\documentclass[11pt]{article}

\usepackage{amsmath,amsthm,amssymb,amsfonts, graphicx}
\usepackage{graphics}
\usepackage{authblk}
\usepackage{upgreek}
\usepackage{amsrefs,hyperref}
\topmargin0.0cm
\headheight0.0cm
\headsep0.0cm
\oddsidemargin0.0cm
\textheight23.0cm
\textwidth16.5cm
\footskip1.0cm
\theoremstyle{plain}
\newtheorem{theorem}{Theorem}[section]

\newtheorem{corollary}[theorem]{Corollary}
\newtheorem{definition}[theorem]{Definition}

\newtheorem{lemma}[theorem]{Lemma}
\newtheorem{proposition}[theorem]{Proposition}
\newtheorem{remark}[theorem]{Remark}
\newtheorem*{theorem*}{Theorem}
\numberwithin{equation}{section}
\newcommand{\diff}{\mathop{}\!\mathrm{d}}
\DeclareMathOperator*{\Hess}{Hess}
\DeclareMathOperator*{\tr}{tr}

\DeclareMathOperator\supp{supp}

\begin{document}

\title{Global regularity for solutions of the three dimensional Navier--Stokes equation with almost two dimensional initial data}
\author[1]{Evan Miller}
\affil[1]{McMaster University, Department of Mathematics, 
millee14@mcmaster.ca}
\affil[1]{The University of Toronto, Department of Mathematics}

\maketitle
\begin{abstract}
In this paper, we will prove a new result that guarantees the global existence of solutions to the Navier--Stokes equation in three dimensions when the initial data is sufficiently close to being two dimensional. This result interpolates between the global existence of smooth solutions for the two dimensional Navier--Stokes equation with arbitrarily large initial data, and the global existence of smooth solutions for the Navier--Stokes equation in three dimensions with small initial data in $\dot{H}^\frac{1}{2}$. This result states that the closer the initial data is to being two dimensional, the larger the initial data can be in $\dot{H}^\frac{1}{2}$ while still guaranteeing the global existence of smooth solutions. 
In the whole space, this set of almost two dimensional initial data is unbounded in the critical space $\dot{H}^\frac{1}{2},$ but is bounded in the critical Besov spaces $\dot{B}^{-1+\frac{3}{p}}_{p,\infty}$ for all $2<p\leq +\infty.$
On the torus, however, this approach does give  examples of arbitrarily large initial data in the endpoint Besov space
$\dot{B}^{-1}_{\infty,\infty}$ that generate global smooth solutions to the Navier--Stokes equation.
In addition to these new results, we will also sharpen the constants in a number of previously known estimates for the growth of solutions to the Navier--Stokes equation and clarify the relationship between certain component reduction type regularity criteria.
\end{abstract}

\section{Introduction}
The Navier--Stokes equation, which governs viscous, incompressible flow, is one of the most fundamental equations in fluid dynamics. The incompressible Navier--Stokes equation with no external forces is given by
\begin{equation} \label{Navier}
\begin{split}
\partial_t u- \nu \Delta u + (u \cdot \nabla)u + \nabla p=0,  \\
\nabla \cdot u=0,
\end{split}
\end{equation}
where $u \in \mathbb{R}^3$ denotes the velocity, $p$ the pressure, and $\nu>0$ is the viscosity. The pressure is completely determined in terms of $u,$ by taking the divergence of both sides of the equation, which yields
\begin{equation}
- \Delta p =\sum_{i,j=1}^3 \frac{\partial u_i}{\partial x_j}
\frac{\partial u_j}{\partial x_i}.
\end{equation}
We refer here to the Navier--Stokes equation, rather than the Navier--Stokes equations, because this PDE is best viewed not as a system of equations, but as an evolution equation on the space of divergence free vector fields.

Two other objects which play a crucial role in Navier--Stokes analysis are the vorticity and the strain, which represent the anti-symmetric and symmetric parts of $\nabla u$ respectively. 
The vorticity is given by taking the curl of the velocity, $\omega=\nabla \times u,$
while the strain is the matrix given by $S_{ij}=\frac{1}{2}\left(
\frac{\partial u_j}{\partial x_i}+
\frac{\partial u_i}{\partial x_j}\right).$
The evolution equation for vorticity is given by
\begin{equation} \label{NavierVorticity}
\partial_t \omega- \nu \Delta \omega+ 
(u\cdot \nabla) \omega- S \omega=0.
\end{equation}
The evolution equation for the strain is given by
\begin{equation} \label{NavierStrain}
\partial_t S -\nu \Delta S + (u \cdot \nabla)S
+S^2+\frac{1}{4}\omega \otimes \omega 
- \frac{1}{4}|\omega|^2 I_3+ \Hess(p)=0.
\end{equation}

Before we proceed further we should define a number of spaces. For all $s \in \mathbb{R},
H^s\left(\mathbb{R}^3\right)$ 
will be the Hilbert space with norm
\begin{equation}
    \|f\|_{H^s}^2=\int_{\mathbb{R}^3} \left(1+
    (2 \pi|\xi|)^{2s} \right) |\hat{f}(\xi)|^2 \diff\xi=
    \left\|\left(1+(2 \pi|\xi|)^{2s} \right)^\frac{1}{2}
    \hat{f}\right\|_{L^2}^2, 
\end{equation}
and for all $-\frac{3}{2}<s<\frac{3}{2},$
$\dot{H}^s\left(\mathbb{R}^3\right)$
will be the homogeneous Hilbert space with norm
\begin{equation}
    \|f\|_{\dot{H}^s}^2=\int_{\mathbb{R}^3} 
    (2 \pi|\xi|)^{2s} |\hat{f}(\xi)|^2 \diff\xi=
    \left\|(2 \pi|\xi|)^{s}
    \hat{f}\right\|_{L^2}^2.
\end{equation}
Note that when referring to
$H^s\left(\mathbb{R}^3\right), \dot{H}^s\left(\mathbb{R}^3\right),$ or
$L^p\left(\mathbb{R}^3 \right),$ the $\mathbb{R}^3$ will often be omitted for brevity's sake. All Hilbert and Lebesgue norms are taken over $\mathbb{R}^3$ unless otherwise specified. Finally we will define the subspace of divergence free vector fields inside each of these spaces.
\begin{definition}
    For all $s\in \mathbb{R}$ define $H^s_{df}\subset
    H^s\left(\mathbb{R}^3;\mathbb{R}^3 \right)$ by
    \begin{equation}
H^s_{df}=\left\{u\in H^s\left(\mathbb{R}^3;\mathbb{R}^3
\right): \xi \cdot \hat{u}(\xi)=0, \text{almost everywhere } \xi\in\mathbb{R}^3 \right\}.
    \end{equation}
    For all $-\frac{3}{2}<s<\frac{3}{2},$ define $\dot{H}^s_{df}\subset
    \dot{H}^s\left(\mathbb{R}^3;\mathbb{R}^3 \right)$ by
    \begin{equation}
\dot{H}^s_{df}=\left\{u\in \dot{H}^s\left(\mathbb{R}^3;\mathbb{R}^3
\right): \xi \cdot \hat{u}(\xi)=0, \text{almost everywhere } \xi\in\mathbb{R}^3 \right\}.
    \end{equation}
For all $1\leq q \leq +\infty,$ define
$L^q_{df}\subset L^q\left(\mathbb{R}^3;
\mathbb{R}^3\right)$ by
\begin{equation}
    L^q_{df}=\left\{u \in L^q
    \left(\mathbb{R}^3;\mathbb{R}^3\right):
    \text{for all } f \in C^\infty_c
    \left(\mathbb{R}^3\right),
    \left<u,\nabla f\right>=0.
    \right\}
\end{equation}
\end{definition}
Note that this definition makes sense, because in $f\in H^s$ or $f\in\dot{H}^s$ implies that $\hat{f}(\xi)$ is well defined almost everywhere.
We will also note that because $\dot{H}^0=L^2,$ we have two different definitions of $L^2_{df}.$ This is not a problem as both definitions are equivalent.

The standard notion of weak solutions to PDEs corresponds to integrating against test functions. Leray first proved the existence of weak solutions of the Navier--Stokes equation satisfying an energy inequality \cite{Leray}. Leray showed that for all initial data $u^0 \in L^2\left(\mathbb{R}^3\right), 
\nabla \cdot u^0=0$ in the sense of distributions, there exists a weak solution $u \in L^\infty \left( [0,+\infty);L^2 \right ) \cap L^2 \left( [0,+\infty);\dot{H}^1 \right )$ to the Navier--Stokes equations in the sense of integrating against smooth test functions, satisfying the energy inequality,
\begin{equation}\frac{1}{2}\|u(\cdot,t)\|_{L^2}^2
+\nu \int_0^t 
\|\nabla u(\cdot,\tau)\|_{L^2}^2 \diff \tau
\leq \frac{1}{2} \|u^0\|_{L^2}^2
\end{equation}
for all $t>0.$
This energy inequality holds with equality for smooth solutions to the Navier--Stokes equations, but a weak solution in $u \in L^\infty \left( [0,+\infty);L^2 \right ) \cap L^2 \left( [0,+\infty);\dot{H}^1 \right )$ does not have enough regularity for us to integrate by parts to conclude that $\left<(u\cdot\nabla)u,u \right>=0,$ which is what is needed to prove that the energy equality holds.

\begin{definition}
Let $u$ be a solution to the Navier--Stokes equation, then the energy is given by $K(t)=\frac{1}{2}\|u(\cdot,t)\|_{L^2}^2$ and the enstrophy is given by 
$E(t)=\frac{1}{2}\|\omega(\cdot,t)\|_{L^2}^2$.
\end{definition}

Note that the energy and enstrophy can be alternatively defined in terms of norms on $u, \omega,$ or $S.$ This is because of the following isometry proved by the author in \cite{MillerStrain}.
\begin{proposition} \label{StrainIsometry}
    For all $u\in H^{\alpha+1}_{df}$
    \begin{equation}
        \|S\|_{\dot{H}^\alpha}^2=\frac{1}{2}
        \|\omega \|_{\dot{H}^\alpha}^2=\frac{1}{2}
        \|\nabla u\|_{\dot{H}^\alpha}^2.
    \end{equation}
\end{proposition}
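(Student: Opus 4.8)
The plan is to prove all three identities simultaneously by passing to the Fourier side and applying the Plancherel theorem, which reduces each equality to a pointwise algebraic identity in the frequency variable. Fix $\xi\in\mathbb{R}^3$ and write $a=\hat{u}(\xi)\in\mathbb{C}^3$; the hypothesis $u\in H^{\alpha+1}_{df}$ says precisely that $\xi\cdot a=0$ for almost every $\xi$. With the Fourier convention implicit in the definitions of the homogeneous spaces, the relevant symbols are $\widehat{\partial_i u_j}(\xi)=2\pi i\,\xi_i a_j$, $\widehat{\omega}(\xi)=2\pi i\,(\xi\times a)$, and $\widehat{S}_{ij}(\xi)=\pi i\,(\xi_i a_j+\xi_j a_i)$. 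Before computing, I would record that all the integrals in question are finite: since $u\in H^{\alpha+1}$ we have $(2\pi|\xi|)^{\alpha+1}\hat{u}\in L^2$, and each of $\|\nabla u\|_{\dot H^\alpha}$, $\|\omega\|_{\dot H^\alpha}$, $\|S\|_{\dot H^\alpha}$ is bounded by $\|u\|_{H^{\alpha+1}}$, so every object below is well defined (here $\alpha$ is tacitly taken in the range $-\tfrac{3}{2}<\alpha<\tfrac{3}{2}$ so that the spaces $\dot H^\alpha$ make sense).

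The heart of the argument is three elementary computations of squared Euclidean norms of these symbols. For the gradient, $|\widehat{\nabla u}(\xi)|^2=4\pi^2\sum_{i,j}\xi_i^2|a_j|^2=4\pi^2|\xi|^2|a|^2$. For the vorticity, the identity $|\xi\times a|^2=|\xi|^2|a|^2-|\xi\cdot a|^2$, valid for real $\xi$ and complex $a$, together with $\xi\cdot a=0$, gives $|\widehat{\omega}(\xi)|^2=4\pi^2|\xi|^2|a|^2$. For the strain, expanding
\[
\sum_{i,j}|\xi_i a_j+\xi_j a_i|^2 = 2|\xi|^2|a|^2 + 2|\xi\cdot a|^2,
\]
where the two cross terms each assemble into $(\xi\cdot a)\overline{(\xi\cdot a)}=|\xi\cdot a|^2$, and again using $\xi\cdot a=0$, yields $|\widehat{S}(\xi)|^2=2\pi^2|\xi|^2|a|^2$. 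Multiplying each of these pointwise identities by the weight $(2\pi|\xi|)^{2\alpha}$, integrating over $\mathbb{R}^3$, and recalling the definition of the $\dot H^\alpha$ norm then gives $\|\nabla u\|_{\dot H^\alpha}^2=\|\omega\|_{\dot H^\alpha}^2=2\|S\|_{\dot H^\alpha}^2$, which is the stated proposition.

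I expect no substantial obstacle; the only real care needed is bookkeeping. One must track the factors of $2\pi$ coming from the Fourier transform convention built into the definition of $\dot H^s$, and one must handle the complex conjugates correctly in the cross terms — for instance $\sum_{i,j}\xi_i\xi_j a_j\overline{a_i}=(\xi\cdot a)\overline{(\xi\cdot a)}$, which vanishes only because $\xi$ is real and $\xi\cdot a=0$, not for an arbitrary vector field. The conceptual content of the proposition is simply that the divergence-free constraint forces the antisymmetric part $\omega$ and the symmetric part $S$ of $\nabla u$ to carry, in every homogeneous Sobolev norm, exactly the full and exactly half of the $\dot H^\alpha$ energy of $\nabla u$, respectively.
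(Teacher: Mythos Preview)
Your proof is correct. The paper itself does not give a proof of this proposition; it merely cites it as a result proven by the author elsewhere (in \cite{MillerStrain}), so there is no in-paper argument to compare against. Your Fourier-side computation is the natural and standard route: the pointwise identities $|\widehat{\nabla u}(\xi)|^2=|\widehat{\omega}(\xi)|^2=2|\widehat{S}(\xi)|^2$ for $\xi\cdot\hat u(\xi)=0$ are exactly what one expects, and your handling of the cross terms and the constraint is accurate.
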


While the global existence of Leray solutions to the Navier--Stokes equations is well established, the global existence of smooth solutions remains a major open problem. 
A notion of solution better adapted to the Navier--Stokes regularity problem is the notion of mild solutions introduced by Kato and Fujita in \cite{Kato}. Before defining mild solutions, we will define the Leray projection. 
\begin{proposition}[Helmholtz decomposition] \label{Helmholtz}
Suppose $1<q<+\infty.$ For all $v\in 
L^q(\mathbb{R}^3;\mathbb{R}^3)$ there exists a unique $u\in L^q(\mathbb{R}^3;\mathbb{R}^3),$ $\nabla \cdot u=0$ and $\nabla f \in L^q(\mathbb{R}^3;\mathbb{R}^3)$ such that $v=u+\nabla f.$ Note because we do not have any assumptions of higher regularity, we will say that $\nabla \cdot u=0,$ if for all $\phi \in C_c^\infty(\mathbb{R}^3)$
\begin{equation}
    \int_{\mathbb{R}^3}u \cdot \nabla \phi=0,
\end{equation}
and we will say that $\nabla f$ is a gradient if for all $w\in C_c^\infty(\mathbb{R}^3;\mathbb{R}^3), \nabla \cdot w=0,$ we have
\begin{equation}
    \int_{\mathbb{R}^3}\nabla f \cdot w=0.
\end{equation}
Furthermore there exists $B_q\geq 1$ depending only on $q,$ such that 
\begin{equation}
    ||u||_{L^q}\leq B_q ||v||_{L^q},
\end{equation}
and
\begin{equation}
    ||\nabla f||_{L^q} \leq B_q ||v||_{L^q}.
\end{equation}
Define $P_{df}:L^q(\mathbb{R}^3;\mathbb{R}^3) \to
    L^q(\mathbb{R}^3;\mathbb{R}^3)$ and
    $P_{g}:L^q(\mathbb{R}^3;\mathbb{R}^3) \to
    L^q(\mathbb{R}^3;\mathbb{R}^3)$ by 
    $P_{df}(v)=u$ and $P_{g}(v)=\nabla f,$ where $v, u,$ and $\nabla f$ are taken as above.
    
Furthermore, suppose $-\frac{3}{2}<s<\frac{3}{2}.$
Then for all $v\in
\dot{H}^s\left(\mathbb{R}^3;\mathbb{R}^3 \right)$
there exists a unique \newline
$u\in \dot{H}^s_{df}, 
\nabla f \in
\dot{H}^s\left(\mathbb{R}^3;\mathbb{R}^3 \right)$ such that $u=v+\nabla f$ and
\begin{equation}
    \|v\|_{\dot{H}^s}^2=\|u\|_{\dot{H}^s}^2+
    \|\nabla f\|_{\dot{H}^s}^2.
\end{equation}
Likewise define $P_{df}:\dot{H}^s\left(\mathbb{R}^3;\mathbb{R}^3 \right) \to
   \dot{H}^s\left(\mathbb{R}^3;\mathbb{R}^3 \right)$ and
    $P_{g}:\dot{H}^s\left(\mathbb{R}^3;\mathbb{R}^3 \right) \to
   \dot{H}^s\left(\mathbb{R}^3;\mathbb{R}^3 \right)$ by 
    $P_{df}(v)=u$ and $P_{g}(v)=\nabla f,$ where $v, u,$ and $\nabla f$ are taken as above.
\end{proposition}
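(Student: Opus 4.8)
To prove this proposition, the plan is to exhibit $P_{df}$ and $P_g$ explicitly as Fourier multiplier operators and then invoke the appropriate harmonic analysis. For $\xi\in\mathbb{R}^3\setminus\{0\}$ set $\mathbb{P}(\xi)=I_3-\frac{\xi\otimes\xi}{|\xi|^2}$, the orthogonal projection of $\mathbb{C}^3$ onto the plane $\xi^\perp$, and $\mathbb{Q}(\xi)=\frac{\xi\otimes\xi}{|\xi|^2}=I_3-\mathbb{P}(\xi)$, the complementary rank-one projection, and define $P_{df}$ and $P_g$ by $\widehat{P_{df}v}(\xi)=\mathbb{P}(\xi)\hat v(\xi)$ and $\widehat{P_g v}(\xi)=\mathbb{Q}(\xi)\hat v(\xi)$. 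Then $P_{df}+P_g=I$ is automatic; $\xi\cdot\widehat{P_{df}v}(\xi)=0$ shows $u:=P_{df}v$ is divergence free; and $\widehat{P_g v}(\xi)$ is a scalar multiple of $\xi$, so $\xi_i\widehat{(P_g v)}_j(\xi)-\xi_j\widehat{(P_g v)}_i(\xi)=0$, i.e.\ $P_g v$ is curl free, which on the simply connected space $\mathbb{R}^3$ is equivalent (the Poincar\'e lemma for distributions) to $P_g v=\nabla f$ for some distribution $f$. That these Fourier-side properties coincide with the weak formulations in the statement follows from a routine duality argument, using that $P_g$ and $P_{df}$ are formally self-adjoint, that $P_g$ annihilates divergence free fields, and that $P_{df}$ annihilates gradients. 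It then remains to establish the boundedness of these operators and the uniqueness of the decomposition.

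For the homogeneous Sobolev case, since $\mathbb{P}(\xi)$ and $\mathbb{Q}(\xi)$ are complementary \emph{orthogonal} projections on $\mathbb{C}^3$ for each fixed $\xi\neq 0$, one has the pointwise identity $|\hat v(\xi)|^2=|\mathbb{P}(\xi)\hat v(\xi)|^2+|\mathbb{Q}(\xi)\hat v(\xi)|^2$; multiplying by $(2\pi|\xi|)^{2s}$ and integrating over $\mathbb{R}^3$ yields $\|v\|_{\dot H^s}^2=\|u\|_{\dot H^s}^2+\|\nabla f\|_{\dot H^s}^2$ directly, with the $L^2$ identity the special case $s=0$. The hypothesis $-\tfrac32<s<\tfrac32$ is exactly what legitimizes this manipulation: for $s<\tfrac32$, Cauchy--Schwarz shows that $\hat v$ is locally integrable near the origin, so the bounded (though discontinuous at $0$) multiplier may be applied pointwise and the product is again the Fourier transform of a tempered distribution lying in $\dot H^s$.

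For $L^q$ with $q\neq 2$, the entries of $\mathbb{Q}(\xi)$ are $\xi_i\xi_j/|\xi|^2$, which up to sign are precisely the symbols of the second-order Riesz transforms $R_iR_j$; equivalently $P_g=-\nabla(-\Delta)^{-1}\divr$ and $P_{df}=I-P_g$. These symbols are smooth on $\mathbb{R}^3\setminus\{0\}$ and homogeneous of degree $0$, so the Mikhlin--H\"ormander multiplier theorem---equivalently, the $L^q$ boundedness of the Riesz transforms for $1<q<+\infty$ from Calder\'on--Zygmund theory---produces a constant $C_q$ with $\|P_g v\|_{L^q}\leq C_q\|v\|_{L^q}$, and hence $\|P_{df}v\|_{L^q}\leq(1+C_q)\|v\|_{L^q}$. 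Taking $B_q=1+C_q$, which is $\geq 1$ in accordance with the fact that $P_{df}$ acts as the identity on divergence free fields (so its operator norm cannot be less than $1$), gives both of the stated inequalities.

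For uniqueness it suffices to show that any $w$ in $L^q$ (respectively $\dot H^s$) that is simultaneously divergence free and a gradient, in the weak senses of the statement, must vanish; subtracting two decompositions $v=u_1+\nabla f_1=u_2+\nabla f_2$ produces exactly such a field $w=u_1-u_2=\nabla f_2-\nabla f_1$. In $\dot H^s$ (and in $L^2$) this is immediate on the Fourier side: $\hat w(\xi)$ is almost everywhere both orthogonal and parallel to $\xi$, so $\hat w=0$ almost everywhere and $w=0$. For general $L^q$ I would instead mollify: $w_\varepsilon=w*\rho_\varepsilon$ is smooth, satisfies $\|w_\varepsilon\|_{L^q}\leq\|w\|_{L^q}$, and remains divergence free and curl free in the classical sense, so $w_\varepsilon=\nabla h_\varepsilon$ with $h_\varepsilon$ smooth and $\Delta h_\varepsilon=\divr w_\varepsilon=0$; each component of $w_\varepsilon$ is then a harmonic function lying in $L^q(\mathbb{R}^3)$, which forces $w_\varepsilon\equiv 0$ by the sub-mean-value inequality for the subharmonic function $|w_\varepsilon|^q$, and sending $\varepsilon\to 0$ gives $w=0$. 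The only step requiring genuine input beyond Plancherel's theorem and elementary distribution theory is the $L^q$ bound for $q\neq 2$, which rests on Calder\'on--Zygmund theory; I expect the bookkeeping needed to reconcile the weak and Fourier-side notions of ``divergence free'' and ``gradient'' to be the other mild nuisance, though it too is entirely standard.
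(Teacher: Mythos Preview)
Your proposal is correct and is exactly the standard argument. The paper, however, does not prove this proposition at all: it simply states that ``this is a well-known, classical result,'' cites a reference, and remarks that the $L^q$ bounds are equivalent to the $L^q$ boundedness of the Riesz transforms, writing $P_{df}(v)=R\times(R\times v)$ and $P_g(v)=-R(R\cdot v)$ with $R=\nabla(-\Delta)^{-1/2}$. Your Fourier-multiplier construction and appeal to Mikhlin--H\"ormander (equivalently Calder\'on--Zygmund for the Riesz transforms) is precisely the content behind that remark, so your approach is fully aligned with what the paper gestures at; you have simply supplied the details the paper omits.

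One minor quibble: your justification of the range $-\tfrac32<s<\tfrac32$ is slightly garbled. It is the lower bound $s>-\tfrac32$ that, via Cauchy--Schwarz, makes $\hat v$ locally integrable near the origin (since $|\xi|^{-s}\in L^2_{\mathrm{loc}}$ there); the upper bound $s<\tfrac32$ is what prevents the need to quotient by polynomials and keeps $\dot H^s$ a genuine space of tempered distributions. This does not affect the validity of your argument.
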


This is a well-known, classical result. For details, see for instance \cite{NS21}. We will also note here that the $L^q$ bounds here are equivalent to the $L^q$ boundedness of the Riesz transform. Take the Riesz transform to be given by $R=\nabla (-\Delta)^{-\frac{1}{2}},$ then 
$P_{df}(v)=R\times (R \times v),$ and
$P_{g}(v)=-R (R \cdot v).$ $P_{df}$ is often referred to as the Leray projection because of its use by Leray in developing weak solutions to the Navier--Stokes equation.

Note that $P_{df}\left((u\cdot \nabla)u \right)
=(u\cdot \nabla)u+\nabla p,$ so the Helmholtz decomposition allows us to define solutions to the incompressible Navier--Stokes equation without making any reference to pressure at all. With this technical detail out of the way, we will now define mild solutions of the Navier--Stokes equation.

\begin{definition}[Mild solutions]
\label{MildSolutions}
Suppose $u \in C\left([0,T];\dot{H}^1_{df} \right ).$
Then $u$ is a mild solution to the Navier--Stokes equation if
\begin{equation}
u(\cdot,t)=e^{\nu t \Delta}u^0
+\int_0^t e^{\nu(t-\tau)\Delta}
P_{df}\left(-(u \cdot \nabla)u\right)
(\cdot, \tau)\diff \tau,
\end{equation}
where $e^{t\Delta}$ is the heat operator given by convolution with the heat kernel; that is to say, $e^{t\Delta}u^0$ is the solution of the heat equation after time $t,$ with initial data $u^0.$ 
\end{definition}

Fujita and Kato proved the local existence of mild solutions for initial data in $\dot{H}^s_{df}, s>\frac{1}{2}$ in \cite{Kato}. This was extended to intial data in $L^q_{df}, q>3$ by Kato in \cite{KatoL3}.  In the case where $s=1,$ their result is the following.

\begin{theorem}[Mild solutions exist for short times] \label{KATO}
There exists a constant $C>0,$ independent of $\nu,$ such that for all $u^0\in \dot{H}^1_{df},$ for all 
$0<T<\frac{C \nu^3}{||u^0||_{\dot{H^1}}^4}$, 
there exists a unique mild solution to the Navier Stokes equation $u \in C\left([0,T];
\dot{H^1}_{df} \right),$
$u(\cdot,0)=u^0.$
Furthermore, this solution will have higher regularity, 
$u \in C^\infty\left ((0,T]\times \mathbb{R}^3\right ).$ 
\end{theorem}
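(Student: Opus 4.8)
The plan is to construct the solution by a Picard fixed-point argument on the mild formulation and then bootstrap its regularity. Define the bilinear map
\[
B(u,v)(\cdot,t)=-\int_0^t e^{\nu(t-\tau)\Delta}P_{df}\bigl((u\cdot\nabla)v\bigr)(\cdot,\tau)\diff\tau ,
\]
so that a mild solution with initial data $u^0$ is exactly a fixed point of $\Phi(u):=e^{\nu t\Delta}u^0+B(u,u)$. I would seek this fixed point in the Banach space $X_T=C\bigl([0,T];\dot{H}^1_{df}\bigr)$ equipped with $\|u\|_{X_T}=\sup_{0\le t\le T}\|u(\cdot,t)\|_{\dot{H}^1}$, looking inside the closed ball of radius $2\|u^0\|_{\dot{H}^1}$. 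On divergence-free fields $(u\cdot\nabla)v=\nabla\cdot(u\otimes v)$, which is the form that will be estimated; since $e^{\nu t\Delta}u^0$ and $B(u,v)$ are again divergence free and $t\mapsto B(u,v)(\cdot,t)$ is continuous into $\dot{H}^1$ by a routine estimate on the Duhamel integral, $\Phi$ is a well-defined map from $X_T$ to itself.

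The argument reduces to two estimates. Because the Fourier multiplier of $e^{\nu t\Delta}$ has modulus at most $1$, Plancherel gives $\|e^{\nu t\Delta}u^0\|_{\dot{H}^1}\le\|u^0\|_{\dot{H}^1}$ for every $t\ge0$, so $\Phi(0)$ lies in the ball. For the bilinear term I would combine three ingredients: (i) $P_{df}$ is bounded on $\dot{H}^{-1/2}$, its Fourier multiplier being bounded; (ii) the three-dimensional product law $\|u\otimes v\|_{\dot{H}^{1/2}}\le C\|u\|_{\dot{H}^1}\|v\|_{\dot{H}^1}$, which follows from $\dot{H}^1\hookrightarrow L^6$ together with $\dot{H}^{1/2}\hookrightarrow L^3$ and a fractional Leibniz (paraproduct) splitting, and is admissible because $1+1-\tfrac{3}{2}=\tfrac{1}{2}>0$; and (iii) the heat estimate $\|e^{\nu s\Delta}h\|_{\dot{H}^1}\le C(\nu s)^{-3/4}\|h\|_{\dot{H}^{-1/2}}$, a gain of $\tfrac{3}{2}$ derivatives coming from $\sup_{r>0}r^{3/2}e^{-\nu s r^2}=C(\nu s)^{-3/4}$. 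Applying these with $h=P_{df}\bigl(\nabla\cdot(u\otimes v)\bigr)$ yields
\[
\|B(u,v)(\cdot,t)\|_{\dot{H}^1}\le C\int_0^t\bigl(\nu(t-\tau)\bigr)^{-3/4}\|u(\cdot,\tau)\|_{\dot{H}^1}\|v(\cdot,\tau)\|_{\dot{H}^1}\diff\tau\le\frac{C}{\nu^{3/4}}\,T^{1/4}\,\|u\|_{X_T}\|v\|_{X_T},
\]
the time integral converging since $\tfrac{3}{4}<1$. On the ball of radius $2\|u^0\|_{\dot{H}^1}$ this gives both a self-mapping bound and, via $B(u,u)-B(v,v)=B(u-v,u)+B(v,u-v)$, a Lipschitz bound $\|\Phi(u)-\Phi(v)\|_{X_T}\le C'\,\nu^{-3/4}T^{1/4}\|u^0\|_{\dot{H}^1}\,\|u-v\|_{X_T}$. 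Hence $\Phi$ is a contraction on the ball as soon as $T^{1/4}\le c\,\nu^{3/4}/\|u^0\|_{\dot{H}^1}$, i.e.\ once $T<C\nu^3/\|u^0\|_{\dot{H}^1}^4$ for an absolute constant $C$; the Banach fixed-point theorem then produces the unique fixed point in the ball. Moreover $\|u(\cdot,t)-u^0\|_{\dot{H}^1}\le\|e^{\nu t\Delta}u^0-u^0\|_{\dot{H}^1}+\|B(u,u)(\cdot,t)\|_{\dot{H}^1}\to0$ as $t\to0^+$, the first term by dominated convergence in frequency and the second by the bilinear bound, so $u(\cdot,0)=u^0$.

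To promote uniqueness from the ball to all of $C\bigl([0,T];\dot{H}^1_{df}\bigr)$, I would feed the same bilinear estimate to the difference $w=u_1-u_2$, which solves $w=B(w,u_1)+B(u_2,w)$: over a subinterval of length $\delta$ the right-hand side has norm at most $C\nu^{-3/4}\delta^{1/4}\bigl(\|u_1\|_{X_T}+\|u_2\|_{X_T}\bigr)\|w\|_{X_\delta}$, which forces $w\equiv0$ once $\delta$ is small depending only on $\|u_1\|_{X_T}$ and $\|u_2\|_{X_T}$, and then one iterates across $[0,T]$. For the smoothness claim, fix $\varepsilon>0$ and bootstrap on the Duhamel formula: assuming $u\in L^\infty\bigl([\varepsilon,T];\dot{H}^s\bigr)$ for some $s\ge1$, split $\int_0^t=\int_0^{t/2}+\int_{t/2}^t$ for $t\in[2\varepsilon,T]$; in the first piece $e^{\nu(t-\tau)\Delta}$ smooths with $t-\tau\ge t/2$ bounded below while only seeing an already-controlled low-regularity norm of $u\otimes u$, and in the second piece $u(\cdot,\tau)\in\dot{H}^s$, the product law upgrades $u\otimes u$, and the heat smoothing gains a fixed fraction of a derivative against the integrable singularity at $\tau=t$. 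Iterating gives $u\in C\bigl((0,T];\dot{H}^s\bigr)$ for every $s$, hence $u(\cdot,t)\in C^\infty(\mathbb{R}^3)$ for each $t>0$ with bounds uniform on $[\varepsilon,T]$; differentiating $\partial_t u=\nu\Delta u-P_{df}\bigl((u\cdot\nabla)u\bigr)$ then trades time derivatives for spatial ones and yields $u\in C^\infty\bigl((0,T]\times\mathbb{R}^3\bigr)$.

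I expect the bilinear estimate to be the only genuine obstacle: the heat-smoothing exponent $\tfrac{3}{4}$ and the three-dimensional product law $\dot{H}^1\cdot\dot{H}^1\hookrightarrow\dot{H}^{1/2}$ have to interlock so that the resulting powers of $\nu$ and $T$ reproduce exactly the scaling-invariant threshold $T\asymp\nu^3/\|u^0\|_{\dot{H}^1}^4$ with a constant independent of $\nu$, and that product law is the one genuinely nonelementary harmonic-analysis input. The regularity bootstrap, though somewhat lengthy, is routine parabolic smoothing.
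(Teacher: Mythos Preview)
Your proposal is a correct and standard fixed-point argument for local well-posedness in $\dot{H}^1_{df}$, and the scaling you obtain, $T<C\nu^3/\|u^0\|_{\dot{H}^1}^4$, is exactly what is claimed. However, there is nothing to compare against: the paper does not supply its own proof of this theorem. It is stated as a classical result due to Fujita and Kato and simply cited, so your write-up is not an alternative to the paper's argument but rather a proof of a background result the paper takes for granted.

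That said, your sketch is essentially the textbook proof one would find in the Fujita--Kato framework: a contraction mapping on a ball in $C([0,T];\dot{H}^1_{df})$, with the bilinear estimate built from the product law $\dot{H}^1\cdot\dot{H}^1\hookrightarrow\dot{H}^{1/2}$ and the heat smoothing $\dot{H}^{-1/2}\to\dot{H}^1$ costing $(\nu t)^{-3/4}$. The only remark I would add is that your continuity-in-time claim for $B(u,v)$ deserves a line of justification (dominated convergence on the Duhamel integral), and the regularity bootstrap, while morally correct, is sketched rather than carried out; both are routine but would need to be filled in for a self-contained proof.
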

The argument is based on a fixed point theorem, as a map associated with Definition \ref{MildSolutions} is a contraction mapping for sufficiently small times. These arguments, however, cannot guarantee the existence of a smooth solutions for arbitrarily large times.
When discussing regularity for the Navier--Stokes equation it is useful to define $T_{max},$
the maximal time of existence for a smooth solution corresponding to some initial data.
\begin{definition} \label{Tmax}
    For all $u^0\in \dot{H}^1_{df},$ if there is a mild solution of the Navier--Stokes equation \newline
    $u \in C\left([0,+\infty);\dot{H^1}_{df} \right),$ $u(\cdot,0)=u^0,$ then $T_{max}=+\infty.$
    If there is not a mild solution globally in time with initial data $u^0,$ then let $T_{max}<+\infty$ be the time such that $u \in C\left([0,T_{max});\dot{H^1}_{df} \right),$ $u(\cdot,0)=u^0,$ is a mild solution to the Navier--Stokes equation that cannot be extended beyond $T_{max}.$ That is, for all $T>T_{max}$ there is no mild solution $u \in C\left([0,T);\dot{H^1}_{df} \right),$ $u(\cdot,0)=u^0.$
\end{definition}

It remains one of the biggest open questions in nonlinear PDEs, indeed one of the Millennium Problems put forward by the Clay Institute, whether the Navier--Stokes equation have smooth solutions globally in time \cite{Clay}.
Note in particular that the Clay Millenium problem can be equivalently stated in terms of Definition \ref{Tmax} as: show $T_{max}=+\infty$ for all $u^0 \in H^1_{df}$ or provide a counterexample.
It is known that the Navier--Stokes equation must have global smooth solutions for small initial data in certain scale-critical function spaces.
In particular, Fujita and Kato also proved in \cite{Kato} the global existence of smooth solutions to the Navier--Stokes equation for small initial data in $\dot{H}^\frac{1}{2}_{df}.$
\begin{theorem}
There exists $C>0$ such that for all $u^0\in \dot{H}^\frac{1}{2}_{df}, 
\left\|u^0\right\|_{\dot{H}^\frac{1}{2}}<C \nu,$ there exists a unique global smooth solution 
$u\in C\left([0,+\infty);\dot{H}^\frac{1}{2}_{df}\right)
\cap C^\infty\left((0,+\infty)\times \mathbb{R}^3;
\mathbb{R}^3\right),$ $u(\cdot,0)=u^0.$
\end{theorem}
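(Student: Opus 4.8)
The plan is to run the Fujita--Kato fixed point argument in a scale-invariant Kato space. Writing the mild formulation of Definition~\ref{MildSolutions} as the abstract quadratic equation $u=y+B(u,u)$ with $y(\cdot,t)=e^{\nu t\Delta}u^0$ and
\begin{equation*}
B(u,v)(\cdot,t)=-\int_0^t e^{\nu(t-\tau)\Delta}P_{df}\,\divr\!\bigl(u\otimes v\bigr)(\cdot,\tau)\,\diff\tau,
\end{equation*}
where we use $\divr(u\otimes v)=(u\cdot\nabla)v$ when $\divr u=0$, I would work in
\begin{equation*}
X=\Bigl\{u\in C\bigl((0,\infty);\dot{H}^1_{df}\bigr):\ \|u\|_X:=\sup_{t>0}(\nu t)^{1/4}\,\|u(\cdot,t)\|_{\dot{H}^1}<\infty\Bigr\},
\end{equation*}
whose norm is invariant under the Navier--Stokes scaling and, crucially, carries no explicit power of $\nu$ in front of $\|u^0\|_{\dot H^{1/2}}$.

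The argument rests on two estimates. First, a linear smoothing bound $\|y\|_X\lesssim\|u^0\|_{\dot H^{1/2}}$, which on the Fourier side is simply $\sup_{t>0}(\nu t)^{1/4}\bigl\||\xi|\,e^{-\nu t(2\pi|\xi|)^2}\widehat{u^0}\bigr\|_{L^2}\lesssim\bigl\||\xi|^{1/2}\widehat{u^0}\bigr\|_{L^2}$ after bounding $x^{1/4}e^{-x}$. Second, a bilinear bound $\|B(u,v)\|_X\lesssim\nu^{-1}\|u\|_X\|v\|_X$: since $P_{df}\divr$ is, up to Riesz multipliers (bounded on every $\dot H^s$, cf.\ Proposition~\ref{Helmholtz}), a map $\dot H^{1/2}\to\dot H^{-1/2}$, and $e^{\nu s\Delta}$ maps $\dot H^{-1/2}\to\dot H^1$ with norm $\lesssim(\nu s)^{-3/4}$, while the Sobolev product estimate on $\mathbb{R}^3$ gives $\|u\otimes v\|_{\dot H^{1/2}}\lesssim\|u\|_{\dot H^1}\|v\|_{\dot H^1}$, one gets $\|B(u,v)(\cdot,t)\|_{\dot H^1}\lesssim\|u\|_X\|v\|_X\int_0^t(\nu(t-\tau))^{-3/4}(\nu\tau)^{-1/2}\,\diff\tau$, and the remaining Beta integral equals a constant times $\nu^{-5/4}t^{-1/4}$, which is exactly the claimed bound after multiplying by $(\nu t)^{1/4}$. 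With these in hand, the standard fixed point lemma for quadratic equations (if $\|y\|_X\le\delta$ and $\|B(u,v)\|_X\le\eta\|u\|_X\|v\|_X$ with $4\delta\eta<1$, there is a unique solution with $\|u\|_X\le2\delta$) applies as soon as $\|u^0\|_{\dot H^{1/2}}<C\nu$ with $C$ absolute, which is precisely the assertion; uniqueness in the relevant class then follows from the same contraction / a Gronwall argument.

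It remains to upgrade the solution to the claimed spaces. Adding the norm $\sup_{t>0}\|u(\cdot,t)\|_{\dot H^{1/2}}$ to $X$ and rerunning the same estimates shows $u\in L^\infty\bigl((0,\infty);\dot H^{1/2}_{df}\bigr)$; continuity in time for $t>0$ follows from the Duhamel formula, and $C^\infty\bigl((0,\infty)\times\mathbb{R}^3\bigr)$ regularity follows by the usual parabolic bootstrap, iterating heat smoothing in the Duhamel integral to place $u(\cdot,t)$ in $\dot H^s$ for every $s$ and every $t>0$, then using the equation for time derivatives and Sobolev embedding. I expect the only genuinely delicate point to be continuity of $u$ into $\dot{H}^{1/2}$ \emph{up to} $t=0$ in the strong topology: the bilinear estimate above only yields that $\sup_{t>0}\|B(u,u)(\cdot,t)\|_{\dot H^{1/2}}$ is small, not that it tends to $0$ as $t\to0$, so here I would argue by density, approximating $u^0$ in $\dot H^{1/2}$ by Schwartz divergence-free fields, using that the corresponding free evolutions converge in $C\bigl([0,\infty);\dot H^{1/2}\bigr)$, and invoking Lipschitz dependence of the fixed point on the data in $X$.

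As an alternative route to the global bound one could proceed a posteriori: Theorem~\ref{KATO}, applied at a small positive time (noting $e^{\nu t\Delta}u^0\in\dot H^1$ for $t>0$), produces a smooth solution on a maximal interval along which $\tfrac12\tfrac{d}{dt}\|u\|_{\dot H^{1/2}}^2+\nu\|u\|_{\dot H^{3/2}}^2=-\langle(u\cdot\nabla)u,u\rangle_{\dot H^{1/2}}$, and the critical trilinear estimate $|\langle(u\cdot\nabla)u,u\rangle_{\dot H^{1/2}}|\lesssim\|u\|_{\dot H^{1/2}}\|u\|_{\dot H^{3/2}}^2$ forces $\|u(\cdot,t)\|_{\dot H^{1/2}}$ to be non-increasing once $\|u^0\|_{\dot H^{1/2}}<c\nu$; this persistent smallness then feeds back through the contraction estimate to keep $\|u(\cdot,t)\|_{\dot H^1}$ bounded, so $T_{max}=+\infty$ by the continuation criterion implicit in Definition~\ref{Tmax}.
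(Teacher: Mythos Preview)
The paper does not prove this statement: it is quoted in the introduction as a classical result of Fujita and Kato \cite{Kato}, with no proof supplied. There is therefore nothing in the paper to compare your argument against.

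That said, your sketch is a faithful rendition of the standard Fujita--Kato argument and is essentially correct. The choice of Kato norm $\sup_{t>0}(\nu t)^{1/4}\|u(\cdot,t)\|_{\dot H^1}$, the linear smoothing bound, the product law $\dot H^1\times\dot H^1\hookrightarrow\dot H^{1/2}$ in $\mathbb{R}^3$, and the Beta-integral computation are all accurate, and your $\nu$-tracking yields the right scale-invariant smallness condition. The caveat you flag about strong continuity at $t=0$ is indeed the only genuinely delicate point, and the density argument you propose is the standard remedy. Your alternative energy route via the trilinear estimate $|\langle(u\cdot\nabla)u,u\rangle_{\dot H^{1/2}}|\lesssim\|u\|_{\dot H^{1/2}}\|u\|_{\dot H^{3/2}}^2$ is also correct and is in fact closer in spirit to the a priori estimates this paper uses elsewhere (e.g.\ in Proposition~\ref{EnergyEnstrophy}).
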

This result was then extended to $L^3$ by Kato \cite{KatoL3} and to $BMO^{-1}$ by Koch and Tataru \cite{KochTataru}. We will note here that the Navier--Stokes equation has a scale invariance. 
If $u$ is a solution of the Navier--Stokes equation, then for all $\lambda>0, u^\lambda$ is also a solution of the Navier--Stokes equation where
\begin{equation} \label{ScaleInvariance}
    u^\lambda(x,t)=\lambda u(\lambda x,\lambda^2t).
\end{equation}
This implies that $u^0$ generates a global smooth solution if and only if, 
$u^{0,\lambda}(x)=\lambda u^0(\lambda x)$ generates a global smooth solution for all $\lambda>0.$ 
It is easy to check that each of these norms are invariant with respect to this rescaling of the initial data.

The main theorem of this paper establishes a new result guaranteeing the existence of global smooth solutions for initial data that are arbitrarily large in
$\dot{H}^\frac{1}{2}$, if two components of the vorticity are sufficiently small in the critical Hilbert space.
\begin{theorem} \label{2VortGlobalExistence}
Let $R_1=\frac{\sqrt{3}}{2\sqrt{2}}\pi,
R_2=\frac{32\pi^4}{3(1+\sqrt{2})^4}.$
Let $\omega_h=(\omega_1,\omega_2,0).$
For all $u^0\in H^1_{df}$ such at 
\begin{equation}
    \left\|\omega_h^0\right\|_{\dot{H}^{-\frac{1}{2}}} \exp{\left(\frac{
    K_0 E_0-6,912 \pi^4 \nu^4}{R_2 \nu^4}\right)}<R_1 \nu,
\end{equation}
$u^0$ generates a unique, global smooth solution to the Navier--Stokes equation $u\in C\left((0,+\infty);H^1_{df} \right),$ that is $T_{max}=+\infty.$
Note that the smallness condition can be equivalently stated as
\begin{equation}
    K_0E_0<6,912 \pi^4 \nu^4 +R_2\nu^4\log \left(\frac{R_1 \nu}
    {\left\|\omega_h^0\right\|_{\dot{H}^{-\frac{1}{2}}}}\right).
\end{equation}
\end{theorem}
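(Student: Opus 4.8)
The plan is to reduce global regularity to an a priori bound on the enstrophy, and then to close such a bound by controlling the \emph{deviation from two-dimensionality}, measured by $\|\omega_h\|_{\dot{H}^{-\frac12}}$. This is a critical quantity: since $\omega=\nabla\times u$, one has $\|\omega_h\|_{\dot H^{-\frac12}}\lesssim\|u\|_{\dot H^{\frac12}}$, the Fujita--Kato scaling, and for a genuinely two-dimensional flow $\omega_h\equiv 0$. First I would invoke Theorem~\ref{KATO}: the data generate a unique smooth solution on a maximal interval $[0,T_{max})$, and by the local existence time $\sim\nu^3\|u\|_{\dot H^1}^{-4}$ the solution extends past any time at which the enstrophy $E(t)=\tfrac12\|\omega\|_{L^2}^2$ stays finite. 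Hence it suffices to show $\sup_{[0,T_{max})}E(t)<\infty$. Along the way I record the energy identity $\tfrac{d}{dt}K=-2\nu E$ (so $K(t)\le K_0$ and $\int_0^\infty E\,dt\le K_0/2\nu$) and the strain isometry of Proposition~\ref{StrainIsometry}, which lets me pass between $\|S\|$, $\|\omega\|$ and $\|\nabla u\|$ with sharp constants.

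The heart of the argument, and the step I expect to be the main obstacle, is a \emph{depletion estimate} for the vortex-stretching term $\langle S\omega,\omega\rangle$ that drives the enstrophy through \[ \frac{dE}{dt}=-\nu\|\nabla\omega\|_{L^2}^2+\langle S\omega,\omega\rangle. \] Since the production vanishes when $\omega_h\equiv 0$, the goal is the quantitative counterpart: both $\langle S\omega,\omega\rangle$ and the production of $\omega_h$ itself should be controlled by $\|\omega_h\|_{\dot H^{-\frac12}}$ together with the dissipation. Writing $\omega=\omega_h+\omega_3 e_3$ and using integration by parts, Proposition~\ref{StrainIsometry}, and sharp Hölder and Gagliardo--Nirenberg interpolation among $\dot H^{-\frac12}$, $L^2$ and $\dot H^{\frac12}$, I would bound the production so as to absorb a full factor of the dissipation $\nu\|\nabla\omega\|_{L^2}^2$ and leave a remainder proportional to $\|\omega_h\|_{\dot H^{-\frac12}}$ times controllable energy factors. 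Optimizing these inequalities at their sharp constants is exactly what produces $R_1=\frac{\sqrt3}{2\sqrt2}\pi$, $R_2=\frac{32\pi^4}{3(1+\sqrt2)^4}$, and the base threshold $6912\pi^4\nu^4$; in the degenerate case the estimate recovers the sharp Fujita--Kato theorem via $\|u_0\|_{\dot H^{\frac12}}^2\le 2\sqrt{K_0E_0}$, which is why $K_0E_0<6912\pi^4\nu^4$ alone already forces regularity.

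With the depletion estimate in hand, the next step is a logarithmic Grönwall inequality for $y(t)=\|\omega_h(t)\|_{\dot H^{-\frac12}}$. Testing the horizontal part of the vorticity equation against $\omega_h$ in $\dot H^{-\frac12}$ and using the same interpolation inequalities yields, after absorbing the dissipation $\nu\|\omega_h\|_{\dot H^{\frac12}}^2$, a differential inequality $\frac{d}{dt}\log y\le g(t)$, where $g$ is built from the energy and enstrophy. The key point I expect to verify is that $\int_0^{T}g\,dt$ is controlled by $\tfrac{1}{R_2\nu^3}\bigl(K_0E_0-6912\pi^4\nu^4\bigr)$: the product $K_0E_0$ enters through the time integral of the energy/enstrophy flux (via $\tfrac{d}{dt}K=-2\nu E$ and the enstrophy balance), while the subtracted $6912\pi^4\nu^4$ is precisely the sharp Fujita--Kato threshold that the dissipation already absorbs, so the logarithmic correction need only account for the excess. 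Integrating gives \[ y(t)\le y(0)\,\exp\!\left(\frac{K_0E_0-6912\pi^4\nu^4}{R_2\nu^3}\right). \]

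Finally I would close the estimate by a continuity argument. Set $T^\ast=\sup\{t<T_{max}:y(s)<R_1\nu\text{ for all }s<t\}$. On $[0,T^\ast)$ the smallness $y<R_1\nu$ keeps the coefficient of the dissipation in the depletion estimate strictly favorable, so $E(t)$ stays bounded and the Grönwall bound above is valid; the hypothesis $y(0)\exp\!\bigl((K_0E_0-6912\pi^4\nu^4)/R_2\nu^3\bigr)<R_1\nu$ then forces $y(t)<R_1\nu$ strictly, so $y$ never reaches the threshold. Therefore $T^\ast=T_{max}$, the enstrophy remains bounded on $[0,T_{max})$, and the continuation criterion gives $T_{max}=+\infty$; propagation of regularity from Theorem~\ref{KATO} upgrades this to the global smooth solution $u\in C((0,+\infty);H^1_{df})$. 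The main difficulty throughout is the sharp anisotropic depletion estimate of the second step: everything else is Grönwall bookkeeping, but extracting the exact constants $R_1$, $R_2$ and $6912\pi^4\nu^4$ requires each interpolation inequality to be used at its optimal constant.
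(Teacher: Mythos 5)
Your overall architecture coincides with the paper's: the same critical quantity $\|\omega_h\|_{\dot H^{-\frac12}}$, a depletion estimate showing enstrophy is strictly decreasing while $\|\omega_h\|_{\dot H^{-\frac12}}<R_1\nu$ (the paper's Proposition \ref{HorizontalVort}, proved via the triple-product form of $\int\det S$ and the exact isometry $\|v^3\|_{\dot H^{-\frac12}}=\|\omega_h\|_{\dot H^{-\frac12}}$), a Gr\"onwall inequality for $\|\omega_h\|_{\dot H^{-\frac12}}$ driven by $\|\omega\|_{L^2}^4$ (Proposition \ref{HorizontalVortGrowth}), and a first-exit-time/bootstrap argument (the paper phrases it contrapositively, which is equivalent). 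However, there is one genuine gap: the step where you assert $\int_0^T g\,dt\le\frac{1}{R_2\nu^3}\bigl(K_0E_0-6912\pi^4\nu^4\bigr)$. Nothing in the estimates you describe produces the subtracted term. What the energy equality and the monotonicity of enstrophy on the bootstrap interval actually give is
\begin{equation*}
\int_0^T\|\omega\|_{L^2}^4\,dt\;\le\;\|\omega^0\|_{L^2}^2\int_0^T\|\omega\|_{L^2}^2\,dt\;\lesssim\;E_0\bigl(K_0-K(T)\bigr)\;\le\;K_0E_0,
\end{equation*}
and your gloss that the threshold is ``absorbed by the dissipation'' is not a mechanism. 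As written, your continuity argument closes only under the more restrictive hypothesis $\|\omega_h^0\|_{\dot H^{-\frac12}}\exp\bigl(K_0E_0/(R_2\nu^3)\bigr)<R_1\nu$, i.e.\ it proves a strictly weaker theorem than the one stated.

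The missing idea is the paper's Corollary \ref{CriticalLowerBound}: by the quantitative small-data result (Proposition \ref{SmallData}) and uniqueness of mild solutions, if $K(t)E(t)<6912\pi^4\nu^4$ at \emph{any} time then the solution is global; hence at the first exit time $T$ one may assume $K(T)E(T)\ge 6912\pi^4\nu^4$. Combined with $E(T)\le E_0$ (enstrophy decreasing up to $T$), this upgrades $E_0\bigl(K_0-K(T)\bigr)$ to $K_0E_0-E(T)K(T)\le K_0E_0-6912\pi^4\nu^4$, which is exactly where the subtraction enters the exponent. Relatedly, you mislabel $6912\pi^4\nu^4$ as the ``sharp Fujita--Kato threshold'' obtainable from $\|u^0\|_{\dot H^{\frac12}}^2\le 2\sqrt{K_0E_0}$; this cannot work, since the Fujita--Kato constant is not explicit. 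In the paper the threshold comes from the sharpened cubic enstrophy inequality $E'(t)\le E(t)^3/(3456\pi^4\nu^3)$ (itself a consequence of the determinant identity and the sharp fractional Sobolev inequality) together with the energy equality. Your plan needs that small-data statement as an independent, quantitative ingredient --- both to define the threshold and to execute the subtraction above --- and once it is inserted, your argument becomes the paper's proof.
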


In addition to the scaling invariance in \eqref{ScaleInvariance}, the Navier--Stokes equation also has a rotational invariance.
The rotational invariance for the Navier--Stokes equation states that
if $Q\in SO(3)$ is any rotation matrix, and if $u$ is a solution of the Navier--Stokes equation, then $u^Q$ is also a solution of the Navier--Stokes equation where
\begin{equation} \label{NavierRotationInvariance}
    u^Q(x,t)=Q^{tr}u\left(Qx,t \right).
\end{equation}
See chapter 1 in \cite{MajdaBertozzi} for further discussion.

\begin{remark}
In Theorem \ref{2VortGlobalExistence}, we have taken almost two dimensional solutions of the Navier--Stokes equation to be solutions that are close to being solutions in the $xy$ plane, with a minimal $z$ dependence.
Because the Navier--Stokes equation have a rotational invariance, this can be generalized to any fixed plane. 
It is easy to observe that 
\begin{equation}
    e_3 \times \omega =(-\omega_2,\omega_1,0),
\end{equation}
and so
\begin{equation}
    \left\|\omega_h^0\right\|_{\dot{H}^{-\frac{1}{2}}}=
    \left\|e_3 \times \omega^0\right\|_{\dot{H}^{-\frac{1}{2}}}
\end{equation}
Using the rotational invariance of the Navier--Stokes equation \eqref{NavierRotationInvariance}, $e_3$ can be replaced with any fixed unit vector $v\in\mathbb{R}^3.$ This means that, just using the rotational invariance of the Navier--Stokes equation, the hypothesis of Theorem \ref{2VortGlobalExistence} can be replaced by 
\begin{equation}
    \inf_{\substack{v\in\mathbb{R}^3\\ |v|=1}}
    \left\|v\times \omega^0\right\|_{\dot{H}^{-\frac{1}{2}}} \exp{\left(\frac{
    K_0 E_0-6,912 \pi^4 \nu^4}{R_2 \nu^4}\right)}<R_1 \nu,
\end{equation}
This is immediately equivalent to Theorem \ref{2VortGlobalExistence} because of rotational invariance, but this statement has an advantage in that 
\begin{equation}
  \inf_{\substack{v\in\mathbb{R}^3\\ |v|=1}}
    \left\|v\times \omega^0\right\|_{\dot{H}^{-\frac{1}{2}}},
\end{equation}
is a measure of how almost two dimensional a solution is that does not depend on the choice of coordinates.
\end{remark}

Very little is known in general about the existence of smooth solutions globally in time with arbitrarily large initial data. Ladyzhenskaya proved the existence of global smooth solutions for swirl-free axisymmetric initial data \cite{LadyzhenskayaAxisNoSwirl}, which gives a whole family of arbitrarily large initial data with globally smooth solutions. Mahalov, Titi, and Leibovich showed global regularity for solutions with a helical symmetry in \cite{TitiHelical}. In light of the Koch-Tataru theorem guaranteeing global regularity for small initial data in $BMO^{-1},$ it has been an active area of research to find examples of solutions that are large in $BMO^{-1}$ that generate global smooth solutions, or even stronger, to find initial data large in
$\dot{B}^{-1}_{\infty,\infty} \supset BMO^{-1},$
which is the maximal scale invariant space. Because both swirl free, axisymetric vector fields and helically symmetric vector fields form subspaces of divergence free vector fields, clearly these are examples of initial data large in $\dot{B}^{-1}_{\infty,\infty}.$ Gallagher and Chemin showed the existence of initial data that generate global smooth solutions that are large in $\dot{B}^{-1}_{\infty,\infty}$ on the torus by taking highly oscillatory initial data \cite{GallagherOscillate}. More recently Kukavica, Rusin, and Ziane exhibited a class of non-oscillatory initial data, large in $\dot{B}^{-1}_{\infty,\infty},$ that generate global smooth solutions \cite{BMONonOscilate}.

Because $\dot{B}^{-1}_{\infty,\infty}$ is the largest scale invariant space, this space is the correct way to measure the size some class of initial data. In order to have a genuine large data global regularity result, it is necessary to show that the set of initial data generating global smooth solutions is unbounded in $\dot{B}^{-1}_{\infty,\infty}.$ 
Unfortunately, while the set of initial data satisfying the hypothesis of Theorem \ref{2VortGlobalExistence} is unbounded in $\dot{H}^\frac{1}{2},$ it is bounded in a whole family of scale critical Besov spaces.
\begin{theorem} \label{BesovIntro}
Let $\Gamma_{2d} \subset H^1_{df}$ be the set of almost two dimensional initial data satisfying the hypothesis of Theorem \ref{2VortGlobalExistence}:
    \begin{equation}
\Gamma_{2d}=\left \{ u\in H^1_{df}:
\|\omega_h\|_{\dot{H}^{-\frac{1}{2}}} \exp{\left(\frac{\frac{1}{4}\|u\|_{L^2}^2
    \|u\|_{\dot{H}^1}^2
    -6,912 \pi^4 \nu^4}{R_2 \nu^4}\right)}<R_1 \nu
\right\}.
    \end{equation}
Then $\Gamma_{2d}$ is unbounded in $\dot{H}^{\frac{1}{2}}$ and $\dot{B}^\frac{1}{2}_{2,\infty},$ but
$\Gamma_{2d}$ is bounded in
$\dot{B}^{-1+\frac{3}{p}}_{p,\infty},$ 
for all $2<p\leq +\infty.$
\end{theorem}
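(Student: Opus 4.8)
The plan is to treat the two assertions separately: exhibiting an explicit family that is unbounded in $\dot H^{1/2}$ (hence in $\dot B^{1/2}_{2,\infty}$), and proving a single anisotropic interpolation inequality that forces boundedness in $\dot B^{-1+3/p}_{p,\infty}$ for every $p>2$. The guiding principle is that the defining inequality of $\Gamma_{2d}$ only permits the data to be large in $\dot H^{1/2}$ when $\|\omega_h\|_{\dot H^{-1/2}}$ is correspondingly \emph{exponentially} small, and that this exponential smallness of the horizontal vorticity is exactly what tames the stronger Besov norms for $p>2$.

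For the unboundedness I would take genuinely anisotropic, almost two dimensional data
\begin{equation}
u^0(x)=\lambda\, v(x_1,x_2)\,\psi(\epsilon x_3),
\end{equation}
where $v=\nabla^\perp\phi=(-\partial_2\phi,\partial_1\phi,0)$ is a fixed compactly supported two dimensional divergence free field and $\psi$ is a fixed Schwartz bump; this is divergence free in $\mathbb{R}^3$ because $v_3=0$ and $\partial_1 v_1+\partial_2 v_2=0$. A direct Fourier computation gives the scalings, as $\epsilon\to0$,
\begin{equation}
K_0\sim\lambda^2\epsilon^{-1},\quad E_0\sim\lambda^2\epsilon^{-1},\quad \|u^0\|_{\dot H^{1/2}}^2\sim\lambda^2\epsilon^{-1},\quad \|\omega_h^0\|_{\dot H^{-1/2}}\sim\lambda\,\epsilon^{1/2},
\end{equation}
the last one reflecting that $\omega_h^0=\lambda\epsilon\,\psi'(\epsilon x_3)(-v_2,v_1,0)$ carries a factor of $\epsilon$. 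Writing $A:=\|u^0\|_{\dot H^{1/2}}^2$ and choosing $\lambda^2=A\epsilon$ with $\epsilon=e^{-KA^2}$ for $K$ large (depending only on $\nu$ and the profiles), one checks $K_0E_0\sim A^2$ while $\|\omega_h^0\|_{\dot H^{-1/2}}\sim\sqrt{A}\,e^{-KA^2}$, so the product $\|\omega_h^0\|_{\dot H^{-1/2}}\exp\!\big(K_0E_0/(R_2\nu^3)\big)\to0$ and $u^0\in\Gamma_{2d}$ for all large $A$, yet $\|u^0\|_{\dot H^{1/2}}=\sqrt{A}\to\infty$. Since this data is concentrated in a single dyadic frequency shell, $\|u^0\|_{\dot B^{1/2}_{2,\infty}}\gtrsim\|u^0\|_{L^2}\sim\sqrt{A}$ as well, which gives unboundedness in $\dot B^{1/2}_{2,\infty}$.

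The core of the boundedness claim is the scale invariant estimate, valid for all $u\in H^1_{df}$ and $2<p\le\infty$,
\begin{equation} \label{AnisoInterp}
\|u\|_{\dot B^{-1+3/p}_{p,\infty}}\lesssim \|u\|_{\dot H^{1/2}}^{\frac12+\frac1p}\,\|\omega_h\|_{\dot H^{-1/2}}^{\frac12-\frac1p}+\|\omega_h\|_{\dot H^{-1/2}}.
\end{equation}
To prove it I would split the Biot--Savart velocity as $u=v+w$, where $v$ is recovered from $\omega_h$ and $w$ from $\omega_3 e_3$. The part $v$ is harmless: Biot--Savart maps $\dot H^{-1/2}\to\dot H^{1/2}$, so $\|v\|_{\dot H^{1/2}}\lesssim\|\omega_h\|_{\dot H^{-1/2}}$, and the embedding $\dot H^{1/2}\hookrightarrow\dot B^{-1+3/p}_{p,\infty}$ ($p\ge2$) bounds its contribution by $\|\omega_h\|_{\dot H^{-1/2}}$. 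For $w$ I would run an anisotropic Littlewood--Paley argument: decomposing each isotropic block $\Delta_j w$ into vertical frequency pieces $|\xi_3|\sim2^k$ (so $m:=j-k\ge0$), the anisotropic Bernstein inequality turns the block contribution into $2^{j/2}2^{-m(1/2-1/p)}\|\Delta_j\Delta_k^v w\|_{L^2}$, and this $L^2$ norm can be estimated in two ways. Trivially one has $\|\Delta_j\Delta_k^v w\|_{L^2}\lesssim 2^{-j/2}\|w\|_{\dot H^{1/2}}$, giving the block bound $2^{-m(1/2-1/p)}\|w\|_{\dot H^{1/2}}$; but crucially, from $|\hat w|=|\hat\omega_3|\,|\xi_h|/(2\pi|\xi|^2)$ together with the divergence free identity $\xi_3\hat\omega_3=-\xi_h\cdot\hat\omega_h$ (that is, $\partial_3\omega_3=-\partial_1\omega_1-\partial_2\omega_2$), one gets $|\hat w|\lesssim2^{-k}|\hat\omega_h|$ on each block, hence the block bound $2^{m(1/2+1/p)}\|\omega_h\|_{\dot H^{-1/2}}$. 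Taking the minimum of the two and summing the resulting geometric series in $m$ on either side of the crossover $2^m\sim\|w\|_{\dot H^{1/2}}/\|\omega_h\|_{\dot H^{-1/2}}$ produces exactly \eqref{AnisoInterp}.

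With \eqref{AnisoInterp} in hand, boundedness on $\Gamma_{2d}$ is immediate. The Cauchy--Schwarz interpolation $\|u\|_{\dot H^{1/2}}^2\le\|u\|_{L^2}\|u\|_{\dot H^1}=2\sqrt{K_0E_0}$, together with the rewritten defining inequality $K_0E_0\le 6{,}912\pi^4\nu^4+R_2\nu^3\log\!\big(R_1\nu/\|\omega_h\|_{\dot H^{-1/2}}\big)$, shows that $\|u\|_{\dot H^{1/2}}$ grows at most like $\big(\log(1/\|\omega_h\|_{\dot H^{-1/2}})\big)^{1/4}$, a merely poly-logarithmic rate, while the prefactor in \eqref{AnisoInterp} carries the \emph{positive} power $\|\omega_h\|_{\dot H^{-1/2}}^{1/2-1/p}$. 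Hence the right-hand side of \eqref{AnisoInterp} remains bounded as $\|\omega_h\|_{\dot H^{-1/2}}$ ranges over the admissible interval, uniformly over $\Gamma_{2d}$. I expect the main obstacle to be the sharp anisotropic inequality \eqref{AnisoInterp}: in particular, pinning down the exponent $\theta=\tfrac12+\tfrac1p$ and verifying that the divergence constraint really transfers the smallness of $\omega_h$ into control of the vertical frequency content of the large-$\dot H^{1/2}$ part $w$. The degeneration of \eqref{AnisoInterp} at $p=2$, where the gain exponent $\tfrac12-\tfrac1p$ vanishes, is consistent with and explains the unboundedness in $\dot B^{1/2}_{2,\infty}$.
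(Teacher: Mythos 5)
Your proposal is correct, and it takes a genuinely different route from the paper. The paper argues at the vorticity level throughout: for unboundedness it builds Fourier-slab vorticities supported on $\{1\le|\xi_h|\le2,\;|\xi_3|<1/n\}$ with amplitude $\sqrt{n}\,(\log\log n)^{1/4}$, so that $K_0E_0\sim\log\log n$ while $\|\omega_h\|_{\dot H^{-1/2}}\lesssim(\log\log n)^{1/4}/n$; for boundedness it splits $\hat\omega$ by a sharp cutoff to the cone $\Omega_\epsilon=\{|\xi_3|<\epsilon|\xi_h|\}$, proves the heat-kernel estimate $\|\cdot\|_{\dot B^{-2+3/p}_{p,\infty}}\lesssim\epsilon^{1/2-1/p}\|\cdot\|_{\dot H^{-1/2}}$ inside the cone and the divergence-free bound $\|\omega\|_{\dot H^{-1/2}}\le\sqrt{2}\,\epsilon^{-1}\|\omega_h\|_{\dot H^{-1/2}}$ outside, optimizes in $\epsilon$, and finally converts to the velocity via an equivalence $\|u\|_{\dot B^{-1+3/p}_{p,\infty}}\sim\|\omega\|_{\dot B^{-2+3/p}_{p,\infty}}$ that it only proves for $p>3$. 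Your argument uses the same two ingredients (anisotropic gain near the plane $\xi_3=0$, and the divergence identity $\xi_3\hat\omega_3=-\xi_h\cdot\hat\omega_h$ away from it) but repackages them: explicit physical-space products $\lambda v(x_h)\psi(\epsilon x_3)$ with $\epsilon$ exponentially small in $A$ for unboundedness, and a single Biot--Savart splitting $u=v+w$ plus anisotropic Littlewood--Paley for boundedness. I checked your exponent arithmetic: the block bounds $2^{-m(1/2-1/p)}\|w\|_{\dot H^{1/2}}$ and $2^{m(1/2+1/p)}\|\omega_h\|_{\dot H^{-1/2}}$ are right, the sums converge exactly when $p>2$, and your exponents $\bigl(\tfrac12+\tfrac1p,\tfrac12-\tfrac1p\bigr)$ are in fact \emph{sharper} than the paper's $\bigl(\tfrac{2p}{3p-2},\tfrac{p-2}{3p-2}\bigr)$ obtained from the cone optimization --- the difference of exponents on the large norm is $\tfrac{(p-2)^2}{2p(3p-2)}\ge0$, and your bound is saturated on your own anisotropic family, while the paper's is not. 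What each buys: the paper avoids all Littlewood--Paley machinery (only Plancherel, H\"older, and the heat kernel), but pays with weaker exponents and with the vorticity-to-velocity Besov transfer, whose proof it defers for $2<p\le3$; your route requires anisotropic Bernstein/Littlewood--Paley estimates (the one place where full details still need to be written out), but it works directly on the velocity for every $p>2$, bypasses the transfer theorem entirely, and the degeneration of the gain $\|\omega_h\|_{\dot H^{-1/2}}^{1/2-1/p}$ at $p=2$ cleanly explains the dichotomy with $\dot B^{1/2}_{2,\infty}$. Two minor corrections: your data is concentrated in a fixed frequency \emph{annulus}, not literally a single dyadic shell, so the lower bound $\|u^0\|_{\dot B^{1/2}_{2,\infty}}\gtrsim\|u^0\|_{L^2}$ needs a pigeonhole over the finitely many shells meeting that annulus; and in the final step, note that the closing logarithm-versus-power argument ($\|u\|_{\dot H^{1/2}}^2\le2\sqrt{K_0E_0}$ against the positive power of $\|\omega_h\|_{\dot H^{-1/2}}$) is essentially identical to the paper's, so the two proofs reconverge there.
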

Note here that for all $2\leq p \leq +\infty,$
$\dot{B}^{-1+\frac{3}{p}}_{p,\infty}\left(\mathbb{R}^3 \right)$
is invariant under the re-scaling 
$u^{0,\lambda}(x)=\lambda u^0 (\lambda x),$ the rescaling that preserves the solution set of the Navier--Stokes equation.

\begin{remark}
A version of Theorem \ref{2VortGlobalExistence} holds on the torus as well. The statement is essentially the same, with the only difference being the value of the constants, because the Sobolev embedding may have different sharp constants on the torus. Interestingly, on the torus the set of almost two dimensional initial data for which we can prove global regularity is unbounded in
$\dot{B}^{-1}_{\infty,\infty}\left(\mathbb{T}^3\right),$ which is not the case on the whole space. This allows us to provide examples of large initial data on the torus that generate global smooth solutions to the Navier--Stokes equations. We will discuss this in more detail in section 5. In particular, Theorem \ref{2VortTorus} is the analogous result on the torus to the whole space result Theorem \ref{2VortGlobalExistence}.
\end{remark}

Unlike the three dimensional case, there are global smooth solutions to the Navier--Stokes equation in two dimensions. This is because in two dimensions the energy equality is scale critical, while in three dimensions the energy inequality is supercritical. This is also because vortex stretching occurs in three dimensions, but not in two dimensions, so the enstrophy is decreasing for solutions of the two dimensional Navier--Stokes equations. Given that the Navier--Stokes equation has global smooth solutions in two dimensions, one natural approach to the extending small data regularity results to arbitrarily large initial data, would be to show global regularity for the solutions that are, in some sense, approximately two dimensional.

There are also a number of previous results guaranteeing global regularity for solutions three dimensional solutions of the Navier Stokes equations with almost two dimensional initial data. One approach to almost two dimensional initial data on the torus is to consider three dimensional initial data that is a perturbation of two dimensional initial data. Note that this approach is available on the torus, because
$L^2_{df}\left(\mathbb{T}^2\right)$ forms a subspace of
$L^2_{df}\left(\mathbb{T}^3\right),$ so we can consider perturbations of this subspace. It is not, however, available on the whole space, as nonzero vector fields in
$L^2_{df}\left(\mathbb{R}^2\right),$ lose integrability when extended to three dimensions, and so $L^2_{df}\left(\mathbb{R}^2\right)$ does not define a subspace of $L^2_{df}(\mathbb{R}^3).$
Iftimie proved that small perturbations of two dimensional initial data must have smooth solutions to the Navier--Stokes equation globally in time \cite{Iftimie}. Another approach is based on re-scaling, to make the the initial data vary slowly in one direction. This approach was used by Gallagher and Chemin in \cite{GallagherOneSlowDirection} and extended by Gallagher, Chemin, and Paicu in \cite{GallgherOneSlowDirectionAnnals} and by
Paicu and Zhang in \cite{PaicuZhang}. We will prove global regularity based on rescaling the vorticity, rather than the velocity, as this rescaling operates better with the divergence free constraint. The result we will prove is the following.

\begin{theorem} \label{VortRescalingIntro}
Fix $a>0.$ For all $u^0\in H^1_{df}, 0<\epsilon<1$ let
\begin{equation}
    \omega^{0,\epsilon}(x)=
    \epsilon^\frac{2}{3}\left(\log\left(
    \frac{1}{\epsilon^a}\right)\right)^\frac{1}{4}
    \left(\epsilon \omega^0_1,\epsilon \omega_2^0,
    \omega_3^0\right) (x_1,x_2,\epsilon x_3),
\end{equation}
and define $u^{0,\epsilon}$ using the Biot-Savart law by
\begin{equation}
    u^{0,\epsilon}=\nabla \times 
    \left(-\Delta\right)^{-1} \omega^{0,\epsilon}.
\end{equation}
For all $u^0\in H^1_{df}$ and for all 
\begin{equation}
    0<a<\frac{4R_2 \nu^4}{C_2^2\left\|\omega_3^0
\right\|_{L^\frac{6}{5}}^2 
\left\|\omega_3^0\right\|_{L^2}^2},
\end{equation}
there exists $\epsilon_0>0$ such that for all 
$0<\epsilon<\epsilon_0,$ there is a unique, global smooth solution to the Navier--Stokes equation $u\in C\left(
(0,+\infty);H^1_{df}\right)$ with $u(\cdot,0)=u^{0,\epsilon}.$
Furthermore if $\omega^0_3$ is not identically zero,
then the initial vorticity is large in the critical space $L^\frac{3}{2}$ as $\epsilon \to 0,$
that is
\begin{equation}
    \lim_{\epsilon \to 0}
    \left\|\omega^{0,\epsilon}\right\|_{L^\frac{3}{2}}=+\infty.
\end{equation}
\end{theorem}

Another approach to almost two dimensional flow is to consider flows where two components of the velocity are small in critical spaces. This case has been analyzed by making use of anisotropic spaces by Ting Zhang \cite{TingZhang} and by Paicu and Zhang \cite{PaicuZhang2}. 
Very recently, Liu and Zhang used aniostropic Sobolev spaces to prove global regularity for solutions of the Navier--Stokes equation with a small unidirectional derivative, that is, with $\partial_3 u$ small \cite{LiuZhang}. 
The physical interpretation of their result is quite similar to that of the main result of this paper, but it neither implies, nor is implied by Theorem \ref{2VortGlobalExistence}. Their result does have the advantage of giving examples of arbitrarily large initial data in $\dot{B}^{-1}_{\infty,\infty}\left(\mathbb{R}^3\right)$ for which there are global smooth solutions of the Navier--Stokes equation, which Theorem \ref{2VortGlobalExistence} does not. On the other hand, Theorem \ref{2VortGlobalExistence} is much more explicitly an interpolation result between global regularity for the two dimensional case, and small data global regularity for the three dimensional case, than the result in \cite{LiuZhang}.

In section 2, we will discuss previous regularity results and estimates for enstrophy growth, and we will sharpen some of the constants involved in these estimates.
In section 3, we will consider the evolution of $\omega_h$ and prove Theorem \ref{2VortGlobalExistence}.
In section 4, we will consider the question of boundedness in Besov spaces, proving Theorem \ref{BesovIntro}.
In section 5,  we will state the results mentioned in the paragraph above precisely and prove Theorem \ref{VortRescalingIntro}.  We will then discuss the relationship between these previous results and Theorem \ref{2VortGlobalExistence} and Theorem \ref{VortRescalingIntro} in detail.

\section{Small Data Results}
We will begin by recalling an identity for enstrophy growth first proven by Neustupa and Penel \cite{NeuPen1,NeuPen2} and independently by the author using different methods \cite{MillerStrain}. This identity was also examined by Chae in the context of smooth solutions of the Euler equation \cite{ChaeStrain}.
 Recalling the isometry in Proposition \ref{StrainIsometry}, we will consider enstrophy in terms of the $\|S(\cdot,t)\|_{L^2}^2.$
\begin{proposition} \label{StrainGrowth}
Let $u\in C \left ([0,T_{max});\dot{H}^1_{df} \right )$ be a mild solution to the Navier--Stokes equation, and let $S=\nabla_{sym} u,$ then for all $0<t<T_{max}$
\begin{equation}
    \partial_t \|S(\cdot,t)\|_{L^2}^2=
    -2 \nu \|S\|_{\dot{H}^1}^2
    -4 \int_{\mathbb{R}^3} \det(S).
\end{equation}
\end{proposition}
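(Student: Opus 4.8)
The plan is to test the (viscous) strain evolution equation --- that is, \eqref{NavierStrain} with the dissipation term $-\nu\Delta S$ in place of $-\Delta S$ --- against $S$ in the $L^2$ pairing of matrices, and to evaluate the six resulting terms one at a time. Fix $t\in(0,T_{max})$; by Theorem~\ref{KATO} the solution is smooth on $(0,T_{max})\times\mathbb{R}^3$, and I will take for granted (as in \cite{ChaeStrain,MillerStrain}) that it decays fast enough at spatial infinity for the integrations by parts below to be valid and for all the integrands below to lie in $L^1$; alternatively one proves the identity first for rapidly decaying data and passes to the limit, every quantity involved being controlled by $\|S\|_{L^2}$ and $\|S\|_{\dot{H}^1}$. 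Taking $\langle\,\cdot\,,S\rangle$ of the equation, the time-derivative term gives $\tfrac12\partial_t\|S\|_{L^2}^2$; the transport term gives $\langle(u\cdot\nabla)S,S\rangle=\tfrac12\int_{\mathbb{R}^3}u\cdot\nabla|S|^2=-\tfrac12\int_{\mathbb{R}^3}(\nabla\cdot u)|S|^2=0$; the dissipation term gives $\nu\|\nabla S\|_{L^2}^2=\nu\|S\|_{\dot{H}^1}^2$; and, integrating by parts twice and using $\sum_i\partial_i S_{ij}=\tfrac12\Delta u_j$ together with $\nabla\cdot u=0$, the pressure-Hessian term gives $\langle\Hess(p),S\rangle=-\tfrac12\int_{\mathbb{R}^3}\nabla p\cdot\Delta u=\tfrac12\int_{\mathbb{R}^3}p\,\Delta(\nabla\cdot u)=0$.

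It remains to handle the two algebraic nonlinearities, and the key observation is that $S$ is trace-free, $\tr S=\nabla\cdot u=0$. Hence $\langle-\tfrac14|\omega|^2I_3,S\rangle=-\tfrac14\int_{\mathbb{R}^3}|\omega|^2\tr S=0$, while $\langle S^2,S\rangle=\int_{\mathbb{R}^3}\tr(S^3)$, and the Cayley--Hamilton theorem applied to a trace-free $3\times3$ matrix gives $\tr(S^3)=3\det S$, so that $\langle S^2,S\rangle=3\int_{\mathbb{R}^3}\det S$. Writing $\langle\tfrac14\,\omega\otimes\omega,S\rangle=\tfrac14\int_{\mathbb{R}^3}\omega^{\mathsf T}S\omega$, the entire computation is thereby reduced to identifying $\int_{\mathbb{R}^3}\omega^{\mathsf T}S\omega$ with a multiple of $\int_{\mathbb{R}^3}\det S$.

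I would prove this reduction by decomposing $\nabla u=S+\Omega$ into its symmetric and antisymmetric parts. Since $\Omega$ acts on vectors as $v\mapsto-\tfrac12\,\omega\times v$, one has $\Omega^2=\tfrac14\big(\omega\otimes\omega-|\omega|^2I_3\big)$. Because $\tr(S+\Omega)=0$, Cayley--Hamilton again gives $\det(\nabla u)=\tfrac13\tr\big((S+\Omega)^3\big)$; expanding the cube and using cyclicity of the trace, the terms other than $\tr(S^3)$ are $3\tr(S^2\Omega)+3\tr(S\Omega^2)+\tr(\Omega^3)$, of which $\tr(S^2\Omega)=0$ (trace of a symmetric times an antisymmetric matrix) and $\tr(\Omega^3)=0$ (trace of an antisymmetric matrix). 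Hence $\det(\nabla u)=\det S+\tr(S\Omega^2)=\det S+\tfrac14\,\omega^{\mathsf T}S\omega$ pointwise. Now $\det(\nabla u)$ is a null Lagrangian --- a pure divergence, by Piola's identity for the cofactor matrix --- so $\int_{\mathbb{R}^3}\det(\nabla u)=0$ for the decaying smooth solution, and therefore $\tfrac14\int_{\mathbb{R}^3}\omega^{\mathsf T}S\omega=-\int_{\mathbb{R}^3}\det S$.

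Collecting the terms, $\tfrac12\partial_t\|S\|_{L^2}^2+\nu\|S\|_{\dot{H}^1}^2+3\int_{\mathbb{R}^3}\det S-\int_{\mathbb{R}^3}\det S=0$, which is the asserted identity after multiplying by $2$. The only genuinely delicate point is analytic: justifying the integrations by parts and the vanishing of $\int_{\mathbb{R}^3}\det(\nabla u)$, i.e.\ securing enough spatial decay at each positive time --- this is supplied by the instantaneous smoothing of Theorem~\ref{KATO} together with standard parabolic decay estimates, or by an approximation argument, exactly as in the sources where this identity originates. I note that one could equally well test the vorticity equation \eqref{NavierVorticity} against $\omega$, obtaining the classical enstrophy identity $\partial_t\tfrac12\|\omega\|_{L^2}^2=-\nu\|\omega\|_{\dot{H}^1}^2+\int_{\mathbb{R}^3}\omega^{\mathsf T}S\omega$, and then convert to $S$ via the determinant identity above and Proposition~\ref{StrainIsometry}.
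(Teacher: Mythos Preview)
The paper does not supply its own proof of this proposition; it simply attributes the identity to \cite{ChaeStrain} and \cite{MillerStrain}. Your argument is correct and self-contained: testing the strain evolution equation against $S$, using $\tr S=0$ to kill the $|\omega|^2 I_3$ term and integration by parts with $\nabla\cdot u=0$ to kill the transport and pressure-Hessian terms, reducing the $S^2$ term via $\tr(S^3)=3\det S$, and then identifying $\tfrac14\int\omega^{\mathsf T}S\omega$ with $-\int\det S$ through the pointwise identity $\det(\nabla u)=\det S+\tfrac14\,\omega^{\mathsf T}S\omega$ together with the null-Lagrangian property $\int_{\mathbb{R}^3}\det(\nabla u)=0$. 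That last step is the crux, and you have it right. The alternative you mention at the end --- the vorticity-equation route combined with Proposition~\ref{StrainIsometry} --- is the other standard way in, and both routes appear in the cited literature. The only analytically delicate point, as you flag, is the spatial decay needed to justify the integrations by parts and the vanishing of $\int_{\mathbb{R}^3}\det(\nabla u)$; this is handled by the instantaneous smoothing of mild solutions or by approximation, exactly as you indicate.
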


As an immediate corollary, we have the following result proved by the author in \cite{MillerStrain} that follows directly from Proposition \ref{StrainGrowth} and the condition $\tr(S)=0.$
\begin{corollary} \label{StrainBound}
Let $u\in C \left ([0,T_{max});H^1_{df} \right )$ be a mild solution to the Navier--Stokes equation, and let $S=\nabla_{sym} u,$ then for all $0<t<T_{max}$
\begin{equation}
\partial_t ||S(\cdot,t)||_{L^2}^2 \leq -2 \nu ||S||_{\dot{H^1}}^2+ \frac{2}{9}\sqrt{6}
\|S\|_{L^3}^3.
\end{equation}
\end{corollary}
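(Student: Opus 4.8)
The plan is to obtain the stated differential inequality directly from Proposition \ref{StrainGrowth} by controlling the cubic term $-4\int_{\mathbb{R}^3}\det(S)$ pointwise by $|S|^3$. The entire argument then reduces to a purely algebraic, finite-dimensional inequality for trace-free symmetric $3\times 3$ matrices.

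First I would isolate the pointwise claim: for every real symmetric $3\times 3$ matrix $M$ with $\tr(M)=0$,
\begin{equation*}
-\det(M)\leq\frac{\sqrt{6}}{18}\,|M|^3,\qquad |M|:=\Bigl(\textstyle\sum_{i,j}M_{ij}^2\Bigr)^{1/2},
\end{equation*}
where $\frac{\sqrt6}{18}=\frac{1}{3\sqrt6}$ and $|\cdot|$ is the Hilbert--Schmidt norm (the right one, since $\|S\|_{L^2}^2=\int|S|^2$ is exactly what appears in Propositions \ref{StrainGrowth} and \ref{StrainIsometry}). Because the strain $S(\cdot,t)$ is symmetric and $\tr(S)=\nabla\cdot u=0$ — this is where incompressibility enters — integrating this bound over $\mathbb{R}^3$ gives $-4\int\det(S)\leq 4\int|\det(S)|\leq\frac{4\sqrt6}{18}\int|S|^3=\frac{2}{9}\sqrt6\,\|S\|_{L^3}^3$, and substituting into the identity of Proposition \ref{StrainGrowth} yields the corollary.

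To prove the matrix inequality I would diagonalize $M$ by the spectral theorem and pass to its eigenvalues $\lambda_1,\lambda_2,\lambda_3\in\mathbb{R}$, which satisfy $e_1:=\lambda_1+\lambda_2+\lambda_3=0$; then $\det(M)=\lambda_1\lambda_2\lambda_3$ and $|M|^2=\sum_i\lambda_i^2$. The cleanest route is the discriminant: since $e_1=0$, the $\lambda_i$ are the roots of $t^3+e_2 t-e_3$ with $e_2=\sum_{i<j}\lambda_i\lambda_j=-\tfrac12|M|^2$ and $e_3=\det(M)$, and all three roots being real is precisely the condition $-4e_2^3-27e_3^2\geq 0$, i.e. $\tfrac12|M|^6\geq 27\det(M)^2$, which rearranges to $|\det(M)|\leq|M|^3/\sqrt{54}=\tfrac{\sqrt6}{18}|M|^3$. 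Alternatively one can run a Lagrange-multiplier/compactness optimization: normalize $|M|=1$, maximize $-\lambda_1\lambda_2\lambda_3$ over the circle $\{\sum\lambda_i=0,\ \sum\lambda_i^2=1\}$, and check that every critical point has two equal eigenvalues $(a,a,-2a)$ with $6a^2=1$, giving $-\det=2a^3=\tfrac{1}{3\sqrt6}=\tfrac{\sqrt6}{18}$, so the bound is sharp.

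The only genuine obstacle is pinning down the sharp constant $\frac{2}{9}\sqrt6$ rather than a nonoptimal one, and the discriminant identity handles this in one line; everything else (spectral theorem, integrating a pointwise bound, substitution into Proposition \ref{StrainGrowth}) is routine. One should just note that the inequality is sharp precisely in the direction that matters physically — the extremizer is $(a,a,-2a)$ with $a>0$, i.e.\ two equal positive eigenvalues and one large negative eigenvalue, the vortex-stretching configuration — which is why the sign in $-4\int\det(S)$ is the dangerous one.
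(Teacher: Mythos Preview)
Your proposal is correct and is exactly the approach the paper has in mind: the paper simply states that the corollary follows from Proposition~\ref{StrainGrowth} together with $\tr(S)=0$, and you have supplied the missing algebraic step, namely the sharp pointwise bound $|\det(M)|\le \tfrac{1}{3\sqrt6}|M|^3$ for trace-free symmetric $3\times3$ matrices, proved via the cubic discriminant (or equivalently Lagrange multipliers). The constant check $4\cdot\tfrac{\sqrt6}{18}=\tfrac{2}{9}\sqrt6$ is right, and your identification of the extremal configuration $(a,a,-2a)$ is correct as well.
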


Using Corollary \ref{StrainBound} and the fractional Sobolev inequality we will be able to prove a cubic differential inequality for the growth of enstrophy.
The sharp fractional Sobolev inequality was first proven by Lieb \cite{Lieb}.
\begin{lemma} \label{FractionalSobolev}
    Let $C_1=\frac{1}{2^\frac{1}{6}
    \pi^\frac{1}{3}}.$ 
    Then for all $f\in \dot{H}^{-\frac{1}{2}}
    \left (\mathbb{R}^3 \right),$
    \begin{equation}
        \|f\|_{\dot{H}^{-\frac{1}{2}}}
        \leq C_1 \| f\|_{L^\frac{3}{2}},
    \end{equation}
    and for all $f\in L^3
    \left( \mathbb{R}^3 \right)$
    \begin{equation}
\|f\|_{L^3}\leq C_1 \|f\|_{\dot{H}^\frac{1}{2}}.
    \end{equation}
\end{lemma}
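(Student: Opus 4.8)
\emph{Proof proposal.} The plan is to derive both estimates from the sharp Hardy--Littlewood--Sobolev (HLS) inequality of Lieb \cite{Lieb}, exploiting that the two inequalities are dual to one another and hence share the same sharp constant. It suffices to establish the first one, $\|f\|_{\dot H^{-1/2}}\le C_1\|f\|_{L^{3/2}}$ (first for Schwartz $f$, then by density), and then recover the second. Indeed, with the normalization used in this paper, for $f\in\dot H^{1/2}$ and $h\in\dot H^{-1/2}$ one has
\[
\left|\int_{\mathbb{R}^3} f\,\overline h\right|
=\left|\int_{\mathbb{R}^3}(2\pi|\xi|)^{1/2}\hat f(\xi)\cdot\overline{(2\pi|\xi|)^{-1/2}\hat h(\xi)}\,d\xi\right|
\le\|f\|_{\dot H^{1/2}}\|h\|_{\dot H^{-1/2}},
\]
so combining the $L^3$--$L^{3/2}$ duality with the first inequality applied to $h$ gives $\|f\|_{L^3}=\sup_{\|h\|_{L^{3/2}}\le1}|\langle f,h\rangle|\le C_1\|f\|_{\dot H^{1/2}}$ (again first for Schwartz $f$, then by density); equivalently, the second inequality is exactly Lieb's sharp Sobolev embedding $\dot H^{1/2}(\mathbb{R}^3)\hookrightarrow L^3(\mathbb{R}^3)$ in this normalization.

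To prove the first inequality I would rewrite $\|f\|_{\dot H^{-1/2}}^2$ as a Riesz-potential bilinear form. Since $\|f\|_{\dot H^{-1/2}}^2=\langle(-\Delta)^{-1/2}f,f\rangle$, where $(-\Delta)^{-1/2}$ is the Fourier multiplier with symbol $(2\pi|\xi|)^{-1}$, and since in the convention $\widehat{\partial_j g}=2\pi i\xi_j\hat g$ this multiplier is convolution with $\tfrac{1}{2\pi^2}|x|^{-2}$ (using $\widehat{|x|^{-2}}(\xi)=\pi|\xi|^{-1}$ in $\mathbb{R}^3$, or equivalently the classical Riesz-potential kernel formula), one obtains
\[
\|f\|_{\dot H^{-1/2}}^2=\frac{1}{2\pi^2}\int_{\mathbb{R}^3}\int_{\mathbb{R}^3}\frac{f(x)\,\overline{f(y)}}{|x-y|^2}\,dx\,dy.
\]
Applying Lieb's sharp HLS inequality in dimension $n=3$ with $\lambda=2$: the relevant exponent is $p=\tfrac{2n}{2n-\lambda}=\tfrac32$, so this is precisely the diagonal case, for which the sharp constant is
\[
\pi^{\lambda/2}\,\frac{\Gamma\!\left(\tfrac{n-\lambda}{2}\right)}{\Gamma\!\left(n-\tfrac{\lambda}{2}\right)}\left(\frac{\Gamma(n/2)}{\Gamma(n)}\right)^{\frac{\lambda}{n}-1}
=\pi\,\frac{\Gamma(1/2)}{\Gamma(2)}\left(\frac{\Gamma(3/2)}{\Gamma(3)}\right)^{-1/3}
=2^{2/3}\pi^{4/3},
\]
using $\Gamma(1/2)=\sqrt\pi$, $\Gamma(2)=1$, $\Gamma(3/2)=\tfrac{\sqrt\pi}{2}$, $\Gamma(3)=2$. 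Multiplying by $\tfrac{1}{2\pi^2}$ gives $\|f\|_{\dot H^{-1/2}}^2\le\frac{2^{2/3}\pi^{4/3}}{2\pi^2}\|f\|_{L^{3/2}}^2=\frac{1}{2^{1/3}\pi^{2/3}}\|f\|_{L^{3/2}}^2$, i.e. $\|f\|_{\dot H^{-1/2}}\le\frac{1}{2^{1/6}\pi^{1/3}}\|f\|_{L^{3/2}}=C_1\|f\|_{L^{3/2}}$, as claimed.

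The only real obstacle here is bookkeeping: one must carry the factors of $2\pi$ from the definition of $\dot H^s$ consistently through the Riesz-potential kernel constant and through Lieb's sharp HLS constant, and verify that the product collapses to exactly $C_1=2^{-1/6}\pi^{-1/3}$. There is no analytic difficulty beyond invoking \cite{Lieb}. An alternative route is to quote Lieb's sharp Sobolev constant for $\|f\|_{L^3}\le C\|f\|_{\dot H^{1/2}}$ directly and dualize, but passing through HLS keeps the constant computation in its cleanest form, since the exponent $p=\tfrac32$ lands exactly on the diagonal case of Lieb's theorem where the sharp constant has its simplest closed form.
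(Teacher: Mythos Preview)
Your derivation is correct: the kernel identification $(-\Delta)^{-1/2}\leftrightarrow \tfrac{1}{2\pi^2}|x|^{-2}$ in the paper's Fourier convention, the application of Lieb's diagonal HLS with $n=3$, $\lambda=2$, $p=q=\tfrac32$, and the arithmetic collapsing to $C_1=2^{-1/6}\pi^{-1/3}$ all check out, as does the duality argument recovering the second inequality from the first.

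The paper, however, does not prove this lemma at all. It simply states it as Lieb's sharp fractional Sobolev inequality with a citation to \cite{Lieb}, remarks (exactly as you do) that the two inequalities are dual and therefore share the same constant, and points to \cite{LiebLoss} and \cite{FractionalSobolevReference} for details. So your proposal is not so much a different route as a fleshed-out version of what the paper leaves to the literature: you have unpacked the specific instance of Lieb's HLS theorem and carried the constants through explicitly. That is a perfectly good thing to include if you want the paper to be self-contained on this point, but it goes beyond what the paper itself does.
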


We will note in particular that the two inequalities in Lemma \ref{FractionalSobolev} are dual to each other because $L^3$ and $L^\frac{3}{2}$ are dual spaces, and $\dot{H}^\frac{1}{2}$ and $\dot{H}^{-\frac{1}{2}}$ are dual spaces, which is why the two inequalities have the same sharp constant.
For more references on this inequality see also chapter 4 in \cite{LiebLoss} and the summary of these results in \cite{FractionalSobolevReference}.
We can now prove a cubic differential inequality for the growth of enstrophy.

\begin{proposition} \label{EnergyEnstrophy}
Let $u\in C \left ([0,T_{max});\dot{H}^1_{df} \right )$ be a mild solution to the Navier--Stokes equation. Then for all $0<t<T_{max},$ we have 
\begin{equation}
E'(t)\leq 
\frac{1}{3,456 \pi^4 \nu^3}E(t)^3.
\end{equation}
Furthermore, if $u\in C \left ([0,T_{max});H^1_{df} \right ),$ then for all $0<t<T_{max},$ we have 
\begin{equation}
    K'(t)=-2 \nu E(t).
\end{equation}
\end{proposition}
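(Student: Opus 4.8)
The plan is to derive both statements directly from results already in hand. For the enstrophy inequality, I would start from Corollary \ref{StrainBound}, which gives
\begin{equation*}
\partial_t \|S(\cdot,t)\|_{L^2}^2 \leq -2\nu \|S\|_{\dot{H}^1}^2 + \tfrac{2}{9}\sqrt{6}\,\|S\|_{L^3}^3,
\end{equation*}
and then control $\|S\|_{L^3}$ by the fractional Sobolev inequality of Lemma \ref{FractionalSobolev}, which yields $\|S\|_{L^3} \leq C_1 \|S\|_{\dot{H}^{1/2}}$ with $C_1 = 2^{-1/6}\pi^{-1/3}$. The next step is to interpolate $\|S\|_{\dot{H}^{1/2}}$ between $\|S\|_{L^2}$ and $\|S\|_{\dot{H}^1}$: on Fourier side $(2\pi|\xi|)^{1/2} = (2\pi|\xi|)^{1/2}$, so $\|S\|_{\dot{H}^{1/2}}^2 \leq \|S\|_{L^2}\,\|S\|_{\dot{H}^1}$ by Cauchy--Schwarz, hence $\|S\|_{\dot{H}^{1/2}}^3 \leq \|S\|_{L^2}^{3/2}\,\|S\|_{\dot{H}^1}^{3/2}$. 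Substituting, the nonlinear term is bounded by $\tfrac{2}{9}\sqrt{6}\,C_1^3\,\|S\|_{L^2}^{3/2}\,\|S\|_{\dot{H}^1}^{3/2}$, and I would absorb the $\|S\|_{\dot{H}^1}^{3/2}$ factor into the dissipation $-2\nu\|S\|_{\dot{H}^1}^2$ using Young's inequality $ab \leq \tfrac{1}{4}a^4 + \tfrac{3}{4}b^{4/3}$ (applied with the right split so that the quartic term exactly matches $2\nu\|S\|_{\dot{H}^1}^2$). What survives is a bound of the form $\partial_t \|S\|_{L^2}^2 \leq c\,\nu^{-3}\,\|S\|_{L^2}^6$ for an explicit constant $c$; tracking the constants carefully — $\tfrac{2}{9}\sqrt{6}$, $C_1^3 = 2^{-1/2}\pi^{-1}$, the Young's-inequality factor, and the factor relating $\|S\|_{L^2}^2$ to $E(t)$ via Proposition \ref{StrainIsometry} (namely $\|S\|_{L^2}^2 = \tfrac{1}{2}\|\omega\|_{L^2}^2 = E(t)$) — should produce exactly $E'(t) \leq \tfrac{1}{3456\pi^4\nu^3}E(t)^3$.

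The main obstacle is purely bookkeeping: getting the constant $3456\pi^4$ right requires choosing the Young's-inequality weights optimally and being careful about the $2\pi$ factors buried in the definition of the homogeneous Sobolev norms. The inequality $\|S\|_{\dot{H}^{1/2}}^2 \leq \|S\|_{L^2}\|S\|_{\dot{H}^1}$ is an equality case in Cauchy--Schwarz only for a delta-supported spectrum, so it is sharp in the scaling sense but there is slack; the claim is presumably that after the optimal Young split the stated constant is what comes out, so I would verify the arithmetic rather than seek further optimization. One clean way to organize it: set $y = \|S\|_{L^2}^2$ and $z = \|S\|_{\dot{H}^1}^2$, so the inequality reads $y' \leq -2\nu z + \tfrac{2}{9}\sqrt{6}\,C_1^3\,y^{3/4}z^{3/4}$; maximizing the right side over $z \geq 0$ gives the critical point $z^{1/4} = \tfrac{3}{8}\cdot\tfrac{2}{9}\sqrt{6}\,C_1^3\,y^{3/4}/\nu$ up to the exponent algebra, and plugging back yields $y' \leq (\text{const})\,\nu^{-3}y^3$. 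I would then just compute the constant and confirm it equals $\tfrac{1}{3456\pi^4}$ after the identification $y = E$.

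The energy identity is genuinely elementary and I would dispose of it in one line: for $u \in C([0,T_{max});H^1_{df})$ the solution is smooth on $(0,T_{max})$ by Theorem \ref{KATO}, so we may integrate by parts legitimately. Testing the Navier--Stokes equation against $u$, the transport term $\langle (u\cdot\nabla)u, u\rangle = \tfrac12\int u\cdot\nabla|u|^2 = 0$ by incompressibility, the pressure term $\langle \nabla p, u\rangle = -\langle p, \nabla\cdot u\rangle = 0$, and the dissipation term gives $\langle \nu\Delta u, u\rangle = -\nu\|\nabla u\|_{L^2}^2$. Hence $\tfrac{d}{dt}\,\tfrac12\|u\|_{L^2}^2 = -\nu\|\nabla u\|_{L^2}^2 = -\nu\|\omega\|_{L^2}^2 = -2\nu E(t)$, where the middle equality uses Proposition \ref{StrainIsometry} at $\alpha = 0$ (equivalently the standard identity $\|\nabla u\|_{L^2} = \|\omega\|_{L^2}$ for divergence-free fields). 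Since $K(t) = \tfrac12\|u\|_{L^2}^2$ this is exactly $K'(t) = -2\nu E(t)$, which completes the proof.
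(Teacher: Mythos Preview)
Your proposal is correct and follows essentially the same route as the paper: start from Corollary \ref{StrainBound}, apply the sharp fractional Sobolev inequality $\|S\|_{L^3}\leq C_1\|S\|_{\dot H^{1/2}}$, interpolate $\|S\|_{\dot H^{1/2}}\leq \|S\|_{L^2}^{1/2}\|S\|_{\dot H^1}^{1/2}$, and then optimize over $\|S\|_{\dot H^1}$ (the paper does this by explicitly maximizing $f(r)=-2\nu r^2+2Br^{3/2}$ over $r=\|S\|_{\dot H^1}$, which is equivalent to your Young-inequality absorption), identifying $\|S\|_{L^2}^2=E(t)$ via Proposition \ref{StrainIsometry}. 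For the energy equality the paper simply cites Leray, while you spell out the standard integration-by-parts argument; either is fine.
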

\begin{proof}
The equality $K'(t)=-2 \nu E(t)$ is the classic energy equality for smooth solutions of the Navier--Stokes equations first proven by Leray \cite{Leray}. We will now prove the first inequality. We begin with the estimate for enstrophy growth in Corollary \ref{StrainBound}:
\begin{equation}
\partial_t ||S(\cdot,t)||_{L^2}^2 \leq -2 \nu ||S||_{\dot{H^1}}^2+ \frac{2}{9}\sqrt{6}
\|S\|_{L^3}^3.
\end{equation}
Next we apply the fractional Sobolev inequality in Lemma \ref{FractionalSobolev}, and observe
\begin{equation}
\partial_t ||S(\cdot,t)||_{L^2}^2 \leq -2 \nu ||S||_{\dot{H^1}}^2+ \frac{2}{9}\sqrt{6}
\frac{1}{\sqrt{2}\pi}
\|S\|_{\dot{H}^\frac{1}{2}}^3.
\end{equation}
Interpolating between $L^2$ and $\dot{H}^1$ and simplifying the constant, we find that
\begin{equation}
\partial_t ||S(\cdot,t)||_{L^2}^2 \leq -2\nu ||S||_{\dot{H^1}}^2+ 
\frac{2}{3^\frac{3}{2}\pi} \|S\|_{L^2}^\frac{3}{2}
\|S\|_{\dot{H}^1}^\frac{3}{2}.
\end{equation}
Substituting $r=\|S\|_{\dot{H}^1},$ we find
\begin{equation}
\partial_t ||S(\cdot,t)||_{L^2}^2 \leq
\sup_{r\geq 0} -2 \nu r^2+ 
\frac{2}{3^\frac{3}{2}\pi} \|S\|_{L^2}^\frac{3}{2} r^\frac{3}{2}.
\end{equation}

Let $B=\frac{1}{3^\frac{3}{2}\pi}
\|S\|_{L^2}^\frac{3}{2},$ and let
\begin{equation} f(r)=-2 \nu r^2+2B r^\frac{3}{2}.
\end{equation}
Computing the derivative we find that 
\begin{equation}
    f'(r)=-4 \nu r+3 B r^\frac{1}{2}.
\end{equation}
This means $f$ has a global maximum at 
$r_0=\left(\frac{3B}{4 \nu}\right)^2.$
Plugging in we find that
\begin{equation}
    f(r_0)=-2\nu\left(\frac{3B}{4\nu}\right)^4
    +2B \left(\frac{3B}{4 \nu}\right)^3=
    \frac{3^3 B^4}{2^7 \nu^3}.
\end{equation}
Recalling that $B=\frac{1}{3^\frac{3}{2}\pi}
\|S\|_{L^2}^\frac{3}{2}$ and that $f$ attains its global maximum at $r_0,$ we conclude that
\begin{align}
    \sup_{r\geq 0} -2 \nu r^2+ 
    \frac{2}{3^\frac{3}{2}\pi} \|S\|_{L^2}^\frac{3}{2} r^\frac{3}{2}
    &=
    f(r_0)\\
    &=
    \frac{1}{3,456 \pi^4 \nu^3}\|S\|_{L^2}^6.
\end{align}
Therefore
\begin{equation}
    \partial_t\|S(\cdot,t)\|_{L^2}^2 \leq
    \frac{1}{3,456 \pi^4 \nu^3}\|S\|_{L^2}^6.
\end{equation}
This completes the proof.
\end{proof}

The cubic bound on the growth of enstrophy is not new, however a closer analysis of the strain allows a major improvement in the constant.
The best known estimate \cite{EnstrophyGrowth,EnstrophyGrowth2,Protas} for enstrophy growth that does not make use of the identity for enstrophy growth in terms of the determinant of strain in Proposition \ref{StrainGrowth} is
\begin{equation}
E'(t) \leq \frac{27}{8 \pi^4}E(t)^3.
\end{equation}
The author then improved the constant in this inequality significantly; using Proposition \ref{StrainGrowth},
the author proved in \cite{MillerStrain} a cubic differential inequality controlling the growth of enstrophy,
\begin{equation}E'(t)\leq
\frac{1}{1,458 \pi^4 \nu^3}E(t)^3,
\end{equation}
in the case where $\nu=1$, although there is no loss of generality in the proof: the proof in the case of $\nu>0$ is entirely analogous.
The proof in \cite{MillerStrain} relied on the sharp Sobolev inequality proven by Talenti \cite{Talenti}, which we will state below.
\begin{lemma} \label{SobolevIneq}
    Let $C_2=\frac{1}{\sqrt{3}}\left(
    \frac{2}{\pi}\right)^\frac{2}{3}.$ 
    Then for all $f\in L^6 \left (\mathbb{R}^3 \right )$
    \begin{align}
        \|f\|_{L^6} &\leq C_2 \|\nabla f\|_{L^2}\\
        &=C_2\|f\|_{\dot{H}^1},
    \end{align}
    and for all $f\in L^\frac{6}{5}
    \left (\mathbb{R}^3\right )$
    \begin{equation}
\|f\|_{\dot{H}^{-1}}\leq C_2 \|f\|_{L^\frac{6}{5}}.
    \end{equation}
\end{lemma}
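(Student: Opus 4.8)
\emph{Proof strategy.} The two displayed inequalities are dual to one another in exactly the way the two inequalities of Lemma~\ref{FractionalSobolev} are: $L^6$ and $L^{6/5}$ are dual, $\dot H^1$ and $\dot H^{-1}$ are dual, and $(-\Delta)$ intertwines the two pairings. So it suffices to prove the second inequality $\|f\|_{\dot H^{-1}}\le C_2\|f\|_{L^{6/5}}$ and then recover the first: for Schwartz $f$ (the general case following by density),
\begin{equation}
\|f\|_{L^6}=\sup_{\|g\|_{L^{6/5}}\le 1}\int_{\mathbb{R}^3}fg
=\sup_{\|g\|_{L^{6/5}}\le 1}\left\langle \nabla f,\nabla(-\Delta)^{-1}g\right\rangle
\le \|\nabla f\|_{L^2}\sup_{\|g\|_{L^{6/5}}\le 1}\|g\|_{\dot H^{-1}}
\le C_2\|\nabla f\|_{L^2},
\end{equation}
where I used $\bigl\langle \nabla f,\nabla(-\Delta)^{-1}g\bigr\rangle=\langle f,g\rangle$ and $\|\nabla(-\Delta)^{-1}g\|_{L^2}^2=\langle(-\Delta)^{-1}g,g\rangle=\|g\|_{\dot H^{-1}}^2$. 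Reading the same computation backwards (testing $\dot H^{-1}$ against $\dot H^1\subset L^6$) shows the second inequality follows from the first, so the two automatically share the same sharp constant.

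To prove $\|f\|_{\dot H^{-1}}\le C_2\|f\|_{L^{6/5}}$ I would argue exactly as in the proof of Lemma~\ref{FractionalSobolev}, but invoking the $\lambda=1$ case of Lieb's sharp Hardy--Littlewood--Sobolev inequality \cite{Lieb} rather than the $\lambda=2$ case. In $\mathbb{R}^3$ the operator $(-\Delta)^{-1}$ is convolution with the Newtonian potential $\frac{1}{4\pi|x|}$ (this is consistent with the $2\pi$ normalization used here, since the Fourier transform of $\frac{1}{4\pi|x|}$ is $(2\pi|\xi|)^{-2}$), so that
\begin{equation}
\|f\|_{\dot H^{-1}}^2=\bigl\langle f,(-\Delta)^{-1}f\bigr\rangle
=\frac{1}{4\pi}\int_{\mathbb{R}^3}\int_{\mathbb{R}^3}\frac{f(x)\,f(y)}{|x-y|}\diff x\diff y.
\end{equation}
With $n=3$ and $\lambda=1$ the Hardy--Littlewood--Sobolev balance condition forces the exponent $p=\frac{6}{5}$, and in this conformal case Lieb's theorem identifies the extremizers as the Aubin--Talenti profiles $f(x)=(1+|x|^2)^{-\frac{5}{2}}$ (up to translation and dilation) and gives the optimal constant as an explicit ratio of Gamma functions. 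Dividing that constant by $4\pi$ produces $C_2^2=\frac{1}{3}\left(\frac{2}{\pi}\right)^{4/3}$, whose square root is the stated $C_2=\frac{1}{\sqrt 3}\left(\frac{2}{\pi}\right)^{2/3}$.

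As an alternative one could prove the first inequality directly in the style of Talenti \cite{Talenti}: symmetric decreasing rearrangement together with the P\'olya--Szeg\H{o} inequality reduces the problem to nonnegative radial $f$, one solves the resulting one-dimensional variational problem, finds the extremal $U(x)=(1+|x|^2)^{-\frac{1}{2}}$, and computes $\|U\|_{L^6}/\|\nabla U\|_{L^2}$ directly; the dual inequality then follows from the duality argument above (see also \cite{LiebLoss} for both routes). Either way there is no genuine analytic obstacle --- the substance is entirely Lieb's (or Talenti's) sharp inequality, which we quote. The only step requiring real care is the arithmetic: matching Lieb's Gamma-function expression for the sharp HLS constant against $C_2$, while correctly tracking both the factor $\frac{1}{4\pi}$ from the Newtonian potential and the $2\pi$ in the definition of the $\dot H^{-1}$ norm. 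Once that bookkeeping is done, the duality step recovering the $L^6$ form is immediate.
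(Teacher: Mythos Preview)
Your proposal is correct, and in fact it goes further than the paper itself. The paper does not prove this lemma at all: it simply states it as the sharp Sobolev inequality due to Talenti \cite{Talenti}, and then remarks (exactly as you do) that the two inequalities are dual because $L^6$ and $L^{6/5}$ are dual and $\dot H^1$ and $\dot H^{-1}$ are dual, which is why the sharp constant is the same in both. Your duality computation makes this remark precise, and your two proposed routes to the sharp constant (Lieb's HLS with $\lambda=1$, or Talenti's rearrangement argument) are the standard ones and are the very references the paper cites.
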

As in the fractional Sobolev inequality, we will note in particular that the two inequalities in Lemma \ref{SobolevIneq} are dual to each other because $L^6$ and $L^\frac{6}{5}$ are dual spaces, and $\dot{H}^1$ and $\dot{H}^{-1}$ are dual spaces, which is why the constant in both inequalities is the same.

In \cite{MillerStrain}, the author first interpolated between $L^2$ and $L^6$ and then applied Lemma \ref{SobolevIneq}, showing
\begin{align}
    \|S\|_{L^3}^3
    &\leq 
    \|S\|_{L^2}^\frac{3}{2}
    \|S\|_{L^6}^\frac{3}{2}\\
    &\leq 
    C_2^\frac{3}{2} \|S\|_{L^2}^\frac{3}{2}
    \|S\|_{\dot{H}^1}^\frac{3}{2}.
\end{align}
It is possible to obtain a sharper constant by first applying the fractional Sobolev inequality and then interpolating between $L^2$ and $\dot{H}^1.$ Proceeding this way, we conclude
\begin{align}
    \|S\|_{L^3}^3
    &\leq
    C_1^3 \|S\|_{\dot{H}^\frac{1}{2}}^3\\
    &\leq
    C_1^3 \|S\|_{L^2}^\frac{3}{2}
    \|S\|_{\dot{H}^1}^\frac{3}{2}.
\end{align}
Because $C_1^3<C_2^\frac{3}{2},$ using the fractional Sobolev inequality results in a sharper bound on enstrophy growth.

Using the bounds in Proposition \ref{EnergyEnstrophy}, we will be able to prove a small data global existence result in terms of the product of energy and enstrophy.
\begin{proposition} \label{SmallData}
Suppose $u^0\in H^1_{df}.$ If $K_0 E_0< 6,912 \pi^4 \nu^4,$
then $T_{max}=+\infty.$
That is, there exists a unique, smooth solution to the Navier Stokes equation
$u \in C\left([0,+\infty);H^1_{df} \right)$
with $u(\cdot,0)=u^0.$ Furthermore, for all $t>0$,
\begin{equation}
    E(t)\leq \frac{E_0}{1-\frac{1}{6,912\pi^4 \nu^4}E_0K_0}.
\end{equation}
\end{proposition}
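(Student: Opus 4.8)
The plan is to feed the two conclusions of Proposition~\ref{EnergyEnstrophy} into a single ODE comparison. The cubic inequality $E'(t)\le \frac{1}{3{,}456\pi^4\nu^3}E(t)^3$ on its own only gives a bound on $E$ that blows up at a finite time of order $\nu^3/(\nu E_0^2)$; the idea is to exploit the decay of energy recorded in $K'(t)=-2\nu E(t)$ to control $\int_0^tE$, and thereby upgrade the cubic inequality to a genuinely global bound, after which global existence will follow from the local theory of Theorem~\ref{KATO}. First I would record that $u^0\in H^1_{df}$ generates, by Theorem~\ref{KATO} together with persistence of $L^2$ regularity, a mild solution $u\in C\big([0,T_{max});H^1_{df}\big)$ that is smooth on $(0,T_{max})\times\mathbb{R}^3$; hence $E$ and $K$ are $C^1$ on $(0,T_{max})$ and continuous at $t=0$, and \emph{both} estimates of Proposition~\ref{EnergyEnstrophy} apply. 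If $E_0=0$ then $u^0=0$ and the claim is vacuous, and if $E$ vanishes at some interior time then $u$ is identically zero thereafter by uniqueness and the asserted bound holds trivially there; so it suffices to argue on the open set where $E>0$, where division by $E$ is legitimate.

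For the core estimate, integrating $K'(t)=-2\nu E(t)$ and using $K\ge 0$ gives
\begin{equation}
\int_0^t E(\tau)\,\diff\tau \;=\; \frac{K_0-K(t)}{2\nu}\;\le\;\frac{K_0}{2\nu},\qquad 0\le t<T_{max}.
\end{equation}
On the other hand, rewriting the cubic inequality as $\frac{\diff}{\diff t}\!\left(\frac{1}{E(t)}\right)=-\frac{E'(t)}{E(t)^2}\ge -\frac{1}{3{,}456\pi^4\nu^3}E(t)$ and integrating from $0$ to $t$ yields
\begin{equation}
\frac{1}{E(t)}\;\ge\;\frac{1}{E_0}-\frac{1}{3{,}456\pi^4\nu^3}\int_0^t E(\tau)\,\diff\tau\;\ge\;\frac{1}{E_0}-\frac{K_0}{6{,}912\pi^4\nu^4}\;=\;\frac{1}{E_0}\left(1-\frac{E_0K_0}{6{,}912\pi^4\nu^4}\right).
\end{equation}
The hypothesis $K_0E_0<6{,}912\pi^4\nu^4$ makes the last factor strictly positive, so inverting gives $E(t)\le \frac{E_0}{1-\frac{1}{6{,}912\pi^4\nu^4}E_0K_0}$ for every $t\in[0,T_{max})$, a bound independent of $t$.

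It remains to promote this a priori bound to global existence. By Proposition~\ref{StrainIsometry} (or directly from the Fourier definition) $\|u(\cdot,t)\|_{\dot H^1}^2=2E(t)$, and $\|u(\cdot,t)\|_{L^2}^2=2K(t)\le 2K_0$, so the $H^1$ norm of $u(\cdot,t)$ stays bounded on $[0,T_{max})$; were $T_{max}$ finite, restarting Theorem~\ref{KATO} at a time close enough to $T_{max}$ would produce a mild solution extending past $T_{max}$, contradicting Definition~\ref{Tmax}, so $T_{max}=+\infty$, and uniqueness is already part of Theorem~\ref{KATO}. The analytically substantive work has all been carried out in Proposition~\ref{EnergyEnstrophy}; what remains is essentially bookkeeping, the one idea being the substitution $\frac{\diff}{\diff t}(1/E)\ge -\frac{1}{3{,}456\pi^4\nu^3}E$ that converts the supercritical-looking cubic inequality into a global bound once it is paired with $\int_0^tE\le K_0/(2\nu)$. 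I expect the only points requiring any care to be the upgrade of the $\dot H^1$ local theory to $H^1$ data (needed so that the energy equality $K'=-2\nu E$ is available) and the precise formulation of the continuation criterion.
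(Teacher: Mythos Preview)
Your proof is correct and the derivation of the enstrophy bound is essentially identical to the paper's: both rewrite $E' \le \frac{1}{3{,}456\pi^4\nu^3}E^3$ as a differential inequality for $1/E$, integrate, and feed in $\int_0^t E \le K_0/(2\nu)$ from the energy equality.

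Where you diverge from the paper is in the global existence step. The paper first introduces $f(t)=K(t)E(t)$, computes $f'(t)\le -2\nu E(t)^2\bigl(1-f(t)/(6{,}912\pi^4\nu^4)\bigr)$, and deduces that $f$ is strictly decreasing as long as it starts below the threshold; it then interpolates $\|u\|_{L^3}^4 \le 4C_1^4 K(t)E(t)$ and invokes the Escauriaza--Seregin--\v{S}ver\'ak $L^3$ blowup criterion to rule out $T_{max}<\infty$. Only afterwards does it turn to the enstrophy bound, by the same computation you carry out. Your route is more self-contained: you obtain the uniform $H^1$ bound first and then appeal directly to the local theory (Theorem~\ref{KATO}) via a standard continuation argument, avoiding the deep $L^3$ endpoint result entirely. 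The paper's approach buys the additional qualitative fact that $K(t)E(t)$ is monotone decreasing under the smallness hypothesis, but this is not needed for the statement of the proposition itself.
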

\begin{proof}
Let $f(t)=K(t)E(t).$ Then we can use the product rule and Proposition \ref{EnergyEnstrophy} to compute that
\begin{equation}
    f'(t)\leq -2 \nu E(t)^2+ K(t)\frac{E(t)^3}{3,456\pi^4 \nu^3}.
\end{equation}
Factoring out a $2\nu E(t)^2,$ we find that 
\begin{equation}
    f'(t)\leq -2 \nu E(t)^2\left(1-\frac{f(t)}
    {6,912\pi^4 \nu^4} \right ).
\end{equation}
Therefore, if $f(t)<6,912\pi^4 \nu^4,$ then
$f'(t) < 0.$
This implies that if $f(0)<6,912\pi^4 \nu^4,$ then for all $0<t<T_{max},$ we have $f(t)<6,912\pi^4 \nu^4.$
Interpolating between $L^2$ and $\dot{H}^1,$ we can see that 
\begin{align} \|u(\cdot,t)\|_{L^3}^4
    &\leq
    C_1^4||u(\cdot,t)||_{\dot{H}^\frac{1}{2}}^4\\
    &\leq
    C_1^4||u(\cdot,t)||_{L^2}^2||u(\cdot,t)||_{\dot{H}^1}^2\\
    &=
    4 C_1^4K(t)E(t)\\
    &=
    4 C_1^4 f(t).
\end{align}
\v{S}ver\'ak, Seregin, and Escauriaza showed in \cite{SereginL3} that if $T_{max}<+\infty,$ then
\begin{equation}
    \limsup_{t \to T_{max}}||u(\cdot,t)||_
    {L^3}=+\infty.
\end{equation}
Therefore, $f(0)<6,912\pi^4 \nu^4$ implies that 
$T_{max}=+\infty.$

Now we will consider the bound on enstrophy globally in time. We know that
\begin{align}
    E'(t)&\leq \frac{1}{3,456 \pi^4 \nu^3}E(t)^3
    \\
    &=\frac{1}{3,456 \pi^4 \nu^3}E(t) E(t)^2
\end{align}
Fix $t>0.$ Integrating this differential inequality and making use of the energy inequality
\begin{equation}
    \int_0^t E(\tau) \diff \tau<\frac{1}{2\nu} K_0,
\end{equation}
we find
\begin{align}
    \frac{1}{E_0}-\frac{1}{E(t)}&\leq \frac{1}
    {3,456 \pi^4 \nu^3} \int_0^t E(\tau) \diff \tau\\
    &\leq \frac{1}{6,912 \pi^4 \nu^4} K_0.
\end{align}
Rearranging terms we find that
\begin{equation}
    E(t)\leq \frac{E_0}{1-\frac{1}{6,912\pi^4 \nu^4}E_0K_0}.
\end{equation}
We took $t>0$ arbitrary, so this completes the proof.
\end{proof}

Similar estimates were considered by Protas and Ayala in \cite{Protas}. In particular, they proved that if 
\begin{equation}
    E_0 K_0< \frac{16 \pi^4 \nu^4}{27}, 
\end{equation}
then there must be a smooth solution globally in time, and enstrophy is bounded uniformly in time, with
\begin{equation}
    E(t)< \frac{E_0}
{1-\frac{27}{16 \pi^4 \nu^4}E_0 K_0},   
\end{equation}
for all $t>0.$
By improving the constant for enstrophy growth instantaneously in time, we significantly expand the set of initial data for which we are guaranteed to have global smooth solutions. The initial data must be in $H^1$ for the product of initial energy and initial enstrophy to be bounded, so the condition in Proposition \ref{SmallData} is more restrictive than the condition in the small initial data results for $\dot{H}^\frac{1}{2}$ \cite{Kato}, $L^3$ \cite{KatoL3}, or $BMO^{-1}$ \cite{KochTataru}.
However, the product of energy and enstrophy is the most physically relevant of the scale invariant quantities, and so we are able to sharpen the bound on the size initial data for which solutions are guaranteed to be smooth globally in time more effectively in this case by taking advantage of the structure of the nonlinear term. The proofs of the bounds for small initial data in $\dot{H}^\frac{1}{2},$ $L^3,$ and $BMO^{-1}$ would all work just as well for the Navier--Stokes model equation introduced by Tao \cite{TaoNavierStokes}, as would the estimates used by Protas and Ayala. The estimates used to prove Proposition \ref{SmallData}, on the other hand, take advantage of the structure of the evolution equations for vorticity and strain, and the constraint spaces, and so would not necessarily hold with the same constants in Tao's model equation.

We will now prove an immediate corollary of Proposition \ref{SmallData}, that any solution that blows up in finite time must be bounded away from zero that will be useful later on.

\begin{corollary} \label{CriticalLowerBound}
Suppose $u\in C\left([0,T_{max});H^1_{df}\right)$ is a mild solution to the Navier--Stokes equation
and $T_{max}<+\infty,$ then for all $0\leq t<T_{max},$
\begin{equation}
    K(t)E(t)\geq 6,912 \pi^4 \nu^4.
\end{equation}
\end{corollary}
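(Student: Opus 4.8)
The plan is to argue by contradiction using the fact that the Navier--Stokes equation is autonomous, combined with Proposition \ref{SmallData}. Suppose the conclusion fails, so that there is some $t_0\in[0,T_{max})$ with $K(t_0)E(t_0)<6,912 \pi^4 \nu^4$. Since $u\in C\left([0,T_{max});H^1_{df}\right)$, the function $v^0:=u(\cdot,t_0)$ is a legitimate element of $H^1_{df}$, and its energy and enstrophy are precisely $K(t_0)$ and $E(t_0)$, so $\tfrac{1}{2}\|v^0\|_{L^2}^2\cdot\tfrac{1}{2}\|\nabla\times v^0\|_{L^2}^2<6,912\pi^4\nu^4$.

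Next I would translate in time: because the equation has no explicit time dependence, $v(\cdot,s):=u(\cdot,t_0+s)$ is a mild solution on $[0,T_{max}-t_0)$ with initial data $v^0$. By the local uniqueness of mild solutions (Theorem \ref{KATO}), $v$ is \emph{the} mild solution associated to $v^0$, and since $u$ cannot be continued past $T_{max}$, the solution $v$ cannot be continued past $s=T_{max}-t_0$; hence, in the sense of Definition \ref{Tmax}, the maximal time of existence for the initial data $v^0$ equals $T_{max}-t_0<+\infty$.

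On the other hand, Proposition \ref{SmallData} applied to $v^0$, whose initial energy-enstrophy product is $K(t_0)E(t_0)<6,912\pi^4\nu^4$, forces the maximal time of existence for $v^0$ to be $+\infty$. This contradiction establishes that $K(t)E(t)\geq 6,912\pi^4\nu^4$ for every $0\leq t<T_{max}$.

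The only point requiring any care is the bookkeeping around maximal times: one must verify that the restarted solution genuinely coincides with $u$ on the overlap interval and inherits the finite blow-up time $T_{max}-t_0$, so that the hypothesis of Proposition \ref{SmallData} is actually contradicted. This is immediate from the uniqueness of mild solutions in $C\left([0,T];\dot{H}^1_{df}\right)$ together with Definition \ref{Tmax}, and I expect this to be the main (mild) obstacle; everything else is a direct invocation of Proposition \ref{SmallData}.
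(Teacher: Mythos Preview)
Your proof is correct and follows essentially the same approach as the paper: argue by contradiction, apply Proposition \ref{SmallData} to the restarted data $u(\cdot,t_0)$, and invoke uniqueness of mild solutions to reach a contradiction with $T_{max}<+\infty$. The paper's version is more terse, but the logical content is identical; your added remarks about time translation and matching the maximal existence time are exactly the bookkeeping that the paper leaves implicit.
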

\begin{proof}
We will prove the contrapositive. Suppose that there exists $0\leq t<T_{max}$ such that
$K(t)E(t)<6,912 \pi^4 \nu^4.$ Then by Proposition \ref{SmallData}, $u(\cdot,t)$ generates a global smooth solution to the Navier--Stokes equations. Smooth solutions of the Navier--Stokes equations are unique, so if $u(\cdot,t)$ generates a global smooth solution to the Navier--Stokes equations, then so does $u^0.$
\end{proof}

Using Proposition \ref{EnergyEnstrophy}, we can also prove an upper bound on blowup time, assuming there is finite time blowup, in terms of the initial energy, and a lower bound on blowup time in terms of the initial enstrophy. We will prove these results below.

\begin{proposition}
For all $u^0 \in H^1_{df},$ either
$T_{max}\leq \frac{K_0^2}{13,824 \pi^4 \nu^5}$ or 
$T_{max}=+\infty.$
\end{proposition}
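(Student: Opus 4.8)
The plan is to exploit the two facts established in Proposition \ref{EnergyEnstrophy}: the cubic differential inequality $E'(t)\leq \frac{1}{3{,}456\pi^4\nu^3}E(t)^3$ and the energy equality $K'(t)=-2\nu E(t)$, which in integrated form gives $2\nu\int_0^t E(\tau)\diff\tau = K_0 - K(t)\leq K_0$. Assume $T_{max}<+\infty$; I want to bound $T_{max}$ from above. By Corollary \ref{CriticalLowerBound}, since $T_{max}<+\infty$ we have $K(t)E(t)\geq 6{,}912\pi^4\nu^4$ for all $0\leq t<T_{max}$, so $E(t)\geq \frac{6{,}912\pi^4\nu^4}{K(t)}\geq \frac{6{,}912\pi^4\nu^4}{K_0}$, using that $K$ is nonincreasing. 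Feeding this lower bound on $E$ into the energy identity gives $K_0\geq 2\nu\int_0^{T_{max}}E(\tau)\diff\tau \geq 2\nu\cdot T_{max}\cdot \frac{6{,}912\pi^4\nu^4}{K_0}$, and rearranging yields $T_{max}\leq \frac{K_0^2}{13{,}824\pi^4\nu^5}$, which is exactly the claimed bound (note $13{,}824 = 2\cdot 6{,}912$).

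So the dichotomy is really: either $T_{max}=+\infty$, or $T_{max}<+\infty$ and then the above argument applies to give the stated upper bound. The key steps in order are: (1) suppose $T_{max}<+\infty$; (2) invoke Corollary \ref{CriticalLowerBound} to get the pointwise lower bound $K(t)E(t)\geq 6{,}912\pi^4\nu^4$; (3) use monotonicity of $K$ (from $K'=-2\nu E\leq 0$) to turn this into $E(t)\geq 6{,}912\pi^4\nu^4/K_0$; (4) integrate $K'(t)=-2\nu E(t)$ over $[0,T_{max})$ and use $K(t)\geq 0$ to get $2\nu\int_0^{T_{max}}E\leq K_0$; (5) combine (3) and (4) and rearrange.

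I do not expect a genuine obstacle here — every ingredient is already in place in the excerpt. The one point requiring a touch of care is the limiting argument at $t\to T_{max}^-$: step (4) should be phrased as $2\nu\int_0^{t}E(\tau)\diff\tau \leq K_0$ for every $t<T_{max}$ and then one lets $t\to T_{max}^-$ by monotone convergence, so that the bound $2\nu T_{max}\cdot\frac{6{,}912\pi^4\nu^4}{K_0}\leq K_0$ is obtained as a supremum over $t<T_{max}$ rather than by naively evaluating at $T_{max}$ itself. Everything else is a one-line rearrangement of constants.
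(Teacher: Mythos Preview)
Your proof is correct and follows essentially the same approach as the paper: both combine the energy integral bound $\int_0^{T_{max}} E \leq K_0/(2\nu)$ with the small-data threshold $K(t)E(t) \geq 6{,}912\pi^4\nu^4$ from Corollary~\ref{CriticalLowerBound}/Proposition~\ref{SmallData}. The only cosmetic difference is that the paper argues by contradiction (assuming $T_{max}$ exceeds the bound and using a mean-value argument to find a time where $E(t)$, hence $K(t)E(t)$, drops below threshold), whereas you argue directly by integrating the pointwise lower bound on $E$; these are contrapositives of one another.
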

\begin{proof}
Suppose toward contradiction that 
$\frac{K_0^2}{13,824\pi^4 \nu^5}< T_{max}<+\infty.$
We know from the energy equality that
\begin{equation}
    \int_0^{T_{max}}E(\tau)\diff\tau \leq 
    \frac{1}{2 \nu}K_0. 
\end{equation}
This implies that there exists $t\in(0,T_{max})$ such that $T_{max}E(t)\leq \frac{1}{2 \nu}K_0.$
We also know from the energy equality that
$K(t)<K_0.$ Combining these two inequalities as well as our hypothesis on $T_{max},$ we find that
\begin{equation}
    E(t)K(t)<\frac{K_0^2}{2 \nu T_{max}}<
    6,912 \pi^4 \nu^4.
\end{equation}
Using Proposition \ref{SmallData}, this implies that if we take $u(\cdot,t)$ to be initial data, it generates a global smooth solution, which contradicts the assumption that $T_{max}<+\infty.$
The uniqueness of strong solutions means that if $u(\cdot,t)$ generates a global smooth solution for some $0<t<T_{max},$ then so does $u^0.$ This contradicts the assumption that $T_{max}<+\infty,$ and completes the proof.
\end{proof}

\begin{proposition}
For all $u^0\in \dot{H}^1_{df}$, and for all 
$0<t<\frac{1,728 \pi^4 \nu^3}{E_0^2},$
\begin{equation}
    E(t)\leq\frac{E_0}{\sqrt{1-
    \frac{E_0^2}{1,728 \pi^4 \nu^3}t}}.
\end{equation}
In particular, for all $u^0\in \dot{H}^1_{df},$
$T_{max}\geq \frac{1,728 \pi^4 \nu^3}{E_0^2}$
\end{proposition}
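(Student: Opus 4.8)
The plan is to integrate the cubic differential inequality for enstrophy growth from Proposition \ref{EnergyEnstrophy} and then read off the lower bound on $T_{max}$ from the resulting a priori estimate. If $E_0=0$ then $u^0=0$, the solution is identically zero, and the statement is vacuous (the right endpoint is $+\infty$), so assume $E_0>0$. Since the mild solution $u$ is smooth on $(0,T_{max})\times\mathbb{R}^3$ and lies in $C\left([0,T_{max});\dot{H}^1_{df}\right)$, the function $E$ is positive and $C^1$ on $(0,T_{max})$ and continuous up to $t=0$ with $E(0)=E_0$; this is all that is needed to run a scalar comparison argument.

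The key step is the substitution $g(t)=E(t)^{-2}$. Proposition \ref{EnergyEnstrophy} gives $E'(t)\le\frac{1}{3,456\pi^4\nu^3}E(t)^3$ for $0<t<T_{max}$, hence
\[
g'(t)=-2\,E(t)^{-3}E'(t)\ge-\frac{1}{1,728\pi^4\nu^3}.
\]
Integrating on $[\varepsilon,t]\subset(0,T_{max})$ and letting $\varepsilon\to0^+$, using the continuity of $E$ at $0$, yields $E(t)^{-2}\ge E_0^{-2}-\frac{t}{1,728\pi^4\nu^3}$. Whenever $t<\frac{1,728\pi^4\nu^3}{E_0^2}$ the right-hand side is strictly positive, and inverting and taking reciprocal square roots produces exactly
\[
E(t)\le\frac{E_0}{\sqrt{1-\frac{E_0^2}{1,728\pi^4\nu^3}t}}.
\]

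For the lower bound on $T_{max}$, suppose toward a contradiction that $T_{max}<\frac{1,728\pi^4\nu^3}{E_0^2}$. The estimate just obtained shows that $E$ remains bounded on all of $[0,T_{max})$, with $\sup_{[0,T_{max})}E(t)\le E_0\bigl(1-\frac{E_0^2T_{max}}{1,728\pi^4\nu^3}\bigr)^{-1/2}<+\infty$. On the other hand, by the local existence result Theorem \ref{KATO} the guaranteed existence time of a mild solution starting from data $v^0\in\dot{H}^1_{df}$ is at least $\frac{C\nu^3}{\|v^0\|_{\dot{H}^1}^4}$, which depends only on $\|v^0\|_{\dot{H}^1}$; if $\|u(\cdot,t)\|_{\dot{H}^1}^2=2E(t)$ stayed bounded as $t\to T_{max}$ one could restart the solution from a time close enough to $T_{max}$ and extend it beyond $T_{max}$, contradicting maximality. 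Hence $T_{max}<+\infty$ forces $\limsup_{t\to T_{max}}E(t)=+\infty$, contradicting the displayed bound; therefore $T_{max}\ge\frac{1,728\pi^4\nu^3}{E_0^2}$.

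I do not anticipate any real difficulty here: the argument is a textbook ODE comparison once Proposition \ref{EnergyEnstrophy} is available. The only two points deserving a line of justification are the limit $\varepsilon\to0^+$ at the left endpoint, handled by the continuity of $E$ on $[0,T_{max})$, and the continuation criterion in $\dot{H}^1$ used to upgrade the a priori bound into the statement about $T_{max}$, which follows immediately from the scaling of the local existence time in Theorem \ref{KATO} (alternatively, for data in $H^1_{df}$ one could invoke the $H^1$ blow-up criterion implicit in Corollary \ref{CriticalLowerBound}).
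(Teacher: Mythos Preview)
Your proof is correct and follows essentially the same approach as the paper: integrate the cubic differential inequality from Proposition \ref{EnergyEnstrophy} to obtain $E_0^{-2}-E(t)^{-2}\le \frac{t}{1,728\pi^4\nu^3}$, rearrange, and then use the fact that a mild solution can be continued as long as enstrophy stays bounded. Your write-up is simply more careful about the endpoint $t=0$ and spells out the continuation argument via Theorem \ref{KATO}, whereas the paper invokes the continuation criterion in one line.
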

\begin{proof}
Integrating the differential inequality
\begin{equation}
\partial_t E(t)\leq \frac{1}{3,456 \pi^4 \nu^3}E(t)^3,
\end{equation}
we find that for all $0<t<\frac{1,728 \pi^4 \nu^3}{E_0^2}$
\begin{equation}
    \frac{1}{E_0^2}-\frac{1}{E(t)^2}\leq
    \frac{1}{1,728 \pi^4 \nu^3}t.
\end{equation}
Rearranging terms we find that for all 
$0<t<\frac{1,728 \pi^4 \nu^3}{E_0^2},$
\begin{equation}
    E(t)\leq\frac{E_0}{\sqrt{1-
    \frac{E_0^2}{1,728 \pi^4 \nu^3}t}}.
\end{equation}
The mild solution can be continued further in time as long as enstrophy is bounded, so this completes the proof.
\end{proof}

\section{A logarithmic correction}

In order to prove the Theorem \ref{2VortGlobalExistence}, we will need to prove some bounds on the growth of $\|\omega_h\|_{\dot{H}^{-\frac{1}{2}}},$ as well as bound the growth of enstrophy in terms of $\|\omega_h\|_{\dot{H}^{-\frac{1}{2}}}.$
In order to do this we will need to consider the evolution equation for the horizontal components of vorticity, $\omega_h,$ which is given in the following proposition.

\begin{proposition} \label{2CompEvolution}
Suppose $u \in C\left ([0,T_{max});H^1_{df} \right )$ is a mild solution, and therefore a classical solution, to the Navier--Stokes equation. Then $\omega_h$ is a classical solution of
\begin{equation}
\partial_t \omega_h+(u\cdot \nabla )\omega_h
-\nu \Delta \omega_h-S\omega_h-S_h\omega=0,
\end{equation}
where
$\omega_h=\left(\begin{matrix}\omega_1 \\ \omega_2 \\ 0
\end{matrix}\right)$ and 
$S_h= \left( \begin{matrix} 
0 & 0 & S_{13} \\
0 & 0& S_{23} \\
-S_{13} & -S_{23} & 0 \\
\end{matrix}  \right).$
\end{proposition}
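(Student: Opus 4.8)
The plan is to derive the evolution equation for $\omega_h$ directly from the vorticity equation \eqref{NavierVorticity}, $\partial_t\omega - \Delta\omega + (u\cdot\nabla)\omega - S\omega = 0$, by projecting onto the first two coordinates. Introduce the constant linear projection $P_h\colon\mathbb{R}^3\to\mathbb{R}^3$, $P_h(v_1,v_2,v_3) = (v_1,v_2,0)$, so that $\omega_h = P_h\omega$ and $\omega = \omega_h + \omega_3 e_3$ with $e_3 = (0,0,1)$. Since $u$ is a mild solution it is a classical solution on $(0,T_{max})$ by Theorem \ref{KATO}, so \eqref{NavierVorticity} holds pointwise and $P_h$ may be applied to each term.

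First I would observe that $P_h$ commutes with $\partial_t$, $\Delta$, and the transport operator $u\cdot\nabla$, since each of these acts componentwise on the target $\mathbb{R}^3$ while $P_h$ is a fixed linear map on $\mathbb{R}^3$. Applying $P_h$ to \eqref{NavierVorticity} therefore yields
\[
\partial_t\omega_h - \Delta\omega_h + (u\cdot\nabla)\omega_h - P_h(S\omega) = 0,
\]
so the whole statement reduces to the pointwise algebraic identity $P_h(S\omega) = S\omega_h + S_h\omega$.

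Next I would verify this identity by writing out the $3\times 3$ matrix-vector products, using only the symmetry $S_{ij}=S_{ji}$. The first two components of $S\omega$ are $S_{11}\omega_1 + S_{12}\omega_2 + S_{13}\omega_3$ and $S_{21}\omega_1 + S_{22}\omega_2 + S_{23}\omega_3$; one checks these coincide with the first two components of $S\omega_h + S_h\omega$, with the $S_h\omega$ term supplying exactly the $S_{13}\omega_3$ and $S_{23}\omega_3$ pieces that $S\omega_h$ omits. For the third component, $(S\omega_h)_3 = S_{31}\omega_1 + S_{32}\omega_2$ while $(S_h\omega)_3 = -S_{13}\omega_1 - S_{23}\omega_2$, and these cancel by symmetry of $S$; hence $S\omega_h + S_h\omega$ has vanishing third component and equals $P_h(S\omega)$. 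Substituting back into the projected equation gives the claimed form.

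I do not expect a genuine obstacle here: the substantive point is simply that, unlike the strain equation \eqref{NavierStrain} or the pressure, the vorticity equation \eqref{NavierVorticity} carries no pressure term, so projecting it produces a self-contained equation for $\omega_h$ whose only coupling to $\omega_3$ enters through the off-diagonal strain entries $S_{13},S_{23}$ assembled into $S_h$. The only place requiring care is the bookkeeping in the matrix products and the use of the symmetry of $S$ to annihilate the third component.
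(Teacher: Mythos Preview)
Your proof is correct and follows essentially the same route as the paper: project the vorticity equation by the constant linear map $I_h=P_h$, note that it commutes with $\partial_t$, $\Delta$, and $u\cdot\nabla$, and then reduce to the pointwise identity $P_h(S\omega)=S\omega_h+S_h\omega$. The only cosmetic difference is that the paper checks this identity via the commutator computation $I_hS-SI_h=S_h$, whereas you verify it by writing out the three components directly; both are equivalent one-line linear-algebra facts.
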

\begin{proof}
Kato and Fujita proved that mild solutions must be smooth \cite{Kato}, so clearly $u$ is a classical solution to the Navier--Stokes equation. 
Therefore $\omega= \nabla \times u$ is also smooth and is a classical solution to the vorticity equation:
\begin{equation} 
    \partial_t \omega + (u\cdot \nabla)\omega -\nu \Delta \omega
    -S \omega=0.
\end{equation}
Let $I_h=\left( \begin{matrix} 
1 & 0 & 0 \\
0 & 1& 0 \\
0 & 0 & 0 \\
\end{matrix}  \right).$ 
Then we clearly have $\omega_h=I_h \omega.$
Multiply the vorticity equation through by $I_h$ and find that
\begin{equation}
    \partial_t \omega_h + (u\cdot \nabla)\omega_h 
    -\nu \Delta \omega_h - I_h S \omega=0.
\end{equation}
Next we add and subtract $S I_h \omega.$ Therefore,
\begin{equation}
    \partial_t \omega_h + (u\cdot \nabla)\omega_h 
    -\nu \Delta \omega_h - I_h S \omega
    +S I_h \omega -S I_h \omega=0.
\end{equation}
Regrouping terms we find that
\begin{equation}
    \partial_t \omega_h + (u\cdot \nabla)\omega_h 
    -\nu \Delta \omega_h - \left(I_h S- S I_h \right)\omega 
    -S \left(I_h \omega \right)=0.
\end{equation}
Recall that $I_h \omega= \omega_h$ and compute that 
$S_h=I_h S- S I_h,$ and this completes the proof.
\end{proof}

One of the key aspects in our proof is a generalization of the isometry in Proposition \ref{StrainIsometry} that tells us 
$\|S\|_{L^2}^2=\frac{1}{2}\|\omega\|_{L^2}^2,$ to an isometry that involves just one column of $S$ and just two components of $\omega.$ In order to state this isometry, we will define the vectors $v^1,v^2,v^3$ as follows.
\begin{definition}
    For $i\in \{1,2,3\}$ define 
    $v^i=\partial_i u+\nabla u_i.$ Note in particular that $v^i_j=2S_{ij},$ for all $i,j\in \{1,2,3\}.$
    Equivalently, note that $v^1,v^2,v^3$ 
    are the columns of $2S.$
\end{definition}
With these vectors defined, we can restate our identity for enstrophy growth in Proposition \ref{StrainGrowth} in terms of $v^1,v^2,v^3.$
\begin{proposition} \label{TripleProduct}
Let $u\in C\left ([0,T_{max});H^1_{df} \right )$ be a mild solution to the Navier--Stokes equation. Then for all $0\leq t<T_{max},$ we have
\begin{equation}
    \partial_t \|S(\cdot,t) \|_{L^2}^2=
    -2 \nu \|S\|_{\dot{H}^1}^2-\frac{1}{2}\int_{\mathbb{R}^3}
    \left (v^1 \times v^2\right ) \cdot v^3.
\end{equation}
\end{proposition}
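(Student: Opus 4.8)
The plan is to show that the quantity $\int_{\mathbb{R}^3}\det(S)$ appearing in Proposition \ref{StrainGrowth} equals $\frac{1}{8}\int_{\mathbb{R}^3}(v^1\times v^2)\cdot v^3$, since then the identity follows immediately by substituting into Proposition \ref{StrainGrowth}:
\begin{equation*}
\partial_t\|S(\cdot,t)\|_{L^2}^2 = -2\nu\|S\|_{\dot H^1}^2 - 4\int_{\mathbb{R}^3}\det(S) = -2\nu\|S\|_{\dot H^1}^2 - \frac{1}{2}\int_{\mathbb{R}^3}(v^1\times v^2)\cdot v^3.
\end{equation*}
So the entire content is the pointwise linear-algebra identity $\det(S)=\frac{1}{8}(v^1\times v^2)\cdot v^3$ valid at every point. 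Recall $v^1,v^2,v^3$ are the columns of the matrix $2S$. For any $3\times 3$ matrix $M$ with columns $c^1,c^2,c^3$, one has the classical determinant formula $\det(M)=(c^1\times c^2)\cdot c^3$. Applying this to $M=2S$ gives $\det(2S)=(v^1\times v^2)\cdot v^3$, and since $\det(2S)=2^3\det(S)=8\det(S)$, we get exactly $\det(S)=\frac{1}{8}(v^1\times v^2)\cdot v^3$.

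Concretely, the steps are: (1) invoke Proposition \ref{StrainGrowth}, which is valid for mild solutions on $[0,T_{max})$; (2) recall from the definition of $v^i$ that $v^i_j = 2S_{ij}$, so that the $v^i$ are the columns of $2S$; (3) apply the scalar triple product formula for the determinant of a matrix in terms of its columns; (4) use homogeneity of the determinant under scalar multiplication, $\det(\lambda M)=\lambda^3\det(M)$, with $\lambda=2$, to convert $\det(2S)$ into $8\det(S)$; (5) substitute the resulting expression $-4\det(S) = -\tfrac12 (v^1\times v^2)\cdot v^3$ into the growth identity and integrate over $\mathbb{R}^3$. The integrability is inherited from Proposition \ref{StrainGrowth}, since $\det(S)\in L^1$ there (this follows from $S\in L^3$ for mild solutions, hence $\det(S)\in L^1$ by Hölder), so no new convergence issue arises.

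There is essentially no obstacle here: this is a restatement of an already-established identity via an elementary pointwise algebraic rewriting, and the only thing to be careful about is tracking the factor of $8$ from the trilinearity of the determinant so that the constant $-4$ in Proposition \ref{StrainGrowth} becomes $-\tfrac12$ in front of the triple-product integral. The purpose of this reformulation is presentational: writing the nonlinear term as a scalar triple product of the three columns $v^1,v^2,v^3$ of $2S$ is what will later allow the anisotropic estimates on $\omega_h$ in Section 3, where the decomposition of $S$ into the block $S_h$ and its complement interacts naturally with the column structure.
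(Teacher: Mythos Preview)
Your proof is correct and follows essentially the same approach as the paper: both invoke Proposition \ref{StrainGrowth}, use the scalar triple product formula $\det(2S)=(v^1\times v^2)\cdot v^3$ together with the homogeneity $\det(2S)=8\det(S)$, and substitute back. Your tracking of the constant $-\tfrac12$ is accurate (the paper's displayed line ``$-4\det(S)=(v^1\times v^2)\cdot v^3$'' is in fact a minor typo for $-\tfrac12(v^1\times v^2)\cdot v^3$).
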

\begin{proof}
We know that $v^1,v^2,v^3$ are the columns of $2S,$ so by the triple product representation of the determinant of a three by three matrix
\begin{equation}
\det \left(2S\right)=
\left (v^1 \times v^2\right ) \cdot v^3.
\end{equation}
The three by three determinant is homogeneous of order three, so 
\begin{equation}
    \det\left(2S\right)=8\det\left(S\right).
\end{equation}
Therefore we conclude that
\begin{equation}
    -4\det\left(S\right)= -\frac{1}{2}
    \left (v^1 \times v^2\right ) \cdot v^3.
\end{equation}
Recalling from Proposition \ref{StrainGrowth} that
\begin{equation}
   \partial_t \|S(\cdot,t) \|_{L^2}^2=
    -2 \nu \|S\|_{\dot{H}^1}^2
    -4\int_{\mathbb{R}^3} \det(S),
\end{equation}
this completes the proof.
\end{proof}
We will now prove an isometry that relates Hilbert norms $v^3$ and $\omega_h$ to each other and to $\partial_3 u$ and $\nabla u_3,$ as well as bounding Hilbert norms of $S_h$
by $\omega_h.$
\begin{lemma} \label{2CompIsometry}
    Suppose $u\in H^1_{df}.$ Then for all $-1\leq \alpha \leq 0,$ 
    \begin{equation}
        \left\|v^3\right\|_{\dot{H}^\alpha}^2=
        \left\|\omega_h\right\|_{\dot{H}^\alpha}^2=
        \left\|\partial_3 u\right\|_{\dot{H}^\alpha}^2+
        \left\|\nabla u_3\right\|_{\dot{H}^\alpha}^2
    \end{equation}
    and
    \begin{equation}
\left\|S_h\right\|_{\dot{H}^\alpha} \leq
\frac{1}{\sqrt{2}}\left\|\omega_h
\right\|_{\dot{H}^\alpha}.
    \end{equation}
\end{lemma}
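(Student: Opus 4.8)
The plan is to work entirely on the Fourier side, where the divergence-free condition $\xi\cdot\hat u(\xi)=0$ becomes a linear algebraic constraint that ties the three components of $\hat u$ together. Writing $\omega=\nabla\times u$, the Fourier multiplier for $\omega$ is $2\pi i\,\xi\times\,\cdot\,$, and the multiplier for $v^3=\partial_3 u+\nabla u_3$ is $2\pi i(\xi_3\,\mathrm{Id}+e_3\otimes\xi)$ acting on $\hat u$. Since all three claimed quantities are $\dot H^\alpha$ norms, by Plancherel it suffices to show the pointwise (in $\xi$) identities
\begin{equation}
\bigl|\widehat{v^3}(\xi)\bigr|^2=\bigl|\widehat{\omega_h}(\xi)\bigr|^2=\bigl|\widehat{\partial_3 u}(\xi)\bigr|^2+\bigl|\widehat{\nabla u_3}(\xi)\bigr|^2,
\end{equation}
and the pointwise bound $\bigl|\widehat{S_h}(\xi)\bigr|^2\le\tfrac12\bigl|\widehat{\omega_h}(\xi)\bigr|^2$, each valid whenever $\xi\cdot\hat u(\xi)=0$; multiplying by $(2\pi|\xi|)^{2\alpha}$ and integrating then gives the lemma for every $\alpha$ (the restriction $-1\le\alpha\le0$ is only needed so that the homogeneous spaces are well-defined and $u\in H^1$ controls these norms). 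The weight $(2\pi|\xi|)^{2\alpha}$ plays no role in the algebra, so effectively this is a linear-algebra computation on a 2-dimensional subspace of $\mathbb C^3$ (the orthogonal complement of $\xi$).

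The concrete steps: fix $\xi\ne0$ and abbreviate $a=\hat u(\xi)\in\mathbb C^3$ with $\xi\cdot a=0$. First, the identity $\|v^3\|_{\dot H^\alpha}=\|\omega_h\|_{\dot H^\alpha}$: this is exactly the one-column analogue of Proposition \ref{StrainIsometry}. I would either cite the method of \cite{MillerStrain} or reprove it directly by computing $\widehat{v^3}=2\pi i(\xi_3 a+a_3\xi)$ and $\widehat{\omega_h}=2\pi i(\xi\times a)_h=2\pi i(\xi_2 a_3-\xi_3 a_2,\ \xi_3 a_1-\xi_1 a_3,\ 0)$, then checking $|\xi_3 a+a_3\xi|^2=|\xi_3 a_2-\xi_2 a_3|^2+|\xi_1 a_3-\xi_3 a_1|^2$ using $\xi\cdot a=0$ to kill cross terms. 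Expanding the left side gives $\xi_3^2|a|^2+2\xi_3\,\mathrm{Re}(\bar a_3\,\xi\cdot a)+a_3^2|\xi|^2=\xi_3^2|a|^2+|a_3|^2|\xi|^2$ (the middle term vanishes), and one checks the right side equals the same expression, again after using $\xi\cdot a=0$. Second, $\|\omega_h\|_{\dot H^\alpha}^2=\|\partial_3 u\|_{\dot H^\alpha}^2+\|\nabla u_3\|_{\dot H^\alpha}^2$: note $\widehat{\partial_3 u}=2\pi i\,\xi_3 a$ has square modulus $(2\pi)^2\xi_3^2|a|^2$ and $\widehat{\nabla u_3}=2\pi i\,a_3\xi$ has square modulus $(2\pi)^2|a_3|^2|\xi|^2$, whose sum is $(2\pi)^2(\xi_3^2|a|^2+|a_3|^2|\xi|^2)$ — precisely the value of $|\widehat{v^3}|^2$ computed above, so all three agree pointwise and hence in $\dot H^\alpha$.

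For the $S_h$ bound: $S_h$ has entries $S_{13},S_{23}$ in the off-diagonal positions, and $|S_h|^2=2(|S_{13}|^2+|S_{23}|^2)$ as a matrix (Frobenius) norm. Now $2S_{13}=v^1_3=(\partial_1 u+\nabla u_1)_3=\partial_1 u_3+\partial_3 u_1$ and similarly $2S_{23}=\partial_2 u_3+\partial_3 u_2$, so on the Fourier side $2\widehat{S_{13}}=2\pi i(\xi_1 a_3+\xi_3 a_1)$ and $2\widehat{S_{23}}=2\pi i(\xi_2 a_3+\xi_3 a_2)$. One then wants $|S_{13}|^2+|S_{23}|^2\le\tfrac14|\widehat{\omega_h}|^2$ pointwise, i.e. $|\xi_1 a_3+\xi_3 a_1|^2+|\xi_2 a_3+\xi_3 a_2|^2\le\xi_3^2|a|^2+|a_3|^2|\xi|^2$. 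Expanding the left side and again using $\xi\cdot a=0$ (hence $\xi_1 a_1+\xi_2 a_2=-\xi_3 a_3$ — this is where the divergence-free constraint does real work, not just killing cross terms but introducing a sign) should reduce this to a manifestly nonnegative remainder. I would carry out this expansion carefully: it is the one genuinely inequality-flavored step rather than a pure identity, and getting the constant $\tfrac12$ (equivalently $\tfrac14$ at the squared level) exactly right is where a miscalculation would hide. That is the main obstacle — everything else is bookkeeping with $2\pi i$ factors and Plancherel — but I expect the remainder after using $\xi\cdot a=0$ to collapse to something like $(\xi_1 a_2-\xi_2 a_1)^2/|\xi|^2$-type nonnegative terms (or more precisely a perfect square times a nonnegative coefficient), confirming the bound with equality generically not attained.
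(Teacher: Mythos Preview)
Your proposal is correct and will work, but takes a more laborious route than the paper's proof. Two differences are worth noting.

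For the isometry $\|v^3\|_{\dot H^\alpha}=\|\omega_h\|_{\dot H^\alpha}=\|\partial_3 u\|_{\dot H^\alpha}^2+\|\nabla u_3\|_{\dot H^\alpha}^2$, the paper observes that $\partial_3 u-\nabla u_3=(\omega_2,-\omega_1,0)$ while $v^3=\partial_3 u+\nabla u_3$, and then simply invokes the Helmholtz orthogonality: since $\nabla\cdot(\partial_3 u)=0$, the vector $\partial_3 u$ is $\dot H^\alpha$-orthogonal to the gradient $\nabla u_3$, so $\|\partial_3 u\pm\nabla u_3\|_{\dot H^\alpha}^2=\|\partial_3 u\|_{\dot H^\alpha}^2+\|\nabla u_3\|_{\dot H^\alpha}^2$. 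This replaces your pointwise expansion by a single conceptual step. Your explicit computation $|\xi_3 a+a_3\xi|^2=\xi_3^2|a|^2+|a_3|^2|\xi|^2$ is exactly the Fourier shadow of that orthogonality, so the two arguments are equivalent in content.

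For the $S_h$ bound, you are overcomplicating matters. The paper does not expand anything: since $v^3=(2S_{13},2S_{23},2S_{33})$, one has the trivial pointwise inequality
\[
|S_h|^2=2S_{13}^2+2S_{23}^2\le 2S_{13}^2+2S_{23}^2+2S_{33}^2=\tfrac12|v^3|^2,
\]
which transfers immediately to the Fourier side and hence to every $\dot H^\alpha$. No use of $\xi\cdot a=0$ is needed here at all. Your anticipated ``nonnegative remainder'' after expansion is precisely $4\xi_3^2|a_3|^2=|2\widehat{S_{33}}|^2/(2\pi)^2$, i.e.\ the dropped $S_{33}$ term --- so your computation would recover exactly the same inequality, just with more effort.
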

\begin{proof}
First we observe that 
\begin{equation}
    \partial_3 u-\nabla u_3 = 
    \left( \begin{matrix}\partial_3 u_1 -\partial_1 u_3 
    \\ \partial_3 u_2- \partial_2 u_3 \\ 0
    \end{matrix} \right)=
    \left( \begin{matrix}
    \omega_2 \\ -\omega_1 \\ 0
    \end{matrix} \right).
\end{equation}
Therefore clearly 
\begin{equation}
    \left\|\omega_h\right\|_{\dot{H}^\alpha}=
    \left\|\partial_3 u-\nabla u_3\right\|_{\dot{H}^\alpha}.
\end{equation}
Next we observe that because $\nabla \cdot u=0,$ then clearly $\nabla \cdot \partial_3 u=0.$ Therefore $\partial_3 u$ and $\nabla u_3$ are orthogonal in $\dot{H}^\alpha,$ so
\begin{equation}
    \left<\partial_3 u, \nabla u_3 \right>_{\dot{H}^\alpha}=0.
\end{equation}
This means we can compute that
\begin{align}
    \left\|\omega_h\right\|_{\dot{H}^\alpha}^2
    &=
    \left\|\partial_3 u-\nabla u_3\right\|_{\dot{H}^\alpha}^2\\
    &=
    \|\partial_3 u\|_{\dot{H}^\alpha}^2+
    \|\nabla u_3\|_{\dot{H}^\alpha}^2\\
    &=\left\|\partial_3 u+\nabla  u_3\right\|_{\dot{H}^\alpha}^2\\
    &=
    \left\|v^3\right\|_{\dot{H}^\alpha}^2.
\end{align}
This completes the first part of the proof.
Finally we see that
\begin{align}
\left|S_h\right|^2&=2S_{13}^2+2S_{23}^2 \\
&\leq
2S_{13}^2+2S_{23}^2+2 S_{33}^2 \\
&=
\frac{1}{2}\left|v^3\right|^2. 
\end{align}
Therefore we can conclude that
\begin{align}
    \|S_h\|_{\dot{H}^\alpha}^2&\leq
    \frac{1}{2}\left\|v^3\right\|_{\dot{H}^\alpha}^2\\
    &=
    \frac{1}{2}\|\omega_h\|_{\dot{H}^\alpha}^2.
\end{align}
This completes the proof.
\end{proof}
\begin{remark}
Another way to see this isometry, is that 
\begin{equation}
    \|S e_3\|_{\dot{H}^\alpha}^2=\frac{1}{4}
    \|e_3 \times \omega\|_{\dot{H}^\alpha}^2.
\end{equation}
In fact, for any fixed vector $v\in \mathbb{R}^3$ we will have
\begin{equation}
    \|S v\|_{\dot{H}^\alpha}^2=\frac{1}{4}
    \|v \times \omega\|_{\dot{H}^\alpha}^2.
\end{equation}
This is directly related to Proposition \ref{StrainIsometry}, because
\begin{equation}
\begin{split}
    \|S\|_{\dot{H}^\alpha}^2 &=
    \|S e_1\|_{\dot{H}^\alpha}^2+
    \|S e_2\|_{\dot{H}^\alpha}^2+
    \|S e_3\|_{\dot{H}^\alpha}^2\\
    &= \frac{1}{4}\left(
    \|e_1 \times \omega\|_{\dot{H}^\alpha}^2+
    \|e_2 \times \omega\|_{\dot{H}^\alpha}^2+
    \|e_3 \times \omega\|_{\dot{H}^\alpha}^2\right)
    \\
    &=\frac{1}{2}\|\omega
    \|_{\dot{H}^\alpha}^2.
\end{split}
\end{equation}
This shows that the isometry between the symmetric and anti-symmetric part of the gradient, between strain and vorticity, not only holds overall, but also in any fixed direction. Physically, this means the total amount of stretching compression along any axis $v,$ as measured by $\|Sv\|_{\dot{H}^\alpha}^2$ is equivalent to the amount of vorticity perpendicular to the axis $v,$ as measured by 
$\frac{1}{4}\|v\times \omega\|_{\dot{H}^\alpha}^2.$
\end{remark}

This isometry, together with the identity for enstrophy growth in Proposition \ref{TripleProduct}, will allow us to prove a new bound on the growth of enstrophy in terms of the critical Hilbert norm of $\omega_h.$ Before we proceed with this estimate, we will note that there is also a generalization of this result in $L^q.$
The $L^q$ norms of $v^3$ and $\omega_h$ are also equivalent, although not necessarily equal.

\begin{proposition} \label{Lq2Vort}
Fix $1<q<+\infty$ and let $B_q \geq 1$ be the constant from the Helmholtz decomposition,
Proposition \ref{Helmholtz}. Then for all $u \in \dot{W}^{1,q}_{df}
\left( \mathbb{R}^3 \right),$ 
\begin{equation}
    \frac{1}{2 B_q}\|\omega_h\|_{L^q}
    \leq \|v^3\|_{L^q}\leq
    2 B_q \|\omega_h\|_{L^q}.
\end{equation}
\end{proposition}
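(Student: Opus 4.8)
The plan is to exploit the relationship $v^3 = \partial_3 u + \nabla u_3$ together with the fact, already used in Lemma~\ref{2CompIsometry}, that $\partial_3 u - \nabla u_3 = (\omega_2, -\omega_1, 0) = -e_3 \times \omega_h^{\perp}$ (up to sign conventions), so $\omega_h$ and $v^3$ differ only by the relative sign of the two pieces $\partial_3 u$ and $\nabla u_3$. The key observation is that each of $\partial_3 u$, $\nabla u_3$, $\omega_h$, and $v^3$ can be written as a Fourier multiplier of $\omega$ composed with projections, but since we are working in $L^q$ (not $L^2$) we cannot use orthogonality and must instead invoke the $L^q$ boundedness of the relevant singular-integral operators, which is exactly what the constant $B_q$ from the Helmholtz decomposition (Proposition~\ref{Helmholtz}) quantifies.

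First I would express $\nabla u_3$ in terms of $\omega$. Writing $u = \nabla \times (-\Delta)^{-1}\omega$ (Biot--Savart), one gets $u_3 = \big((-\Delta)^{-1}(\partial_1 \omega_2 - \partial_2 \omega_1)\big)$, hence $\nabla u_3 = \nabla(-\Delta)^{-1}(\partial_1 \omega_2 - \partial_2 \omega_1) = -R\,R_h\cdot \omega_h$ in Riesz-transform notation, which is of the form $P_g$ applied to a rearrangement of $\omega_h$; by Proposition~\ref{Helmholtz} this is bounded on $L^q$ with constant $\le B_q \|\omega_h\|_{L^q}$. Similarly $\partial_3 u$: since $\partial_3 u = \partial_3 \nabla \times (-\Delta)^{-1}\omega$, the third component $\partial_3 u_3 = -\nabla u_3$ contribution is already controlled, while $\partial_3 u_h = \partial_3 u - e_3(\partial_3 u_3)$ should likewise reduce, using $\nabla\cdot u = 0$ to rewrite $\partial_3 u_3 = -\partial_1 u_1 - \partial_2 u_2$, to a Calder\'on--Zygmund operator applied to $\omega_h$ with norm bounded by a multiple of $B_q$. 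Collecting these, $\|v^3\|_{L^q} \le \|\partial_3 u\|_{L^q} + \|\nabla u_3\|_{L^q} \le 2B_q\|\omega_h\|_{L^q}$, and the reverse inequality follows symmetrically since $\omega_h = (\partial_3 u - \nabla u_3)$-type expression is recovered from $v^3$ by the same operators: writing $\partial_3 u$ and $\nabla u_3$ each as bounded-on-$L^q$ functions of $v^3$ (again via Helmholtz/Leray projections, since $v^3 = \partial_3 u + \nabla u_3$ is the sum of a divergence-free field and a gradient, so $\partial_3 u = P_{df}v^3$ and $\nabla u_3 = P_g v^3$), one gets $\|\omega_h\|_{L^q} \le \|\partial_3 u\|_{L^q} + \|\nabla u_3\|_{L^q} \le 2B_q \|v^3\|_{L^q}$, which rearranges to the stated lower bound.

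The main obstacle, I expect, is making the decomposition $v^3 = \partial_3 u + \nabla u_3 = P_{df}v^3 + P_g v^3$ rigorous: one must check that $\partial_3 u$ is genuinely the Leray projection of $v^3$ (i.e.\ that $\partial_3 u$ is divergence-free, which is immediate from $\nabla\cdot u = 0$) and that $\nabla u_3$ is a gradient in the weak sense of Proposition~\ref{Helmholtz}, and then one needs the uniqueness part of the Helmholtz decomposition to conclude $P_{df}v^3 = \partial_3 u$ and $P_g v^3 = \nabla u_3$ exactly. Given that, the two inequalities drop out immediately from the operator-norm bound $\|P_{df}v\|_{L^q}, \|P_g v\|_{L^q} \le B_q\|v\|_{L^q}$ applied once with $v = v^3$ (giving the upper bound on $\|v^3\|$ in terms of... wait, the other direction) — more precisely: from $\omega_h = $ [bounded operator]$(v^3)$ we get the lower bound, and from $v^3 = $ [bounded operator]$(\omega_h)$ we get the upper bound, each losing at most a factor $2B_q$ because we split into the two pieces $\partial_3 u$ and $\nabla u_3$. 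The factor of $\tfrac12$ versus $2$ in the statement reflects exactly this two-term split on each side. No hard analysis beyond the classical $L^q$ theory of Riesz transforms is needed; the content is purely bookkeeping of which multiplier hits which component.
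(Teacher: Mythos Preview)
Your proposal is correct and, once you arrive at the third paragraph, it is exactly the paper's argument: the Helmholtz decomposition of $v^3=\partial_3 u+\nabla u_3$ gives $\partial_3 u=P_{df}(v^3)$ and $\nabla u_3=P_g(v^3)$, whence $\|\omega_h\|_{L^q}\le \|\partial_3 u\|_{L^q}+\|\nabla u_3\|_{L^q}\le 2B_q\|v^3\|_{L^q}$. The only point on which the paper is cleaner is the \emph{other} inequality: your first two paragraphs reach for Biot--Savart and explicit Riesz-transform formulas to control $\partial_3 u$ and $\nabla u_3$ by $\omega_h$, whereas the paper simply repeats the same trick symmetrically---the vector $(\omega_2,-\omega_1,0)=\partial_3 u-\nabla u_3$ is \emph{also} a sum of a divergence-free field and a gradient, so $\partial_3 u=P_{df}(\partial_3 u-\nabla u_3)$ and $-\nabla u_3=P_g(\partial_3 u-\nabla u_3)$, giving $\|\partial_3 u\|_{L^q},\|\nabla u_3\|_{L^q}\le B_q\|\omega_h\|_{L^q}$ directly and hence $\|v^3\|_{L^q}\le 2B_q\|\omega_h\|_{L^q}$. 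Your Biot--Savart computation would of course reduce to this (the expression you wrote for $\nabla u_3$ \emph{is} $-P_g$ of the rearranged $\omega_h$), but the symmetric application of Helmholtz avoids all of that bookkeeping.
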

\begin{proof}
As we have already seen,
\begin{equation}
    \partial_3 u-\nabla u_3 = 
    \left( \begin{matrix}
    \omega_2 \\ -\omega_1 \\ 0
    \end{matrix} \right),
\end{equation}
so clearly
\begin{equation}
    \|\omega_h\|_{L^q}=
    \|\partial_3 u-\nabla u_3\|_{L^q}.
\end{equation}
Observing that $\partial_3 u=
P_{df}\left(\partial_3 u-\nabla u_3\right),$ and
$\nabla u_3=
-P_{g}\left(\partial_3 u-\nabla u_3\right),$ we can apply Proposition \ref{Helmholtz} and find that 
\begin{align}
    \|\partial_3 u\|_{L^q} &\leq B_q
    \|\omega_h\|_{L^q},\\
    \|\nabla u_3\|_{L^q} &\leq B_q
    \|\omega_h\|_{L^q}.
\end{align}
Recalling that $v^3=\partial_3 u+\nabla u_3,$ we apply the triangle inequality and find that
\begin{align}
    \|v^3\|_{L^q}&\leq 
    \|\partial_3 u\|_{L^q}+\|\nabla u_3\|_{L^q}\\
    &\leq 2 B_q \|\omega_h\|_{L^q}.
\end{align}
We have proven the second inequality. Now we need to show that $\|\omega_h\|_{L^q} \leq 
2 B_q\|v^3\|_{L^q}.$
The argument is essentially the same.
Observe that $\partial_3 u=P_{df}\left(v^3\right)$
and $\nabla u_3=P_{g}\left(v^3\right).$ Therefore from Proposition \ref{Helmholtz}, we find that 
\begin{align}
    \|\partial_3 u\|_{L^q} &\leq B_q
    \|v^3\|_{L^q},\\
    \|\nabla u_3\|_{L^q} &\leq B_q
    \|v^3\|_{L^q}.
\end{align}
Applying the triangle inequality, we find that
\begin{align}
    \|\omega_h\|_{L^q}&\leq
    \|\partial_3 u\|_{L^q}+\|\nabla u_3\|_{L^q}\\
    &\leq 2 B_q \|v^3\|_{L^q}.
\end{align}
This completes the proof.
\end{proof}

\begin{proposition} \label{HorizontalVort}
Taking $C_1$ and $C_2$ as in Lemmas \ref{FractionalSobolev} and \ref{SobolevIneq},
let $R_1=\frac{1}{2 C_1 C_2}.$ Then for all mild solutions to the Navier--Stokes equation $u\in C \left ([0,T_{max});H^1_{df} \right ),$ we have
\begin{equation}
    \partial_t \|\omega (\cdot,t) \|_{L^2}^2 \leq
    -\frac{2}{R_1}\|\omega\|_{\dot{H}^1}^2\left(
    R_1 \nu - \|\omega_h\|_{\dot{H}^{-\frac{1}{2}}}
    \right ).
\end{equation}
In particular, if $T_{max}<+\infty,$ then
\begin{equation}
    \limsup_{t \to T_{max}}
    \|\omega_h(\cdot,t)
    \|_{\dot{H}^{-\frac{1}{2}}}
    \geq R_1 \nu.
\end{equation}
\end{proposition}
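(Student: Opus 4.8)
The plan is to start from the identity for enstrophy growth in Proposition \ref{TripleProduct},
\begin{equation*}
    \partial_t \|S(\cdot,t)\|_{L^2}^2 = -2\nu\|S\|_{\dot{H}^1}^2 - \frac{1}{2}\int_{\mathbb{R}^3}(v^1\times v^2)\cdot v^3,
\end{equation*}
and to bound the cubic term using the triple-product structure together with the isometry of Lemma \ref{2CompIsometry}. The key point is that $v^3$ appears linearly in $(v^1\times v^2)\cdot v^3$, so we can estimate $\left|\int (v^1\times v^2)\cdot v^3\right| \le \|v^1\times v^2\|_{\dot{H}^{1/2}}\|v^3\|_{\dot{H}^{-1/2}}$ by the duality pairing between $\dot H^{1/2}$ and $\dot H^{-1/2}$. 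By Lemma \ref{2CompIsometry} with $\alpha = -\tfrac12$, $\|v^3\|_{\dot{H}^{-1/2}} = \|\omega_h\|_{\dot{H}^{-1/2}}$, which is exactly the quantity we want to see.

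Next I would control $\|v^1\times v^2\|_{\dot{H}^{1/2}}$. Since $v^1\times v^2$ is a product of two components of $2S$, the natural route is: bound it in $L^3$ by the product $\|v^1\|_{L^6}\|v^2\|_{L^6}$ (Hölder), and then pass from $L^3$ to $\dot H^{1/2}$ — but here the inequality goes the wrong way, so instead I would use the other direction of the fractional Sobolev inequality in Lemma \ref{FractionalSobolev}, namely $\|g\|_{\dot H^{-1/2}}\le C_1\|g\|_{L^{3/2}}$, applied to $g = v^1\times v^2$ via the pairing: $\left|\int (v^1\times v^2)\cdot v^3\right| \le \|v^1\times v^2\|_{\dot H^{-1/2}}\|v^3\|_{\dot H^{1/2}}$. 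Wait — the cleaner choice is to put the derivative on $v^3$: estimate $\left|\int(v^1\times v^2)\cdot v^3\right|\le \|v^1\times v^2\|_{L^{3/2}}\,\|v^3\|_{L^3}$, then $\|v^1\times v^2\|_{L^{3/2}}\le \|v^1\|_{L^3}\|v^2\|_{L^3}\le C_1^2\|v^1\|_{\dot H^{1/2}}\|v^2\|_{\dot H^{1/2}}$ by Hölder and Lemma \ref{FractionalSobolev}, while $\|v^3\|_{L^3}$ needs $\dot H^{1/2}$ control of $v^3$ as well. This symmetric treatment would land on $C_1^3\|S\|_{\dot H^{1/2}}^2\|v^3\|_{\dot H^{1/2}}$-type bounds, which is \emph{not} what we want — we need the dimensionful smallness to sit in $\|\omega_h\|_{\dot H^{-1/2}}$, a lower-regularity norm. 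So the right move is the asymmetric one: keep $\|v^3\|_{\dot H^{-1/2}} = \|\omega_h\|_{\dot H^{-1/2}}$ and pay for $\|v^1\times v^2\|_{\dot H^{1/2}}$ by writing $\|v^1\times v^2\|_{\dot H^{1/2}}\lesssim \|v^1\|_{L^\infty}\|v^2\|_{\dot H^{1/2}} + \cdots$ — but $L^\infty$ is unavailable. The workable version is to use $\dot H^{1/2}(\mathbb R^3)\hookrightarrow L^3$ and the product estimate $\|fg\|_{\dot H^{-1/2}}\le C\|f\|_{\dot H^{1/2}}\|g\|_{L^3}$-style bounds: concretely, $\|v^1\times v^2\|_{L^{3/2}}\le\|v^1\|_{L^3}\|v^2\|_{L^3}\le C_1^2\|S\|_{\dot H^{1/2}}^2$, hence by Lemma \ref{FractionalSobolev} the pairing is $\le C_1^2\|S\|_{\dot H^{1/2}}^2\cdot\|v^3\|_{L^3}$ — still symmetric. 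To break the symmetry one must instead bound $\|v^1\times v^2\|_{\dot H^{1/2}}\le C(\|v^1\|_{L^6}\|\nabla v^2\|_{L^2}+\|v^2\|_{L^6}\|\nabla v^1\|_{L^2})\le 2C_2\|S\|_{\dot H^1}^2$ using Lemma \ref{SobolevIneq} (this is where $C_2$ and the product $C_1C_2$ in $R_1 = \tfrac{1}{2C_1C_2}$ come from), so that
\begin{equation*}
    \Bigl|\int(v^1\times v^2)\cdot v^3\Bigr|\le \|v^1\times v^2\|_{\dot H^{1/2}}\|v^3\|_{\dot H^{-1/2}}\le 2C_2\|S\|_{\dot H^1}^2\,\|\omega_h\|_{\dot H^{-1/2}}.
\end{equation*}

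From there the argument closes quickly: plug this into Proposition \ref{TripleProduct}, use $\|S\|_{\dot H^1}^2 = \tfrac12\|\omega\|_{\dot H^1}^2$ and $\|S\|_{L^2}^2 = \tfrac12\|\omega\|_{L^2}^2$ from Proposition \ref{StrainIsometry}, and collect constants to get
\begin{equation*}
    \partial_t\|\omega(\cdot,t)\|_{L^2}^2 \le -2\nu\|\omega\|_{\dot H^1}^2 + 2C_2\,\|\omega\|_{\dot H^1}^2\,\|\omega_h\|_{\dot H^{-1/2}},
\end{equation*}
wait — I need to recheck the factor; writing $R_1 = \tfrac{1}{2C_1C_2}$ suggests a $C_1$ enters when estimating $\|v^1\times v^2\|_{\dot H^{1/2}}$ via an intermediate $L^{3/2}$ or $\dot H^{-1/2}$ step rather than directly, so the true chain is $\|v^1\times v^2\|_{\dot H^{1/2}}\le 2C_1C_2\|S\|_{\dot H^1}^2$ — say — giving $\partial_t\|\omega\|_{L^2}^2\le -\tfrac{2}{R_1}\|\omega\|_{\dot H^1}^2(R_1\nu - \|\omega_h\|_{\dot H^{-1/2}})$ as stated. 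The second claim is then immediate: if $\|\omega_h(\cdot,t)\|_{\dot H^{-1/2}} < R_1\nu$ for all $t<T_{max}$, the right side is $\le 0$, so $E(t)\le E_0$ stays bounded, and by the standard continuation criterion (enstrophy bounded implies the mild solution extends, as used repeatedly above), $T_{max} = +\infty$; contrapositively, finite-time blowup forces $\limsup_{t\to T_{max}}\|\omega_h(\cdot,t)\|_{\dot H^{-1/2}}\ge R_1\nu$.

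The main obstacle is the $\dot H^{1/2}$-product estimate for $v^1\times v^2$ with the correct sharp constant $2C_1C_2$: one must choose exactly the right sequence of Hölder, Sobolev (Lemma \ref{SobolevIneq}), and fractional-Sobolev (Lemma \ref{FractionalSobolev}) steps so that (i) the derivative lands entirely on the "large" factors $v^1,v^2$ (equivalently on $S$), leaving $v^3$ measured in the negative-regularity norm $\dot H^{-1/2}$ where $\|v^3\|_{\dot H^{-1/2}} = \|\omega_h\|_{\dot H^{-1/2}}$ exactly, and (ii) the resulting constant is $2C_1C_2$ and not something larger coming from a suboptimal ordering (the excerpt has already flagged, in the $C_1^3 < C_2^{3/2}$ discussion, that the order of interpolation versus Sobolev embedding affects the constant). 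Everything else — the isometries, the continuation criterion, the sign analysis — is routine given the results proved earlier in the excerpt.
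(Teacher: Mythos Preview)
Your approach is exactly the paper's: duality pairs $v^3$ in $\dot H^{-1/2}$ against $v^1\times v^2$ in $\dot H^{1/2}$, Lemma~\ref{2CompIsometry} replaces $\|v^3\|_{\dot H^{-1/2}}$ by $\|\omega_h\|_{\dot H^{-1/2}}$, and the bilinear factor is estimated by $\|S\|_{\dot H^1}^2$. The two loose ends you flagged are resolved as follows. The $C_1$ enters via the identity $\|g\|_{\dot H^{1/2}}=\|\nabla g\|_{\dot H^{-1/2}}$ followed by Lemma~\ref{FractionalSobolev}:
\[
\|v^1\times v^2\|_{\dot H^{1/2}}=\|\nabla(v^1\times v^2)\|_{\dot H^{-1/2}}\le C_1\|\nabla(v^1\times v^2)\|_{L^{3/2}},
\]
after which Leibniz, H\"older ($L^2\cdot L^6\hookrightarrow L^{3/2}$) and Lemma~\ref{SobolevIneq} give $\le 2C_1C_2\|\nabla v^1\|_{L^2}\|\nabla v^2\|_{L^2}$. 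For the constant, do not forget that $v^i$ is a column of $2S$, so $\|\nabla v^i\|_{L^2}\le 2\|S\|_{\dot H^1}$; combined with the $\tfrac12$ in Proposition~\ref{TripleProduct} this yields the coefficient $4C_1C_2=\tfrac{2}{R_1}$ in front of $\|\omega_h\|_{\dot H^{-1/2}}\|S\|_{\dot H^1}^2$, and the strain--vorticity isometry finishes. Finally, your contrapositive as written only gives $\sup_{[0,T_{max})}\|\omega_h\|_{\dot H^{-1/2}}\ge R_1\nu$; to upgrade to the $\limsup$ you must localise: for every $\epsilon>0$ enstrophy cannot be nonincreasing on $(T_{max}-\epsilon,T_{max})$, so some $t$ in that interval has $\partial_t\|\omega(\cdot,t)\|_{L^2}^2>0$, forcing $\|\omega_h(\cdot,t)\|_{\dot H^{-1/2}}>R_1\nu$ there.
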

\begin{proof}
We begin by applying Proposition \ref{TripleProduct}, Lemma \ref{2CompIsometry}, and the duality of $\dot{H}^{-\frac{1}{2}}$ and $\dot{H}^\frac{1}{2}.$ 
We find that:
\begin{align}
    \partial_t \|S(\cdot,t) \|_{L^2}^2&=
    -2 \nu \|S\|_{\dot{H}^1}^2-\frac{1}{2}\int_{\mathbb{R}^3}
    \left (v^1 \times v^2\right ) \cdot v^3\\
    &\leq 
    -2 \nu \|S\|_{\dot{H}^1}^2+\frac{1}{2}
    \left\|v^3\right\|_{\dot{H}^{-\frac{1}{2}}}
    \left\|v^1 \times v^2\right\|_{\dot{H}^\frac{1}{2}}\\
    &=
    -2 \nu \|S\|_{\dot{H}^1}^2+\frac{1}{2}
    \|\omega_h\|_{\dot{H}^{-\frac{1}{2}}}
    \left\|v^1 \times v^2\right\|_{\dot{H}^\frac{1}{2}}\\
    &=
    -2 \nu \|S\|_{\dot{H}^1}^2+\frac{1}{2}
    \|\omega_h\|_{\dot{H}^{-\frac{1}{2}}}
    \left\|\nabla \left(v^1 \times v^2\right)
    \right\|_{\dot{H}^{-\frac{1}{2}}}.
    \end{align}
    Next we apply the fractional Sobolev inequality, the chain rule for gradients, the generalized H\"older inequality, and the Sobolev inequality to find that
    \begin{align}
    \partial_t \|S(\cdot,t) \|_{L^2}^2&\leq 
    -2 \nu \|S\|_{\dot{H}^1}^2+\frac{1}{2} C_1
    \|\omega_h\|_{\dot{H}^{-\frac{1}{2}}}
    \left\|\nabla \left (v^1 \times v^2
    \right )\right\|_{L^\frac{3}{2}}\\
    &\leq 
    -2 \nu \|S\|_{\dot{H}^1}^2+\frac{1}{2} C_1
    \|\omega_h\|_{\dot{H}^{-\frac{1}{2}}}
    \left ( \left\|\left(\left|\nabla v^1\right| 
    \left|v^2\right|\right) \right\|_{L^\frac{3}{2}}+ 
    \left\|\left(\left|v^1\right| \left|\nabla v^2\right|\right)
    \right\|_{L^\frac{3}{2}} \right )\\
    &\leq 
    -2 \nu \|S\|_{\dot{H}^1}^2+\frac{1}{2} C_1
    \|\omega_h\|_{\dot{H}^{-\frac{1}{2}}}
    \left ( \left\|\nabla v^1\right\|_{L^2} 
    \left\|v^2\right\|_{L^6}+ 
    \left\|v^1\right\|_{L^6} 
    \left\|\nabla v^2\right\|_{L^2}\right )\\
    &\leq 
    -2 \nu \|S\|_{\dot{H}^1}^2+C_1 C_2
    \|\omega_h\|_{\dot{H}^{-\frac{1}{2}}}
   \left\|\nabla v^1\right\|_{L^2} 
   \left\|\nabla v^2\right \|_{L^2}.
\end{align}
Finally observe that the vectors $v^i$ are the columns of $2S,$ so
\begin{align}
    \left\|\nabla v^i\right\|_{L^2}
    &=
    \left\|v^i\right\|_{\dot{H}^1}\\
    &\leq
    2\|S\|_{\dot{H}^1}.
\end{align}
Therefore we find that
\begin{equation}
     \partial_t \|S(\cdot,t) \|_{L^2}^2 \leq 
    -2 \nu \|S\|_{\dot{H}^1}^2+4 C_1 C_2
    \|\omega_h\|_{\dot{H}^{-\frac{1}{2}}}
    \|S\|_{\dot{H}^1}^2.
\end{equation}
Applying Proposition \ref{StrainIsometry} and
recalling that $\frac{1}{R_1}=2 C_1 C_2,$, 
we find that
\begin{align}
   \partial_t \|\omega(\cdot,t) \|_{L^2}^2 
   &\leq
    -2\nu \|\omega \|_{\dot{H}^1}^2+4 C_1 C_2
    \|\omega_h\|_{\dot{H}^{-\frac{1}{2}}}
    \|\omega\|_{\dot{H}^1}^2\\
    &=
    -\frac{2}{R_1}\|\omega\|_{\dot{H}^2}^2 
    \left (R_1 \nu -
    \|\omega_h\|_{\dot{H}^{-\frac{1}{2}}}
    \right ).
\end{align}
This completes the proof of the bound.

Now we will prove the second piece. Suppose $T_{max}<+\infty.$ Then
\begin{equation}
    \limsup_{t \to T_{max}}\|\omega(\cdot,t)
    \|_{L^2}^2=+\infty.
\end{equation}
Therefore, for all $\epsilon>0,$ there exists
$t\in(T_{max}-\epsilon,T_{max}),$ such that 
$\partial_{t}\|\omega(\cdot,t)\|_{L^2}>0.$
Applying the bound we have just proven, this implies that for all $\epsilon>0,$ there exists $t \in (T_{max}-\epsilon,T_{max})$ such that
\begin{equation}
    \|\omega_h(\cdot,t)
    \|_{\dot{H}^{-\frac{1}{2}}}>R_1 \nu.
\end{equation}
Therefore,
\begin{equation}
    \limsup_{t \to T_{max}}\|\omega_h(\cdot,t)
    \|_{\dot{H}^{-\frac{1}{2}}}\geq R_1\nu.
\end{equation}
This completes the proof.
\end{proof}

We will note that this is the $\dot{H}^{-\frac{1}{2}}$ version of a theorem proved in $L^\frac{3}{2}$ by Chae and Choe in \cite{Vort2}. Their result is the following.
\begin{theorem} \label{Chae2Vort}
Let $u \in C\left([0,T_{max});\dot{H}^1_{df}\right ),$ be a mild solution to the Navier--Stokes equation. There exists $C>0$ independent of $\nu$ such that
if $T_{max}<+\infty,$ 
then
\begin{equation}
    \limsup_{t \to T_{max}}
    \|\omega_h\|_{L^\frac{3}{2}}\geq C \nu.
\end{equation}
Furthermore, for all $\frac{3}{2}<q<+\infty,$ let $\frac{3}{q}+\frac{2}{p}=2.$ There exists $C_q>0$ defending on only $q$ and $\nu$ such that for all $0\leq t<T_{max}$
\begin{equation}
    E(t)\leq E_0 \exp\left(C_q \int_0^t
    \|\omega_h(\cdot,\tau)\|_{L^q}^p \diff \tau \right).
\end{equation}
\end{theorem}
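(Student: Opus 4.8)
The plan is to mirror the proof of Proposition \ref{HorizontalVort}, replacing the $\dot{H}^{-\frac{1}{2}}$--$\dot{H}^{\frac{1}{2}}$ duality with $L^{\frac{3}{2}}$--$L^3$ duality for the first part and with $L^q$--$L^{q'}$ duality for the second, and replacing Lemma \ref{2CompIsometry} by Proposition \ref{Lq2Vort} when converting the column norm $\|v^3\|_{L^q}$ into $\|\omega_h\|_{L^q}$. I would again begin from the enstrophy growth identity of Proposition \ref{TripleProduct},
\[
\partial_t \|S\|_{L^2}^2 = -2\nu\|S\|_{\dot{H}^1}^2 - \tfrac{1}{2}\int_{\mathbb{R}^3}(v^1\times v^2)\cdot v^3,
\]
and estimate the cubic term by isolating the factor $v^3$, which carries the $\omega_h$ information, and paying for the remaining two columns with the dissipation.

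First I would treat the endpoint $q=\frac{3}{2}$, which yields the lower bound. By H\"older's inequality with the dual pair $L^{\frac{3}{2}}$--$L^3$, the pointwise bound $|v^1\times v^2|\leq |v^1||v^2|$, and the generalized H\"older inequality with $L^6$--$L^6$, I would obtain
\[
\left|\int (v^1\times v^2)\cdot v^3\right| \leq \|v^3\|_{L^{\frac{3}{2}}}\|v^1\|_{L^6}\|v^2\|_{L^6}.
\]
Applying Lemma \ref{SobolevIneq} to each $\|v^i\|_{L^6}$, the bound $\|v^i\|_{\dot{H}^1}\leq 2\|S\|_{\dot{H}^1}$, Proposition \ref{Lq2Vort} to replace $\|v^3\|_{L^{\frac{3}{2}}}$ by a multiple of $\|\omega_h\|_{L^{\frac{3}{2}}}$, and converting via Proposition \ref{StrainIsometry}, I would arrive at
\[
\partial_t\|\omega\|_{L^2}^2 \leq -2\|\omega\|_{\dot{H}^1}^2\left(\nu - \tfrac{1}{C}\|\omega_h\|_{L^{\frac{3}{2}}}\right),
\]
where $C$ is built only from $B_{\frac{3}{2}}$ and $C_2$, hence independent of $\nu$. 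The lower bound then follows exactly as in Proposition \ref{HorizontalVort}: if $T_{max}<+\infty$ the enstrophy cannot be eventually nonincreasing, so at times approaching $T_{max}$ the right-hand side must be positive, forcing $\|\omega_h\|_{L^{\frac{3}{2}}}>C\nu$ and hence the claimed $\limsup$.

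For the exponential estimate with general $\frac{3}{2}<q<+\infty$, I would repeat the H\"older argument with the dual pair $L^q$--$L^{q'}$, $q'=\frac{q}{q-1}$, splitting $\|v^1\times v^2\|_{L^{q'}}\leq \|v^1\|_{L^a}\|v^2\|_{L^a}$ with $a=\frac{2q}{q-1}\in(2,6]$, then interpolating $\|v^i\|_{L^a}\leq \|v^i\|_{L^2}^{1-\theta}\|v^i\|_{L^6}^{\theta}$ with $\theta=\frac{3}{2q}$ before applying Lemma \ref{SobolevIneq}, and using Proposition \ref{Lq2Vort} on the $v^3$ factor. This produces
\[
\partial_t\|S\|_{L^2}^2 \leq -2\nu\|S\|_{\dot{H}^1}^2 + C'\|\omega_h\|_{L^q}\|S\|_{L^2}^{2(1-\theta)}\|S\|_{\dot{H}^1}^{2\theta},
\]
with $2\theta=\frac{3}{q}<2$. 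I would then apply Young's inequality to the last term with exponents $\frac{1}{1-\theta}$ and $\frac{1}{\theta}$, choosing the splitting parameter so that a $\nu\|S\|_{\dot{H}^1}^2$ piece is absorbed into the dissipation; the leftover is $C_q\|\omega_h\|_{L^q}^{p}\|S\|_{L^2}^2$ with $p=\frac{1}{1-\theta}=\frac{2q}{2q-3}$, which is precisely the $p$ determined by $\frac{3}{q}+\frac{2}{p}=2$. Converting with Proposition \ref{StrainIsometry} gives $E'(t)\leq C_q\|\omega_h\|_{L^q}^{p}E(t)$, and Gr\"onwall's inequality yields the stated bound.

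The main obstacle is the exponent bookkeeping in this final step: one must check that the interpolation exponent $\theta=\frac{3}{2q}$, the admissible range $a\in(2,6]$, and the Young pairing all align so that the dissipation absorbs exactly the $\|S\|_{\dot{H}^1}^2$ term while leaving a clean $\|S\|_{L^2}^2$ factor and the correct Serrin-type exponent $p$. These constraints are forced by the scaling relation $\frac{3}{q}+\frac{2}{p}=2$, so dimensional analysis dictates the answer, but it is the one place where a slip in the fractional exponents would break the argument. The endpoint $q=\frac{3}{2}$ is genuinely singular, since there $\theta=1$ and $p\to+\infty$, which is why it cannot be handled by Young's inequality and must instead be treated by the separate direct argument giving the $\limsup$ lower bound.
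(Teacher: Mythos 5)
Your proposal is correct, but there is an important point of comparison: the paper never proves Theorem \ref{Chae2Vort} at all. It is quoted as Chae and Choe's theorem from \cite{Vort2}, and the paper's only original contribution in that vicinity is the observation, via Proposition \ref{Lq2Vort}, that for $\frac{3}{2}<q<+\infty$ the theorem is equivalent to the $v=e_3$ special case of Theorem \ref{IntroStrain}. What you have done is turn that observation into a self-contained proof: your endpoint argument is a transplant of Proposition \ref{HorizontalVort} with the $\dot{H}^{-\frac{1}{2}}$--$\dot{H}^{\frac{1}{2}}$ duality replaced by $L^{\frac{3}{2}}$--$L^{3}$ duality and Lemma \ref{2CompIsometry} replaced by Proposition \ref{Lq2Vort}, and your subcritical argument proves directly the estimate the paper only cites. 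The bookkeeping checks out: $a=\frac{2q}{q-1}\in(2,6)$ (not $(2,6]$, since $q>\frac{3}{2}$ strictly, though this is immaterial), $\theta=\frac{3}{2q}$, and Young's inequality with conjugate exponents $\frac{1}{\theta},\frac{1}{1-\theta}$ leaves exactly $p=\frac{1}{1-\theta}=\frac{2q}{2q-3}$, which satisfies $\frac{3}{q}+\frac{2}{p}=2$; the endpoint constant $C=\frac{1}{2B_{3/2}C_2^2}$ is independent of $\nu$ as required, while $C_q$ picks up a negative power of $\nu$ from the Young splitting, consistent with the statement. The one fact you invoke without proof, that $T_{max}<+\infty$ forces $\limsup_{t\to T_{max}}\|\omega(\cdot,t)\|_{L^2}=+\infty$, is precisely the fact the paper itself uses without proof in Proposition \ref{HorizontalVort} and in Theorem \ref{Almost2d}, so you are on equal footing there. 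In short, your route yields something the paper does not provide: an explicit, quantitative derivation with constants built only from $B_q$ and $C_2$, in place of an appeal to the literature.
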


Proposition \ref{HorizontalVort} extends the result of Chae and Choe from a lower bound on $\omega_h$ in $L^\frac{3}{2}$ near a possible singularity to a lower bound in $\dot{H}^{-\frac{1}{2}}$ near a possible singularity. The analysis of the relationship between $\omega_h$ and $v^3$ also sheds some light on a relationship between Theorem \ref{Chae2Vort} and the following theorem prove by the author in \cite{MillerStrain}.

\begin{theorem}[Blowup requires the strain to blow up in every direction] \label{IntroStrain}
Let $u \in C\left([0,T_{max});\dot{H}^1_{df}\right ),$ be a mild solution to the Navier--Stokes equation and let 
$v \in L^\infty\left (\mathbb{R}^3 \times [0,T_{max});\mathbb{R}^3\right ),$ 
with $|v(x,t)|=1$ almost everywhere.
If $\frac{2}{p}+\frac{3}{q}=2,$ with $\frac{3}{2}<q \leq + \infty,$ then there exists $C_q>0,$ depending on only $q$ and $\nu,$ such that
\begin{equation}
E(t) \leq 
E_0
\exp \left (C_q \int_{0}^T
||S(\cdot,t)v(\cdot,t)||_{L^q}^p dt \right ).
\end{equation}

If $T_{max}<+\infty,$ then 
\begin{equation}
\int_{0}^{T_{max}}
||S(\cdot,t)v(\cdot,t)||_{L^q}^p dt= + \infty.
\end{equation}
In particular, if $T_{max}<+\infty,$ letting $v(x,t)=e_3,$ then
\begin{equation}
\int_{0}^{T_{max}}||(\partial_3 u+\nabla u_3)(\cdot,t)||_{L^q}^p dt= + \infty.
\end{equation}
\end{theorem}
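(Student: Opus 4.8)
The plan is to reduce the whole statement to a Gr\"onwall inequality for the enstrophy $E(t)=\|S(\cdot,t)\|_{L^2}^2$, with the crucial input being the identity of Proposition~\ref{StrainGrowth},
\begin{equation*}
\partial_t\|S(\cdot,t)\|_{L^2}^2=-2\nu\|S\|_{\dot H^1}^2-4\int_{\mathbb{R}^3}\det(S),
\end{equation*}
combined with the elementary pointwise inequality $|\det S(x)|\leq |S(x)v(x)|\,|S(x)|^2$, where $|S(x)|$ denotes the Frobenius norm. This inequality is what makes a single direction of the strain suffice to control regularity: at each point, complete $v(x)$ to a positively oriented orthonormal basis $\{v(x),w_1(x),w_2(x)\}$; then the determinant of $S$ equals the triple product of the columns of $S$ expressed in that basis, $\det S=(Sv\times Sw_1)\cdot Sw_2$, so $|\det S|\leq|Sv|\,|Sw_1|\,|Sw_2|\leq|Sv|\,\|S\|_{\mathrm{op}}^2\leq|Sv|\,|S|^2$. (No measurable selection of the frame is needed, since this is merely a pointwise inequality between two measurable functions of $x$.) Note that the naive vorticity estimate $\partial_t\|\omega\|_{L^2}^2=-2\nu\|\nabla\omega\|_{L^2}^2+2\int_{\mathbb{R}^3}(S\omega)\cdot\omega$ offers no room to single out a direction; it is precisely the representation of the enstrophy nonlinearity through $\det S$ in Proposition~\ref{StrainGrowth} that does, and this is the one genuinely nontrivial ingredient.

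Granting this, the estimate proceeds by now-standard steps. First, $-4\int\det S\leq 4\int|Sv|\,|S|^2\leq 4\|Sv\|_{L^q}\|S\|_{L^{2q'}}^2$ by H\"older with $\tfrac1q+\tfrac1{q'}=1$; since $\tfrac32<q\leq\infty$ forces $2q'=\tfrac{2q}{q-1}\in[2,6)$, the Gagliardo--Nirenberg inequality gives $\|S\|_{L^{2q'}}\leq C\|S\|_{L^2}^{1-\theta}\|S\|_{\dot H^1}^{\theta}$ with $\theta=\tfrac{3}{2q}$. Feeding this into Proposition~\ref{StrainGrowth} and applying Young's inequality to the resulting term $C\|Sv\|_{L^q}\|S\|_{L^2}^{2-\frac3q}\|S\|_{\dot H^1}^{\frac3q}$ — with exponents chosen so that the power of $\|S\|_{\dot H^1}$ becomes $2$, to be absorbed by $-2\nu\|S\|_{\dot H^1}^2$ — leaves
\begin{equation*}
\partial_t\|S(\cdot,t)\|_{L^2}^2\leq C_\nu\bigl(\|Sv\|_{L^q}\|S\|_{L^2}^{2-\frac3q}\bigr)^{\frac{2q}{2q-3}}=C_\nu\,\|Sv\|_{L^q}^{p}\,\|S\|_{L^2}^{2},
\end{equation*}
where one checks $\tfrac{2q}{2q-3}=p$ and $(2-\tfrac3q)\tfrac{2q}{2q-3}=2$ directly from $\tfrac2p+\tfrac3q=2$. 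This bookkeeping is forced by scaling: $\int_0^T\|Sv\|_{L^q}^p\,dt$ is exactly the scale-invariant Prodi--Serrin quantity for $Sv$ under $u^\lambda(x,t)=\lambda u(\lambda x,\lambda^2t)$. Using $\|S\|_{L^2}^2=E(t)$ from Proposition~\ref{StrainIsometry}, this reads $E'(t)\leq C_\nu\|S(\cdot,t)v(\cdot,t)\|_{L^q}^p\,E(t)$.

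The proof then concludes quickly. Gr\"onwall's inequality yields $E(t)\leq E_0\exp\!\bigl(C_\nu\int_0^t\|Sv\|_{L^q}^p\,d\tau\bigr)\leq E_0\exp\!\bigl(D_q\int_0^T\|Sv\|_{L^q}^p\,d\tau\bigr)$ for $0\leq t\leq T$, with $D_q=C_\nu$ depending only on $p,q,\nu$, which is the claimed bound. If $T_{max}<+\infty$ then $\limsup_{t\to T_{max}}E(t)=+\infty$ — otherwise $\|u(\cdot,t)\|_{\dot H^1}^2=2E(t)$ would stay bounded on $[0,T_{max})$ and Theorem~\ref{KATO} would let us continue the solution past $T_{max}$, contradicting maximality — so the exponential bound forces $\int_0^{T_{max}}\|Sv\|_{L^q}^p\,dt=+\infty$. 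Finally, specializing to the constant field $v=e_3$, where $Se_3=\tfrac12(\partial_3u+\nabla u_3)$, gives $\int_0^{T_{max}}\|\partial_3u+\nabla u_3\|_{L^q}^p\,dt=+\infty$. The main obstacle is the pointwise determinant bound together with Proposition~\ref{StrainGrowth}; everything after that — H\"older, Gagliardo--Nirenberg, Young, Gr\"onwall — is routine once the exponents are read off from scaling.
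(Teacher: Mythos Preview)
The paper does not actually prove Theorem~\ref{IntroStrain} here; it is quoted from the author's earlier work \cite{MillerStrain} and used only as context for the relationship between $\omega_h$ and $v^3=\partial_3u+\nabla u_3$. So there is no in-paper proof to compare against directly.

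That said, your argument is correct and is precisely the strain-based approach that the surrounding discussion points to. The only substantive step is the pointwise bound $|\det S|\leq|Sv|\,|S|^2$ obtained via the triple-product/basis-change trick, and you justify it cleanly. The subsequent H\"older--Gagliardo--Nirenberg--Young chain is routine and your exponent bookkeeping $(p=\tfrac{2q}{2q-3}$, $(2-\tfrac3q)\cdot\tfrac{2q}{2q-3}=2)$ is accurate; the endpoint $q=\infty$ ($\theta=0$, no Young step needed) is harmless. The continuation argument via Theorem~\ref{KATO} to force $\limsup_{t\to T_{max}}E(t)=+\infty$ is the standard way to close. One cosmetic point: you write $E(t)=\|S(\cdot,t)\|_{L^2}^2$ at the outset, which by Proposition~\ref{StrainIsometry} equals $\tfrac12\|\omega\|_{L^2}^2$, matching the paper's definition of $E(t)$; you note this later, but it is worth saying up front to avoid confusion.
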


We will note here that Proposition \ref{Lq2Vort} implies that the special case of Theorem \ref{IntroStrain} is equivalent to Chae and Choe's result in Theorem \ref{Chae2Vort} for $\frac{3}{2}<q<+\infty$, because we have shown that for $1<q<+\infty,$ $\|\omega_h\|_{L^q}$ and $\|\partial_3 u+ \nabla u_3\|_{L^q}$ are equivalent norms.
Theorem \ref{IntroStrain} is more general, however, in that it does not require the strain blow up only in a fixed direction, but also allows the direction to vary.

We previously found a bound for enstrophy growth in terms of $\|\omega_h\|_{\dot{H}^{-\frac{1}{2}}}.$
The next step will be to prove a bound on the growth of $\|\omega_h\|_{\dot{H}^{-\frac{1}{2}}}$ using the evolution equation for $\omega_h$ in Proposition \ref{2CompEvolution} and the bounds in Proposition \ref{2CompIsometry}.
\begin{proposition} \label{HorizontalVortGrowth}
Taking $C_1$ and $C_2$ as in Lemmas
\ref{FractionalSobolev} and \ref{SobolevIneq},
let $\frac{1}{R_2}=\frac{27}{128} \left (1+\sqrt{2}\right )^4 C_1^4 C_2^4.$ Then for all mild solutions to the Navier--Stokes equation $u\in C \left ([0,T_{max});H^1_{df} \right )$
and for all $0\leq t<T_{max},$
\begin{equation}
    \partial_t \|\omega_h(\cdot,t)
    \|_{\dot{H}^{-\frac{1}{2}}}^2 
    \leq \frac{1}{R_2 \nu^3} 
    \|\omega\|_{L^2}^4 
    \|\omega_h\|_{\dot{H}^{-\frac{1}{2}}}^2.
\end{equation}
Furthermore, for all $0\leq t<T_{max}$
\begin{equation}
    \|\omega_h(\cdot,t)\|_{\dot{H}^{-\frac{1}{2}}}^2
    \leq \left\|\omega_h^0\right\|_{\dot{H}^{-\frac{1}{2}}}^2
    \exp{\left(\frac{1}{R_2\nu^3}\int_0^t
    \|\omega(\cdot,\tau)\|_{L^2}^4 \diff\tau \right)}.
\end{equation}
\end{proposition}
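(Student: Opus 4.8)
The plan is to derive an energy-type inequality for $\|\omega_h(\cdot,t)\|_{\dot{H}^{-1/2}}^2$ by pairing the evolution equation from Proposition \ref{2CompEvolution} against $\omega_h$ in the $\dot{H}^{-1/2}$ inner product, and then close the resulting terms using the isometry and bounds in Lemma \ref{2CompIsometry} together with the fractional Sobolev inequality (Lemma \ref{FractionalSobolev}) and the Sobolev inequality (Lemma \ref{SobolevIneq}). The second assertion then follows immediately from the first by Grönwall's inequality, since the differential inequality has the form $\partial_t \phi(t) \le \psi(t)\phi(t)$ with $\phi(t) = \|\omega_h(\cdot,t)\|_{\dot{H}^{-1/2}}^2$ and $\psi(t) = \frac{1}{R_2\nu^3}\|\omega(\cdot,t)\|_{L^2}^4$.

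For the first assertion, I would take the $\dot{H}^{-1/2}$ inner product of the equation
$\partial_t \omega_h + (u\cdot\nabla)\omega_h - \Delta\omega_h - S\omega_h - S_h\omega = 0$
with $\omega_h$. The Laplacian term gives $-\nu\|\omega_h\|_{\dot{H}^{1/2}}^2 \le 0$ (which we discard — or keep if needed, but it should be dispensable here), the transport term $\langle (u\cdot\nabla)\omega_h, \omega_h\rangle_{\dot{H}^{-1/2}}$ must be shown to be harmless (it is \emph{not} zero in a negative Sobolev space, so this needs care — see below), and we are left to bound $\langle S\omega_h + S_h\omega, \omega_h\rangle_{\dot{H}^{-1/2}}$. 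For this last term I would use duality of $\dot{H}^{-1/2}$ and $\dot{H}^{1/2}$: write $\langle S\omega_h, \omega_h\rangle_{\dot{H}^{-1/2}} \le \|\omega_h\|_{\dot{H}^{-1/2}}\|S\omega_h\|_{\dot{H}^{1/2}}$, and similarly for the $S_h\omega$ term. Then $\|S\omega_h\|_{\dot{H}^{1/2}} = \|\nabla(S\omega_h)(-\Delta)^{-1/2}\cdot\|$-type manipulations reduce (as in the proof of Proposition \ref{HorizontalVort}) to $\|\nabla(S\omega_h)\|_{L^{3/2}}$ via the $L^{3/2}\hookrightarrow\dot{H}^{-1/2}$ embedding, and the product rule plus generalized Hölder ($L^2 \cdot L^6$) and the Sobolev inequality $\|\cdot\|_{L^6}\le C_2\|\cdot\|_{\dot{H}^1}$ convert everything into factors of $\|S\|_{\dot{H}^1}$ and $\|\omega_h\|_{\dot{H}^{-1/2}}$ (using $\|\omega_h\|_{L^6} \le C_2\|\omega_h\|_{\dot{H}^1}$ and controlling $\|\omega_h\|_{\dot{H}^1}$, or interpolating $\|\omega_h\|_{L^6}$ appropriately). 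Using $\|S_h\|\le \frac{1}{\sqrt2}\|\omega_h\|$ from Lemma \ref{2CompIsometry} accounts for the $(1+\sqrt2)$ factor in $R_2$. Finally I would interpolate $\|S\|_{\dot{H}^1}$ against $\|S\|_{L^2}$ — but since the claimed bound has $\|\omega\|_{L^2}^4$ and no $\|\omega\|_{\dot{H}^1}$ factor, I expect instead to use Young's inequality to absorb all powers of $\|S\|_{\dot{H}^1} = \frac{1}{\sqrt2}\|\omega\|_{\dot{H}^1}$ into the (discarded) dissipation term $-\nu\|\omega\|_{\dot{H}^1}^2$ coming from $-\Delta\omega$ in the full vorticity equation, paying a power of $\nu^{-3}$ and leaving exactly $\|\omega\|_{L^2}^4\|\omega_h\|_{\dot{H}^{-1/2}}^2$; the constant $\frac{27}{128}(1+\sqrt2)^4 C_1^4 C_2^4$ is the bookkeeping output of this Young's-inequality optimization (the $\frac{27}{128}$ and fourth powers are the signature of optimizing $-\nu r^2 + \text{const}\cdot r^{?}$-type expressions as in Proposition \ref{EnergyEnstrophy}).

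The main obstacle I anticipate is the transport term $\langle (u\cdot\nabla)\omega_h, \omega_h\rangle_{\dot{H}^{-1/2}}$: unlike in $L^2$, where it vanishes by the divergence-free condition and integration by parts, in $\dot{H}^{-1/2}$ this commutator-type term does not obviously cancel, and handling it may require a commutator estimate, a careful use of the algebra/product structure of Riesz potentials, or exploiting $\nabla\cdot u = 0$ more cleverly (e.g. rewriting $(u\cdot\nabla)\omega_h = \nabla\cdot(u\otimes\omega_h)$ and estimating $\|\nabla\cdot(u\otimes\omega_h)\|_{\dot{H}^{-3/2}}$-style, then pairing). A plausible clean route is: $\langle (u\cdot\nabla)\omega_h,\omega_h\rangle_{\dot{H}^{-1/2}} = \langle \nabla\cdot(u\otimes\omega_h), \omega_h\rangle_{\dot{H}^{-1/2}}$ and bound this by $\|u\otimes\omega_h\|_{\dot{H}^{-1/2}}\|\omega_h\|_{\dot{H}^{1/2}}$, then use $\dot{H}^{-1/2}\hookleftarrow L^{3/2}$ and Hölder $\|u\otimes\omega_h\|_{L^{3/2}} \le \|u\|_{L^6}\|\omega_h\|_{L^2}$, with $\|u\|_{L^6}\le C_2\|\nabla u\|_{L^2} = C_2\sqrt2\|S\|_{L^2}$ — but this introduces $\|\omega_h\|_{\dot{H}^{1/2}}$, which again must be absorbed into the dissipation. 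If instead the transport term genuinely cancels against the dissipation via a good commutator structure (which may be the case given the specific exponents), the estimate is cleaner; sorting out exactly which mechanism gives the stated constant is where the real work lies.
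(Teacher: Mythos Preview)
Your overall strategy---test the $\omega_h$ equation in $\dot H^{-1/2}$, then Gr\"onwall---is right, and your treatment of the transport term is essentially the paper's (integrate by parts using $\nabla\cdot u=0$, then H\"older $L^6\times L^2\times L^3$ to get $C_1C_2\|\omega\|_{L^2}\|\omega_h\|_{L^2}\|\omega_h\|_{\dot H^{1/2}}$, and interpolate $\|\omega_h\|_{L^2}\le\|\omega_h\|_{\dot H^{-1/2}}^{1/2}\|\omega_h\|_{\dot H^{1/2}}^{1/2}$). The transport term is not the obstacle.

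The genuine gap is in the stretching term and, relatedly, in which dissipation you think you have. The only dissipation produced by the $\dot H^{-1/2}$ energy identity for $\omega_h$ is $-\nu\|\omega_h\|_{\dot H^{1/2}}^2$; there is no $-\nu\|\omega\|_{\dot H^1}^2$ available, and you cannot borrow it from ``the full vorticity equation,'' which is not being tested here. Consequently any factor you produce at the level of $\|S\|_{\dot H^1}$ or $\|\omega_h\|_{\dot H^1}$ (second derivatives of $u$) cannot be absorbed. Your proposed route---estimate $\|S\omega_h\|_{\dot H^{1/2}}$ via $\|\nabla(S\omega_h)\|_{L^{3/2}}$ and the product rule---unavoidably generates exactly such factors. (Your duality bound $\langle S\omega_h,\omega_h\rangle_{\dot H^{-1/2}}\le\|\omega_h\|_{\dot H^{-1/2}}\|S\omega_h\|_{\dot H^{1/2}}$ is also miscounted: Cauchy--Schwarz in $\dot H^{-1/2}$ gives $\|S\omega_h\|_{\dot H^{-1/2}}$ on the right, not $\dot H^{1/2}$.)

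The paper's device is to choose the duality so that \emph{no derivative ever lands on the product}. Pair $(-\Delta)^{-1/2}\omega_h$ with $S\omega_h+S_h\omega$ as $\dot H^1\times\dot H^{-1}$, so the left factor becomes $\|\omega_h\|_{L^2}$ and the right becomes $C_2\|S\omega_h+S_h\omega\|_{L^{6/5}}$. Then split with H\"older $L^{6/5}\leftarrow L^2\times L^3$, always placing the $L^3$ on the factor controllable by $\omega_h$: namely $\|S\omega_h\|_{L^{6/5}}\le\|S\|_{L^2}\|\omega_h\|_{L^3}$ and $\|S_h\omega\|_{L^{6/5}}\le\|S_h\|_{L^3}\|\omega\|_{L^2}$. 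The fractional Sobolev inequality gives $\|\omega_h\|_{L^3}\le C_1\|\omega_h\|_{\dot H^{1/2}}$ and, crucially via Lemma~\ref{2CompIsometry}, $\|S_h\|_{L^3}\le C_1\|S_h\|_{\dot H^{1/2}}\le\frac{C_1}{\sqrt2}\|\omega_h\|_{\dot H^{1/2}}$; the remaining $L^2$ factors are $\|S\|_{L^2}=\frac{1}{\sqrt2}\|\omega\|_{L^2}$ and $\|\omega\|_{L^2}$. Both pieces thus produce only $\|\omega\|_{L^2}\|\omega_h\|_{\dot H^{1/2}}$ (with combined constant $\sqrt2\,C_1C_2$), and after interpolating $\|\omega_h\|_{L^2}$ the stretching and transport contributions together read $-\nu r^2+(1+\sqrt2)C_1C_2\|\omega\|_{L^2}\|\omega_h\|_{\dot H^{-1/2}}^{1/2}\,r^{3/2}$ with $r=\|\omega_h\|_{\dot H^{1/2}}$. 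Optimizing in $r$ (giving $27/256$, doubled to $27/128$ after multiplying through by $2$) yields the stated constant. In short: do \emph{not} discard the $\omega_h$-dissipation, do \emph{not} aim for $\|S\|_{\dot H^1}$, and arrange every $\dot H^{1/2}$ or $L^3$ factor to fall on $\omega_h$ (or $S_h$), never on $S$ or $\omega$.
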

\begin{proof}
We begin by using the evolution equation for $\omega_h$ in Proposition \ref{2CompEvolution} to compute that
\begin{equation}
    \partial_t\frac{1}{2}\|\omega_h(\cdot,t)
    \|_{\dot{H}^{-\frac{1}{2}}}^2 =
    -\nu \|\omega_h\|_{\dot{H}^\frac{1}{2}}^2
    -\left <(-\Delta)^{-\frac{1}{2}} \omega_h,
    (u\cdot \nabla)\omega_h \right >+
    \left <(-\Delta)^{-\frac{1}{2}} \omega_h,
    S\omega_h+S_h\omega \right >.
\end{equation}
Next we bound the last term using the duality of $\dot{H}^1$ and $\dot{H}^{-1}:$
\begin{align}
        \left <(-\Delta)^{-\frac{1}{2}} \omega_h,
    S\omega_h+S_h\omega \right > &\leq
    \|(-\Delta)^{-\frac{1}{2}} \omega_h
    \|_{\dot{H}^1} \|S \omega_h+ \omega_h S
    \|_{\dot{H}^{-1}}\\
    &=
    \|\omega_h\|_{L^2} 
    \|S \omega_h+ \omega_h S
    \|_{\dot{H}^{-1}}\\
    & \leq
    C_2 \|\omega_h\|_{L^2}
    \|S_h \omega+ S \omega_h\|_{L^\frac{6}{5}},
\end{align}
where we have applied the definition of the $\dot{H}^1$ to show that $\|(-\Delta)^{-\frac{1}{2}}\omega_h \|_{\dot{H}^1}=\|\omega_h\|_{L^2},$ and then
applied the Sobolev inequality in Lemma \ref{SobolevIneq}.
Applying the triangle inequality, the generalized H\"older inequality, and the fractional Sobolev inequality we can see that
\begin{align}
       \left <(-\Delta)^{-\frac{1}{2}} \omega_h,
    S\omega_h+S_h\omega \right >  &\leq
    C_2 \|\omega_h\|_{L^2}\left(
    \|S_h \omega\|_{L^\frac{6}{5}}+
    \|S \omega_h\|_{L^\frac{6}{5}}
    \right)\\
    &\leq
    C_2 \|\omega_h\|_{L^2} \big (
    \|S_h\|_{L^3} \|\omega\|_{L^2}+ \|S\|_{L^2}
    \|\omega_h\|_{L^3} \big )\\
    &\leq 
    C_1 C_2 \|\omega_h\|_{L^2} \big (
    \|S_h\|_{\dot{H}^\frac{1}{2}} \|\omega\|_{L^2}+ \|S\|_{L^2}
    \|\omega_h\|_{\dot{H}^\frac{1}{2}} \big ).
\end{align}
Applying Lemma \ref{2CompIsometry} we observe that $\|S_h\|_{\dot{H}^\frac{1}{2}} \leq
\frac{1}{\sqrt{2}}\|\omega_h\|_{\dot{H}^
\frac{1}{2}},$ and applying Proposition \ref{StrainIsometry} we observe that $\|S\|_{L^2}=
\frac{1}{\sqrt{2}}\|\omega\|_{L^2}.$
Finally we can conclude that
\begin{align}
    \left <(-\Delta)^{-\frac{1}{2}} \omega_h,
    S\omega_h+S_h\omega \right> &\leq \sqrt{2} C_1 C_2 \|\omega_h\|_{L^2}\|\omega\|_{L^2}
    \|\omega_h\|_{\dot{H}^\frac{1}{2}}\\
    & \leq \label{FirstBound}
    \sqrt{2} C_1 C_2
    \|\omega\|_{L^2}\|\omega_h\|_{\dot{H}^{
    -\frac{1}{2}}}^\frac{1}{2}
    \|\omega_h\|_{\dot{H}^\frac{1}{2}}^\frac{3}{2},
\end{align}
where we have interpolated between $\dot{H}^{-1}$ and $\dot{H}^1,$ observing that $\|\omega_h\|_{L^2} \leq 
\|\omega_h\|_{\dot{H}^{-\frac{1}{2}}}^\frac{1}{2} 
\|\omega_h\|_{\dot{H}^{\frac{1}{2}}}^\frac{1}{2}.$

We now turn our attention to the term
$-\left <(-\Delta)^{-\frac{1}{2}} \omega_h,
    (u\cdot \nabla)\omega_h \right >.$
First we note that 
$u \in C\left ((0,T_{max});H^\infty \right ),$ 
due to the higher regularity of mild solutions, so we have sufficient regularity to integrate by parts. Using the fact that $\nabla \cdot u=0,$ conclude that
\begin{equation}
    -\left <(-\Delta)^{-\frac{1}{2}} \omega_h,
    (u\cdot \nabla)\omega_h \right >=
    \left <(u\cdot \nabla)(-\Delta)^{-\frac{1}{2}} \omega_h, \omega_h \right >.
\end{equation}
Applying the generalized H\"{o}lder inequality, the Sobolev inequality, and the isometry in Proposition \ref{StrainIsometry}, and interpolating between $\dot{H}^{-1}$ and $\dot{H}^1$ as above, we find that
\begin{align}
    \left <(u\cdot \nabla)(-\Delta)^{-\frac{1}{2}} \omega_h, \omega_h \right > &\leq \|u\|_{L^6}
    \|\nabla (-\Delta)^{-\frac{1}{2}} \omega_h
    \|_{L^2} \|\omega_h\|_{L^3}\\
    &=
    \|u\|_{L^6}\|\omega_h \|_{L^2}
    \|\omega_h\|_{L^3}\\
    &\leq
    C_1 C_2
    \|u\|_{\dot{H}^1} \| \omega_h\|_{L^2} 
    \|\omega_h\|_{\dot{H}^\frac{1}{2}}\\
    &=
    C_1 C_2
    \|\omega\|_{L^2} \| \omega_h\|_{L^2} 
    \|\omega_h\|_{\dot{H}^\frac{1}{2}}\\
    &\leq \label{SecondBound}
    C_1 C_2
    \|\omega\|_{L^2} 
    \|\omega_h\|_{\dot{H}^{-\frac{1}{2}}}^\frac{1}{2}
    \|\omega_h\|_{\dot{H}^\frac{1}{2}}^\frac{3}{2}.
\end{align}

Combining the bounds in \eqref{FirstBound} and \eqref{SecondBound}, we find that
\begin{equation}
    \partial_t\frac{1}{2}\|\omega_h(\cdot,t)
    \|_{\dot{H}^{-\frac{1}{2}}}^2 \leq 
    -\nu \|\omega_h\|_{\dot{H}^\frac{1}{2}}^2
    +\left(1+\sqrt{2}\right)C_1C_2 \|\omega\|_{L^2}
    \|\omega_h\|_{\dot{H}^{-\frac{1}{2}}}^\frac{1}{2}
    \|\omega_h\|_{\dot{H}^\frac{1}{2}}^\frac{3}{2}.
\end{equation}
Setting $r=\|\omega_h\|_{\dot{H}^\frac{1}{2}},$
we can see that
\begin{equation}
    \partial_t\frac{1}{2}\|\omega_h(\cdot,t)
    \|_{\dot{H}^{-\frac{1}{2}}}^2 \leq 
    \sup_{r>0} \left (-\nu r^2
    +\left(1+\sqrt{2}\right)C_1C_2 \|\omega\|_{L^2}
    \|\omega_h\|_{\dot{H}^{-\frac{1}{2}}}^\frac{1}{2}
    r^\frac{3}{2} \right ).
\end{equation}
Let $f(r)=-\nu r^2+ M r^\frac{3}{2},$ where
$M=\left(1+\sqrt{2}\right)C_1C_2 \|\omega\|_{L^2}
\|\omega_h\|_{\dot{H}^{-\frac{1}{2}}}^\frac{1}{2}.$
Observe that
\begin{equation}
    f'(r)=-2 \nu r+\frac{3}{2}Mr^\frac{1}{2}.
\end{equation}
Therefore $f$ has a global max at $r_0=
\sqrt{\frac{3M}{4\nu}}.$
This implies that
\begin{align}
    \sup_{r>0} \left (-\nu r^2
    +\left(1+\sqrt{2}\right)C_1C_2 \|\omega\|_{L^2}
    \|\omega_h\|_{\dot{H}^{-\frac{1}{2}}}^\frac{1}{2}
    r^\frac{3}{2} \right )&=f(r_0)\\
    &=\frac{27}{256 \nu^3}M^4.
\end{align}
Substituting in for $M,$ we find that
\begin{equation}
    \partial_t\frac{1}{2}\|\omega_h(\cdot,t)
    \|_{\dot{H}^{-\frac{1}{2}}}^2 \leq 
    \frac{27\left(1+\sqrt{2}\right)^4C_1^4 C_2^4}{256\nu^3} \|\omega\|_{L^2}^4 \|\omega_h\|_{\dot{H}^{-\frac{1}{2}}}^2.
\end{equation}
Multiplying both sides by $2,$ and substituting in 
$\frac{1}{R_2}=\frac{27\left(1+\sqrt{2}\right)^4
C_1^4 C_2^4}{128},$ observe that 
\begin{equation}
    \partial_t\|\omega_h(\cdot,t)\|_{\dot{H}^{-\frac{1}{2}}}^2
    \leq \frac{1}{R_2\nu^3}
    \|\omega\|_{L^2}^4 \|\omega_h\|_{\dot{H}^{-\frac{1}{2}}}^2.
\end{equation}
Applying Gr\"{o}nwall's inequality, this completes the proof.
\end{proof}

With this bound, we now have developed all the machinery we need to prove the main result of this paper, Theorem \ref{2VortGlobalExistence}, which is restated here for the reader's convenience.

\begin{theorem} \label{Almost2d}
For each initial condition $u^0\in H^1_{df}$ such that 
\begin{equation}
    \left\|\omega_h^0\right\|_{\dot{H}^{-\frac{1}{2}}} \exp{\left(\frac{
    K_0 E_0-6,912 \pi^4 \nu^4}{R_2 \nu^4}\right)}<R_1 \nu,
\end{equation}
$u^0$ generates a unique, global smooth solution to the Navier--Stokes equation $u\in C\left((0,+\infty);H^1_{df} \right),$ that is $T_{max}=+\infty.$ Note that the smallness condition can be equivalently stated as
\begin{equation}
    K_0E_0<6,912 \pi^4 \nu^4 +R_2\nu^4\log \left(\frac{R_1 \nu}
    {\left\|\omega_h^0\right\|_{\dot{H}^{-\frac{1}{2}}}}\right),
\end{equation}
and that the constants $R_1$ and $R_2$ are taken as in Propositions \ref{HorizontalVort} and \ref{HorizontalVortGrowth}.
\end{theorem}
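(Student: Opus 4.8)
The plan is to run a continuation (bootstrap) argument that exploits the feedback between Proposition~\ref{HorizontalVort} and Proposition~\ref{HorizontalVortGrowth}: the former says the enstrophy cannot increase as long as $\|\omega_h\|_{\dot{H}^{-\frac{1}{2}}}<R_1\nu$, while the latter controls the growth of $\|\omega_h\|_{\dot{H}^{-\frac{1}{2}}}$ by $\int_0^t\|\omega(\cdot,\tau)\|_{L^2}^4\diff\tau$, hence by the enstrophy. First I dispose of an easy case: if $K_0E_0<6{,}912\pi^4\nu^4$, then Proposition~\ref{SmallData} already gives $T_{max}=+\infty$, so we may assume $K_0E_0\geq 6{,}912\pi^4\nu^4$; then the exponential factor in the hypothesis is at least $1$, so the hypothesis forces in particular $\|\omega_h^0\|_{\dot{H}^{-\frac{1}{2}}}<R_1\nu$. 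Suppose toward a contradiction that $T_{max}<+\infty$.

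Since $t\mapsto\|\omega_h(\cdot,t)\|_{\dot{H}^{-\frac{1}{2}}}$ is continuous on $[0,T_{max})$ and is strictly less than $R_1\nu$ at $t=0$, the time
$T^{*}=\sup\{t\in[0,T_{max}):\|\omega_h(\cdot,s)\|_{\dot{H}^{-\frac{1}{2}}}<R_1\nu\text{ for all }s\in[0,t]\}$
is positive. On $[0,T^{*})$ Proposition~\ref{HorizontalVort} gives $\partial_t\|\omega(\cdot,t)\|_{L^2}^2\leq 0$, hence $E(t)\leq E_0$. Combining this with the energy equality in the integrated form $\int_0^tE(\tau)\diff\tau=\tfrac{1}{2\nu}(K_0-K(t))$ and with Corollary~\ref{CriticalLowerBound} — which, because $T_{max}<+\infty$, forces $K(t)\geq 6{,}912\pi^4\nu^4/E(t)\geq 6{,}912\pi^4\nu^4/E_0$ — I obtain the uniform bound, valid for all $t\in[0,T^{*})$,
\[
\int_0^t\|\omega(\cdot,\tau)\|_{L^2}^4\diff\tau
=4\int_0^tE(\tau)^2\diff\tau
\leq 4E_0\int_0^tE(\tau)\diff\tau
\leq \frac{2}{\nu}\bigl(K_0E_0-6{,}912\pi^4\nu^4\bigr).
\]
Feeding this into the Grönwall conclusion of Proposition~\ref{HorizontalVortGrowth} and invoking the smallness hypothesis shows that $\|\omega_h(\cdot,t)\|_{\dot{H}^{-\frac{1}{2}}}$ is bounded, uniformly on $[0,T^{*})$, by a quantity strictly smaller than $R_1\nu$.

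This uniform bound closes the bootstrap. By continuity it persists at $t=T^{*}$, so if $T^{*}<T_{max}$ the strict inequality $\|\omega_h(\cdot,T^{*})\|_{\dot{H}^{-\frac{1}{2}}}<R_1\nu$ would, again by continuity, extend the interval defining $T^{*}$, contradicting its maximality; hence $T^{*}=T_{max}$, and $\|\omega_h(\cdot,t)\|_{\dot{H}^{-\frac{1}{2}}}$ stays strictly below $R_1\nu$ on all of $[0,T_{max})$. This contradicts the second assertion of Proposition~\ref{HorizontalVort}, namely $\limsup_{t\to T_{max}}\|\omega_h(\cdot,t)\|_{\dot{H}^{-\frac{1}{2}}}\geq R_1\nu$. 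Therefore $T_{max}=+\infty$; existence, uniqueness, and $C^\infty$ regularity of the global solution are then immediate from Definition~\ref{Tmax} and Theorem~\ref{KATO}, and the equivalence of the two forms of the smallness condition is an elementary rearrangement. The crux of the argument — and its only real subtlety — is precisely this circularity: smallness of $\|\omega_h\|_{\dot{H}^{-\frac{1}{2}}}$ is what suppresses enstrophy growth, and the absence of enstrophy growth is what keeps $\int_0^t\|\omega\|_{L^2}^4\diff\tau$ controlled by $K_0E_0-6{,}912\pi^4\nu^4$ and hence keeps $\|\omega_h\|_{\dot{H}^{-\frac{1}{2}}}$ small; the strict inequality in the hypothesis is exactly the slack that makes the continuation go through. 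A secondary technical point, which I would settle using the mild formulation together with the smoothing estimates for mild solutions and the assumption $\omega_h^0\in\dot{H}^{-\frac{1}{2}}$, is the continuity of $t\mapsto\|\omega_h(\cdot,t)\|_{\dot{H}^{-\frac{1}{2}}}$ up to $t=0$.
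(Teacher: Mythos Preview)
Your argument is correct and follows essentially the same route as the paper's proof: both assume $T_{max}<+\infty$, identify a maximal interval on which $\|\omega_h\|_{\dot{H}^{-1/2}}<R_1\nu$, use Proposition~\ref{HorizontalVort} to force the enstrophy to be nonincreasing there, combine the energy equality with Corollary~\ref{CriticalLowerBound} to bound $\int_0^t\|\omega\|_{L^2}^4\,\diff\tau$ by $\tfrac{2}{\nu}(K_0E_0-6{,}912\pi^4\nu^4)$, and then feed this into the Gr\"onwall estimate of Proposition~\ref{HorizontalVortGrowth} to obtain a contradiction. The only cosmetic difference is that the paper evaluates at the first hitting time $T$ where $\|\omega_h(\cdot,T)\|_{\dot{H}^{-1/2}}=R_1\nu$ and contradicts the hypothesis directly, whereas you phrase the same step as a continuation argument and contradict the $\limsup$ conclusion of Proposition~\ref{HorizontalVort}; the content is identical.
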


\begin{proof}
We will prove the contrapositive. That is we will show that $T_{max}<+\infty$ implies that
\begin{equation}
    \left\|\omega_h^0\right\|_{\dot{H}^{-\frac{1}{2}}} \exp{\left(\frac{
    K_0 E_0-6,912 \pi^4 \nu^4}{R_2 \nu^4}\right)}\geq R_1 \nu.
\end{equation}
Suppose $T_{max}<+\infty.$
Using Proposition \ref{SmallData}, $T_{max}<+\infty$ implies that $K_0E_0 \geq 6,912\pi^4 \nu^4.$ This means that
\begin{equation}
\exp{\left(\frac{K_0 E_0-6,912 \pi^4 \nu^4}
{R_2 \nu^4}\right)}\geq 1.
\end{equation}
If $\left\|\omega_h^0\right\|_{\dot{H}^{-\frac{1}{2}}}
\geq R_1 \nu,$ this completes the proof.

Now Suppose $\left\|\omega_h^0\right\|_{\dot{H}^{-\frac{1}{2}}}< R_1 \nu.$ We know that 
\begin{equation}
\limsup_{t \to T_{max}} \|\omega(\cdot,t)\|_{L^2}=+\infty. 
\end{equation}
If for all $0<t<T_{max}, \partial_t \|\omega(\cdot,t)\|_{L^2}^2 \leq 0,$ then we would have
\begin{equation}
\limsup_{t \to T_{max}} \|\omega(\cdot,t)\|_{L^2}
\leq \left\|\omega^0\right\|_{L^2},
\end{equation}
so we can conclude that there exists $0<t<T_{max}$ such that 
$\partial_t \|\omega(\cdot,t)\|_{L^2}^2>0.$
By Proposition \ref{HorizontalVort}, we can conclude that
for such a time $0<t<T_{max},$
\begin{equation}
\|\omega_h(\cdot,t)\|_{\dot{H}^{-\frac{1}{2}}}>R_1 \nu.
\end{equation}
$\omega_h \in C\left([0,T_{max});\dot{H}^{-\frac{1}{2}}\right),$ so by the intermediate value theorem, there exists $0<\tau<t,$
such that 
\begin{equation}
  \|\omega_h(\cdot,\tau)\|_{\dot{H}^{-\frac{1}{2}}}=
R_1 \nu.  
\end{equation}
Let $T$ be the first such time. That is, define
$0<T<T_{max}$ by
\begin{equation}
    T=\inf\left\{0<t<T_{max}:\|\omega_h(\cdot,t)
    \|_{\dot{H}^{-\frac{1}{2}}}=R_1 \nu\right\}.
\end{equation}
It is clear from the intermediate value theorem and the fact that $\|\omega_h^0\|_{\dot{H}^{-\frac{1}{2}}}<R_1\nu,$ that for all $t<T,$ 
$\|\omega_h(\cdot,t)\|_{\dot{H}^{-\frac{1}{2}}}<R_1\nu.$
Applying Proposition \ref{HorizontalVort}, this implies that for all $t<T,$ $\partial_t\|\omega(\cdot,t)\|_{L^2}^2<0.$

Using Proposition \ref{HorizontalVortGrowth}, observe that
\begin{align}
    R_1 \nu&=\|\omega_h(\cdot,T)\|_{\dot{H}^{-\frac{1}{2}}}\\
    &\leq
    \left\|\omega_h^0\right\|_{\dot{H}^{-\frac{1}{2}}} \exp{\left(
    \frac{1}{2 R_2 \nu^3}\int_0^T 
    \|\omega(\cdot,t)\|_{L^2}^4 \diff t \right)}.
\end{align}
Using the fact that $\|\omega(\cdot,t)\|_{L^2}$ is decreasing on the interval $[0,T],$ we can pull out a factor of
$\|\omega^0\|_{L^2}^2,$ and conclude
\begin{equation}
    R_1 \nu \leq \left\|\omega_h^0\right
    \|_{\dot{H}^{-\frac{1}{2}}} 
    \exp{\left(\frac{1}{2 R_2 \nu^3}
    \left\|\omega^0\right\|_{L^2}^2
    \int_0^T \|\omega(\cdot,t)\|_{L^2}^2 \diff t \right)}.
\end{equation}
We know from the energy equality that 
\begin{equation}
    \nu \int_0^T\|\omega(\cdot,t)\|_{L^2}^2 \diff t=
    \frac{1}{2}\left\|u^0\right\|_{L^2}-
    \frac{1}{2}\|u(\cdot,T)\|_{L^2}^2.
\end{equation}
Therefore
\begin{equation}
    R_1 \nu \leq \left\|\omega_h^0\right\|_{\dot{H}^{-\frac{1}{2}}} \exp{\left( \frac{1}{2 R_2 \nu^4}
    \left\|\omega^0\right\|_{L^2}^2
    \left(\frac{1}{2}\left\|u^0\right\|_{L^2}^2-
    \frac{1}{2}\|u(\cdot,T)\|_{L^2}^2\right) \right)}.
\end{equation}
Again using the fact that $\|\omega(\cdot,t)\|_{L^2}$ is decreasing on the interval $[0,T],$ and therefore that
\newline
$\|\omega(\cdot,T)\|_{L^2}<\left\|\omega^0\right\|_{L^2},$ 
and applying Corollary \ref{CriticalLowerBound} to conclude that
$K(T)E(T)\geq 6,912 \pi^4\nu^4,$
we may compute that
\begin{align}
    R_1 \nu 
    & \leq 
    \left\|\omega_h^0\right\|_{\dot{H}^{-\frac{1}{2}}} \exp{\left( \frac{1}{ R_2 \nu^4}
    \left(\frac{1}{2}\left\|u^0\right\|_{L^2}^2
    \frac{1}{2}\left\|\omega^0\right\|_{L^2}^2-
    \frac{1}{2}\left\|\omega^0\right\|_{L^2}^2
    \frac{1}{2}\left\|u(\cdot,T)\right\|_{L^2}^2
    \right) \right)}\\
    &<
    \left\|\omega_h^0\right\|_{\dot{H}^{-\frac{1}{2}}} \exp{\left( \frac{1}{ R_2 \nu^4}
    \left(\frac{1}{2}\left\|u^0\right\|_{L^2}^2
    \frac{1}{2}\left\|\omega^0\right\|_{L^2}^2-
    \frac{1}{2}\|\omega(\cdot,T)\|_{L^2}^2
    \frac{1}{2}\|u(\cdot,T)\|_{L^2}^2\right) \right)}\\
    &= \left\|\omega_h^0\right\|_{\dot{H}^{-\frac{1}{2}}} \exp{\left( \frac{1}{ R_2 \nu^4}
    \left(K_0E_0-K(T)E(T)\right)\right)}\\
    &\leq 
    \left\|\omega_h^0\right\|_{\dot{H}^{-\frac{1}{2}}} \exp{\left(\frac{
    K_0 E_0-6,912 \pi^4 \nu^4}{R_2 \nu^4}\right)}.
\end{align}
Therefore we can conclude that $T_{max}<+\infty$ implies that
\begin{equation}
    \left\|\omega_h^0\right\|_{\dot{H}^{-\frac{1}{2}}} \exp{\left(\frac{
    K_0 E_0-6,912 \pi^4 \nu^4}{R_2 \nu^4}\right)}
    \geq R_1 \nu.
\end{equation}
This completes the proof.
\end{proof}

\section{Boundedness in Besov spaces}
Now that we have proven Theorem \ref{Almost2d}, we will consider the size of the set of initial data for which we have proven global regularity in Hilbert and Besov spaces. In this section we will prove Theorem \ref{BesovIntro}. To begin with, we will denote the set of initial data satisfying the hypothesis of Theorem \ref{Almost2d} by $\Gamma_{2d}$.
\begin{definition}
    We will define the set $\Gamma_{2d} \subset H^1_{df}$ to be the set of almost two dimensional initial data satisfying the hypothesis of Theorem \ref{Almost2d}:
    \begin{equation}
\Gamma_{2d}=\left \{ u\in H^1_{df}:
\|\omega_h\|_{\dot{H}^{-\frac{1}{2}}} \exp{\left(\frac{\frac{1}{4}\|u\|_{L^2}^2
    \|u\|_{\dot{H}^1}^2
    -6,912 \pi^4 \nu^4}{R_2 \nu^4}\right)}<R_1 \nu
\right\}.
    \end{equation}
\end{definition}
Because the proof of Theorem \ref{BesovIntro} will rely heavily on the structure of the vorticity, we will also introduce the set of initial vorticities satisfying the hypothesis of Theorem \ref{Almost2d}. It will be easier to prove our results in terms of vorticity, and then show that the results in terms of velocity are equivalent.
\begin{definition}
    We will define the set $\Tilde{\Gamma}_{2d} \subset L^2_{df}\cap \dot{H}^{-1}_{df}$ to be the set of almost two dimensional initial vorticities satisfying the hypothesis of Theorem \ref{Almost2d}:
    \begin{equation}
\Tilde{\Gamma}_{2d}=\left \{ 
\omega\in L^2_{df}\cap \dot{H}^{-1}_{df}:
\|\omega_h\|_{\dot{H}^{-\frac{1}{2}}} \exp{\left(\frac{\frac{1}{4}
\|\omega\|_{\dot{H}^{-1}}^2
\|\omega\|_{L^2}^2
    -6,912 \pi^4 \nu^4}{R_2 \nu^4}\right)}<R_1 \nu
\right\}.
    \end{equation}
\end{definition}
Note that $\Tilde{\Gamma}_{2d}$ is the image of $\Gamma_{2d}$ under the curl operator. We will now define Besov spaces with negative indices.

\begin{definition} \label{BesovDefinition}
    For all $s>0,$ take the homogeneous Besov space $\dot{B}^{-s}_{p,q}\left(\mathbb{R}^3\right)$ to be the space of tempered distributions with the norm
    \begin{equation}
        \|f\|_{\dot{B}^{-s}_{p,q}}=
\left\| t^{\frac{s}{2}} \|e^{t\Delta}f\|_{L^p\left(\mathbb{R}^3\right)} \right\|_{L^q\left((0,+\infty),
\frac{\diff t}{t}\right)}.
    \end{equation}
In particular, note that when $q=+\infty,$
\begin{equation}
\|f\|_{\dot{B}^{-s}_{p,\infty}}=
\sup_{t>0}t^{\frac{s}{2}}
\|e^{t\Delta}f\|_{L^p\left(\mathbb{R}^3\right)}.
\end{equation}
\end{definition}

For all $2\leq p \leq +\infty,$ 
we have a Sobolev-type embedding of
$\dot{H}^{-\frac{1}{2}}\left(\mathbb{R}^3\right)$ 
into $\dot{B}^{-2+\frac{3}{p}}_{p,\infty}
\left(\mathbb{R}^3\right).$
\begin{proposition} \label{BesovEmbed}
For all $2\leq p\leq+\infty,$ there exists $C_p>0$ such that for all $f \in
\dot{H}^{-\frac{1}{2}}\left(\mathbb{R}^3\right),$
\begin{equation}
    \|f\|_{\dot{B}^{-2+\frac{3}{p}}_{p,\infty}}
    \leq C_p \|f\|_{\dot{H}^{-\frac{1}{2}}}.
\end{equation}
Therefore 
$\dot{H}^{-\frac{1}{2}}\left(\mathbb{R}^3\right)$
continuously embeds into
$\dot{B}^{-2+\frac{3}{p}}_{p,\infty}
\left(\mathbb{R}^3\right).$
\end{proposition}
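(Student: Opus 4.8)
The plan is to argue directly from the heat-semigroup characterization of the Besov norm in Definition \ref{BesovDefinition}. Writing $s=2-\tfrac{3}{p}$, so that $\tfrac{s}{2}=1-\tfrac{3}{2p}$, it suffices to show that
\begin{equation}
\sup_{t>0}\; t^{1-\frac{3}{2p}}\,\|e^{t\Delta}f\|_{L^p}\;\leq\; C_p\,\|f\|_{\dot{H}^{-\frac{1}{2}}}.
\end{equation}
The key device is the semigroup identity $e^{t\Delta}=e^{\frac{t}{2}\Delta}\circ e^{\frac{t}{2}\Delta}$: the first half-step converts the negative homogeneous regularity of $f$ into a power of $t$ measured in $L^2$, while the second half-step supplies the remaining decay via the $L^2\to L^p$ smoothing of the heat kernel.

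First I would establish the bound $\|e^{\tau\Delta}f\|_{L^2}\leq C\,\tau^{-1/4}\,\|f\|_{\dot{H}^{-\frac{1}{2}}}$. By Plancherel, inserting a factor $(2\pi|\xi|)$ and its reciprocal,
\begin{equation}
\|e^{\tau\Delta}f\|_{L^2}^2=\int_{\mathbb{R}^3}e^{-2\tau(2\pi|\xi|)^2}\,(2\pi|\xi|)\,(2\pi|\xi|)^{-1}\,|\hat{f}(\xi)|^2\,d\xi\leq\Big(\sup_{r\geq 0}\,r\,e^{-2\tau r^2}\Big)\,\|f\|_{\dot{H}^{-\frac{1}{2}}}^2,
\end{equation}
and the scalar supremum is attained at $r=(4\tau)^{-1/2}$ and equals a constant times $\tau^{-1/2}$, which gives the claim. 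Second, I would record the standard heat-kernel smoothing estimate: writing $e^{\tau\Delta}g=G_\tau*g$ with $G_\tau$ the Gaussian kernel and applying Young's convolution inequality with exponents satisfying $1+\tfrac{1}{p}=\tfrac{1}{r}+\tfrac{1}{2}$ --- which has a solution $r\geq 1$ precisely when $p\geq 2$ --- together with the explicit value $\|G_\tau\|_{L^r}=C\,\tau^{-\frac{3}{2}(\frac{1}{2}-\frac{1}{p})}$, one obtains $\|e^{\tau\Delta}g\|_{L^p}\leq C\,\tau^{-\frac{3}{2}(\frac{1}{2}-\frac{1}{p})}\,\|g\|_{L^2}$ for all $2\leq p\leq+\infty$; the endpoint $p=+\infty$ is the case $r=2$ and causes no difficulty.

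Combining the two bounds through the splitting yields
\begin{equation}
\|e^{t\Delta}f\|_{L^p}\leq C\,\Big(\tfrac{t}{2}\Big)^{-\frac{3}{2}(\frac{1}{2}-\frac{1}{p})}\,\big\|e^{\frac{t}{2}\Delta}f\big\|_{L^2}\leq C\,t^{-\frac{3}{2}(\frac{1}{2}-\frac{1}{p})-\frac{1}{4}}\,\|f\|_{\dot{H}^{-\frac{1}{2}}},
\end{equation}
and since $-\tfrac{3}{2}(\tfrac{1}{2}-\tfrac{1}{p})-\tfrac{1}{4}=-\bigl(1-\tfrac{3}{2p}\bigr)$, the quantity $t^{1-\frac{3}{2p}}\|e^{t\Delta}f\|_{L^p}$ is bounded by $C_p\|f\|_{\dot{H}^{-\frac{1}{2}}}$ uniformly in $t>0$; taking the supremum over $t$ gives the inequality, and hence the continuous embedding. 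I do not anticipate a genuine obstacle: the only points needing attention are that Young's inequality forces the restriction $p\geq 2$, which is exactly the hypothesis, and that the powers of $t$ cancel precisely, which is the short bookkeeping check just performed. As an alternative to the second step one could estimate $\|e^{t\Delta}f\|_{L^p}$ on the Fourier side using Hausdorff--Young and H\"older, but the convolution argument is cleaner and handles $p=+\infty$ uniformly.
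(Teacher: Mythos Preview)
Your proof is correct and the bookkeeping is clean. The route, however, is genuinely different from the paper's. The paper works entirely on the Fourier side in a single step: it applies the Hausdorff--Young inequality $\|e^{t\Delta}f\|_{L^p}\leq \|\widehat{e^{t\Delta}f}\|_{L^q}$ (with $1/p+1/q=1$, so $1\le q\le 2$), then splits $\widehat{e^{t\Delta}f}(\xi)=e^{-4\pi^2|\xi|^2 t}\hat f(\xi)$ via H\"older with $\tfrac1q=\tfrac12+\tfrac1s$ into $(2\pi|\xi|)^{1/2}e^{-4\pi^2|\xi|^2 t}$ in $L^s$ and $(2\pi|\xi|)^{-1/2}\hat f$ in $L^2$; a scaling change of variables in the $L^s$ factor produces the power $t^{-1+3/(2p)}$. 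You instead use the semigroup splitting $e^{t\Delta}=e^{\frac t2\Delta}e^{\frac t2\Delta}$ and combine the Plancherel bound $\|e^{\tau\Delta}f\|_{L^2}\lesssim \tau^{-1/4}\|f\|_{\dot H^{-1/2}}$ with Young's inequality for the heat-kernel $L^2\to L^p$ smoothing. Amusingly, the alternative you mention at the end---Hausdorff--Young plus H\"older on the Fourier side---is exactly what the paper does.

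Both arguments cover the full range $2\le p\le +\infty$ with no trouble at the endpoint. The paper's Fourier-side computation has one practical advantage in context: it immediately localizes. In the very next lemma (Lemma~\ref{EpsilonBesov}) the paper needs the same estimate but with $\hat f$ supported in a thin cone $\Omega_\epsilon$, and the Fourier-side H\"older step then yields an extra factor $\epsilon^{\frac12-\frac1p}$ coming from the $L^s$ norm of $(2\pi|\xi|)^{1/2}e^{-4\pi^2|\xi|^2}$ over $\Omega_\epsilon$. Your convolution argument, working in physical space for the $L^2\to L^p$ step, does not see the Fourier support and would not give that refinement without reverting to the Fourier side anyway.
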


\begin{proof}
Fix $f\in \dot{H}^{-\frac{1}{2}}\left(\mathbb{R}^3\right)$
and $2\leq p \leq +\infty.$
Let $\frac{1}{p}+\frac{1}{q}=1.$ Note that because $1\leq q\leq 2,$ we can apply Plancherel's inequality and H\"older's inequality, setting $\frac{1}{q}=\frac{1}{2}+\frac{1}{s},$ and find that
\begin{align}
    \|e^{t\Delta} f\|_{L^p}&\leq
    \|\widehat{e^{t\Delta}f}\|_{L^q}\\
    &=
    \left \|\exp{(-4 \pi^2 |\xi|^2t)}
    \hat{f}(\xi)\right\|_{L^q(\diff \xi)}\\
    & \leq
    \left\|(2 \pi|\xi|)^{\frac{1}{2}}
    \exp(-4 \pi^2 |\xi|^2 t) \right\|_{L^s}
    \left\|(2\pi|\xi|)^{-\frac{1}{2}}\hat{f}\right\|_{L^2}\\
    &=
    \left\|(2 \pi|\xi|)^{\frac{1}{2}}
    \exp(-4 \pi^2 |\xi|^2 t) \right\|_{L^s}
    \|f\|_{\dot{H}^{-\frac{1}{2}}}\\
    &=
    t^{-\frac{1}{4}-\frac{3}{2s}}
    \left\|(2 \pi|\zeta|)^{\frac{1}{2}}
    \exp(-4 \pi^2 |\zeta|^2) \right\|_{L^s}
    \|f\|_{\dot{H}^{-\frac{1}{2}}}\\
    &=
    t^{-1+\frac{3}{2p}}
    \left\|(2 \pi|\zeta|)^{\frac{1}{2}}
    \exp(-4 \pi^2 |\zeta|^2) \right\|_{L^s}
    \|f\|_{\dot{H}^{-\frac{1}{2}}}.
\end{align}
where we have made the change of variables $\zeta=t^\frac{1}{2}\xi$ and recalled that $\frac{1}{s}=\frac{1}{2}-\frac{1}{p}.$
Recall from Definition \ref{BesovDefinition} that
\begin{equation}
\|f\|_{\dot{B}^{-2+\frac{3}{p}}_{p,\infty}}=
\sup_{t>0}t^{1-\frac{3}{2p}}
\|e^{t\Delta}f\|_{L^p\left(\mathbb{R}^3\right)},
\end{equation}
and conclude that
\begin{equation}
    \|f\|_{\dot{B}^{-2+\frac{3}{p}}_{p,\infty}} \leq
    \left\|(2 \pi|\zeta|)^{\frac{1}{2}}
    \exp(-4 \pi^2 |\zeta|^2) \right\|_{L^s}
    \|f\|_{\dot{H}^{-\frac{1}{2}}}.
\end{equation}
This completes the proof.
\end{proof}

We are going to prove that $\Tilde{\Gamma}_{2d},$ the set of vorticities satisfying the hypothesis of Theorem \ref{Almost2d}, is bounded in $\dot{B}^{-2+\frac{3}{p}}_{p,\infty},$ for all $2<p\leq +\infty.$ To do this we will first need two lemmas. The first lemma will show that if the support of the Fourier transform of $f$ is close the $xy$ plane in the appropriate sense, then the constant in the Sobolev type embedding in Proposition \ref{BesovEmbed} will be small. The second will allow us to control the size of a divergence free vector field $v$ by the size of its horizontal components $v_h,$ when the support of the Fourier transform of $v$ is bounded away from the $xy$ plane in the appropriate sense.

\begin{lemma} \label{EpsilonBesov}
    Let $\Omega_{\epsilon}\subset \mathbb{R}^3$ be defined by
    \begin{equation}
\Omega_\epsilon=\left\{\xi \in\mathbb{R}^3:
\frac{|z|}{r}<\epsilon \right\},
    \end{equation}
where $z=\xi_3$ and $r=\sqrt{\xi_1^2+\xi_2^2}$ are the cylindrical coordinates in Fourier space.
Then for all $2<p \leq +\infty$ there exists a constant $\Tilde{C}_p>0$ depending only on $p,$ such that for all $f\in \dot{H}^{-\frac{1}{2}}\left(\mathbb{R}^3\right)$ and for all $0<\epsilon<1,$ if 
$\supp\left({\hat{f}}\right)\subset \Omega_\epsilon,$ then
\begin{equation}
    \|f\|_{\dot{B}^{-2+\frac{3}{p}}_{p,\infty}}\leq
    \Tilde{C}_p \epsilon^{\frac{1}{2}-\frac{1}{p}} \|f\|_{\dot{H}^{-\frac{1}{2}}}.
\end{equation}
\end{lemma}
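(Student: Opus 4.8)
The plan is to repeat the argument of Proposition \ref{BesovEmbed} essentially verbatim, except that at the step where Hölder's inequality is applied in frequency space we restrict the heat-kernel multiplier to the cone $\Omega_\epsilon$ on which $\hat f$ is supported; this restriction is exactly where the gain of $\epsilon^{\frac12-\frac1p}$ comes from. Fix $2<p\le+\infty$, set $\frac1p+\frac1q=1$ (so that $1\le q<2$) and $\frac1s=\frac1q-\frac12=\frac12-\frac1p$ (so that $2\le s<+\infty$). As in Proposition \ref{BesovEmbed}, Plancherel's inequality together with Hölder's inequality with the split $\frac1q=\frac1s+\frac12$, now using $\widehat{e^{t\Delta}f}=e^{-4\pi^2|\xi|^2t}\hat f$ and the support hypothesis on $\hat f$, gives
\begin{equation}
\|e^{t\Delta}f\|_{L^p}\le\left\|(2\pi|\xi|)^{\frac12}e^{-4\pi^2|\xi|^2t}\chi_{\Omega_\epsilon}(\xi)\right\|_{L^s(\diff\xi)}\|f\|_{\dot H^{-\frac12}}.
\end{equation}

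The heart of the proof is to estimate the $L^s$ norm of the truncated multiplier. Since $\Omega_\epsilon$ is a cone, hence invariant under the dilation $\xi\mapsto t^{-1/2}\xi$, the change of variables $\zeta=t^{1/2}\xi$ used in Proposition \ref{BesovEmbed} goes through unchanged and produces the same power of $t$:
\begin{equation}
\left\|(2\pi|\xi|)^{\frac12}e^{-4\pi^2|\xi|^2t}\chi_{\Omega_\epsilon}\right\|_{L^s(\diff\xi)}=t^{-1+\frac{3}{2p}}\left\|(2\pi|\zeta|)^{\frac12}e^{-4\pi^2|\zeta|^2}\chi_{\Omega_\epsilon}\right\|_{L^s(\diff\zeta)}.
\end{equation}
To bound the remaining constant I would pass to cylindrical coordinates $(r,\theta,z)$ in $\zeta$-space, noting that on $\Omega_\epsilon$ one has $|z|<\epsilon r$ and hence $r\le|\zeta|<\sqrt2\,r$, while for each fixed $r$ the admissible set of $z$ is an interval of length $2\epsilon r$. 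This yields
\begin{equation}
\left\|(2\pi|\zeta|)^{\frac12}e^{-4\pi^2|\zeta|^2}\chi_{\Omega_\epsilon}\right\|_{L^s}^s\le \epsilon\,(2\pi)^{\frac s2}2^{\frac s4}4\pi\int_0^\infty r^{\frac s2+2}e^{-4\pi^2sr^2}\diff r =: \epsilon\, C(s),
\end{equation}
where $C(s)<+\infty$ for every $s\in[2,+\infty)$ since the Gaussian dominates the polynomial. Taking $s$-th roots and recalling $\frac1s=\frac12-\frac1p$ produces a constant $\tilde C_p>0$ with $\left\|(2\pi|\zeta|)^{\frac12}e^{-4\pi^2|\zeta|^2}\chi_{\Omega_\epsilon}\right\|_{L^s}\le\tilde C_p\,\epsilon^{\frac12-\frac1p}$.

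Combining the three displays and multiplying by $t^{1-\frac{3}{2p}}$, the powers of $t$ cancel exactly, so taking the supremum over $t>0$ and invoking Definition \ref{BesovDefinition} gives $\|f\|_{\dot B^{-2+\frac3p}_{p,\infty}}\le\tilde C_p\,\epsilon^{\frac12-\frac1p}\|f\|_{\dot H^{-\frac12}}$, which is the claim. I do not anticipate a genuine obstacle here, as this is a refinement of an estimate already proven; the one subtlety worth flagging is that the argument truly needs $p>2$, i.e. $s<+\infty$. Truncating the multiplier to $\Omega_\epsilon$ only saves a power of the measure of the slab $\{|z|<\epsilon r\}$, and in the $L^s$ norm this saving is $\epsilon^{1/s}=\epsilon^{\frac12-\frac1p}$, which degenerates to $1$ as $p\to2^+$ (where one measures the multiplier in $L^\infty_\xi$ and the small measure of the slab is invisible). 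The endpoint $p=+\infty$ is handled, as in Proposition \ref{BesovEmbed}, by the convention $\|e^{t\Delta}f\|_{L^\infty}\le\|\widehat{e^{t\Delta}f}\|_{L^1}$, corresponding to $q=1$, $s=2$.
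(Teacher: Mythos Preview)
Your proposal is correct and follows essentially the same route as the paper's proof: restrict the Hausdorff--Young/H\"older step of Proposition \ref{BesovEmbed} to $\Omega_\epsilon$, use the scale-invariance of the cone to pull out the power $t^{-1+\frac{3}{2p}}$, and estimate the remaining $L^s$-norm in cylindrical coordinates via $|z|<\epsilon r$, $|\zeta|\le\sqrt{2}\,r$, $e^{-4\pi^2 s|\zeta|^2}\le e^{-4\pi^2 s r^2}$, arriving at the same one-dimensional Gaussian integral and the factor $\epsilon^{1/s}=\epsilon^{\frac12-\frac1p}$. Your additional remarks on why $p>2$ is needed and how the endpoint $p=+\infty$ is handled are correct and go slightly beyond what the paper spells out.
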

\begin{proof}
Proceeding as in Proposition \ref{BesovEmbed}, set $\frac{1}{p}+\frac{1}{q}=1$ and $\frac{1}{q}=\frac{1}{2}+\frac{1}{s}.$
Again applying Plancherel's inequality and H\"older's inequality, we find that
\begin{align}
    \|e^{t\Delta}f\|_{L^p}&\leq
    \|\widehat{e^{t\Delta}f}\|_{L^q(\Omega_\epsilon)}\\
    &\leq
    \left\|(2\pi |\xi|)^{\frac{1}{2}}\exp(-4\pi^2|\xi|^2t) \right\|_{L^s(\Omega_\epsilon)}
    \|f\|_{\dot{H}^{-\frac{1}{2}}}\\
    &= t^{-1+\frac{3}{2p}} \label{RestrictedEmbed}
    \left\|(2\pi |\zeta|)^{\frac{1}{2}}\exp(-4\pi^2|\zeta|^2) \right\|_{L^s(\Omega_\epsilon)} \|f\|_{\dot{H}^{-\frac{1}{2}}}.
\end{align}
Note that we can again make the change of variables,  $\zeta=t^\frac{1}{2}\xi$, without changing the domain of integration, just as we did in Proposition \ref{BesovEmbed}, because $\Omega_\epsilon$ is invariant under multiplication by scalars.

From this we may conclude that
\begin{equation}
    \|f\|_{\dot{B}^{-2+\frac{3}{p}}_{p,\infty}}\leq
    \left\|(2\pi |\zeta|)^{\frac{1}{2}}\exp(-4\pi^2|\zeta|^2) \right\|_{L^s(\Omega_\epsilon)}
    \|f\|_{\dot{H}^{-\frac{1}{2}}}.
\end{equation}
It now remains only to estimate
$\left\|(2\pi |\zeta|)^{\frac{1}{2}}\exp(-4\pi^2|\zeta|^2) \right\|_{L^s(\Omega_\epsilon)}.$ First we compute
\begin{align}
    \left\|(2\pi |\zeta|)^{\frac{1}{2}}\exp(-4\pi^2|\zeta|^2) \right\|_{L^s(\Omega_\epsilon)}^s&=
    \int_{\Omega_\epsilon} (2\pi|\zeta|)^{\frac{s}{2}}
    \exp(-4\pi^2|\zeta|^2 s) \diff\zeta\\
    &= 
    (2\pi)^{\frac{s}{2}}
    \int_0^\infty 2\pi r \int_{-\epsilon r}^{\epsilon r}
    (r^2+z^2)^{\frac{s}{4}}\exp(-4\pi^2 (r^2+z^2)s) 
    \diff r \diff z\\
    &=
    (2\pi)^{\frac{s}{2}}
    \int_0^\infty 4 \pi r \exp(-4 \pi^2 r^2 s)
    \left(\int_{0}^{\epsilon r} (r^2+z^2)^{\frac{s}{4}}
    \exp(-4 \pi^2 z^2 s)\diff z \right) \diff r\\
    &\leq 
    (2\pi)^{\frac{s}{2}}
    \int_0^\infty 4 \pi r \exp(-4 \pi^2 r^2 s)
    \left(\int_{0}^{\epsilon r} 
    (r^2+z^2)^{\frac{s}{4}} \diff z \right) \diff r.
    \end{align}
    Using the fact that for all $0\leq z\leq \epsilon r$
    \begin{align}
        r^2+z^2 &\leq 
        r^2 +\epsilon^2 r^2\\
        &\leq 2r^2,
    \end{align}
    we can conclude that
    \begin{align}
    \left\|(2\pi |\zeta|)^{\frac{1}{2}}\exp(-4\pi^2|\zeta|^2) \right\|_{L^s(\Omega_\epsilon)}^s
    & \leq
    (2\pi)^{\frac{s}{2}}
    \int_0^\infty 4 \pi r \exp(-4 \pi^2 r^2 s)
    \epsilon r (2r^2)^\frac{s}{4} \diff r\\
    &=
    (2\pi)^{\frac{s}{2}} 2^\frac{s}{4} \epsilon
    \int_0^\infty 4 \pi r^{2+\frac{s}{2}}
    \exp(-4\pi^2r^2s) \diff r.
\end{align}
Recalling that $\frac{1}{s}=\frac{1}{2}-\frac{1}{p},$ observe that
\begin{equation}
    \left\|(2\pi |\zeta|)^{\frac{1}{2}}\exp(-4\pi^2|\zeta|^2) \right\|_{L^s(\Omega_\epsilon)}
    \leq
    (2\pi)^{\frac{1}{2}} 2^\frac{1}{4}
    \left(\int_0^\infty 4 \pi r^{2+\frac{s}{2}}
    \exp(-4\pi^2r^2s)\diff r\right)^{\frac{1}{2}-\frac{1}{p}}
    \epsilon^{\frac{1}{2}-\frac{1}{p}}.
\end{equation}
Plugging this inequality back into \eqref{RestrictedEmbed}, we find that
\begin{equation}
    \|f\|_{\dot{B}^{-2+\frac{3}{p}}_{p,\infty}} \leq
    (2\pi)^{\frac{1}{2}} 2^\frac{1}{4}
    \left(\int_0^\infty 4 \pi r^{2+\frac{s}{2}}
    \exp(-4\pi^2r^2s)\diff r\right)^{\frac{1}{2}-\frac{1}{p}}
    \epsilon^{\frac{1}{2}-\frac{1}{p}}
    \|f\|_{\dot{H}^{-\frac{1}{2}}}.
\end{equation}
This completes the proof.
\end{proof}
\begin{lemma} \label{HorizontalVortBound}
Suppose $v \in \dot{H}^{-\frac{1}{2}}_{df}$ and
$\supp{\hat{v}} \subset \Omega_\epsilon^c,$
for some $0<\epsilon<1,$ where $\Omega_\epsilon$ is taken as in Lemma \ref{EpsilonBesov}. Then 
\begin{equation}
    \|v\|_{\dot{H}^{-\frac{1}{2}}} 
    \leq \frac{\sqrt{2}}{\epsilon}
    \|v_h\|_{\dot{H}^{-\frac{1}{2}}}
\end{equation}
\end{lemma}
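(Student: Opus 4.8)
The plan is to work entirely on the Fourier side and reduce the claim to a pointwise inequality between $\widehat{v}(\xi)$ and $\widehat{v_h}(\xi)$ that holds almost everywhere on $\Omega_\epsilon^c$. The key structural input is the divergence-free condition $\xi\cdot\widehat{v}(\xi)=0$, which in coordinates reads $\xi_1\widehat{v}_1+\xi_2\widehat{v}_2+\xi_3\widehat{v}_3=0$, i.e. $z\,\widehat{v}_3=-(\xi_1\widehat{v}_1+\xi_2\widehat{v}_2)$, where $z=\xi_3$ and $r=\sqrt{\xi_1^2+\xi_2^2}$.

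First I would apply the Cauchy--Schwarz inequality to the identity $z\,\widehat{v}_3=-(\xi_1\widehat{v}_1+\xi_2\widehat{v}_2)$ to get $|z|^2|\widehat{v}_3|^2\le(\xi_1^2+\xi_2^2)(|\widehat{v}_1|^2+|\widehat{v}_2|^2)=r^2|\widehat{v_h}|^2$, hence $|\widehat{v}_3|^2\le \frac{r^2}{z^2}|\widehat{v_h}|^2$ wherever $z\neq 0$. Next I would use the support hypothesis: on $\Omega_\epsilon^c$ we have $|z|\ge \epsilon r$, so $\frac{r^2}{z^2}\le\frac{1}{\epsilon^2}$, giving $|\widehat{v}_3|^2\le\frac{1}{\epsilon^2}|\widehat{v_h}|^2$ almost everywhere. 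Since $|\widehat{v}|^2=|\widehat{v_h}|^2+|\widehat{v}_3|^2$ and $\epsilon<1$ forces $1\le 1/\epsilon^2$, I would conclude $|\widehat{v}(\xi)|^2\le\big(1+\tfrac1{\epsilon^2}\big)|\widehat{v_h}(\xi)|^2\le\tfrac{2}{\epsilon^2}|\widehat{v_h}(\xi)|^2$ for a.e. $\xi\in\Omega_\epsilon^c$.

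Finally I would multiply this pointwise bound by the weight $(2\pi|\xi|)^{-1}$, integrate over $\mathbb{R}^3$ (using that $\widehat{v}$ is supported in $\Omega_\epsilon^c$), and invoke the definition of the $\dot{H}^{-\frac12}$ norm to obtain $\|v\|_{\dot{H}^{-1/2}}^2\le\frac{2}{\epsilon^2}\|v_h\|_{\dot{H}^{-1/2}}^2$; taking square roots yields the stated inequality. I do not anticipate a serious obstacle here: the only mild point to be careful about is the set where $z=0$, but there $|z|\ge\epsilon r$ forces $r=0$ as well, so this locus has measure zero and does not affect the integral; everything else is a routine Cauchy--Schwarz plus Plancherel argument.
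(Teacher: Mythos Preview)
Your proposal is correct and follows essentially the same approach as the paper: both arguments use the divergence-free constraint on the Fourier side to bound $|\widehat{v}_3|$ pointwise by $\tfrac{r}{|z|}|\widehat{v_h}|$ on $\Omega_\epsilon^c$, then use $|z|\ge\epsilon r$ and $\epsilon<1$ to get $|\widehat{v}|^2\le\tfrac{2}{\epsilon^2}|\widehat{v_h}|^2$ before integrating against the $\dot H^{-1/2}$ weight. The only cosmetic difference is that the paper decomposes $\widehat{v}$ in cylindrical components $(\widehat{v}_r,\widehat{v}_\theta,\widehat{v}_3)$ and reads off the exact identity $\widehat{v}_3=-\tfrac{r}{z}\widehat{v}_r$, whereas you stay in Cartesian coordinates and reach the same pointwise bound via Cauchy--Schwarz.
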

\begin{proof}
We know that $\nabla \cdot v=0,$ so 
$\xi \cdot \hat{v}(\xi)=0,$ almost everywhere 
$\xi \in \mathbb{R}^3.$ Therefore we may compute that 
\begin{align}
    z\hat{v}_3(\xi)+r\hat{v}_r(\xi)&=
    \xi \cdot \hat{v}(\xi)\\
    &=0.
\end{align}
Therefore we find that
\begin{equation}
    \hat{v}_3(\xi)=-\frac{r}{z}\hat{v}_r(\xi).
\end{equation}
Noting that for all $\xi \in \Omega_\epsilon^c, 
\frac{r}{|z|}\leq \frac{1}{\epsilon},$ we may compute that for all
$\xi \in \Omega_\epsilon^c$
\begin{align}
    \frac{|\hat{v}(\xi)|^2}{|\hat{v}_h(\xi)|^2}&=
    \frac{\hat{v}_\theta^2+\hat{v}_r^2+\hat{v}_3^2}
    {\hat{v}_\theta^2+\hat{v}_r^2}\\
    &=
    \frac{\hat{v}_\theta^2+
    \left(1+\frac{r^2}{z^2}\right)\hat{v}_r^2}
    {v_\theta^2+v_r^2}\\
    &\leq
    1+\frac{r^2}{z^2}\\
    &\leq 1+\frac{1}{\epsilon^2}\\
    &\leq \frac{2}{\epsilon^2},
\end{align}
Therefore, for all $\xi \in \Omega_\epsilon^c,$
\begin{equation}
    |\hat{v}(\xi)|^2 \leq 
    \frac{2}{\epsilon^2}|\hat{v}_h(\xi)|^2.
\end{equation}
Integrating this inequality over $\Omega_\epsilon^c,$ this completes the proof.
\end{proof}

We will now prove that $\Tilde{\Gamma}_{2d}$ is bounded in $\dot{B}^{-2+\frac{3}{p}}_{p,\infty}$ for all $2<p\leq +\infty,$ by splitting up the support of Fourier transform and applying Lemma \ref{EpsilonBesov} to one piece and Lemma \ref{HorizontalVortBound} to the other.

\begin{theorem} \label{VortBounded}
$\Tilde{\Gamma}_{2d}$ is bounded in 
$\dot{B}^{-2+\frac{3}{p}}_{p,\infty}$ for all $2<p\leq +\infty.$
\end{theorem}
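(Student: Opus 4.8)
\section*{Proof proposal for Theorem \ref{VortBounded}}

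The plan is to exploit the tension built into the definition of $\tilde\Gamma_{2d}$: the full norm $\|\omega\|_{\dot H^{-\frac12}}$ may be arbitrarily large, but then the horizontal part $\|\omega_h\|_{\dot H^{-\frac12}}$ is forced to be super-polynomially small. For $\omega\in\tilde\Gamma_{2d}$ write $N=\|\omega\|_{\dot H^{-\frac12}}$ and $A=\|\omega_h\|_{\dot H^{-\frac12}}$; since $\omega\in L^2_{df}\cap\dot H^{-1}_{df}$ we have $N^2\le\|\omega\|_{\dot H^{-1}}\|\omega\|_{L^2}<+\infty$ by interpolation, and $A\le N$ because $|\widehat{\omega_h}|\le|\hat\omega|$ pointwise. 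From $\tfrac14 N^4\le\tfrac14\|\omega\|_{\dot H^{-1}}^2\|\omega\|_{L^2}^2$ and the defining inequality of $\tilde\Gamma_{2d}$ one gets the key estimate
\[
A\le R_1\nu\,\exp\!\left(\tfrac{6{,}912\pi^4\nu^4}{R_2\nu^3}\right)\exp\!\left(-\tfrac{N^4}{4R_2\nu^3}\right),
\]
i.e. $A$ decays like a Gaussian in $N$.

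Next I would split each $\omega\in\tilde\Gamma_{2d}$ by the cone $\Omega_\epsilon$ of Lemma \ref{EpsilonBesov}: for a parameter $\epsilon\in(0,1)$ to be chosen, set $\widehat{\omega^{\mathrm{lo}}}=\hat\omega\,\mathbf 1_{\Omega_\epsilon}$ and $\widehat{\omega^{\mathrm{hi}}}=\hat\omega\,\mathbf 1_{\Omega_\epsilon^c}$, so $\omega=\omega^{\mathrm{lo}}+\omega^{\mathrm{hi}}$. Both pieces lie in $L^2_{df}\cap\dot H^{-1}_{df}\subset\dot H^{-\frac12}$, since multiplying $\hat\omega$ by a scaling-invariant Fourier cutoff preserves $\xi\cdot\hat\omega(\xi)=0$ and only shrinks Hilbert norms. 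Because $\|\cdot\|_{\dot B^{-2+\frac3p}_{p,\infty}}$ obeys the triangle inequality, it suffices to bound the two pieces. For $\omega^{\mathrm{lo}}$, Lemma \ref{EpsilonBesov} gives directly $\|\omega^{\mathrm{lo}}\|_{\dot B^{-2+\frac3p}_{p,\infty}}\le\tilde C_p\,\epsilon^{\frac12-\frac1p}\,\|\omega^{\mathrm{lo}}\|_{\dot H^{-\frac12}}\le\tilde C_p\,\epsilon^{\frac12-\frac1p}N$. For $\omega^{\mathrm{hi}}$, which is divergence free with Fourier support in $\Omega_\epsilon^c$, Lemma \ref{HorizontalVortBound}, Proposition \ref{BesovEmbed}, and $\|(\omega^{\mathrm{hi}})_h\|_{\dot H^{-\frac12}}=\|\widehat{\omega_h}\,\mathbf 1_{\Omega_\epsilon^c}\|_{\dot H^{-\frac12}}\le A$ yield
\[
\|\omega^{\mathrm{hi}}\|_{\dot B^{-2+\frac3p}_{p,\infty}}\le C_p\|\omega^{\mathrm{hi}}\|_{\dot H^{-\frac12}}\le\frac{\sqrt2\,C_p}{\epsilon}\,\|(\omega^{\mathrm{hi}})_h\|_{\dot H^{-\frac12}}\le\frac{\sqrt2\,C_p}{\epsilon}\,A.
\]

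Finally I would balance the split. If $N\le1$, Proposition \ref{BesovEmbed} alone gives $\|\omega\|_{\dot B^{-2+\frac3p}_{p,\infty}}\le C_pN\le C_p$. If $N>1$, put $\beta=\tfrac12-\tfrac1p>0$ and choose $\epsilon=N^{-1/\beta}\in(0,1)$, so that $\epsilon^{\beta}N=1$; then the low-frequency contribution is at most $\tilde C_p$, and the high-frequency contribution is at most $\sqrt2\,C_p\,N^{1/\beta}A\le\sqrt2\,C_pR_1\nu\exp(6{,}912\pi^4\nu/R_2)\cdot N^{1/\beta}\exp(-N^4/(4R_2\nu^3))$, which is bounded over $N\in[0,+\infty)$ by a constant depending only on $p$ and $\nu$. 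Summing the two contributions bounds $\|\omega\|_{\dot B^{-2+\frac3p}_{p,\infty}}$ uniformly on $\tilde\Gamma_{2d}$. The one step that actually carries the content — and the one to check with care — is this last balancing: one must verify that the polynomial factor $1/\epsilon$ produced by Lemma \ref{HorizontalVortBound} is defeated by the Gaussian decay of $A$ in $N$ coming from membership in $\tilde\Gamma_{2d}$; everything else is a direct assembly of the lemmas already proved in Section 4.
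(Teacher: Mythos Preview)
Your proposal is correct and follows essentially the same approach as the paper: the same cone decomposition $\omega=\omega^{\mathrm{lo}}+\omega^{\mathrm{hi}}$ via $\Omega_\epsilon$, the same three lemmas (Proposition~\ref{BesovEmbed}, Lemma~\ref{EpsilonBesov}, Lemma~\ref{HorizontalVortBound}), and the same key observation that membership in $\tilde\Gamma_{2d}$ together with the interpolation $\|\omega\|_{\dot H^{-1/2}}^4\le\|\omega\|_{\dot H^{-1}}^2\|\omega\|_{L^2}^2$ forces $A\lesssim\exp(-N^4/(4R_2\nu^3))$. The only difference is cosmetic: the paper minimizes $\tilde C_p\epsilon^{\beta}N+\sqrt2\,C_p\,\epsilon^{-1}A$ exactly and then splits into the cases $\epsilon_0<1$ and $\epsilon_0\ge1$, whereas you split on $N\le1$ versus $N>1$ and in the latter case make the explicit (non-optimal but sufficient) choice $\epsilon=N^{-1/\beta}$; both routes land on a polynomial-in-$N$ factor being killed by the Gaussian.
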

\begin{proof}
Fix $2<p\leq +\infty$ and $\omega \in \Tilde{\Gamma}_{2d}.$ 
We will split up the domain in Fourier space in two parts, $\Omega_\epsilon$ and $\Omega_\epsilon^c$ and consider them separately. Fix $0<\epsilon<1.$
Define $v$ and $\sigma$ by
\begin{equation}
    \hat{v}(\xi)=\begin{cases} \hat{\omega}(\xi),
    \xi \in \Omega_\epsilon \\
    0, \text{otherwise},
    \end{cases}
\end{equation}
and
\begin{equation}
    \hat{\sigma}(\xi)=\begin{cases} \hat{\omega}(\xi),
    \xi \in \Omega_\epsilon^c \\
    0, \text{otherwise}.
    \end{cases}
\end{equation}
It is obvious that
\begin{equation}
    \omega=v+\sigma,
\end{equation}
and therefore by the triangle inequality we know that
\begin{equation}
    \|\omega\|_{\dot{B}^{-2+\frac{3}{p}}_{p,\infty}} \leq
    \|v\|_{\dot{B}^{-2+\frac{3}{p}}_{p,\infty}}+
    \|\sigma\|_{\dot{B}^{-2+\frac{3}{p}}_{p,\infty}}.
\end{equation}
We will estimate these two quantities separately.

By Theorem \ref{EpsilonBesov}, we find that 
\begin{align}
    \|v\|_{\dot{B}^{-2+\frac{3}{p}}_{p,\infty}} &\leq
    \Tilde{C}_p \epsilon^{\frac{1}{2}-\frac{1}{p}}
    \|v\|_{\dot{H}^{-\frac{1}{2}}}\\
    &\leq
    \Tilde{C}_p \epsilon^{\frac{1}{2}-\frac{1}{p}}
    \|\omega\|_{\dot{H}^{-\frac{1}{2}}}.
\end{align}
Applying Proposition \ref{BesovEmbed} and Lemma \ref{HorizontalVortBound}, we observe that
\begin{align}
    \|\sigma\|_{\dot{B}^{-2+\frac{3}{p}}_{p,\infty}} &\leq
    C_p\|\sigma\|_{\dot{H}^{-\frac{1}{2}}}\\
    &\leq
    C_p\frac{\sqrt{2}}{\epsilon}
    \|\sigma_h\|_{\dot{H}^{-\frac{1}{2}}}\\
    &\leq
    C_p\frac{\sqrt{2}}{\epsilon}
    \|\omega_h\|_{\dot{H}^{-\frac{1}{2}}}.
\end{align}
Putting together these bounds on the two pieces we find that
\begin{equation}
    \|\omega\|_{\dot{B}^{-2+\frac{3}{p}}_{p,\infty}} \leq
    \Tilde{C}_p \epsilon^{\frac{1}{2}-\frac{1}{p}}
    \|\omega\|_{\dot{H}^{-\frac{1}{2}}}+
    C_p\frac{\sqrt{2}}{\epsilon}
    \|\omega_h\|_{\dot{H}^{-\frac{1}{2}}}.
\end{equation}
Because we took $0<\epsilon<1$ arbitrary, it follows immediately that
\begin{equation}
    \|\omega\|_{\dot{B}^{-2+\frac{3}{p}}_{p,\infty}} \leq
    \inf_{0<\epsilon<1} \left(
    \Tilde{C}_p \epsilon^{\frac{1}{2}-\frac{1}{p}}
    \|\omega\|_{\dot{H}^{-\frac{1}{2}}}+
    C_p\frac{\sqrt{2}}{\epsilon}
    \|\omega_h\|_{\dot{H}^{-\frac{1}{2}}} \right).
\end{equation}
In order to compute this infinimum we will define 
\begin{equation}
    f(\epsilon)=\Tilde{C}_p \|\omega\|_{\dot{H}^{-\frac{1}{2}}}
    \epsilon^{\frac{1}{2}-\frac{1}{p}}
    +\frac{C_p \sqrt{2} \|\omega_h\|_{
    \dot{H}^{-\frac{1}{2}}}}{\epsilon},
\end{equation}
Computing the derivative, we find that
\begin{align}
    f'(\epsilon)&=\left(\frac{1}{2}-\frac{1}{p}\right)
    \Tilde{C}_p \|\omega\|_{\dot{H}^{-\frac{1}{2}}} \epsilon^{-\frac{1}{2}-\frac{1}{p}}
    -\frac{C_p \sqrt{2} \|\omega_h\|_{
    \dot{H}^{-\frac{1}{2}}}}{\epsilon^2}\\
    &=
    \left(\left(\frac{1}{2}-\frac{1}{p}\right)
    \Tilde{C}_p \|\omega\|_{\dot{H}^{-\frac{1}{2}}} \epsilon^{\frac{3}{2}-\frac{1}{p}}
    -C_p \sqrt{2} \|\omega_h\|_{
    \dot{H}^{-\frac{1}{2}}}\right)
    \frac{1}{\epsilon^2}
\end{align}
Therefore, $f$ achieves a global maximum on $(0,+\infty)$ at
\begin{equation}
    \epsilon_0=\left(
    \frac{C_p \sqrt{2}\|\omega_h\|_{\dot{H}^{-\frac{1}{2}}}}
    {\left(\frac{1}{2}-\frac{1}{p}\right)
    \Tilde{C}_p \|\omega\|_{\dot{H}^{-\frac{1}{2}}}}
    \right)^{\frac{1}{\frac{3}{2}-\frac{1}{p}}}.
\end{equation}
We now will consider two cases, when $\epsilon_0<1$
and when $\epsilon_0 \geq 1.$

If $\epsilon_0<1,$ then we have
\begin{align}
    \|\omega\|_{\dot{B}^{-2+\frac{3}{p}}_{p,\infty}}
    & \leq f(\epsilon_0)\\
    &=M_p \|\omega\|_{\dot{H}^{-\frac{1}{2}}}^\frac
    {1}{\frac{3}{2}-\frac{1}{p}}
    \|\omega_h\|_{\dot{H}^{-\frac{1}{2}}}^{1-\frac
    {1}{\frac{3}{2}-\frac{1}{p}}},
\end{align}
where $M_p$ depends only on $p.$
Interpolating between $\dot{H}^{-1}$ and $L^2$ and using that $\omega \in \Tilde{\Gamma}_{2d}$ by hypothesis, we find that
\begin{align}
    \|\omega_h\|_{\dot{H}^{-\frac{1}{2}}} \exp{\left(\frac{\frac{1}{4}
    \|\omega\|_{\dot{H}^{-\frac{1}{2}}}^4
    -6,912 \pi^4 \nu^4}{R_2 \nu^4}\right)}
    &\leq 
    \|\omega_h\|_{\dot{H}^{-\frac{1}{2}}} \exp{\left(\frac{\frac{1}{4}
    \|\omega\|_{\dot{H}^{-1}}^2
    \|\omega\|_{L^2}^2
    -6,912 \pi^4 \nu^4}{R_2 \nu^4}\right)}\\
    &< R_1 \nu,
\end{align}
so we may conclude that
\begin{equation}
    \|\omega_h\|_{\dot{H}^{-\frac{1}{2}}}<
    R_1\nu \exp{\left(\frac{6,912\pi^4\nu^4}{R_2 \nu^4}\right)}
    \exp\left(\frac{-
    \|\omega\|_{\dot{H}^{-\frac{1}{2}}}^4}{4R_2 \nu^4}\right)
\end{equation}
Therefore we may conclude that
\begin{align}
    \|\omega\|_{\dot{B}^{-2+\frac{3}{p}}_{p,\infty}}
    &\leq M_p \|\omega\|_{\dot{H}^{-\frac{1}{2}}}^\frac
    {1}{\frac{3}{2}-\frac{1}{p}}
    \|\omega_h\|_{\dot{H}^{-\frac{1}{2}}}^{1-\frac
    {1}{\frac{3}{2}-\frac{1}{p}}}\\
    &<
    M_p \|\omega\|_{\dot{H}^{-\frac{1}{2}}}^\frac
    {1}{\frac{3}{2}-\frac{1}{p}}
    \left(R_1\nu \exp{\left(\frac{6,912\pi^4\nu^4}
    {R_2 \nu^4}\right)}\right)^
    {\left(1-\frac{1}{\frac{3}{2}-\frac{1}{p}}\right)}
    \exp\left(\frac{-\left(1-\frac{1}
    {\frac{3}{2}-\frac{1}{p}}\right)
    \|\omega\|_{\dot{H}^{-\frac{1}{2}}}^4}{4R_2 \nu^4}\right)\\
    &<
    M_p \left(R_1\nu \exp{\left(\frac{6,912\pi^4\nu^4}
    {R_2 \nu^4}\right)}\right)^
    {\left(1-\frac{1}{\frac{3}{2}-\frac{1}{p}}\right)}
    \sup_{r>0}\left(
    r^\frac{1}{\frac{3}{2}-\frac{1}{p}}
    \exp\left(\frac{-\left(1-\frac{1}
    {\frac{3}{2}-\frac{1}{p}}\right)
    r^4}{4R_2 \nu^4}\right)\right)\\
    &=R_{p,\nu},
\end{align}
with $R_{p,\nu}<+\infty$ depending only on $p$ and $\nu.$

Now suppose $\epsilon_0\geq 1.$
This implies that 
\begin{align}
    \|\omega\|_{\dot{H}^{-\frac{1}{2}}}
    &\leq
    \frac{\sqrt{2}C_p}
    {\left(\frac{1}{2}-\frac{1}{p}\right)\Tilde{C}_p}
    \|\omega_h\|_{\dot{H}^{-\frac{1}{2}}}\\
    &<
    \frac{\sqrt{2}C_p}
    {\left(\frac{1}{2}-\frac{1}{p}\right)\Tilde{C}_p}
    R_1\nu \exp{\left(\frac{6,912\pi^4\nu^4}{R_2 \nu^4}\right)}
    \exp\left(\frac{-
    \|\omega\|_{\dot{H}^{-\frac{1}{2}}}^4}{4R_2 \nu^4}\right)\\
    &\leq
    \frac{\sqrt{2}C_p}
    {\left(\frac{1}{2}-\frac{1}{p}\right)\Tilde{C}_p}
    R_1\nu \exp{\left(\frac{6,912\pi^4\nu^4}{R_2 \nu^4}\right)}.
\end{align}
Applying Theorem \ref{BesovEmbed} we conclude that
\begin{align}
    \|\omega\|_{\dot{B}^{-2+\frac{3}{p}}_{p,\infty}}
    &\leq
    C_p \|\omega\|_{\dot{H}^{-\frac{1}{2}}}\\
    &<
    \frac{\sqrt{2}C_p^2}
    {\left(\frac{1}{2}-\frac{1}{p}\right)\Tilde{C}_p}
    R_1\nu \exp{\left(\frac{6,912\pi^4\nu^4}
    {R_2 \nu^4}\right)}\\
    &=\Tilde{R}_{p,\nu}.
\end{align}
Putting together the case where $\epsilon_0<1$ and the case where $\epsilon_0\geq 1,$ we find that for all
$\omega \in \Tilde{\Gamma}_{2d},$
\begin{equation}
    \|\omega\|_{\dot{B}^{-2+\frac{3}{p}}_{p,\infty}} \leq
    \max\left(R_{p,\nu},\Tilde{R}_{p,\nu}\right).
\end{equation}
This completes the proof.
\end{proof}

Now that we have shown that $\Tilde{\Gamma}_{2d}$ is bounded in $\dot{B}^{-2+\frac{3}{p}}_{p,\infty}$ for all $2<p\leq +\infty,$ we will proceed to constructing a sequence showing that $\Tilde{\Gamma}_{2d}$ is unbounded in $\dot{H}^{-\frac{1}{2}}$ and
$\dot{B}^{-\frac{1}{2}}_{2,\infty}.$

\begin{theorem} \label{VortUnbounded}
$\Tilde{\Gamma}_{2d}$ is unbounded in $\dot{H}^{-\frac{1}{2}}$ and
$\dot{B}^{-\frac{1}{2}}_{2,\infty}.$
\end{theorem}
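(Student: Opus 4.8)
The plan is to produce, for every $N$, an element $\omega^{(N)}\in\widetilde{\Gamma}_{2d}$ with $\|\omega^{(N)}\|_{\dot H^{-1/2}}\ge N$ and $\|\omega^{(N)}\|_{\dot B^{-1/2}_{2,\infty}}\ge N$. The family will be built from a fixed profile by rescaling anisotropically in $x_3$ (pushing the Fourier support toward the plane $\xi_3=0$, exactly the regime $\Omega_\epsilon$ used in Theorem \ref{VortBounded}) and then multiplying by an amplitude. Fix a real Schwartz function $\phi$ on $\mathbb{R}^2$ with $\widehat{\phi}$ supported in the annulus $\{1\le|\xi_h|\le 2\}$ and $\partial_2\phi\not\equiv 0$, and a real Schwartz function $\chi$ on $\mathbb{R}$ with $\chi\not\equiv 0$; for $\lambda,\delta>0$ set
\[
\omega_{\lambda,\delta}=\nabla\times\bigl(\lambda\,\phi(x_1,x_2)\chi(\delta x_3),\,0,\,0\bigr)=\lambda\bigl(0,\ \delta\,\phi(x_1,x_2)\chi'(\delta x_3),\ -\partial_2\phi(x_1,x_2)\chi(\delta x_3)\bigr).
\]
Being a curl, $\omega_{\lambda,\delta}$ is divergence free; it is Schwartz, and $\widehat{\omega_{\lambda,\delta}}$ is supported in $\{1\le|\xi_h|\le 2,\ |\xi_3|\le\delta\}$, hence (for $\delta<1$) in the fixed annulus $\{1\le|\xi|\le\sqrt5\}$, so $\omega_{\lambda,\delta}\in L^2_{df}\cap\dot H^{-1}_{df}$.

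\textbf{Key estimates.} The point is that the norms entering the definition of $\widetilde\Gamma_{2d}$ scale differently in $\lambda,\delta$. Computing in Fourier variables, substituting $\xi_3=\delta s$, and using $|\xi_h|\ge 1$ on the support, one obtains constants $c_1,A,D>0$ depending only on $\phi,\chi$ such that for all $0<\delta\le 1$,
\[
\|\omega_{\lambda,\delta}\|_{\dot H^{-1/2}}^2\ \ge\ \frac{c_1\lambda^2}{\delta},\qquad
\tfrac14\|\omega_{\lambda,\delta}\|_{\dot H^{-1}}^2\|\omega_{\lambda,\delta}\|_{L^2}^2\ \le\ \frac{A\lambda^4}{\delta^2},\qquad
\|(\omega_{\lambda,\delta})_h\|_{\dot H^{-1/2}}^2\ \le\ D\,\lambda^2\delta .
\]
The gain in the last bound is the heart of the matter: the horizontal part $(\omega_{\lambda,\delta})_h$ carries the Fourier multiplier $\xi_3\,\widehat{\chi}(\xi_3/\delta)/\delta$, which is $O(\delta)$ on the support, whereas $\omega_3$ carries only $\widehat{\chi}(\xi_3/\delta)/\delta$; thus $\|(\omega_{\lambda,\delta})_h\|_{\dot H^{-1/2}}\lesssim\delta\,\|\omega_{\lambda,\delta}\|_{\dot H^{-1/2}}$, so the anisotropic rescaling decouples the horizontal vorticity from the full vorticity. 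The middle bound controls $K_0E_0=\tfrac14\|\omega\|_{\dot H^{-1}}^2\|\omega\|_{L^2}^2$ in the exponential.

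\textbf{Parameter choice and conclusion.} Choose $\lambda_n^2=n\,\delta_n$, so that $\|\omega_{\lambda_n,\delta_n}\|_{\dot H^{-1/2}}^2\ge c_1 n$, while $K_0E_0\le A n^2$ and $\|(\omega_{\lambda_n,\delta_n})_h\|_{\dot H^{-1/2}}\le\sqrt{Dn}\,\delta_n$. Since $6,912\,\pi^4\nu^4>0$, membership in $\widetilde\Gamma_{2d}$ is implied by $\sqrt{Dn}\,\delta_n\exp\!\bigl(An^2/(R_2\nu^3)\bigr)<R_1\nu$, which holds for $\delta_n:=\tfrac{R_1\nu}{2\sqrt{Dn}}\exp\!\bigl(-An^2/(R_2\nu^3)\bigr)$ (note $\delta_n\to 0$, so $\delta_n<1$ for $n$ large). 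Hence $\omega^{(n)}:=\omega_{\lambda_n,\delta_n}\in\widetilde\Gamma_{2d}$ with $\|\omega^{(n)}\|_{\dot H^{-1/2}}\ge\sqrt{c_1 n}\to\infty$, giving unboundedness in $\dot H^{-1/2}$. For the Besov statement, the same computation gives $\|\omega^{(n)}\|_{L^2}^2\ge\|\omega^{(n)}_3\|_{L^2}^2=n\,\|\partial_2\phi\|_{L^2}^2\|\chi\|_{L^2}^2\to\infty$, and since $\widehat{\omega^{(n)}}$ is supported in $\{|\xi|\le\sqrt5\}$, evaluating the supremum defining $\|\cdot\|_{\dot B^{-1/2}_{2,\infty}}$ at $t=1$ yields $\|\omega^{(n)}\|_{\dot B^{-1/2}_{2,\infty}}\ge\|e^{\Delta}\omega^{(n)}\|_{L^2}\ge e^{-20\pi^2}\|\omega^{(n)}\|_{L^2}\to\infty$, so $\widetilde\Gamma_{2d}$ is unbounded in $\dot B^{-1/2}_{2,\infty}$ as well.

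\textbf{Main obstacle.} The essential obstruction, and the reason the construction must be anisotropic rather than a mere amplitude rescaling, is the interpolation inequality $\|\omega\|_{\dot H^{-1/2}}^4\le\|\omega\|_{\dot H^{-1}}^2\|\omega\|_{L^2}^2=4K_0E_0$: enlarging $\|\omega\|_{\dot H^{-1/2}}$ forces $K_0E_0$ to grow at least as fast to the fourth power, and then the definition of $\widetilde\Gamma_{2d}$ forces $\|\omega_h\|_{\dot H^{-1/2}}$ to be \emph{super-exponentially} small. One therefore needs a mechanism driving $\|\omega_h\|_{\dot H^{-1/2}}$ to zero far faster than $\|\omega\|_{\dot H^{-1/2}}$ grows; concentrating the Fourier support near $\{\xi_3=0\}$ does exactly this, since there the horizontal Fourier components of a divergence-free field are suppressed by the factor $|\xi_3|/|\xi_h|$. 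Once that is in hand, pinning down the trade-off between $\lambda_n$ and $\delta_n$ is routine bookkeeping.
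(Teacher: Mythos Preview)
Your proof is correct and rests on the same mechanism as the paper's: concentrate the Fourier support in a thin slab $\{|\xi_h|\sim 1,\ |\xi_3|\lesssim\delta\}$ so that the divergence-free constraint forces $|\widehat{\omega_h}|\lesssim\delta\,|\widehat{\omega_3}|$, decoupling $\|\omega_h\|_{\dot H^{-1/2}}$ from $\|\omega\|_{\dot H^{-1/2}}$. One small slip: you only assume $\chi$ is Schwartz, but then assert $\widehat{\omega_{\lambda,\delta}}$ is supported in $\{|\xi_3|\le\delta\}$; for this you need $\widehat{\chi}$ compactly supported (say in $[-1,1]$), which you should add to the hypotheses on $\chi$.

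The realizations differ in two respects worth noting. First, the paper builds $\omega^n$ directly on the Fourier side as (essentially) a characteristic function of the slab $\{1\le r\le 2,\ |z|<1/n\}$ times the divergence-free profile $e_3-\tfrac{z}{r}e_r$, whereas you use a smooth physical-space tensor product; your version is arguably cleaner to manipulate. Second, and more interestingly, the parameter balances are very different: the paper tunes the amplitude so that $K_0E_0\sim\log\log n$, hence the exponential factor is only a power of $\log n$, and the critical norms grow like $(\log\log n)^{1/4}$; your choice $\lambda_n^2=n\delta_n$ lets $K_0E_0\sim n^2$ and compensates by taking $\delta_n$ super-exponentially small, giving $\|\omega^{(n)}\|_{\dot H^{-1/2}}\sim\sqrt{n}$. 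Your route is more direct and shows the unboundedness more dramatically; the paper's finer tuning is not needed for the theorem as stated, though it does illustrate that one can stay inside $\widetilde\Gamma_{2d}$ with $K_0E_0$ growing extremely slowly.
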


\begin{proof}
For all $n\in \mathbb{N}, n\geq 3,$ 
define $\Lambda_n \subset \mathbb{R}^3$ by
\begin{equation}
    \Lambda_n= \left \{ \xi \in \mathbb{R}^3:
    1\leq r\leq 2, |z|<\frac{1}{n} \right\}
\end{equation}
and define $\omega^n \in L^2_{df}$ by
\begin{equation}
    \hat{\omega}^n(\xi)=
    \sqrt{n}\log\left(\log(n)\right)^\frac{1}{4}
\begin{cases}
e_3-\frac{z}{r}e_r, \xi \in \Lambda_n\\
0, \text{otherwise}
\end{cases}.
\end{equation}
Because the Fourier transform is supported on an annulus, it is clear that for each $n\in\mathbb{N}, \omega^n \in L^2,$ and in fact that $\omega^n \in H^s,$ for all $s\in \mathbb{R},$ and so must be smooth.
Recalling that $\xi= z e_3+ r e_r,$ we can observe that
for all $\xi \in \mathbb{R}^3,$
\begin{align}
    \xi \cdot \hat{\omega}^n(\xi)&= \sqrt{n}\log\left(\log(n)\right)^\frac{1}{4}
    \left(z e_3+ r e_r\right) \cdot
    \left(e_3-\frac{z}{r}e_r \right)\\
    &=0,
\end{align}
so we likewise may conclude that $\nabla \cdot \omega=0.$
Therefore we can see that 
$\omega^n \in L^2_{df}\cap \dot{H}^{-1}.$ We will now show that for sufficiently large $n\in \mathbb{N},
\omega^n \in \Tilde{\Gamma}_{2d}.$
Note that for all $n\geq 3, \xi \in \mathbb{R}^3$
\begin{align}
    |\hat{\omega}^n(\xi)|^2
    &=
    \left(1+\frac{z^2}{r^2}\right)
    n \log(\log(n))^\frac{1}{2}\\
    &\leq
    \left(1+\frac{1}{n^2}\right)
    n \log(\log(n))^\frac{1}{2}\\
    &\leq
    \frac{10}{9} n \log(\log(n))^\frac{1}{2}.
\end{align}
Likewise we can also compute the size of the support of $\hat{\omega}^n$ in Fourier space, finding that
\begin{equation}
    |\Lambda_n|=
    \frac{6 \pi}{n}
\end{equation}
Therefore we can compute that 
\begin{align}
    \|\omega^n\|_{L^2}^2&=
    \|\hat{\omega}^n\|_{L^2}^2\\
    &\leq
    \frac{10}{9} n \log(\log(n))^\frac{1}{2}
    |\Lambda_n|\\
    &=
    \frac{20 \pi}{3} \log(\log(n))^\frac{1}{2}. \label{L2est}
\end{align}
We now need to bound the $\dot{H}^{-1}$ norm. Observe that for all $\xi \in \Lambda_n, |\xi|\geq 1.$
Therefore, we find that
\begin{align}
    \|\omega^n\|_{\dot{H}^{-1}}^2 &=
    \int_{\Lambda_n} \frac{1}{4 \pi^2 |\xi|^2}
    |\hat{\omega}(\xi)|^2 \diff \xi \\
    &\leq
    \frac{1}{4 \pi^2}\int_{\Lambda_n}
    |\hat{\omega}(\xi)|^2 \diff \xi \\
    & \leq
    \frac{5}{3 \pi} \log(\log(n))^\frac{1}{2}. \label{H-1est}
\end{align}
Putting together \eqref{L2est} and \eqref{H-1est} we find that
\begin{equation}
\frac{1}{4}\|\omega^n\|_{L^2}^2 \|\omega^n\|_{\dot{H}^{-1}}^2
\leq \frac{25}{9} \log (\log(n))
\end{equation}
Therefore we find that 
\begin{align}
    \exp \left( \frac{\|\omega^n\|_{L^2}^2 \|\omega^n\|_{\dot{H}^{-1}}^2}
    {4 R_2 \nu^4}\right)
    &\leq
    \exp\left(\frac{25}{9 R_2 \nu^4}
    \log(\log(n))\right)\\
    &=
    \exp\left(\log(\log(n))\right)^\frac{25}{9 R_2 \nu^4}\\
    &=
    \log(n)^\frac{25}{9 R_2 \nu^4}. \label{EnEnEst}
\end{align}
Finally we need to estimate
$\|\omega_h^n\|_{\dot{H}^{-\frac{1}{2}}}.$
Observe that for all $\xi \in \Lambda_n,$
\begin{equation}
    \hat{\omega}^n_h(\xi)=
    -\sqrt{n} \log(\log(n))^\frac{1}{4}
    \frac{z}{r} e_r
\end{equation}
Clearly for all $\xi \in \Lambda_n, 
|\frac{z}{r}| \leq \frac{1}{n},$ so therefore
\begin{equation}
    |\hat{\omega}^n_h(\xi)|\leq
    \frac{1}{\sqrt{n}} \log(\log(n))^\frac{1}{4}.
\end{equation}
Therefore we can compute that
\begin{align}
    \|\omega_h^n\|_{\dot{H}^{-\frac{1}{2}}}^2
    &=
    \int_{\Lambda_n} \frac{1}{2\pi|\xi|}
    |\hat{\omega}^n_h(\xi)|^2 \diff\xi \\
    &\leq
    \frac{1}{2\pi} \frac{\log(\log(n))^\frac{1}{2}}{n}
    |\Lambda_n|\\
    &=
    \frac{\log(\log(n))^\frac{1}{2}}{n^2}. \label{HorizEst}
\end{align}
Finally putting together \eqref{EnEnEst} and \eqref{HorizEst} we find that
\begin{equation}
   \|\omega^n_h\|_{\dot{H}^{-\frac{1}{2}}} 
   \exp \left( \frac{\|\omega^n\|_{L^2}^2 \|\omega^n\|_{\dot{H}^{-1}}^2}
    {4 R_2 \nu^4}\right) \leq
    \frac{\log(\log(n))^\frac{1}{4}}{n}
    \log(n)^\frac{25}{9 R_2 \nu^4}.
\end{equation}
Observe that 
\begin{equation}
    \lim_{n\to \infty}
    \frac{\log(\log(n))^\frac{1}{4}}{n}
    \log(n)^\frac{25}{9 R_2 \nu^4}=0,
\end{equation}
and therefore we can see that 
\begin{equation}
    \lim_{n\to \infty}
    \|\omega^n_h\|_{\dot{H}^{-\frac{1}{2}}} 
   \exp \left( \frac{\|\omega^n\|_{L^2}^2 \|\omega^n\|_{\dot{H}^{-1}}^2}
    {4 R_2 \nu^4}\right)=0.
\end{equation}
This implies that there exists some $N\in\mathbb{N}$
such that for all $n\geq N,$
\begin{equation}
    \|\omega^n_h\|_{\dot{H}^{-\frac{1}{2}}} 
   \exp \left( \frac{\|\omega^n\|_{L^2}^2 \|\omega^n\|_{\dot{H}^{-1}}^2}
    {4 R_2 \nu^4}\right)<
    R_1 \nu \exp\left(
    \frac{6,912 \pi^4}{R_2}\right),
\end{equation}
and therefore that for all $n\geq N,
\omega^n \in \Tilde{\Gamma}_{2d}.$

In order to show that $\Tilde{\Gamma}_{2d}$ is unbounded in
$\dot{B}^{-\frac{1}{2}}_{2,\infty}\left(\mathbb{R}^3\right),$
it remains only to show that
\begin{equation}
    \lim_{n\to \infty} \|\omega^n
    \|_{\dot{B}^{-\frac{1}{2}}_{2,\infty}}
    =+\infty.
\end{equation}
Applying the Plancherel Theorem, and the fact that for all $\xi \in \Lambda_n, |\xi|^2\leq 5,$ we compute that
\begin{align}
    \left\|\omega^n\right\|_{\dot{B}^{-\frac{1}{2}}_{2,\infty}}^2
    &\geq
    \left\|\omega^n_3\right\|_{\dot{B}^{-\frac{1}{2}}_{2,\infty}}^2\\
    &=
    \sup_{t>0} t^{\frac{1}{2}}
    \left\|e^{t\Delta}\omega_3^n\right\|_{L^2}^2\\
    &=
    \sup_{t>0} t^{\frac{1}{2}}
    \|\exp \left(-4\pi^2|\xi|^2\right)
    \hat{\omega}_3^n\|_{L^2}^2\\
    &=
    \sup_{t>0} t^{\frac{1}{2}}
    n \log(\log(n))^\frac{1}{2}\int_{\Lambda_n}
    \exp(-8 \pi^2 |\xi|^2 t) \diff \xi\\
    &\geq
    \sup_{t>0} t^{\frac{1}{2}}
    n \log(\log(n))^\frac{1}{2} |\Lambda_n|
    \exp(-40 \pi^2 t)\\
    &=
    6\pi\left(\sup_{t>0}t^{\frac{1}{2}} \exp(-40\pi^2 t)\right)
    \log(\log(n))^\frac{1}{2}.
\end{align}
Therefore we can conclude that
\begin{align}
    \lim_{n\to \infty} \|\omega^n
    \|_{\dot{B}^{-\frac{1}{2}}_{2,\infty}}
    &\geq
    6\pi\left(\sup_{t>0}t^{\frac{1}{2}} 
    \exp(-40\pi^2 t)\right)
    \lim_{n\to \infty}
    \log(\log(n))^\frac{1}{2} \\
    &=+\infty.
\end{align}
We have now shown that $\Tilde{\Gamma}_{2d}$ is unbounded in $\dot{B}^{-\frac{1}{2}}_{2,\infty}.$ We know from Proposition \ref{BesovEmbed} that 
\begin{equation}
    C_2
    \|\omega^n\|_{\dot{H}^{-\frac{1}{2}}}
    \geq
    \|\omega^n\|_{\dot{B}^{-\frac{1}{2}}_{2,\infty}},
\end{equation}
so this immediately implies that $\Tilde{\Gamma}_{2d}$ is unbounded in $\dot{H}^{-\frac{1}{2}}.$
This completes the proof.
\end{proof}

Now that we have established results about the boundedness and unboundedness in Besov spaces of the set of vorticities satisfying the hypothesis of Theorem \ref{Almost2d}, we need to prove the corresponding results for the velocities satisfying the hypothesis of Theorem \ref{Almost2d}. We will do this by showing an equivalence of Besov norms, and we will need first to establish some bounds related to the heat kernel.

\begin{lemma} \label{HeatCurl}
For all $1\leq p\leq +\infty$ and for all
$v\in L^p\left(\mathbb{R}^3;\mathbb{R}^3\right)$
\begin{equation}
    \|\nabla \times e^{t\Delta} v\|_{L^p}\leq
    t^{-\frac{1}{2}}\|\nabla g\|_{L^1}\|v\|_{L^p},
\end{equation}
where $g$ is given by
\begin{equation}
    g(x)=\frac{1}{(4\pi)^\frac{3}{2}}
    \exp\left(-\frac{|x|^2}{4}\right).
\end{equation}
\end{lemma}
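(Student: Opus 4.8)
The plan is to move the curl off the solution and onto the heat kernel, then estimate by Young's convolution inequality. Write $e^{t\Delta}v=G_t*v$ componentwise, where $G_t$ is the heat kernel
\begin{equation}
G_t(x)=\frac{1}{(4\pi t)^{\frac{3}{2}}}\exp\left(-\frac{|x|^2}{4t}\right)=t^{-\frac{3}{2}}g\left(\frac{x}{\sqrt{t}}\right),
\end{equation}
with $g$ as in the statement. Since convolution with a fixed scalar kernel commutes with differentiation, for each $i$ we have $(\nabla\times(G_t*v))_i=\sum_{j,k}\epsilon_{ijk}\partial_j(G_t*v_k)=\sum_{j,k}\epsilon_{ijk}(\partial_jG_t)*v_k$, i.e.
\begin{equation}
\nabla\times e^{t\Delta}v=(\nabla G_t)\times v,
\end{equation}
where the right side denotes the vector whose value at $x$ is $\int_{\mathbb{R}^3}(\nabla G_t)(x-y)\times v(y)\diff y$. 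First I would justify this identity: for $v\in C_c^\infty$ it is immediate by differentiating under the integral sign, and the general case $v\in L^p$ follows by density together with the $L^p$ bound proved below (or, for $p=\infty$, directly from the absolute convergence of the integral since $\nabla G_t\in L^1$ and $v\in L^\infty$).

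Next I would apply the pointwise bound $|a\times b|\le|a|\,|b|$ and then Young's inequality (equivalently Minkowski's integral inequality) to obtain, for every $1\le p\le+\infty$,
\begin{equation}
\|\nabla\times e^{t\Delta}v\|_{L^p}\le\big\|\,|\nabla G_t|*|v|\,\big\|_{L^p}\le\|\nabla G_t\|_{L^1}\|v\|_{L^p}.
\end{equation}
It then remains to compute $\|\nabla G_t\|_{L^1}$. Differentiating the scaling identity $G_t(x)=t^{-3/2}g(x/\sqrt{t})$ gives $\nabla G_t(x)=t^{-2}(\nabla g)(x/\sqrt{t})$, so the change of variables $y=x/\sqrt{t}$ yields
\begin{equation}
\|\nabla G_t\|_{L^1}=\int_{\mathbb{R}^3}t^{-2}\left|(\nabla g)\left(\frac{x}{\sqrt{t}}\right)\right|\diff x=t^{-2}\cdot t^{\frac{3}{2}}\int_{\mathbb{R}^3}|\nabla g(y)|\diff y=t^{-\frac{1}{2}}\|\nabla g\|_{L^1}.
\end{equation}
Combining the last two displays gives the claimed bound.

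The argument is essentially routine; the only point requiring a little care is the justification of the convolution identity $\nabla\times e^{t\Delta}v=(\nabla G_t)\times v$ for merely $L^p$ data and of the endpoint $p=\infty$, which is the mild obstacle but is handled by the density/absolute-convergence remarks above. Everything else is Young's inequality plus the parabolic scaling of the Gaussian.
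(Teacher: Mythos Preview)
Your proposal is correct and follows essentially the same argument as the paper: write $e^{t\Delta}v$ as convolution with the heat kernel, move the curl onto the kernel, apply Young's inequality, and compute $\|\nabla G_t\|_{L^1}=t^{-1/2}\|\nabla g\|_{L^1}$ by scaling. The only difference is that you include a brief justification of the convolution identity for rough $L^p$ data, which the paper omits.
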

\begin{proof}
Fix $1\leq p \leq +\infty$ and
$v\in L^p\left(\mathbb{R}^3;\mathbb{R}^3\right).$
We will first define the heat kernel, taking
\begin{equation}
    G(x,t)=t^{-\frac{3}{2}}g(t^{-\frac{1}{2}}x).
\end{equation}
The heat operator $e^{t\Delta}$ can be defined in terms of convolution with $G$ as follows:
\begin{equation}
    e^{t\Delta}f= G(\cdot,t)*f.
\end{equation}
Therefore we can compute that
\begin{align}
    \nabla \times e^{t\Delta} v(x)&=\int_{\mathbb{R}^3}
    \nabla G(x-y,t)\times v(y)\diff y\\
    &=\int_{\mathbb{R}^3}
    t^{-2}\nabla g(t^{-\frac{1}{2}}(x-y))\times v(y) \diff y.
\end{align}
Applying Young's inequality for convolutions we find that
\begin{equation}
    \|\nabla \times e^{t\Delta} v\|_{L^p} \leq
    t^{-\frac{1}{2}}\|\nabla g\|_{L^1}\|v\|_{L^p}.
\end{equation}
This completes the proof.
\end{proof}

\begin{theorem} \label{BesovEquivalence}
For all $2\leq p \leq +\infty,$ 
there exists $M_p$ depending only on $p,$ such that
if $u \in \dot{B}^{-1+\frac{3}{p}}_{p,\infty},
\nabla \cdot u=0$ in the sense of distributions, then
\begin{equation}
    \frac{1}{M_p}\|\omega\|
    _{\dot{B}^{-2+\frac{3}{p}}_{p,\infty}} \leq
    \|u\|_{\dot{B}^{-1+\frac{3}{p}}_{p,\infty}} \leq
    M_p\|\omega\|_{\dot{B}^{-2+\frac{3}{p}}_{p,\infty}},
\end{equation}
where $\omega=\nabla \times u.$
\end{theorem}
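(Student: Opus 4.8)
The plan is to read both inequalities off the Biot--Savart relation together with the heat-semigroup description of the Besov norms and the gradient-of-heat-kernel estimate of Lemma \ref{HeatCurl}. The algebraic input is that for divergence-free $u$ with $\omega=\nabla\times u$ one has $\Delta u=-\nabla\times\omega$, and hence, commuting $\Delta$ and $\nabla\times$ with $e^{t\Delta}$,
\[
\partial_t e^{t\Delta}u=-\nabla\times e^{t\Delta}\omega,
\qquad
\partial_t e^{t\Delta}\omega=\nabla\times\bigl(\partial_t e^{t\Delta}u\bigr).
\]
I would measure the norms through the standard characterization $\|f\|_{\dot B^{\sigma}_{p,\infty}}\approx\sup_{t>0}t^{1-\sigma/2}\|\partial_t e^{t\Delta}f\|_{L^p}$, which coincides with Definition \ref{BesovDefinition} when $\sigma<0$; this point matters because for $2<p\le 3$ the velocity index $-1+\tfrac{3}{p}$ is nonnegative, whereas the vorticity index $-2+\tfrac{3}{p}$ is negative throughout $p\ge 2$.

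For $\|u\|_{\dot B^{-1+3/p}_{p,\infty}}\le M_p\|\omega\|_{\dot B^{-2+3/p}_{p,\infty}}$ I would start from $\partial_t e^{t\Delta}u=-\nabla\times e^{(t/2)\Delta}\bigl(e^{(t/2)\Delta}\omega\bigr)$, apply Lemma \ref{HeatCurl} with $v=e^{(t/2)\Delta}\omega\in L^p$ (which contributes the factor $t^{-1/2}$), and then estimate $\|e^{(t/2)\Delta}\omega\|_{L^p}\lesssim (t/2)^{-1+3/(2p)}\|\omega\|_{\dot B^{-2+3/p}_{p,\infty}}$ directly from Definition \ref{BesovDefinition}, which is legitimate since $-2+\tfrac{3}{p}<0$. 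This gives $\|\partial_t e^{t\Delta}u\|_{L^p}\lesssim t^{-3/2+3/(2p)}\|\omega\|_{\dot B^{-2+3/p}_{p,\infty}}$, and multiplying by $t^{1-\sigma/2}=t^{3/2-3/(2p)}$ the powers of $t$ cancel, so that $M_p$ is an explicit multiple of $\|\nabla g\|_{L^1}$.

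For the reverse bound $\|\omega\|_{\dot B^{-2+3/p}_{p,\infty}}\le M_p\|u\|_{\dot B^{-1+3/p}_{p,\infty}}$, equivalently $\sup_{t>0}t^{1-3/(2p)}\|e^{t\Delta}\omega\|_{L^p}\lesssim\|u\|_{\dot B^{-1+3/p}_{p,\infty}}$, I would integrate the second identity above from $t$ to $+\infty$. Since $-2+\tfrac{3}{p}<0$, Definition \ref{BesovDefinition} gives $\|e^{s\Delta}\omega\|_{L^p}\lesssim s^{-1+3/(2p)}\|\omega\|\to 0$ as $s\to\infty$, so $e^{t\Delta}\omega=-\int_t^\infty\nabla\times\bigl(\partial_s e^{s\Delta}u\bigr)\diff s$. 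Writing $\partial_s e^{s\Delta}u=e^{(s/2)\Delta}\bigl(\partial_r e^{r\Delta}u\big|_{r=s/2}\bigr)$, Lemma \ref{HeatCurl} together with $\|\partial_r e^{r\Delta}u\|_{L^p}\lesssim r^{-3/2+3/(2p)}\|u\|_{\dot B^{-1+3/p}_{p,\infty}}$ yields $\|\nabla\times(\partial_s e^{s\Delta}u)\|_{L^p}\lesssim s^{-2+3/(2p)}\|u\|_{\dot B^{-1+3/p}_{p,\infty}}$; the $s$-integral over $(t,\infty)$ converges exactly because $p>\tfrac{3}{2}$, it produces a factor $t^{-1+3/(2p)}$, and multiplying by $t^{1-3/(2p)}$ closes the estimate.

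The changes of variables and exponent bookkeeping are routine; the step I would take the most care over is the low range $2<p\le 3$, where $-1+\tfrac{3}{p}\ge 0$ and $\|e^{t\Delta}u\|_{L^p}$ need not even be finite, so Definition \ref{BesovDefinition} cannot be applied verbatim on the velocity side. To stay within the paper's framework one should record the elementary Littlewood--Paley (or direct Fourier) computation showing $\sup_{t>0}t^{1-\sigma/2}\|\partial_t e^{t\Delta}f\|_{L^p}\approx\|f\|_{\dot B^{\sigma}_{p,\infty}}$ for $\sigma<2$, together with its reduction to Definition \ref{BesovDefinition} when $\sigma<0$. With this in hand, taking $M_p$ to be the larger of the two constants produced above finishes the proof; all the constants are finite for $2\le p\le\infty$ and blow up only as $p\downarrow\tfrac{3}{2}$.
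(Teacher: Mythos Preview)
Your proposal is correct and takes a genuinely different route from the paper. The paper works directly with Definition \ref{BesovDefinition} on both sides: it writes $e^{t\Delta}\omega=\nabla\times e^{(t/2)\Delta}\bigl(e^{(t/2)\Delta}u\bigr)$ for the first inequality and $e^{t\Delta}u=\int_t^{\infty}\nabla\times e^{\tau\Delta}\omega\,d\tau$ (coming from $(-\Delta)^{-1}=\int_0^{\infty}e^{\tau\Delta}\,d\tau$) for the second, applying Lemma \ref{HeatCurl} in each case. Both steps force $p>3$: the first because one must invoke Definition \ref{BesovDefinition} on the velocity at index $-1+\tfrac{3}{p}$, and the second because the $\tau$-integral $\int_t^{\infty}\tau^{-3/2+3/(2p)}\,d\tau$ diverges for $p\le 3$. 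The paper explicitly restricts to $3<p\le\infty$ and defers $2\le p\le 3$ to a Littlewood--Paley definition it does not introduce.

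Your approach avoids this constraint by passing to the time-derivative characterization $\|f\|_{\dot B^{\sigma}_{p,\infty}}\approx\sup_{t>0}t^{1-\sigma/2}\|\partial_t e^{t\Delta}f\|_{L^p}$ on the velocity side, while keeping Definition \ref{BesovDefinition} on the vorticity side (where the index $-2+\tfrac{3}{p}$ stays negative throughout). This lets Lemma \ref{HeatCurl} do exactly the same work, but the resulting $s$-integral $\int_t^{\infty}s^{-2+3/(2p)}\,ds$ converges already for $p>\tfrac{3}{2}$, so the argument runs uniformly over $2\le p\le\infty$. The cost is that you must record the equivalence of the two heat characterizations for indices in $[0,2)$; the gain is that you actually prove the full statement rather than only the subrange $p>3$.
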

Note that we have only defined $\dot{B}^s_{p,\infty}
\left(\mathbb{R}^3\right),$ for $s<0.$ 
This space is also well defined for $0\leq s<\frac{3}{2},$ but in this case cannot be defined in terms of the heat kernel. This theorem holds for all $2\leq p\leq +\infty,$ but we will only prove the case where
$3<p\leq +\infty.$ In order to prove the case where $2\leq p\leq 3,$
we would need to introduce a dyadic decomposition of unity to define the homogeneous Besov space with $s>0$, and this would clutter this paper with technical details that are ancillary to the main results. We refer the interested reader to Chapter 2 in \cite{CheminBook} for more details.

\begin{proof}
Fix $p>3$ and $u\in \dot{B}^{-1+\frac{3}{p}}, \nabla \cdot u=0.$
We will begin by proving the first bound.
Let $\omega= \nabla \times u.$
Then using the properties of the heat semi-group, we can see that
\begin{align}
    e^{t\Delta}\omega&= e^{t\Delta} \nabla \times u\\
    &=
    \nabla \times e^{\frac{t}{2}\Delta} \left(
    e^{\frac{t}{2}\Delta}u \right).
\end{align}
Applying Lemma \ref{HeatCurl}, we can see that for all $t>0,$
\begin{align}
    \|e^{t\Delta}\omega\|_{L^p}&\leq
    \left(\frac{t}{2}\right)^{-\frac{1}{2}}
    \|\nabla g\|_{L^1}
    \|e^{\frac{t}{2}\Delta}u\|_{L^p}\\
    &\leq
    \left(\frac{t}{2}\right)^{-\frac{1}{2}} \|\nabla g\|_{L^1}
    \|u\|_{\dot{B}^{-1+\frac{3}{p}}_{p,\infty}}
    \left(\frac{t}{2}\right)^{-\frac{1}{2}+\frac{3}{2p}}\\
    &=
    2^{1-\frac{3}{2p}}\|g\|_{L^1}
    \|u\|_{\dot{B}^{-1+\frac{3}{p}}_{p,\infty}}
    t^{-1+\frac{3}{2p}}
\end{align}
Therefore we can see from Definition \ref{BesovDefinition} that
\begin{equation}
    \|\omega\|_{\dot{B}^{-2+\frac{3}{p}}_{p,\infty}} \leq
    2^{1-\frac{3}{2p}}\|g\|_{L^1}
    \|u\|_{\dot{B}^{-1+\frac{3}{p}}_{p,\infty}}.
\end{equation}
We will now prove the second inequality, bounding Besov norms of $u$ in terms of Besov norms of $\omega.$ Recall that we can invert $\omega$ to obtain $u$ with the formula
\begin{equation}
    u= \nabla \times (-\Delta)^{-1}\omega.
\end{equation}
The inverse Laplacian can be computed using the heat kernel via the following formula:
\begin{equation}
    (-\Delta)^{-1}=\int_0^{+\infty}e^{\tau\Delta}\diff\tau.
\end{equation}
Therefore we can see that
\begin{equation}
    u=\int_0^{+\infty}\nabla \times e^{\tau\Delta} \omega 
    \diff \tau.
\end{equation}
Using the properties of the heat semi-group, it follows that
\begin{align}
    e^{t\Delta }u&=
    \int_t^{+\infty}\nabla \times e^{\tau\Delta} \omega 
    \diff \tau\\
    &=
    \int_t^{+\infty}\nabla\times e^{\frac{\tau}{2}\Delta}
    \left(e^{\frac{\tau}{2}\Delta}\omega \right) 
    \diff \tau.
\end{align}
Therefore, applying the Minkowski inequality, Lemma \ref{HeatCurl}, and Definition \ref{BesovDefinition}, we can see that
for all $t>0,$
\begin{align}
    \|e^{t\Delta }u\|_{L^p}&=
    \left\|\int_t^{+\infty}\nabla\times e^{\frac{\tau}{2}\Delta}
    \left(e^{\frac{\tau}{2}\Delta}\omega\right)\diff \tau\right\|_{L^p}\\
    &\leq
    \int_t^{+\infty}\left\|\nabla\times e^{\frac{\tau}{2}\Delta}
    \left(e^{\frac{\tau}{2}\Delta}\omega\right)\right\|_{L^p}
    \diff \tau \\
    &\leq
    \int_t^{+\infty}\left(\frac{\tau}{2}\right)^{-\frac{1}{2}}
    \|\nabla g\|_{L^1}
    \left\|e^{\frac{\tau}{2}\Delta}\omega\right\|_{L^p} 
    \diff \tau\\
    &\leq
    \int_t^{+\infty}\left(\frac{\tau}{2}\right)^{-\frac{1}{2}}
    \|\nabla g\|_{L^1}
    \|\omega\|_{\dot{B}^{-2+\frac{3}{p}}}
    \left(\frac{\tau}{2}\right)^{-1+\frac{3}{2p}}
    \diff \tau \\
    &=
    2^{\frac{3}{2}\left(1-\frac{1}{p}\right)}
    \|\nabla g\|_{L^1} \|\omega\|_{\dot{B}^{-2+\frac{3}{p}}}
    \int_t^{+\infty} \tau^{-\frac{3}{2}+\frac{3}{2p}}
    \diff \tau \\
    &=
    2^{\frac{3}{2}\left(1-\frac{1}{p}\right)}
    \|\nabla g\|_{L^1} \|\omega\|_{\dot{B}^{-2+\frac{3}{p}}}
    \left(\frac{1}{-\frac{1}{2}+\frac{3}{2p}} \right)
    t^{-\frac{1}{2}+\frac{3}{2p}}.
\end{align}
Therefore, by Definition \ref{BesovDefinition}, we find that 
\begin{equation}
    \|u\|_{\dot{B}^{-1+\frac{3}{p}}} \leq
    2^{\frac{3}{2}\left(1-\frac{1}{p}\right)}
    \left(\frac{2}{-1+\frac{3}{p}}\right) 
    \|\nabla g\|_{L^1}
    \|\omega\|_{\dot{B}^{-2+\frac{3}{p}}}.
\end{equation}
This completes the proof. 
\end{proof}

As we have already mentioned, the reason the constant goes to infinity here as $p \to 3,$ is because the definition of the Besov space we are using breaks down for nonnegative indices $s \geq 0.$ The equivalence also holds in the range $2\leq p \leq 3,$ but we would need to introduce a lot of technical details that have little to do with almost two dimensional Navier--Stokes flows in order to define Besov spaces for this range of parameters, so it is left to the reader.

We can now prove Theorem \ref{BesovIntro} from the introduction, which is restated here for the reader's convenience.
\begin{corollary}
$\Gamma_{2d}$ is unbounded in $\dot{H}^{\frac{1}{2}}$ and $\dot{B}^\frac{1}{2}_{2,\infty},$ but
$\Gamma_{2d}$ is bounded in
$\dot{B}^{-1+\frac{3}{p}}_{p,\infty},$ 
for all $2<p\leq +\infty.$
\end{corollary}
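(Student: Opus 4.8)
The plan is to reduce the corollary to the vorticity-level statements already proved --- Theorem~\ref{VortBounded} and Theorem~\ref{VortUnbounded} for $\tilde{\Gamma}_{2d}$ --- by transporting them through the Biot--Savart law. The first step is to pin down the (already announced) fact that the curl is a bijection from $\Gamma_{2d}$ onto $\tilde{\Gamma}_{2d}$, with inverse $\omega \mapsto \nabla \times (-\Delta)^{-1}\omega$. For $u \in H^1_{df}$ and $\omega = \nabla \times u$, Proposition~\ref{StrainIsometry} with $\alpha = 0$ gives $\|u\|_{\dot{H}^1} = \|\nabla u\|_{L^2} = \|\omega\|_{L^2}$, and with $\alpha = -1$ it gives $\|u\|_{L^2} = \|\nabla u\|_{\dot{H}^{-1}} = \|\omega\|_{\dot{H}^{-1}}$; since $\omega_h$ is the same object viewed from either side, the defining inequality of $\Gamma_{2d}$ becomes exactly the defining inequality of $\tilde{\Gamma}_{2d}$, so $u \in \Gamma_{2d}$ if and only if $\nabla \times u \in \tilde{\Gamma}_{2d}$. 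One also notes that $\nabla \times (-\Delta)^{-1}$ maps $L^2_{df} \cap \dot{H}^{-1}_{df}$ into $H^1_{df}$, which is immediate from the pointwise Fourier bound $|\widehat{\nabla \times (-\Delta)^{-1}\omega}(\xi)| \le (2\pi|\xi|)^{-1}|\hat{\omega}(\xi)|$.

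For the boundedness assertion, fix $2 < p \le +\infty$ and $u \in \Gamma_{2d}$, and set $\omega = \nabla \times u \in \tilde{\Gamma}_{2d}$. Theorem~\ref{VortBounded} bounds $\|\omega\|_{\dot{B}^{-2+\frac{3}{p}}_{p,\infty}}$ by a constant depending only on $p$ and $\nu$, and the right-hand inequality of Theorem~\ref{BesovEquivalence} then gives $\|u\|_{\dot{B}^{-1+\frac{3}{p}}_{p,\infty}} \le M_p \|\omega\|_{\dot{B}^{-2+\frac{3}{p}}_{p,\infty}}$, so $\Gamma_{2d}$ is bounded in $\dot{B}^{-1+\frac{3}{p}}_{p,\infty}$.

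For the unboundedness assertions, take the explicit sequence $\omega^n \in \tilde{\Gamma}_{2d}$ built in the proof of Theorem~\ref{VortUnbounded}, and put $u^n = \nabla \times (-\Delta)^{-1}\omega^n \in \Gamma_{2d}$. For $\dot{H}^{\frac{1}{2}}$, Proposition~\ref{StrainIsometry} with $\alpha = -\frac{1}{2}$ together with Plancherel gives $\|u^n\|_{\dot{H}^{\frac{1}{2}}} = \|\nabla u^n\|_{\dot{H}^{-\frac{1}{2}}} = \|\omega^n\|_{\dot{H}^{-\frac{1}{2}}}$; since $\|\cdot\|_{\dot{B}^{-\frac{1}{2}}_{2,\infty}} \lesssim \|\cdot\|_{\dot{H}^{-\frac{1}{2}}}$, the divergence of $\|\omega^n\|_{\dot{B}^{-\frac{1}{2}}_{2,\infty}}$ established in Theorem~\ref{VortUnbounded} forces $\|u^n\|_{\dot{H}^{\frac{1}{2}}} \to +\infty$. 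For $\dot{B}^{\frac{1}{2}}_{2,\infty}$, apply the left-hand inequality of Theorem~\ref{BesovEquivalence} at $p = 2$ (where $-1+\frac{3}{p} = \frac{1}{2}$ and $-2+\frac{3}{p} = -\frac{1}{2}$) to obtain $\|u^n\|_{\dot{B}^{\frac{1}{2}}_{2,\infty}} \ge \frac{1}{M_2}\|\omega^n\|_{\dot{B}^{-\frac{1}{2}}_{2,\infty}} \to +\infty$.

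The one place where care is needed --- and thus the main obstacle, such as it is --- is the invocation of Theorem~\ref{BesovEquivalence} at the endpoint $p = 2$, since its proof in the excerpt was carried out only for $3 < p \le +\infty$. If one prefers not to rely on the deferred case, one can argue directly: the Biot--Savart law gives $\hat{u}^n(\xi) = \frac{i}{2\pi|\xi|^2}\,\xi \times \hat{\omega}^n(\xi)$, and because $\xi \cdot \hat{\omega}^n(\xi) = 0$ one has $|\hat{u}^n(\xi)| = \frac{|\hat{\omega}^n(\xi)|}{2\pi|\xi|}$, so that $|\hat{u}^n(\xi)| \ge \frac{1}{2\pi\sqrt{5}}|\hat{\omega}^n(\xi)|$ on $\Lambda_n \subset \{1 \le |\xi|^2 \le 5\}$; repeating verbatim the heat-flow estimate at the end of the proof of Theorem~\ref{VortUnbounded} with $u^n$ in place of $\omega^n_3$ then yields $\|u^n\|_{\dot{B}^{\frac{1}{2}}_{2,\infty}}^2 \gtrsim (\log(\log n))^{\frac{1}{2}} \to +\infty$. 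Apart from this endpoint bookkeeping, everything follows from the machinery already assembled.
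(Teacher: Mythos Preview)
Your proposal is correct and follows essentially the same route as the paper: transfer the vorticity-level results (Theorems~\ref{VortBounded} and~\ref{VortUnbounded}) through the isometry $\|u\|_{\dot{H}^{1/2}}=\|\omega\|_{\dot{H}^{-1/2}}$ and the Besov equivalence Theorem~\ref{BesovEquivalence}. You are in fact more careful than the paper in flagging that Theorem~\ref{BesovEquivalence} is only proved for $p>3$; one small slip in your backup argument is that the heat-kernel characterization (Definition~\ref{BesovDefinition}) applies only to negative indices, so you cannot literally ``repeat the heat-flow estimate'' to get $\|u^n\|_{\dot{B}^{1/2}_{2,\infty}}$---but since $\hat{u}^n$ is supported in the fixed annulus $\{1\le|\xi|\le\sqrt{5}\}$, all homogeneous Besov and Sobolev norms on $u^n$ are comparable to $\|u^n\|_{L^2}$, which your pointwise Fourier lower bound already shows diverges.
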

\begin{proof}
For all $u \in H^1_{df}, \|u\|_{\dot{H}^\frac{1}{2}}= 
\|\omega\|_{\dot{H}^{-\frac{1}{2}}},$ 
so the statement in Theorem \ref{VortUnbounded}, that
$\Tilde{\Gamma}_{2d}$ is unbounded in $\dot{H}^{-\frac{1}{2}},$ immediately implies $\Gamma_{2d}$ is unbounded in $\dot{H}^\frac{1}{2}.$
Likewise, the equivalence between the respective, scale-critical Besov norms for $u$ and $\omega$ that we proved in Theorem \ref{BesovEquivalence} implies that the unboundedness of $\Gamma_{2d}$ in $\dot{B}^{\frac{1}{2}}_{2,\infty}$ follows immediately from the unboundedness of $\Tilde{\Gamma}_{2d}$ in $\dot{B}^{-\frac{1}{2}}_{2,\infty}$ in Theorem \ref{VortUnbounded}, and the boundedness of $\Gamma_{2d}$ in $\dot{B}^{-1+\frac{3}{p}}_{p,\infty}$ for $2<p\leq +\infty$ follows immediately from the boundedness of $\Tilde{\Gamma}_{2d}$ in
$\dot{B}^{-2+\frac{3}{p}}_{p,\infty}$ for $2<p\leq +\infty$ in Theorem \ref{VortBounded}.
\end{proof}

\section{Relationship to previous results}

In this section we will consider the relationship between the vorticity approach to almost two dimensional initial data developed in section 3 and previous global regularity results for almost two dimensional initial data.
Gallagher and Chemin proved in \cite{GallagherOneSlowDirection} that initial data re-scaled so it varies slowly in one direction must generate global smooth solutions.
\begin{theorem} \label{GallagherWellPrepared}
Let $v^0_h=(v_1,v_2)$ be a smooth divergence free vector field on $\mathbb{R}^3$ that belongs, along with all of its derivatives, to 
$L^2\left(\mathbb{R}_{x_3};\dot{H}^{-1}
\left(\mathbb{R}^2\right) \right),$ 
and let $w^0$ be any smooth divergence free vector field. For each $\epsilon>0$ define the re-scaled initial data by
\begin{equation}
    u^{0,\epsilon}(x)=
    (v^0_h+\epsilon w^0_h,w^0_3)
    (x_h,\epsilon x_3).
\end{equation}
Then there exists $\epsilon_0>0,$ such that 
for all $0<\epsilon<\epsilon_0,$ the initial data
$u^{0,\epsilon}$ generates a global smooth solution to the Navier--Stokes equations. 
\end{theorem}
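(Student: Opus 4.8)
The plan is to exhibit $u^{0,\epsilon}$ as a small perturbation of a globally defined, essentially two-dimensional flow, and then control the perturbation uniformly in time. First I would introduce the slow variable $y_3=\epsilon x_3$ and write $y=(x_h,y_3)$; in these coordinates every $x_3$-derivative acquires a factor $\epsilon$, so the three dimensional Navier--Stokes operator becomes $\partial_t-\Delta_h-\epsilon^2\partial_{y_3}^2+(\text{advection})$ with all $y_3$-advection carrying an $\epsilon$. To leading order the system decouples into (i) the two dimensional Navier--Stokes equation $\partial_t\bar v^h+(\bar v^h\cdot\nabla_h)\bar v^h-\Delta_h\bar v^h+\nabla_h\bar p=0$, $\nabla_h\cdot\bar v^h=0$, solved for each fixed $y_3$ with datum $v^0_h(\cdot,y_3)$, and (ii) the linear transport--diffusion equation $\partial_t\bar v^3+(\bar v^h\cdot\nabla_h)\bar v^3-\Delta_h\bar v^3=0$ with datum $w^0_3(\cdot,y_3)$. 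By the global regularity of the two dimensional Navier--Stokes equation together with linearity of (ii), the pair $\bar v=(\bar v^h,\bar v^3)$ exists and is smooth for all time; the hypothesis that $v^0_h$ and all of its derivatives lie in $L^2(\mathbb{R}_{y_3};\dot H^{-1}(\mathbb{R}^2))$ is precisely what makes the $y_3$-integrated two dimensional energy and enstrophy estimates finite and uniform, so that $\bar v$ and all its derivatives stay bounded for all time in the anisotropic scales $L^2(\mathbb{R}_{y_3};\dot H^s(\mathbb{R}^2))$.

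Next I would correct $\bar v(t,x_h,\epsilon x_3)$ to an exactly divergence free field $\bar u^\epsilon$. Since $\nabla\cdot[\bar v(t,x_h,\epsilon x_3)]=\epsilon(\partial_{y_3}\bar v^3)(t,x_h,\epsilon x_3)$, I subtract an $O(\epsilon)$ potential correction, which is possible for all time because $\partial_{y_3}\bar v^3$ is globally controlled by the first step. I then look for the exact solution in the form $u^\epsilon=\bar u^\epsilon+\epsilon R^\epsilon$. Substituting into Navier--Stokes, $R^\epsilon$ solves a forced linearized system of the shape
\begin{equation*}
\partial_t R^\epsilon-\Delta R^\epsilon+P_{df}\big((\bar u^\epsilon\cdot\nabla)R^\epsilon+(R^\epsilon\cdot\nabla)\bar u^\epsilon\big)+\epsilon\,P_{df}\big((R^\epsilon\cdot\nabla)R^\epsilon\big)=F^\epsilon,
\end{equation*}
with $R^\epsilon|_{t=0}$ essentially $w^0_h$, where the forcing $F^\epsilon$ collects exactly the terms by which $\bar u^\epsilon$ fails to solve Navier--Stokes, namely $\epsilon^2\partial_{y_3}^2\bar v$, $\epsilon(\bar v^3\partial_{y_3})\bar v$, and the potential-correction terms. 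By the first step all of these are $O(\epsilon)$ in the relevant anisotropic norm.

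The last and hardest step is a global-in-time a priori bound on $R^\epsilon$ in a scale-critical anisotropic space, for which I would use a space built on $L^2$ in the slow direction and critical Besov regularity (for instance $\dot{B}^{1/2}_{2,1}$) in the horizontal variables. One cannot rely on dissipativity alone, since the background $\bar u^\epsilon$ is not small; the point is that $\bar u^\epsilon$ is two dimensional to leading order, so the dangerous term $(R^\epsilon\cdot\nabla)\bar u^\epsilon$ involving $\partial_3\bar u^\epsilon$ carries a factor $\epsilon$, while the genuinely two dimensional part of the linearized operator is tamed by the two dimensional energy method with $x_3$ treated as a parameter. Combining the $O(\epsilon)$ forcing, the $\epsilon$ in front of the self-interaction $(R^\epsilon\cdot\nabla)R^\epsilon$, and a Gr\"onwall/continuity argument against the globally bounded coefficients produced in the first step, one finds $\epsilon_0>0$ such that for $0<\epsilon<\epsilon_0$ the norm of $R^\epsilon$ remains bounded (in fact $O(1)$) on $[0,+\infty)$. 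Then $u^\epsilon=\bar u^\epsilon+\epsilon R^\epsilon$ is globally controlled in norms strong enough to run the standard continuation criterion, and parabolic smoothing upgrades it to the desired global smooth solution with initial data $u^{0,\epsilon}$. The main obstacle is precisely closing this last estimate for all time --- that is, identifying the functional framework in which the evolution linearized around the non-small background $\bar u^\epsilon$ is uniformly well-behaved --- and this is exactly where the anisotropic Besov spaces and the $L^2(\dot H^{-1})$ hypothesis on $v^0_h$ are essential.
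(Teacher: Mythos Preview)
The paper does not contain a proof of this theorem: it is stated as a result of Chemin and Gallagher \cite{GallagherOneSlowDirection} and used only for context and comparison with the author's own results (Theorems \ref{2VortGlobalExistence} and \ref{VortRescaling}). So there is nothing in the paper to compare your proposal against.

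That said, your outline is essentially the strategy of the original Chemin--Gallagher proof: pass to the slow variable $y_3=\epsilon x_3$, build a background profile from a $y_3$-parametrized family of two dimensional Navier--Stokes solutions coupled to a passive scalar for the third component, correct by an $O(\epsilon)$ term to restore the divergence free condition, and then control the remainder $R^\epsilon$ via a perturbative argument in anisotropic spaces. Two points would need sharpening if you wanted to make this rigorous. First, the uniform-in-$y_3$ control of the family of two dimensional solutions requires more than the standard two dimensional energy estimate; one needs global-in-time bounds on enough $y_3$-derivatives of $\bar v$, and this is where the $L^2(\mathbb{R}_{y_3};\dot H^{-1}(\mathbb{R}^2))$ hypothesis on \emph{all} derivatives of $v_h^0$ is actually used. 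Second, the closing of the remainder estimate is the heart of the matter: the linearization around a large two dimensional background is handled in the original paper via product laws in anisotropic Besov spaces tailored so that the horizontal advection by $\bar v^h$ is absorbed by a Gr\"onwall factor involving $\int_0^\infty\|\nabla_h\bar v^h(\tau)\|\,d\tau$, which is finite precisely because of the two dimensional global bounds. Your sketch gestures at this but does not identify the specific norm or the specific commutator/paraproduct structure that makes the argument close; without that, ``Gr\"onwall/continuity argument against the globally bounded coefficients'' is not yet a proof.
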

This is often referred to as the well-prepared case, because $v^0_3=0,$ and so $v^{0,\epsilon}$ converges to a two dimensional vector field in the sense that for all $x\in \mathbb{R}^3$.
\begin{equation}
    \lim_{\epsilon \to 0}u^{0,\epsilon}(x)=
    \left(v^0_h,w^0_3\right)(x_h,0).
\end{equation}
We will also note that global regularity in Theorem \ref{GallagherWellPrepared} is not a consequence of Koch and Tataru's theorem on global regularity for small initial data in $BMO^{-1}$, because, subject to certain conditions, $v^{0,\epsilon}$ is large in $\dot{B}^{-1}_{\infty,\infty},$ the largest scale-critical space.

Gallagher, Chemin, and Paicu generalized this result to the ill-prepared case in \cite{GallgherOneSlowDirectionAnnals}.
\begin{theorem} \label{GallagherIllPrepeared}
For any $u^0,$ a divergence free vector field on $\mathbb{T}^2 \times \mathbb{R},$ and for each $\epsilon>0$ define $u^{0,\epsilon}$ by
\begin{equation}
    u^{0,\epsilon}(x)=
    \left(u^0_h,\frac{1}{\epsilon}u^0_3\right)
    (x_h,\epsilon x_3).
\end{equation}
For all $a>0,$ there exists $\epsilon_0, \mu>0$ such that if $u^0$ satisfies
\begin{equation}
\left\|\exp(a |D_3|)u^0\right\|_{H^4\left(\mathbb{T}^2\times \mathbb{R}\right)} \leq \mu,
\end{equation}
then for all $0<\epsilon<\epsilon_0,$ the initial data $u^{0,\epsilon}$ generates a global smooth solution to the Navier--Stokes equation.
\end{theorem}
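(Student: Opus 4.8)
The plan is to follow the scheme of Chemin, Gallagher, and Paicu: reduce the problem, via an anisotropic change of variables, to a global-in-time energy estimate that is uniform in $\epsilon$ for a Navier--Stokes system whose vertical viscosity is weak, and close that estimate by an analytic (``Gevrey-$1$ in $x_3$'') energy method. First I would make the substitution $u^\epsilon(x,t) = \bigl(v_h,\tfrac1\epsilon v_3\bigr)(x_h,\epsilon x_3,t)$; the factor $1/\epsilon$ on the third component is exactly what keeps $\nabla\cdot v = 0$ under $x_3\mapsto\epsilon x_3$, so $v$ is divergence free and solves a Navier--Stokes-type system on $\mathbb{T}^2\times\mathbb{R}$ in which the full horizontal Laplacian $\Delta_h$ is retained but the vertical dissipation appears only as $\epsilon^2\partial_3^2$. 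Formally letting $\epsilon\to0$ leaves a two-dimensional Navier--Stokes equation with $x_3$ a passive parameter, which has global smooth solutions by the classical 2D theory; the whole task is thus to propagate, globally in time and uniformly in $\epsilon$, a norm of $v$ strong enough (here, matching the hypothesis, an $H^4$-type norm) to feed the standard 3D continuation criterion.

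Next I would set up the anisotropic functional framework. Since the horizontal directions keep full parabolic smoothing, one can be economical there and work in a Chemin--Lerner-type space (Besov in $x_h$, $L^2$ in $x_3$), but the vertical direction has essentially no smoothing once $\epsilon\to0$, so I would compensate by assuming and propagating analyticity in $x_3$: introduce the Fourier multiplier $e^{\Phi(t)|D_3|}$ with radius $\Phi(t) = a - \lambda\int_0^t\theta(\tau)\diff\tau$, where $\theta$ is a carefully chosen Sobolev-type norm of $v$, and carry out the $H^4$ energy estimate on $V:=e^{\Phi(t)|D_3|}v$. The dissipative contribution then splits into $\nu\|\nabla_h V\|_{H^4}^2$ (good, from the horizontal viscosity) plus $\dot\Phi(t)\,\bigl\||D_3|^{1/2}V\bigr\|_{H^4}^2 = -\lambda\theta(t)\bigl\||D_3|^{1/2}V\bigr\|_{H^4}^2$ (also good, since $\dot\Phi<0$): this ``gain of half a vertical derivative'' is the classical device for Gevrey-$1$ data and is precisely what will absorb the vertical-derivative part of the nonlinearity in the absence of vertical viscosity. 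I would also isolate the zero horizontal Fourier mode of $v_h$ (a function of $(x_3,t)$ only) and estimate it separately, since $\nabla_h$ does not control it.

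The heart of the argument — and where I expect nearly all the work to lie — is estimating $v\cdot\nabla v$ after conjugation by $e^{\Phi|D_3|}$. The Fourier subadditivity $|\xi_3|\le|\eta_3|+|\xi_3-\eta_3|$ lets one distribute the analytic weight across a product at the cost of a harmless constant, reducing matters to bilinear estimates for $V$ in the anisotropic spaces. The terms carrying a full vertical derivative (those that would wreck a naive $\epsilon$-uniform $H^4$ bound) must be absorbed by $\lambda\theta(t)\||D_3|^{1/2}V\|_{H^4}^2$, which forces $\theta$ to be chosen large enough to dominate quantities like $\|\nabla v\|_{L^\infty}$; the remaining horizontal-derivative contributions are handled by $\nu\|\nabla_h V\|_{H^4}^2$ together with product laws in the horizontal Besov scale. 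This yields a differential inequality which, under the smallness assumption $\|V(0)\|_{H^4} = \|e^{a|D_3|}u^0\|_{H^4}\le\mu$ with $\mu$ chosen small enough (depending on $a$ and $\lambda$), keeps $\|V(t)\|_{H^4}$ small and keeps $\lambda\int_0^t\theta(\tau)\diff\tau$ strictly below $a$ for all $t$, so the analytic radius $\Phi(t)$ never degenerates. A closed continuation/bootstrap argument then gives the global, $\epsilon$-uniform bound on $v$; undoing the rescaling produces a global smooth solution $u^\epsilon$ for every $0<\epsilon<\epsilon_0$, and higher regularity propagates by standard parabolic smoothing. The main obstacle is the precise bookkeeping of the anisotropic product laws and the calibration of $\theta$ so that the vertical Gevrey gain genuinely dominates every term carrying $\partial_3$ — this is exactly the delicate point that the exponential hypothesis $\|e^{a|D_3|}u^0\|_{H^4}\le\mu$ is tailored to make possible.
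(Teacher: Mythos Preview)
The paper does not prove this theorem at all: it is stated as background, attributed to Chemin, Gallagher, and Paicu \cite{GallgherOneSlowDirectionAnnals}, and used only for comparison with the paper's own vorticity-based results. There is therefore no ``paper's own proof'' to compare against.

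That said, your sketch is a faithful outline of the original Chemin--Gallagher--Paicu strategy --- the anisotropic rescaling to a system with weak vertical viscosity, the propagation of a shrinking radius of analyticity in $x_3$ via a multiplier $e^{\Phi(t)|D_3|}$, the extra half vertical derivative gained from $\dot\Phi<0$ used to absorb the $\partial_3$-terms in the nonlinearity, and the bootstrap closing under the smallness hypothesis on $\|e^{a|D_3|}u^0\|$. The paper even remarks that the actual result in \cite{GallgherOneSlowDirectionAnnals} is proved in a Besov-type space $B^{7/2}$ with the $H^4$ statement as a corollary, which matches your note about the Chemin--Lerner framework. So your proposal is correct in spirit as a summary of the cited proof; just be aware it is not something this paper itself carries out.
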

This is referred to as the ill-prepared case because whenever $u^0_3$ is not identically zero, this clearly does not converge to any almost two dimensional vector field. The proof of this result is quite technical, 
in particular because all control over 
$u^{0,\epsilon}_3$ is lost as $\epsilon \to 0.$ This means that the proofs do not rely on $L^p$ or Sobolev space estimates, but are based on controlling regularity via a Banach space, $B^s$ that is introduced. The theorem in the paper is actually proved in terms of $B^\frac{7}{2}$ and the result in terms of $H^4$ follows as a corollary.

The underlying reason for these technical difficulties is that, in order to maintain the divergence free structure needed for the Navier--Stokes equation, making the solution vary slowly in $x_3$ requires us to make $u^{0,\epsilon}_3$ large, so that applying the chain rule,
\begin{equation}
    \nabla \cdot u^{0,\epsilon}(x)=
    (\partial_1u^0_1+\partial_2u^0_2+
    \epsilon \frac{1}{\epsilon} \partial_3u^0_3)
    (x_h,\epsilon x_3)
    =(\nabla \cdot u^0)(x_h,\epsilon x_3)=0.
\end{equation}
One way to get around this technical difficulty without the restriction that $v^0_3=0,$ is to perform the rescaling in terms of the vorticity, rather than the velocity. For a solution to be almost two dimensional, we want both and $u_3$ to be small and for the solution to vary slowly with respect to $x_3,$ but the divergence free condition doesn't let us scale both out simultaneously. 

On the vorticity side however, a two dimensional flow has its vorticity in the vertical direction, so an almost two dimensional flow corresponds to one in which $\omega_1$ and $\omega_2$ are small, and which varies slowly with respect to $x_3.$ Take
\begin{equation}
    \omega^{0,\epsilon}(x)=\left(\epsilon \omega_h^0, 
    \omega_3^0\right)(x_h,\epsilon x_3).
\end{equation}
This re-scaling preserves the divergence free condition, because applying the chain rule
\begin{align}
    \nabla \cdot \omega^{0,\epsilon}(x)&=
    \epsilon (\nabla \cdot \omega^0)
    (x_h,\epsilon x_3)\\
    &=0.
\end{align}

Furthermore, this is a re-scaling which allows us to converge to almost two dimensional initial data without any restrictions such as $v^0_3=0,$ because we have
\begin{equation}
    \lim_{\epsilon \to 0}\omega^{0,\epsilon}(x)=
    \left(0,0,\omega_3^0\right)(x_h,0).
\end{equation}
In this sense, any initial data is well prepared for rescaling in the vorticity formulation.
Theorem \ref{2VortGlobalExistence}, is not strong enough to prove there is global regularity for sufficiently small $\epsilon$ with this re-scaling, because it is only a logarithmic correction. We will, however prove an analogous result that is slightly weaker in terms of scaling, because it grows more slowly in the critical space $L^\frac{3}{2}$ as $\epsilon \to 0,$ but still becomes large in $L^\frac{3}{2}$ as $\epsilon \to 0.$ This result is Theorem \ref{VortRescalingIntro} in the introduction, which is restated here for the reader's convenience.

\begin{theorem} \label{VortRescaling}
Fix $a>0.$ For all $u^0\in H^1_{df}, 0<\epsilon<1$ let
\begin{equation}
    \omega^{0,\epsilon}(x)=
    \epsilon^\frac{2}{3}\left(\log\left(
    \frac{1}{\epsilon^a}\right)\right)^\frac{1}{4}
    \left(\epsilon \omega^0_1,\epsilon \omega_2^0,
    \omega_3^0\right) (x_1,x_2,\epsilon x_3),
\end{equation}
and define $u^{0,\epsilon}$ using the Biot-Savart law by
\begin{equation}
    u^{0,\epsilon}=\nabla \times 
    \left(-\Delta\right)^{-1} \omega^{0,\epsilon}.
\end{equation}
For all $u^0\in H^1_{df}$ and for all
\begin{equation}
   0<a<\frac{4R_2 \nu^4}{C_2^2\left\|\omega_3^0
\right\|_{L^\frac{6}{5}}^2 \left\|\omega_3^0\right\|_{L^2}^2}, 
\end{equation}
there exists $\epsilon_0>0$ such that for all 
$0<\epsilon<\epsilon_0,$ there is a unique, global smooth solution to the Navier--Stokes equation $u\in C\left(
(0,+\infty);H^1_{df}\right)$ with $u(\cdot,0)=u^{0,\epsilon}.$
Furthermore if $\omega^0_3$ is not identically zero,
then the initial vorticity is large in the critical space $L^\frac{3}{2}$ as $\epsilon \to 0,$
that is
\begin{equation}
    \lim_{\epsilon \to 0}
    \left\|\omega^{0,\epsilon}\right\|_{L^\frac{3}{2}}=+\infty.
\end{equation}
\end{theorem}
We note that while Theorem \ref{VortRescaling} is weaker in terms of scaling than Theorem \ref{GallagherIllPrepeared} proven in \cite{GallgherOneSlowDirectionAnnals}, 
it is stronger in the sense that it allows us to take as initial data the re-scalings of arbitrary $u^0 \in H^1_{df},$ whereas Theorem \ref{GallagherIllPrepeared} requires that the we re-scale $u^0 \in H^4$ that is also smooth with respect to $x_3.$
The regularity hypotheses on $u^0$ in Theorem \ref{VortRescaling} are the weakest available in order to ensure global regularity for initial data rescaled to be almost two dimensional.
Unfortunately, however, the rescaled initial data do not become large in the endpoint Besov space $\dot{B}^{-1}_{\infty,\infty},$ so this is not a genuine large data result, unlike the result proven by Gallagher, Chemin and Paicu \cite{GallgherOneSlowDirectionAnnals}.

Before proving Theorem \ref{VortRescaling}, we will need to state a corollary of Theorem \ref{2VortGlobalExistence} that guarantees global regularity purely in terms of $L^p$ norms of $\omega.$

\begin{corollary} \label{2VortGlobalExistence2}
For all $u^0\in \dot{H}^1_{df}$ such at 
\begin{equation}
    C_1\left\|\omega_h^0\right\|_{L^\frac{3}{2}} \exp{\left(\frac{\frac{1}{4}C_2^2
    \left\|\omega^0\right\|_{L^\frac{6}{5}} 
    \left\|\omega^0\right\|_{L^2}^2
    -6,912 \pi^4 \nu^4}{R_2 \nu^4}\right)}
    <R_1 \nu,
\end{equation}
$u^0$ generates a unique, global smooth solution to the Navier--Stokes equation $u\in C\left((0,+\infty);H^1_{df} \right),$ that is $T_{max}=+\infty,$ with $C_2$ taken as in Lemma \ref{SobolevIneq}, and $R_1$ and $R_2$ taken as in Theorem \ref{2VortGlobalExistence}.
\end{corollary}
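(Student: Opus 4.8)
The plan is to deduce this directly from Theorem \ref{Almost2d} by using the two sharp Sobolev inequalities (Lemmas \ref{FractionalSobolev} and \ref{SobolevIneq}) to dominate the critical Hilbert norms appearing in the hypothesis of that theorem by the stated $L^p$ norms of the vorticity. First I would observe that the finiteness of $\|\omega^0\|_{L^{6/5}}$ and $\|\omega^0\|_{L^2}$, together with $u^0\in\dot H^1_{df}$, already forces $u^0\in H^1_{df}$: by the dual Sobolev inequality $\|\omega^0\|_{\dot H^{-1}}\le C_2\|\omega^0\|_{L^{6/5}}<+\infty$, and since the Biot--Savart law and Plancherel's theorem give, for any divergence free field, the identity $\|u^0\|_{L^2}=\|\nabla u^0\|_{\dot H^{-1}}=\|\omega^0\|_{\dot H^{-1}}$ (the $\alpha=-1$ analogue of Proposition \ref{StrainIsometry}), we get $u^0\in L^2$, hence $u^0\in H^1_{df}$ and both $K_0$ and $E_0$ are finite.

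Next I would record the two bounds that upgrade the present hypothesis to that of Theorem \ref{Almost2d}. Applying the fractional Sobolev inequality of Lemma \ref{FractionalSobolev} to $\omega_h^0$ gives
\begin{equation}
\|\omega_h^0\|_{\dot H^{-\frac12}}\le C_1\|\omega_h^0\|_{L^{\frac32}}.
\end{equation}
Using $K_0=\frac12\|u^0\|_{L^2}^2=\frac12\|\omega^0\|_{\dot H^{-1}}^2$ and $E_0=\frac12\|\omega^0\|_{L^2}^2$ we have $K_0E_0=\frac14\|\omega^0\|_{\dot H^{-1}}^2\|\omega^0\|_{L^2}^2$, and then the dual Sobolev inequality $\|\omega^0\|_{\dot H^{-1}}\le C_2\|\omega^0\|_{L^{6/5}}$ from Lemma \ref{SobolevIneq} yields
\begin{equation}
K_0E_0\le \frac14 C_2^2\|\omega^0\|_{L^{\frac65}}^2\|\omega^0\|_{L^2}^2.
\end{equation}

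Finally, since $x\mapsto e^x$ is increasing, chaining these two estimates gives
\begin{equation}
\|\omega_h^0\|_{\dot H^{-\frac12}}\exp\left(\frac{K_0E_0-6{,}912\pi^4\nu^4}{R_2\nu^3}\right)\le C_1\|\omega_h^0\|_{L^{\frac32}}\exp\left(\frac{\frac14 C_2^2\|\omega^0\|_{L^{\frac65}}^2\|\omega^0\|_{L^2}^2-6{,}912\pi^4\nu^4}{R_2\nu^3}\right)<R_1\nu,
\end{equation}
where the last inequality is exactly the hypothesis of the corollary. Thus the hypothesis of Theorem \ref{Almost2d} is satisfied, so $u^0$ generates a unique global smooth solution and $T_{max}=+\infty$. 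There is essentially no obstacle in this argument; the only point that deserves a line of justification is the identity $\|u^0\|_{L^2}=\|\omega^0\|_{\dot H^{-1}}$, which is the Biot--Savart law combined with Plancherel's theorem and the divergence free constraint $\xi\cdot\hat u^0(\xi)=0$.
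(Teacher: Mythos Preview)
Your proof is correct and follows essentially the same approach as the paper: apply Lemma \ref{FractionalSobolev} to bound $\|\omega_h^0\|_{\dot H^{-1/2}}$ by $C_1\|\omega_h^0\|_{L^{3/2}}$, apply Lemma \ref{SobolevIneq} to bound $K_0=\tfrac12\|\omega^0\|_{\dot H^{-1}}^2$ by $\tfrac12 C_2^2\|\omega^0\|_{L^{6/5}}^2$, and then invoke Theorem \ref{Almost2d}. Your extra paragraph verifying that $u^0\in H^1_{df}$ (via $\|u^0\|_{L^2}=\|\omega^0\|_{\dot H^{-1}}\le C_2\|\omega^0\|_{L^{6/5}}$) is a worthwhile point that the paper leaves implicit, since Theorem \ref{Almost2d} is stated for $H^1_{df}$ rather than $\dot H^1_{df}$.
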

\begin{proof}
This is a corollary of Theorem \ref{2VortGlobalExistence}.
Suppose 
\begin{equation}
    C_1\left\|\omega_h^0\right\|_{L^\frac{3}{2}} \exp{\left(\frac{\frac{1}{4}C_2^2
    \left\|\omega^0\right\|_{L^\frac{6}{5}}^2 
    \left\|\omega^0\right\|_{L^2}^2
    -6,912 \pi^4 \nu^4}{R_2 \nu^4}\right)}<R_1 \nu.
\end{equation}
We know from the fractional Sobolev inequality, Lemma \ref{FractionalSobolev}, that
\begin{equation}
    \left\|\omega^0_h\right\|_{\dot{H}^{-\frac{1}{2}}}
    \leq C_1 \left\|\omega^0_h\right\|_{L^\frac{3}{2}},
\end{equation}
and from the Sobolev inequality, Lemma \ref{SobolevIneq}, that
\begin{align}
    K_0&=\frac{1}{2}\left\|\omega^0\right\|_{\dot{H}^{-1}}^2\\
    &\leq
    \frac{1}{2}C_2^2\left\|\omega^0\right\|_{L^\frac{6}{5}}^2.
\end{align}
Therefore we can conclude that
\begin{equation}
    \left\|\omega_h^0\right\|_{\dot{H}^{-\frac{1}{2}}}
    \exp{\left(\frac{K_0 E_0-
    6,912 \pi^4 \nu^4}{R_2 \nu^4}\right)} \leq
    C_1\left\|\omega_h^0\right\|_{L^\frac{3}{2}} \exp{\left(\frac{\frac{1}{4}C_2^2
    \left\|\omega^0\right\|_{L^\frac{6}{5}} 
    \left\|\omega^0\right\|_{L^2}^2
    -6,912 \pi^4 \nu^4}{R_2 \nu^4}\right)}.
\end{equation}
This implies that
\begin{equation}
    \left\|\omega_h^0\right\|_{\dot{H}^{-\frac{1}{2}}}
    \exp{\left(\frac{K_0 E_0-
    6,912 \pi^4 \nu^4}{R_2 \nu^4}\right)}
    < R_1 \nu.
\end{equation}
Applying Theorem \ref{2VortGlobalExistence}, this completes the proof.
\end{proof}

\begin{remark} \label{LqScaling}
    For all $1 \leq q<+\infty,$ and for all
    $f\in L^q\left(\mathbb{R}^3\right)$
    \begin{equation}
        \left\|f^\epsilon\right\|_{L^q}=
        \epsilon^{-\frac{1}{q}}\|f\|_{L^q},
    \end{equation}
    where $f^\epsilon(x)=f(x_1,x_2,\epsilon x_3), \epsilon>0.$
    This is an elementary computation for the rescaling of the $L^q$ norm in one direction.
\end{remark}

We will now prove Theorem \ref{VortRescaling}.
\begin{proof}
Fix $u^0 \in H^1_{df}$ and 
$0<a<\frac{4R_2 \nu^4}{C_2^2\left\|\omega_3^0
\right\|_{L^\frac{6}{5}}^2 
\left\|\omega_3^0\right\|_{L^2}^2}.$
We will prove the result using Corollary \ref{2VortGlobalExistence2}.
Applying Remark \ref{LqScaling}, we find that 
\begin{equation} \label{HorizontalReScaled}
    \left\|\omega^{0,\epsilon}_h\right\|_{L^\frac{3}{2}}=
    \epsilon \log \left(\epsilon^{-a}\right)^\frac{1}{4}
    \left\|\omega^0_h\right\|_{L^\frac{3}{2}}.
\end{equation}
Similarly we apply Remark \ref{LqScaling}, to compute the other relevant $L^q$ norms in Corollary \ref{2VortGlobalExistence2}:
\begin{align}
    \left\|\omega^{0,\epsilon}_3\right\|_{L^2}&=
    \epsilon^\frac{1}{6} \log \left(\epsilon^{-a}\right)^\frac{1}{4}
    \left\|\omega^0_h\right\|_{L^2},\\
    \left\|\omega^{0,\epsilon}_h\right\|_{L^2}&=
    \epsilon^\frac{7}{6} \log \left(\epsilon^{-a}\right)^\frac{1}{4}
    \left\|\omega^0_h\right\|_{L^2},\\
    \left\|\omega^{0,\epsilon}_3\right\|_{L^\frac{6}{5}}&=
    \epsilon^{-\frac{1}{6}} 
    \log \left(\epsilon^{-a}\right)^\frac{1}{4}
    \left\|\omega^0_h\right\|_{L^\frac{6}{5}},\\
    \left\|\omega^{0,\epsilon}_h\right\|_{L^\frac{6}{5}}&=
    \epsilon^\frac{5}{6} \log \left(\epsilon^{-a}\right)^\frac{1}{4}
    \left\|\omega^0_h\right\|_{L^\frac{6}{5}}.
\end{align}
Using the triangle inequality for norms we can see that
\begin{align}
    \left\|\omega^{0,\epsilon}\right\|_{L^2}&\leq
    \left\|\omega^{0,\epsilon}_3\right\|_{L^2}+
    \left\|\omega^{0,\epsilon}_h\right\|_{L^2}\\
    &=
    \epsilon^\frac{1}{6} \log \left(\epsilon^{-a}\right)^\frac{1}{4}
    \left\|\omega^0_3\right\|_{L^2}+
    \epsilon^\frac{7}{6} \log \left(\epsilon^{-a}\right)^\frac{1}{4}
    \left\|\omega^0_h\right\|_{L^2}.
\end{align}
Likewise we may compute that
\begin{align}
    \left\|\omega^{0,\epsilon}\right\|_{L^\frac{6}{5}}&\leq
    \left\|\omega^{0,\epsilon}_3\right\|_{L^\frac{6}{5}}+
    \left\|\omega^{0,\epsilon}_h\right\|_{L^\frac{6}{5}}\\
    &=
    \epsilon^{-\frac{1}{6}} \log \left(\epsilon^{-a}\right)^\frac{1}{4}
    \left\|\omega^0_3\right\|_{L^\frac{6}{5}}+
    \epsilon^\frac{5}{6} \log \left(\epsilon^{-a}\right)^\frac{1}{4}
    \left\|\omega^0_h\right\|_{L^\frac{6}{5}}.
\end{align}
Combining these inequalities and factoring out the
$\log\left(\epsilon^{-a}\right)^\frac{1}{4}$
terms we find that
\begin{equation}
   \left\|\omega^{0,\epsilon}\right\|_{L^\frac{6}{5}}^2
   \left\|\omega^{0,\epsilon}\right\|_{L^2}^2 \leq
   \log\left(\epsilon^{-a}\right)
   \left(\left\|\omega^0_3\right\|_{L^2}+\epsilon
   \left\|\omega^0_h\right\|_{L^2}\right)^2
   \left(\left\|\omega^0_3\right\|_{L^\frac{6}{5}}+
   \epsilon\left\|\omega^0_h\right\|_{L^\frac{6}{5}}\right)^2.
\end{equation}
Dividing by $R_2 \nu^4$ and taking the exponential of both sides of this inequality, we find that
\begin{equation}
    \exp\left(\frac{C_2^2\left\|\omega^{0,\epsilon}
    \right\|_{L^\frac{6}{5}}^2\left\|\omega^{0,\epsilon}
    \right\|_{L^2}^2}{4 R_2 \nu^4}\right) \leq
   \epsilon^{-a\frac{C_2^2
   \left(\left\|\omega^0_3\right\|_{L^2}+\epsilon
   \left\|\omega^0_h\right\|_{L^2}\right)^2
   \left(\left\|\omega^0_3\right\|_{L^\frac{6}{5}}+
   \epsilon\left\|\omega^0_h\right\|_{L^\frac{6}{5}}\right)^2}
   {4 R_2 \nu^4}}.
\end{equation}
Combining this with the estimate \eqref{HorizontalReScaled}, we find that
\begin{equation} \label{ExponentialBound}
    \left\|\omega_h^{0,\epsilon}\right\|_{L^\frac{3}{2}}
    \exp\left(\frac{C_2^2\left\|\omega^{0,\epsilon}
    \right\|_{L^\frac{6}{5}}^2\left\|\omega^{0,\epsilon}
    \right\|_{L^2}^2}{4 R_2 \nu^4}\right)  \leq
    \epsilon^{1-a\frac{C_2^2\left(
    \left\|\omega^0_3\right\|_{L^2}+\epsilon
   \left\|\omega^0_h\right\|_{L^2}\right)^2
   \left(\left\|\omega^0_3\right\|_{L^\frac{6}{5}}+
   \epsilon\left\|\omega^0_h\right\|_{L^\frac{6}{5}}\right)^2}
   {4 R_2 \nu^4}}
   \log \left(\epsilon^{-a}\right)^\frac{1}{4}
    \left\|\omega^0_h\right\|_{L^\frac{3}{2}}.
\end{equation}
We know from the definition of $a$ that
\begin{equation}
    a \frac{\left\|w_3^0\right\|_{L^2}^2
    \left\|\omega^0_3\right\|_{L^\frac{6}{5}}^2}
    {R_2 \nu^4}<1,
\end{equation}
so fix
\begin{equation}
    0<\delta<1-a \frac{\left\|w_3^0\right\|_{L^2}^2
    \left\|\omega^0_3\right\|_{L^\frac{6}{5}}^2}
    {R_2 \nu^4}.
\end{equation}
Clearly we can see that 
\begin{equation}
    \lim_{\epsilon \to 0}
    1-a\frac{\left(
    \left\|\omega^0_3\right\|_{L^2}+\epsilon
   \left\|\omega^0_h\right\|_{L^2}\right)^2
   \left(\left\|\omega^0_3\right\|_{L^\frac{6}{5}}+
   \epsilon\left\|\omega^0_h\right\|_{L^\frac{6}{5}}\right)^2}
   {R_2 \nu^4}=
   1-a \frac{\left\|w_3^0\right\|_{L^2}^2
   \left\|\omega^0_3\right\|_{L^\frac{6}{5}}^2}
   {R_2 \nu^4}.
\end{equation}
Therefore, there exists $r>0,$ such that for all $0<\epsilon<r,$
\begin{equation}
    1-a\frac{\left(
    \left\|\omega^0_3\right\|_{L^2}+\epsilon
   \left\|\omega^0_h\right\|_{L^2}\right)^2
   \left(\left\|\omega^0_3\right\|_{L^\frac{6}{5}}+
   \epsilon\left\|\omega^0_h\right\|_{L^\frac{6}{5}}\right)^2}
   {R_2 \nu^4}>\delta.
\end{equation}
Then for all $0<\epsilon<\min(1,r),$
\begin{equation}
    \epsilon^{1-a\frac{\left(
    \left\|\omega^0_3\right\|_{L^2}+\epsilon
   \left\|\omega^0_h\right\|_{L^2}\right)^2
   \left(\left\|\omega^0_3\right\|_{L^\frac{6}{5}}+
   \epsilon\left\|\omega^0_h\right\|_{L^\frac{6}{5}}\right)^2}
   {R_2 \nu^4}}<\epsilon^\delta.
\end{equation}
Combining this estimate with the estimate \eqref{ExponentialBound}, we find
\begin{equation}
    \lim_{\epsilon \to 0}
    \left\|\omega_h^{0,\epsilon}\right\|_{L^\frac{3}{2}}
    \exp\left(\frac{C_2^2\left\|\omega^{0,\epsilon}
    \right\|_{L^\frac{6}{5}}^2\left\|\omega^{0,\epsilon}
    \right\|_{L^2}^2}{4 R_2 \nu^4}\right)\leq
    \lim_{\epsilon \to 0}
    \left\|\omega^0_h\right\|_{L^\frac{3}{2}} 
    \epsilon^\delta
    \log \left( \epsilon^{-a}\right)^\frac{1}{4}.
\end{equation}
Making the substitution $k=\frac{1}{\epsilon},$ we find
\begin{align}
    \lim_{\epsilon \to 0}
    \left\|\omega^0_h\right\|_{L^\frac{3}{2}} 
    \epsilon^\delta
    \log \left( \epsilon^{-a}\right)^\frac{1}{4}
    &=
    \lim_{k \to +\infty}
    \left\|\omega^0_h\right\|_{L^\frac{3}{2}}
    \frac{\log\left(k^a\right)^\frac{1}{4}}
    {k^\delta}\\
    &=0,
\end{align}
because the logarithm grows more slowly than any power.
Putting these inequalities together we find that
\begin{equation}
    \lim_{\epsilon \to 0}
    \left\|\omega_h^{0,\epsilon}\right\|_{L^\frac{3}{2}}
    \exp\left(\frac{C_2^2\left\|\omega^{0,\epsilon}
    \right\|_{L^\frac{6}{5}}^2\left\|\omega^{0,\epsilon}
    \right\|_{L^2}^2}{4 R_2 \nu^4}\right)\leq 0.
\end{equation}
This limit is clearly non-negative,
so we can conclude that
\begin{equation}
    \lim_{\epsilon \to 0}
    \left\|\omega_h^{0,\epsilon}\right\|_{L^\frac{3}{2}}
    \exp\left(\frac{C_2^2\left\|\omega^{0,\epsilon}
    \right\|_{L^\frac{6}{5}}^2\left\|\omega^{0,\epsilon}
    \right\|_{L^2}^2}{4 R_2 \nu^4}\right)=0.
\end{equation}
Therefore there exists $\epsilon_0>0,$ such that for all $0<\epsilon<\epsilon_0,$
\begin{equation}
    \left\|\omega_h^{0,\epsilon}\right\|_{L^\frac{3}{2}}
    \exp\left(\frac{C_2^2\left\|\omega^{0,\epsilon}
    \right\|_{L^\frac{6}{5}}^2\left\|\omega^{0,\epsilon}
    \right\|_{L^2}^2}{4 R_2 \nu^4}\right) <
    \exp\left(\frac{6,912 \pi^4 \nu^4}
    {R_2 \nu^4} \right) R_1 \nu.
\end{equation}
Applying Corollary \ref{2VortGlobalExistence2}, this means for all $0<\epsilon<\epsilon_0$ there is a unique global smooth solution of the Navier--Stokes equation for initial data 
$u^{0,\epsilon}\in H^1_{df}.$ 

Next we will show that unless $\omega^0_3$ is identically zero, 
\begin{equation}
    \lim_{\epsilon \to 0}\left\|\omega^{0,\epsilon}
    \right\|_{L^\frac{3}{2}}=+\infty.
\end{equation}
We know that
\begin{equation}
    \left\|\omega^{0,\epsilon}\right\|_{L^\frac{3}{2}} \geq
    \left\|\omega^{0,\epsilon}_3\right\|_{L^\frac{3}{2}},
\end{equation}
so it suffices to show that 
\begin{equation}
    \lim_{\epsilon \to 0}
    \left\|\omega^{0,\epsilon}_3\right\|_{L^\frac{3}{2}}
    =+\infty.
\end{equation}
We can see from Remark \ref{LqScaling}, that 
\begin{equation}
    \left\|\omega^{0,\epsilon}_3\right\|_{L^\frac{3}{2}}=
    \log\left(\epsilon^{-a}\right)
    \left\|\omega^0_3\right\|_{L^\frac{3}{2}}.
\end{equation}
Therefore we may compute that
\begin{align}
    \lim_{\epsilon \to 0}
    \left\|\omega^{0,\epsilon}_3\right\|_{L^\frac{3}{2}}&=
    \left\|\omega^0_3\right\|_{L^\frac{3}{2}}
    \lim_{\epsilon \to 0}
    \log\left(\epsilon^{-a}\right)\\
    &=+\infty.
\end{align}
This completes the proof.
\end{proof}

Iftimie proved the global existence of smooth solutions for the Navier--Stokes equation with three dimensional initial data that are a perturbation of two dimensional initial data. As we mentioned in the introduction, this is possible on the torus, but not on the whole space, in particular because $L^2\left(\mathbb{T}^2\right)$ defines a subspace of
$L^2\left(\mathbb{T}^3\right),$ but 
$L^2\left(\mathbb{R}^2\right)$ does not define a subspace of $L^2\left(\mathbb{R}^3\right)$ because we lose integrability. The precise result Iftime showed is the following \cite{Iftimie}.
\begin{theorem} \label{Iftimie}
There exists $C>0,$ such that for all $v^0\in
L^2_{df}(\mathbb{T}^2;\mathbb{R}^3),$
and for all 
$w^0 \in H^\frac{1}{2}_{df}\left(\mathbb{T}^3;\mathbb{R}^3\right),$
such that
\begin{equation}
    \left\|w^0\right\|_{\dot{H}^\frac{1}{2}} \exp{\left(\frac{\left\|v^0\right\|_{L^2}^2}
    {C \nu^2}\right)}\leq C \nu,
\end{equation}
there exists a unique, global smooth solution to the
Navier--Stokes equation with initial data
$u^0=v^0+w^0.$
\end{theorem}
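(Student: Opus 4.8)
The plan is a perturbative argument around the two dimensional global solution determined by $v^0$. Since $v^0$ depends only on $x_h = (x_1,x_2)$, its first two components generate a global smooth solution of the two dimensional Navier--Stokes equation on $\mathbb{T}^2$, while the third component is carried along as the solution of a linear transport--diffusion equation; denote the resulting vector field by $\bar v = \bar v(x_h,t)$, which we regard as an $x_3$-independent, divergence free vector field on $\mathbb{T}^3$. It is smooth for $t>0$, and the two dimensional energy equality yields the only facts about $\bar v$ that will matter:
\begin{equation}
\|\bar v(\cdot,t)\|_{L^2} \le \|v^0\|_{L^2}, \qquad \nu\int_0^{+\infty}\|\nabla\bar v(\cdot,\tau)\|_{L^2}^2\diff\tau \le \tfrac12\|v^0\|_{L^2}^2 .
\end{equation}

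Next I would set $u = \bar v + w$, so that $w$ must solve
\begin{equation}
\partial_t w - \nu\Delta w + P_{df}\big((w\cdot\nabla)w\big) + P_{df}\big((w\cdot\nabla)\bar v + (\bar v\cdot\nabla)w\big) = 0,\quad \nabla\cdot w = 0,\quad w(\cdot,0)=w^0 .
\end{equation}
A Fujita--Kato type fixed point argument, using that $\bar v$ is smooth for $t>0$ and $w^0\in\dot{H}^\frac{1}{2}_{df}$, produces a unique mild solution $w\in C([0,T_{max});\dot{H}^\frac{1}{2}_{df})$ with the usual blow-up criterion. The main step is a continuity/bootstrap argument on $\|w(\cdot,t)\|_{\dot{H}^\frac{1}{2}}$: pairing the equation with $w$ in $\dot{H}^\frac{1}{2}$, the viscous term produces $-\nu\|w\|_{\dot{H}^\frac{3}{2}}^2$, and the self-interaction $(w\cdot\nabla)w$ is controlled exactly as in the small-data $\dot{H}^\frac{1}{2}$ theory by $C\|w\|_{\dot{H}^\frac{1}{2}}\|w\|_{\dot{H}^\frac{3}{2}}^2$, so that as long as $\|w(\cdot,t)\|_{\dot{H}^\frac{1}{2}}$ stays below a fixed multiple of $\nu$ this term is absorbed into the dissipation.

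The crucial estimate is on the two cross terms. Since $\bar v$ is independent of $x_3$ we have $(w\cdot\nabla)\bar v = (w_h\cdot\nabla_h)\bar v$, while $(\bar v\cdot\nabla)w$ is transport by the $x_3$-independent divergence free field $\bar v$; in Fourier space the $x_3$-frequency of any product of $w$ with $\bar v$ equals that of $w$. Exploiting this anisotropy --- via an anisotropic Bony decomposition, or equivalently the two dimensional Ladyzhenskaya/Gagliardo--Nirenberg inequality in the $x_h$ variables together with the embedding $\dot{H}^\frac{1}{2}(\mathbb{T}^3)\hookrightarrow L^4_{x_h}L^2_{x_3}$ --- one bounds
\begin{equation}
\big|\big\langle (w\cdot\nabla)\bar v + (\bar v\cdot\nabla)w,\, w\big\rangle_{\dot{H}^\frac{1}{2}}\big| \le \tfrac{\nu}{2}\|w\|_{\dot{H}^\frac{3}{2}}^2 + \frac{C}{\nu}\|\nabla\bar v\|_{L^2}^2\|w\|_{\dot{H}^\frac{1}{2}}^2 ,
\end{equation}
so that $\bar v$ enters only through its (two dimensional, scale invariant) enstrophy. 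On the interval where the self-interaction is absorbed this gives $\frac{d}{dt}\|w\|_{\dot{H}^\frac{1}{2}}^2 \le \frac{C}{\nu}\|\nabla\bar v\|_{L^2}^2\|w\|_{\dot{H}^\frac{1}{2}}^2$, and Gr\"onwall's inequality together with the dissipation bound for $\bar v$ yields
\begin{equation}
\|w(\cdot,t)\|_{\dot{H}^\frac{1}{2}} \le \|w^0\|_{\dot{H}^\frac{1}{2}}\exp\!\Big(\frac{C}{\nu}\int_0^{+\infty}\|\nabla\bar v\|_{L^2}^2\diff\tau\Big) \le \|w^0\|_{\dot{H}^\frac{1}{2}}\exp\!\Big(\frac{C\|v^0\|_{L^2}^2}{\nu^2}\Big).
\end{equation}
Choosing the constant in the hypothesis so that this upper bound stays strictly below the threshold for which the self-interaction was absorbed closes the bootstrap: $\|w(\cdot,t)\|_{\dot{H}^\frac{1}{2}}$ never reaches the critical size, hence $T_{max}=+\infty$, and $u = \bar v + w$ is the global smooth solution with initial data $u^0 = v^0 + w^0$, smoothness for $t>0$ following from the smoothing property of mild solutions.

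The main obstacle I anticipate is exactly the cross-term estimate: forcing the background to appear only through the time-integrable, two dimensional quantity $\int_0^{+\infty}\|\nabla\bar v\|_{L^2}^2$ --- and not through $\sup_t\|\bar v\|_{\dot{H}^1}$ or $\int_0^{+\infty}\|\nabla\bar v\|_{L^2}^4$, neither of which is controlled by $\|v^0\|_{L^2}$ alone when $v^0$ is merely in $L^2$ --- which is precisely what the $x_3$-independence of $\bar v$ buys, but only if the half-derivative borrowed from the dissipation is apportioned correctly between the two copies of $w$ and the anisotropic product estimates are arranged so that $\bar v$ is hit by exactly one horizontal derivative. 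A secondary technical point is the local well-posedness of the $w$-equation around a background that is smooth only for $t>0$, which is routine given the smallness of $w^0$ in the critical space.
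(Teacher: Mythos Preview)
The paper does not prove this theorem: it is quoted from Iftimie's work \cite{Iftimie} and used only as a point of comparison with the author's own results. There is therefore no ``paper's proof'' against which to compare your proposal.

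That said, your sketch is the standard perturbative strategy for results of this type and is essentially the route Iftimie takes. The decomposition $u=\bar v+w$ around the global two dimensional flow, the $\dot{H}^{1/2}$ energy estimate on $w$ with the self-interaction absorbed under a smallness bootstrap, and the Gr\"onwall step using the two dimensional dissipation bound $\nu\int_0^\infty\|\nabla\bar v\|_{L^2}^2\diff\tau\le\tfrac12\|v^0\|_{L^2}^2$ are exactly the right ingredients. You have also correctly identified the genuine difficulty: the cross-term estimate must be arranged so that $\bar v$ appears only through $\|\nabla\bar v\|_{L^2}^2$, which is where the $x_3$-independence of $\bar v$ and the anisotropic product estimates (Iftimie's $H^{\delta,\delta,\frac12-\delta}$ machinery, or equivalently the two dimensional Ladyzhenskaya inequality applied slice-by-slice) are essential. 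Your self-diagnosis of this point is accurate, and the sketch would serve as a reasonable outline of Iftimie's argument.
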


In fact, Iftimie proves something slightly stronger. The result still holds if the space $H^\frac{1}{2}$ is replaced by the anisotropic space $H^{\delta,\delta,\frac{1}{2}-\delta}, 0<\delta<\frac{1}{2}$ which is the space given by taking the $H^{\frac{1}{2}-\delta}$ norm with respect to $x_3,$ leaving $x_1,x_2$ fixed, giving us a function of $x_1$ and $x_2,$ then taking the $H^\delta$ norm with respect to $x_2$ and so forth. In the range $0<\delta<\frac{1}{2},$ these spaces strictly contain $H^\frac{1}{2}.$
This result was also extended to the case of the Navier--Stokes equation with an external force by Gallagher \cite{GallagherForce}, but only where the control in $w^0$ is in the critical Hilbert space $\dot{H}^\frac{1}{2},$ not in these more complicated, anisotropic spaces.
These anisotropic spaces are quite messy; in particular we will note that for $\alpha \neq 0,$ $H^{\alpha,\alpha,\alpha}\neq H^\alpha\left(\mathbb{T}^3\right).$
For this reason, and because the results in this paper deal with Hilbert spaces, we will focus our comparison of Iftimie's result with ours in the setting of $\dot{H}^\frac{1}{2}.$
For more details on these anisotropic spaces, see \cite{IftimieAnisotropic}.

We will find that Iftimie's result neither implies, nor is implied by, our result, but that they are closely related.
In order to compare the results in this paper to the result proven by Iftimie, it is first necessary to state a version of Theorem \ref{2VortGlobalExistence} on the torus. The result will be essentially the same, although possibly with different constants.

\begin{theorem} \label{2VortTorus}
There exists constants
$\Tilde{R}_1,\Tilde{R}_2,\Tilde{R}_3>0$
independent of $\nu,$
such that for all 
$u^0\in H^1_{df}\left(\mathbb{T}^3\right)$ 
such at 
\begin{equation}
    \left\|\omega_h^0\right\|_{\dot{H}^{-\frac{1}{2}}
    \left(\mathbb{T}^3\right)}
    \exp{\left(\frac{
    K_0 E_0-\Tilde{R}_3 \nu^4}
    {\Tilde{R}_2 \nu^4}\right)}
    <\Tilde{R}_1 \nu,
\end{equation}
$u^0$ generates a unique, global smooth solution to the Navier--Stokes equation $u\in C\left((0,+\infty);H^1_{df}
\left(\mathbb{T}^3\right) \right),$ that is $T_{max}=+\infty.$
\end{theorem}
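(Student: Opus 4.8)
The plan is to repeat, essentially verbatim, the chain of estimates in Sections 2 and 3, replacing each appeal to a sharp Sobolev-type inequality on $\mathbb{R}^3$ by its (non-sharp) toroidal analogue, and tracking how the numerical constants change; the theorem only asserts the existence of constants $\tilde R_1,\tilde R_2,\tilde R_3$, so the loss of sharpness is harmless. The one genuinely new point is that the Leray projection, the $\dot{H}^{-1/2}$--$\dot{H}^{1/2}$ duality, and the Sobolev embedding $\dot{H}^{1/2}\hookrightarrow L^3$ behave well only on the zero-mean subspace of $L^2_{df}(\mathbb{T}^3)$. I would handle this by a Galilean reduction: the spatial mean $\bar u$ of a solution on $\mathbb{T}^3$ is a time-independent constant, and the change of variables $u(x,t)\mapsto u(x-\bar u t,t)-\bar u$ produces a zero-mean solution with the same vorticity --- hence the same $E_0$ and the same $\|\omega_h^0\|_{\dot{H}^{-1/2}}$ --- and with $K_0$ only decreased, while preserving smoothness in both directions; thus it suffices to prove the theorem when $u^0$ has zero mean. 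On that subspace the embeddings $\dot{H}^{1/2}(\mathbb{T}^3)\hookrightarrow L^3$, $\dot{H}^1(\mathbb{T}^3)\hookrightarrow L^6$ and their duals hold with some finite constants $\tilde C_1,\tilde C_2$ (combine the inhomogeneous Sobolev embedding with Poincar\'e), Proposition \ref{StrainIsometry}, Lemma \ref{2CompIsometry} and Proposition \ref{Lq2Vort} hold unchanged since they are Fourier-multiplier identities, and the $L^3$ blow-up criterion used in Proposition \ref{SmallData} also holds on $\mathbb{T}^3$.

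With these ingredients I would run the same four-step engine as in the whole-space case. Step one: Corollary \ref{StrainBound} is local and holds verbatim; combined with $\tilde C_1$ and interpolation between $L^2$ and $\dot{H}^1$ it yields a cubic bound $E'(t)\le\frac{1}{\tilde c\,\nu^3}E(t)^3$, and then, exactly as in Proposition \ref{SmallData}, a small-data result: if $K_0E_0<\tilde R_3\nu^4$ with $\tilde R_3:=2\tilde c$, then $T_{max}=+\infty$; contraposing gives the toroidal analogue of Corollary \ref{CriticalLowerBound}, i.e. $K(t)E(t)\ge\tilde R_3\nu^4$ whenever $T_{max}<+\infty$. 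Step two: Proposition \ref{TripleProduct} is an integration-by-parts identity valid on $\mathbb{T}^3$, so combining it with Lemma \ref{2CompIsometry}, the $\dot{H}^{\pm1/2}$ duality, the fractional Sobolev inequality, H\"older, and $\dot{H}^1\hookrightarrow L^6$ gives the analogue of Proposition \ref{HorizontalVort}, $\partial_t\|\omega\|_{L^2}^2\le-\frac{2}{\tilde R_1}\|\omega\|_{\dot{H}^1}^2\big(\tilde R_1\nu-\|\omega_h\|_{\dot{H}^{-1/2}}\big)$ with $\tilde R_1=\frac{1}{2\tilde C_1\tilde C_2}$. Step three: the evolution equation for $\omega_h$ in Proposition \ref{2CompEvolution} is again local, so pairing it with $(-\Delta)^{-1/2}\omega_h$, estimating the stretching and transport terms as in Proposition \ref{HorizontalVortGrowth}, optimizing the resulting polynomial in $\|\omega_h\|_{\dot{H}^{1/2}}$, and applying Gr\"onwall yields $\|\omega_h(\cdot,t)\|_{\dot{H}^{-1/2}}^2\le\|\omega_h^0\|_{\dot{H}^{-1/2}}^2\exp\big(\frac{1}{\tilde R_2\nu^3}\int_0^t\|\omega\|_{L^2}^4\big)$ for some $\tilde R_2>0$.

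Finally I would assemble these as in the proof of Theorem \ref{Almost2d}, by contraposition: assuming $T_{max}<+\infty$, step one gives $K_0E_0\ge\tilde R_3\nu^4$, so the claim is immediate if $\|\omega_h^0\|_{\dot{H}^{-1/2}}\ge\tilde R_1\nu$; otherwise, since $\limsup_{t\to T_{max}}\|\omega(\cdot,t)\|_{L^2}=+\infty$ the enstrophy is not nonincreasing, so by step two there is a first time $T<T_{max}$ with $\|\omega_h(\cdot,T)\|_{\dot{H}^{-1/2}}=\tilde R_1\nu$, the enstrophy is decreasing on $[0,T]$, and the Gr\"onwall bound of step three together with the energy equality $\int_0^T\|\omega\|_{L^2}^2=2K_0-2K(T)$, the monotonicity $\|\omega(\cdot,T)\|_{L^2}\le\|\omega^0\|_{L^2}$, and $K(T)E(T)\ge\tilde R_3\nu^4$ collapse to $\tilde R_1\nu\le\|\omega_h^0\|_{\dot{H}^{-1/2}}\exp\big(\frac{K_0E_0-\tilde R_3\nu^4}{\tilde R_2\nu^3}\big)$, which is the contrapositive; the $\nu$-independence of $\tilde R_1,\tilde R_2,\tilde R_3$ follows by reducing to $\nu=1$ via $u(x,t)\mapsto\nu^{-1}u(x,\nu^{-1}t)$, which leaves $\mathbb{T}^3$ invariant. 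I expect the main obstacle to be purely bookkeeping: verifying that no step of the whole-space argument secretly relies on the explicit values of the constants or on features of $\mathbb{R}^3$ --- $L^q$-boundedness of the Riesz transforms, the homogeneous Sobolev spaces $\dot{H}^s$, or the $L^3$ regularity criterion --- that do not transfer to $\mathbb{T}^3$; the zero-mean reduction is exactly what makes all of these survive, and no new analytic idea is needed.
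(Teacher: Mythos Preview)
Your proposal is correct and follows precisely the approach the paper intends: the paper does not give an independent proof of Theorem \ref{2VortTorus} but simply states that ``the proof of this result on the torus is exactly the same as the proof of the result on the whole space,'' with possibly different Sobolev constants, and notes that one works on the zero-mean subspace so that the homogeneous Sobolev inequalities are available. Your Galilean reduction to zero mean is a slight elaboration over the paper's bare stipulation that $\hat u(0)=0$, but otherwise your four-step outline (cubic enstrophy bound $\to$ small-data threshold $\to$ enstrophy control by $\|\omega_h\|_{\dot H^{-1/2}}$ $\to$ Gr\"onwall for $\|\omega_h\|_{\dot H^{-1/2}}$, then contraposition) is exactly the whole-space argument transported verbatim, which is all the paper claims.
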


The proof of the this result on the torus is exactly the same as the proof of the result on the whole space. The only reason the constants may be different is because the sharp Sobolev constant may be worse on the torus than the whole space.
We will note that when considering solutions to the Navier--Stokes equations on the torus, we include the stipulation that the flow over the whole torus integrates to zero, so 
\begin{equation}
    \hat{u}(0,0,0)=\int_{\mathbb{T}^3} u(x)\diff x=0.
\end{equation}
This normalization is necessary in order to mod out constant functions on the torus, so without this stipulation, we would not in fact be able to make use of Sobolev and fractional Sobolev inequalities.

In order to relate Theorem \ref{Iftimie} and Theorem \ref{2VortTorus}, we will need to define a projection from three dimensional vector fields to two dimensional vector fields, following the approach of Iftimie \cite{Iftimie} and Gallagher \cite{Gallagher}.

\begin{proposition} \label{P2d}
Define $P_{2d}$ by
\begin{equation}
    P_{2d}(u)(x_h)=\int_0^1
    u(x_h,x_3)\diff x_3.
\end{equation}
Then for all $1\leq q\leq +\infty,$ 
$P_{2d}: L^q_{df}\left(\mathbb{T}^3\right)
\to L^q_{df}\left(\mathbb{T}^2\right).$
In particular,
\begin{equation}
 \nabla \cdot P_{2d}(u)=0,   
\end{equation}
and
\begin{equation}
\|P_{2d}(u)\|_{L^q\left(\mathbb{T}^2\right)}
\leq \|u\|_{L^q\left(\mathbb{T}^3\right)}.
\end{equation}
\end{proposition}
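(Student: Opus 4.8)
The plan is to check the three assertions --- that $P_{2d}(u)$ is well defined, that it does not increase the $L^q$ norm, and that it preserves the divergence-free constraint --- directly, since $P_{2d}$ is nothing more than the average over the vertical fibre of the torus. First I would observe that $P_{2d}(u)$ makes sense pointwise almost everywhere: because $\mathbb{T}^3$ has finite measure we have $L^q\left(\mathbb{T}^3\right)\subset L^1\left(\mathbb{T}^3\right)$ for every $1\leq q\leq+\infty$, so $u\in L^1\left(\mathbb{T}^3;\mathbb{R}^3\right)$, and by Fubini--Tonelli the slice $x_3\mapsto u(x_h,x_3)$ lies in $L^1\left([0,1];\mathbb{R}^3\right)$ for almost every $x_h\in\mathbb{T}^2$; hence $P_{2d}(u)(x_h)=\int_0^1 u(x_h,x_3)\diff x_3$ is defined a.e.\ and is itself an $L^1\left(\mathbb{T}^2\right)$ function.

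For the norm bound I would use that $\diff x_3$ is a probability measure on $[0,1]$ together with the convexity of $v\mapsto|v|^q$ on $\mathbb{R}^3$ (the composition of the Euclidean norm with $t\mapsto t^q$). For $q<+\infty$, Jensen's inequality and Tonelli's theorem give
\begin{equation}
    \|P_{2d}(u)\|_{L^q\left(\mathbb{T}^2\right)}^q
    =\int_{\mathbb{T}^2}\left|\int_0^1 u(x_h,x_3)\diff x_3\right|^q\diff x_h
    \leq\int_{\mathbb{T}^2}\int_0^1 |u(x_h,x_3)|^q\diff x_3\diff x_h
    =\|u\|_{L^q\left(\mathbb{T}^3\right)}^q,
\end{equation}
and the case $q=+\infty$ is immediate from $|P_{2d}(u)(x_h)|\leq\int_0^1|u(x_h,x_3)|\diff x_3\leq\|u\|_{L^\infty}$; this is simply Minkowski's integral inequality.

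For the divergence-free property I would unwind the definition of $L^q_{df}$ on the torus. Since $P_{2d}(u)$ is a function of $x_h$ alone, $\nabla\cdot P_{2d}(u)=0$ amounts to $\int_{\mathbb{T}^2}(P_{2d}(u))_h\cdot\nabla_h\psi\diff x_h=0$ for every $\psi\in C^\infty\left(\mathbb{T}^2\right)$, where $(\cdot)_h$ denotes the first two components and $\nabla_h=(\partial_1,\partial_2)$. Given such a $\psi$, I would lift it to the $x_3$-independent test function $\phi(x_h,x_3):=\psi(x_h)$ on $\mathbb{T}^3$, so that $\nabla\phi=(\nabla_h\psi,0)$; since $u_h\cdot\nabla_h\psi\in L^1\left(\mathbb{T}^3\right)$, Fubini's theorem then yields
\begin{equation}
    \int_{\mathbb{T}^2}(P_{2d}(u))_h\cdot\nabla_h\psi\diff x_h
    =\int_{\mathbb{T}^3}u_h\cdot\nabla_h\psi\diff x
    =\int_{\mathbb{T}^3}u\cdot\nabla\phi\diff x
    =0,
\end{equation}
the last equality because $u\in L^q_{df}\left(\mathbb{T}^3\right)$. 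Heuristically this is just the identity $\int_0^1\partial_3 u_3\diff x_3=u_3(x_h,1)-u_3(x_h,0)=0$ coming from periodicity in $x_3$, recast against test functions so as to avoid any differentiation of $u$; the same Fubini argument gives $\int_{\mathbb{T}^2}P_{2d}(u)\diff x_h=\int_{\mathbb{T}^3}u\diff x$, so the mean-zero normalization used on the torus is inherited as well.

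I do not anticipate a genuine obstacle here; the only points needing a little care are the measure-theoretic justification for Fubini --- supplied by $L^q\subset L^1$ on a finite-measure space --- and being explicit that, for a three-component vector field living on $\mathbb{T}^2$, the divergence-free condition constrains only the horizontal part $\partial_1 v_1+\partial_2 v_2$, the vertical component $v_3$ behaving as a passively advected scalar in the associated two-dimensional system.
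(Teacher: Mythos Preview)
Your proof is correct. For the norm bound, both you and the paper use what amounts to Minkowski's integral inequality: you invoke Jensen's inequality for the convex function $v\mapsto|v|^q$ against the probability measure $\diff x_3$, while the paper applies Minkowski to pull the $L^q(\mathbb{T}^2)$ norm inside the $x_3$-integral and then H\"older against the constant function $1$; these are the same estimate in slightly different clothing.

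The divergence-free argument is where the two genuinely diverge. The paper first establishes $\nabla\cdot P_{2d}(u)=0$ for smooth divergence-free $u$ via the fundamental theorem of calculus and periodicity (the identity $\int_0^1\partial_3 u_3\,\diff x_3=0$ that you mention as a heuristic), and then extends to general $u\in L^q_{df}$ by a density argument using the $L^q$ boundedness of $P_{2d}$ already proved. Your approach bypasses approximation entirely: you lift a two-dimensional test function $\psi$ to the $x_3$-independent $\phi$ on $\mathbb{T}^3$ and use the weak divergence-free condition on $u$ directly, together with Fubini. This is shorter and more self-contained --- it needs neither the density of smooth divergence-free fields in $L^q_{df}$ nor any separate treatment of the smooth case --- and it makes transparent that the divergence-free property of $P_{2d}(u)$ is exactly the divergence-free property of $u$ tested against horizontal gradients.
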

\begin{proof}
Notice that we are projecting onto two dimensional vector fields by taking the average in the vertical direction. First we will observe that $P_{2d}$ is a bounded linear map from $L^q$ to $L^q.$ Linearity is clear. As for boundedness, applying Minkowski's inequality, we find
\begin{equation}
\|P_{2d}(u)\|_{L^q\left(\mathbb{T}^2\right)} \leq
\int_0^1 \|u_h(\cdot,x_3)\|_{L^q\left(\mathbb{T}^2\right)}
\diff x_3.
\end{equation}
Let $f(x_3)=\|u_h(\cdot,x_3)\|_{L^q\left(\mathbb{T}^3\right)}, g(x_3)=1,$ and let $\frac{1}{p}+\frac{1}{q}=1,$ then apply H\"older's inequality to observe
\begin{align}
   \int_0^1 \|u(\cdot,x_3)\|_{L^q}
   \diff x_3&\leq
   \|f\|_{L^q}\|g\|_{L^p} \\
   &=
   \|u\|_{L^q\left(\mathbb{T}^3\right)}.
\end{align}
So we may conclude that 
\begin{equation}
\|P_{2d}(u)\|_{L^q\left(\mathbb{T}^2\right)}
\leq \|u\|_{L^q\left(\mathbb{T}^3\right)}.
\end{equation}
Now we need to show that for all $u \in L^q_{df}\left(\mathbb{T}^3\right),
\nabla \cdot P_{2d}(u)=0.$
First we will show this by formal computation for $u$ smooth, and then we will extend by density. Fix $u\in C^\infty\left(\mathbb{T}^3\right), \nabla \cdot u=0.$
Observe that
\begin{equation}
    \nabla \cdot P_{2d}(u)(x_1,x_2)=
    \int_0^1 (\partial_1u_1+\partial_2u_2)
    (x_1,x_2,x_3)\diff x_3.
\end{equation}
Using the fact that $\nabla \cdot u=0,$ we can conclude that $\partial_1u_1+\partial_2u_2=-\partial_3u_3.$ Applying the fundamental theorem of calculus, and using the fact that $u_3$ is continuous and periodic, we find
\begin{align}
    \nabla \cdot P_{2d}(u)(x_1,x_2)&=
    -\int_0^1 \partial_3u_3
    (x_1,x_2,x_3)\diff x_3\\
    &=
    -u_3(x_1,x_2,1)+u_3(x_1,x_2,0)\\
    &=0.
\end{align}

We will now proceed to proving that $\nabla \cdot P_{2d}(u)$ for all $u\in L^q_{df}\left(\mathbb{T}^3\right).$
Note, we here refer to divergence free in the sense of integrating against test functions, as $u$ is not differentiable a priori.
Fix $u\in L^q_{df}\left(\mathbb{T}^3\right)$ and 
$f \in C^\infty\left(\mathbb{T}^2\right)$.
$C^\infty\left(\mathbb{T}^3\right)$ is dense in $L^q_{df}\left(\mathbb{T}^3\right),$ so for some arbitrary $\epsilon>0,$ fix $v \in C^\infty\left(\mathbb{T}^3\right), \nabla \cdot v=0,$ such that 
\begin{equation}
\|u-v\|_{L^q\left(\mathbb{T}^3\right)}<\epsilon.
\end{equation}
As we have shown above $\nabla \cdot P_{2d}(v)=0,$ so clearly
\begin{equation}
    \left<P_{2d}(v),\nabla f\right>=0.
\end{equation}
Using the linearity of $P_{2d}$ observe that
\begin{equation}
    \left<P_{2d}(u),\nabla f\right>=
    \left<P_{2d}(u-v),\nabla f\right>.
\end{equation}
Applying H\"older's inequality we find that
\begin{equation}
    |\left<P_{2d}(u-v),\nabla f\right>|
    \leq \|P_{2d}(u-v)\|_{L^q}\|\nabla f\|_{L^p}.
\end{equation}
We know from the bound we have already shown that
\begin{align}
    \|P_{2d}(u-v)\|_{L^q\left(\mathbb{T}^2\right)}
    &\leq 
    \|u-v\|_{L^q\left(\mathbb{T}^3\right)}\\
    &< 
    \epsilon,
\end{align}
so therefore
\begin{equation}
    |\left<P_{2d}(u),\nabla f\right>| <
    \epsilon \|\nabla f\|_{L^p}.
\end{equation}
But $\epsilon>0$ was arbitrary, so taking $\epsilon \to 0,$ we find that 
\begin{equation}
    \left<P_{2d}(u),\nabla f\right>=0.
\end{equation}
This completes the proof.
\end{proof}
We will also define the projection onto the subspace orthogonal to 
$L^2_{df}\left(\mathbb{T}^2;\mathbb{R}^3\right).$
\begin{definition}
    Let $P_{2d}^\perp: L^2_{df}\left(\mathbb{T}^3;\mathbb{R}^3\right)
    \to L^2_{df}\left(\mathbb{T}^3;\mathbb{R}^3\right),$
    be given by
    \begin{equation}
        P_{2d}^\perp\left(u\right)=
        u-P_{2d}\left(u\right).
    \end{equation}
\end{definition}
Note that this is well defined, because we have already shown that $u\in L^2_{df}\left(\mathbb{T}^3\right)$ implies that
$P_{2d}(u)\in L^2_{df}\left(\mathbb{T}^3\right),$ so clearly their difference, $u-P_{2d}(u),$ is also in this space, which means it is a well defined linear map.

\begin{remark}
Note that Theorem \ref{Iftimie} can be reformulated in terms of $P_{2d}$ and $P_{2d}^\perp$ as saying there exists $C>0$ such that for all $u^0\in H^\frac{1}{2}_{df}\left(
\mathbb{T}^3\right),$ such that
\begin{equation}
    \left\|P_{2d}^\perp\left(u^0\right)\right\|_{\dot{H}^\frac{1}{2}} \exp{\left(\frac{\left\|P_{2d}\left(u^0\right)\right\|_{L^2}^2}
    {C \nu^2}\right)}\leq C \nu,
\end{equation}
$u^0$ generates a global smooth solution to the Navier--Stokes equation.
\end{remark}

Next we will note that $P_{2d}$ and $P_{2d}^\perp$ decompose the support of the Fourier transform of $u$ into the plane where $k_3=0$ and the rest of $\mathbb{Z}^3.$

\begin{proposition} \label{FourierDecomp}
Fix $u \in H^\frac{1}{2}_{df}\left(\mathbb{T}^3\right).$
Let $v=P_{2d}(u), w=P_{2d}^\perp(u).$ Then 
\begin{equation} \label{v hat}
    \hat{v}(k)=\begin{cases} \hat{u}(k), k_3=0 \\
    0, k_3\neq 0
\end{cases}\end{equation}
and
\begin{equation}
    \hat{w}(k)=\begin{cases} \hat{u}(k), k_3\neq 0 \\
    0, k_3= 0
\end{cases}.\end{equation}
\end{proposition}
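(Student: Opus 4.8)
The plan is to compute the Fourier coefficients directly from the definition of $P_{2d}$ using Fubini's theorem. Since $u\in H^{\frac{1}{2}}_{df}\left(\mathbb{T}^3\right)\subset L^2\left(\mathbb{T}^3\right)\subset L^1\left(\mathbb{T}^3\right)$, the integral defining $P_{2d}(u)(x_h)=\int_0^1 u(x_h,x_3)\diff x_3$ converges for almost every $x_h$, and $v=P_{2d}(u)$, regarded as a function on $\mathbb{T}^3$ that is independent of $x_3$, lies in $L^1\left(\mathbb{T}^3\right)$; hence its Fourier coefficients $\hat v(k)$ are well defined for all $k\in\mathbb{Z}^3$.

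First I would compute $\hat v(k)$. Writing $k=(k_h,k_3)$ and $x=(x_h,x_3)$, and using that $v$ does not depend on $x_3$,
\begin{equation}
\hat v(k)=\int_{\mathbb{T}^2}\int_0^1 v(x_h)\, e^{-2\pi i(k_h\cdot x_h+k_3 x_3)}\diff x_3\diff x_h
=\left(\int_0^1 e^{-2\pi i k_3 x_3}\diff x_3\right)\int_{\mathbb{T}^2} v(x_h)\, e^{-2\pi i k_h\cdot x_h}\diff x_h.
\end{equation}
The first factor vanishes unless $k_3=0$, in which case it equals $1$. When $k_3=0$, substituting the definition of $v$ and applying Fubini a second time gives
\begin{equation}
\int_{\mathbb{T}^2} v(x_h)\, e^{-2\pi i k_h\cdot x_h}\diff x_h
=\int_{\mathbb{T}^3} u(x)\, e^{-2\pi i k_h\cdot x_h}\diff x
=\hat u(k_h,0),
\end{equation}
which is exactly \eqref{v hat}.

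Finally, since $w=u-v$ by the definition of $P_{2d}^\perp$, we have $\hat w(k)=\hat u(k)-\hat v(k)$ for every $k$. For $k_3=0$ this gives $\hat w(k)=\hat u(k)-\hat u(k)=0$, and for $k_3\neq 0$ it gives $\hat w(k)=\hat u(k)-0=\hat u(k)$, establishing the formula for $\hat w$. The only point requiring any care is the two applications of Fubini's theorem, which are justified by $u\in L^1\left(\mathbb{T}^3\right)$; there is no genuine obstacle here. I would also remark that this computation re-derives, as a byproduct, the fact that $v$ and $w$ have disjoint Fourier support and are therefore orthogonal in $L^2\left(\mathbb{T}^3\right)$, consistent with $P_{2d}^\perp$ being the projection onto the orthogonal complement of $L^2_{df}\left(\mathbb{T}^2;\mathbb{R}^3\right)$.
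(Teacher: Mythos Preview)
Your proof is correct and follows essentially the same approach as the paper's: both compute $\hat v(k)$ directly from the fact that $v$ is independent of $x_3$, then obtain $\hat w$ from $\hat w=\hat u-\hat v$. The only cosmetic difference is that the paper deduces $\hat v(k)=0$ for $k_3\neq 0$ from $\partial_3 v=0$ (so $2\pi i k_3\hat v(k)=0$), whereas you obtain it by evaluating $\int_0^1 e^{-2\pi i k_3 x_3}\diff x_3$; these are equivalent one-line observations.
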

\begin{proof}
First we note that it is obvious that $\hat{w}=\hat{u}-\hat{v},$ so it suffices
to prove \eqref{v hat}.
First note that $\partial_3 v=0,$ so
\begin{equation}
   \hat{\partial_3 v}(k)=2 \pi i k_3 \hat{v}(k)=0.
\end{equation}
Therefore we see that $k_3 \neq 0$ implies that
$\hat{v}(k)=0.$ Now we will proceed to the case where $k_3=0.$ Observe that
\begin{equation}
    \hat{v}(k_1,k_2,0)=\int_{\mathbb{T}^2}
    v(x_h) \exp\left(-2 \pi i (k_1x_1+k_2x_2)\right) \diff x_h.
\end{equation}
Recalling the definition of $P_{2d},$ we can see that
\begin{equation}
    \hat{v}(k_1,k_2,0)=\int_{\mathbb{T}^2} \int_0^1
    u(x_h,z) \exp\left(-2 \pi i (k_1x_1+k_2x_2)\right)
    \diff x_h \diff z.
\end{equation}
Taking $x=(x_h,z)\in \mathbb{T}^3$ we can express this integral as
\begin{align}
    \hat{v}(k_1,k_2,0)&=\int_{\mathbb{T}^3}
    u(x) \exp\left(-2 \pi i (k_1x_1+k_2x_2)
    \right) \diff x \\
    &=\hat{u}(k_1,k_2,0).
\end{align}
This completes the proof.
\end{proof}

This Fourier decomposition allows us to control $P_{2d}^\perp(u)$ by $\partial_3 u,$ although in doing so we lose scale criticality.

\begin{proposition} \label{P2dPerp}
For all $u \in H^\frac{1}{2}_{df}\left(\mathbb{T}^3\right),$
\begin{equation}
    \left\|P_{2d}^\perp\left(u\right)\right\|_{\dot{H}^\frac{1}{2}}
    \leq \frac{1}{2 \pi}
    \|\partial_3 u\|_{\dot{H}^\frac{1}{2}}.
\end{equation}
\end{proposition}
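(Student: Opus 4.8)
The plan is to reduce the inequality to a pointwise comparison between Fourier coefficients. Write $w=P_{2d}^\perp(u)$. By Proposition \ref{FourierDecomp} we know that $\hat w(k)=\hat u(k)$ whenever $k_3\neq 0$ and $\hat w(k)=0$ whenever $k_3=0$; in particular $\hat w$ is supported on $\{k\in\mathbb{Z}^3:|k_3|\geq 1\}$, which is the only structural fact about $P_{2d}^\perp$ we will need.

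Next I would invoke the identity $\widehat{\partial_3 u}(k)=2\pi i k_3\,\hat u(k)$, so that $|\widehat{\partial_3 u}(k)|=2\pi|k_3|\,|\hat u(k)|$. Since $|k_3|\geq 1$ on the spectrum of $w$, this gives, for every $k\in\mathbb{Z}^3$,
\[
|\hat w(k)|\leq \frac{|\widehat{\partial_3 u}(k)|}{2\pi|k_3|}\leq \frac{1}{2\pi}|\widehat{\partial_3 u}(k)|,
\]
the estimate being trivially true when $k_3=0$ because then $\hat w(k)=0$.

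Finally I would expand the homogeneous norm on $\mathbb{T}^3$ in Fourier series,
\[
\|w\|_{\dot H^{1/2}}^2=\sum_{k\in\mathbb{Z}^3}2\pi|k|\,|\hat w(k)|^2\leq \frac{1}{4\pi^2}\sum_{k\in\mathbb{Z}^3}2\pi|k|\,|\widehat{\partial_3 u}(k)|^2=\frac{1}{4\pi^2}\|\partial_3 u\|_{\dot H^{1/2}}^2,
\]
where the inequality uses the pointwise bound on each coefficient and then simply enlarges the index set back to all of $\mathbb{Z}^3$; taking square roots yields the claim.

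I do not expect any genuine obstacle: this is a short Fourier-side computation. The only point worth flagging is that the argument is deliberately lossy — it ignores the interplay between the weight $|k|^{1/2}$ and the component $k_3$ and uses only the crude bound $|k_3|\geq 1$ on the spectrum of $P_{2d}^\perp(u)$. This crudeness is precisely the loss of scale-criticality mentioned in the sentence preceding the statement, and it is why the estimate controls $P_{2d}^\perp(u)$ in $\dot H^{1/2}$ by $\partial_3 u$ in $\dot H^{1/2}$ rather than by a critical quantity.
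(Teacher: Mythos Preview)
Your proof is correct and follows essentially the same approach as the paper: use Proposition~\ref{FourierDecomp} to see that $\hat w$ is supported on $\{k_3\neq 0\}$, exploit $|k_3|\geq 1$ there to insert a factor $|k_3|^2$, and recognise the resulting sum as $\frac{1}{4\pi^2}\|\partial_3 u\|_{\dot H^{1/2}}^2$. The only cosmetic difference is that you phrase the key step as a pointwise bound $|\hat w(k)|\leq \tfrac{1}{2\pi}|\widehat{\partial_3 u}(k)|$ before summing, whereas the paper inserts the factor $k_3^2$ directly into the sum; this is the same computation.
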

\begin{proof}
Let $w=P_{2d}^\perp\left(u\right)=u-P_{2d}(u).$
Observe that
\begin{align}
    \|w\|_{\dot{H}^\frac{1}{2}}^2
    &=
    \sum_{k\in \mathbb{Z}^3}2 \pi |k| |\hat{w}(k)|^2\\
    &=
    \sum_{\substack{{k\in \mathbb{Z}^3}\\
    {k_3\neq 0}}}2 \pi |k| |\hat{u}(k)|^2.
\end{align}
Note that for all $k\in \mathbb{Z}^3, k_3\neq 0,$ we have $k_3^2\geq 1,$ so we can see that
\begin{align}
   \|w\|_{\dot{H}^\frac{1}{2}}^2 
   &\leq  
   \sum_{\substack{{k\in \mathbb{Z}^3}\\
    {k_3\neq 0}}}
    2 \pi k_3^2 |k|  |\hat{u}(k)|^2\\
   &=
    \sum_{k \in \mathbb{Z}^3}
    2 \pi k_3^2 |k|  |\hat{u}(k)|^2\\
   &=
   \frac{1}{4 \pi^2} \sum _{k \in \mathbb{Z}^3}
   2 \pi |k| |2 \pi i k_3\hat{u}(k)|^2.
\end{align}
Recalling that $\widehat{\partial_3 u}(k)=2 \pi i k_3 \hat{u}(k),$ we can compute that
\begin{align}
    \|w\|_{\dot{H}^\frac{1}{2}}^2 
    &\leq  
   \frac{1}{4 \pi^2} \sum _{k \in \mathbb{Z}^3}
   2 \pi |k| |\widehat{\partial_3 u}(k)|^2\\
   &=
   \frac{1}{4 \pi^2}
   \|\partial_3 u\|_{\dot{H}^\frac{1}{2}}^2.
\end{align}
\end{proof}

This inequality allows us to prove a corollary of Iftimie's result, Theorem \ref{Iftimie}, that is stated as bound on in terms of the size of $\partial_3 u$ in $H^\frac{1}{2},$ rather than in terms of perturbations of $L^2_{df}\left(\mathbb{T}^2\right).$

\begin{corollary} \label{CorollaryOfIftimie}
There exists $C>0$ independent of $\nu,$ such that for all $u^0 \in H^\frac{1}{2}_{df}\left(\mathbb{T}^3\right),$
\begin{equation}
    \left\|\partial_3 u^0\right\|_{\dot{H}^\frac{1}{2}} \exp 
    \left(\frac{\left\|u^0\right\|_{L^2}^2}{C \nu^2}\right)
    \leq 2 \pi C \nu,
\end{equation}
implies $u^0$ generates a global, smooth solution to the Navier--Stokes equations.
\end{corollary}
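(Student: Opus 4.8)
The plan is to deduce this directly from Iftimie's theorem (Theorem~\ref{Iftimie}, in the $P_{2d}/P_{2d}^\perp$ reformulation given in the Remark following Proposition~\ref{P2dPerp}), by feeding in the two quantitative estimates just established: the $L^2$ contractivity of $P_{2d}$ (Proposition~\ref{P2d}) to control the two dimensional part, and the inequality $\|P_{2d}^\perp(u)\|_{\dot H^{1/2}}\le \frac{1}{2\pi}\|\partial_3 u\|_{\dot H^{1/2}}$ (Proposition~\ref{P2dPerp}) to control the transverse part. Let $C>0$ be the constant appearing in Theorem~\ref{Iftimie}; this is the $C$ that will appear in the corollary.

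Given $u^0\in H^{1/2}_{df}(\mathbb{T}^3)$, I would set $v^0=P_{2d}(u^0)$ and $w^0=P_{2d}^\perp(u^0)$, so that $u^0=v^0+w^0$. By Proposition~\ref{P2d}, $v^0\in L^2_{df}(\mathbb{T}^2;\mathbb{R}^3)$, and by Proposition~\ref{FourierDecomp} together with the boundedness of the projection on each $H^s$, $w^0\in H^{1/2}_{df}(\mathbb{T}^3;\mathbb{R}^3)$; thus the pair $(v^0,w^0)$ is of the type to which Theorem~\ref{Iftimie} applies. Proposition~\ref{P2d} gives $\|v^0\|_{L^2(\mathbb{T}^2)}\le\|u^0\|_{L^2(\mathbb{T}^3)}$, hence
\begin{equation}
\exp\!\left(\frac{\|v^0\|_{L^2}^2}{C\nu^2}\right)\le\exp\!\left(\frac{\|u^0\|_{L^2}^2}{C\nu^2}\right),
\end{equation}
while Proposition~\ref{P2dPerp} gives $\|w^0\|_{\dot H^{1/2}}\le\frac{1}{2\pi}\|\partial_3 u^0\|_{\dot H^{1/2}}$.

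Multiplying these two bounds and invoking the hypothesis $\|\partial_3 u^0\|_{\dot H^{1/2}}\exp\big(\|u^0\|_{L^2}^2/(C\nu^2)\big)\le 2\pi C\nu$, I obtain
\begin{equation}
\|w^0\|_{\dot H^{1/2}}\exp\!\left(\frac{\|v^0\|_{L^2}^2}{C\nu^2}\right)\le\frac{1}{2\pi}\,\|\partial_3 u^0\|_{\dot H^{1/2}}\exp\!\left(\frac{\|u^0\|_{L^2}^2}{C\nu^2}\right)\le C\nu,
\end{equation}
which is exactly the smallness condition in Theorem~\ref{Iftimie}. Applying that theorem to the data $u^0=v^0+w^0$ yields a unique global smooth solution, completing the argument.

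There is no real obstacle here; the content is entirely in the preparatory propositions. The only points requiring care are bookkeeping ones: verifying that $v^0$ really is an honest two dimensional ($x_3$-independent) divergence free field and that $w^0$ lies in $H^{1/2}_{df}(\mathbb{T}^3)$ — both supplied by Propositions~\ref{P2d} and \ref{FourierDecomp} — and tracking the constant, where the factor $2\pi$ in the statement is precisely the loss incurred in Proposition~\ref{P2dPerp}. Note also that the criticality loss flagged after Proposition~\ref{P2dPerp} is what forces the hypothesis to be phrased in terms of $\|\partial_3 u^0\|_{\dot H^{1/2}}$ rather than a genuinely scale-invariant quantity.
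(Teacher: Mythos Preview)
Your proof is correct and follows essentially the same route as the paper: decompose $u^0=P_{2d}(u^0)+P_{2d}^\perp(u^0)$, bound $\|P_{2d}(u^0)\|_{L^2}$ by $\|u^0\|_{L^2}$ via Proposition~\ref{P2d} and $\|P_{2d}^\perp(u^0)\|_{\dot H^{1/2}}$ by $\frac{1}{2\pi}\|\partial_3 u^0\|_{\dot H^{1/2}}$ via Proposition~\ref{P2dPerp}, then invoke Theorem~\ref{Iftimie}. The only cosmetic slip is that the $P_{2d}/P_{2d}^\perp$ reformulation you cite is the Remark following the definition of $P_{2d}^\perp$, not the one following Proposition~\ref{P2dPerp}.
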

\begin{proof}
We will take $C>0$ as in Theorem \ref{Iftimie}. Suppose $u^0\in H^\frac{1}{2}_{df}$ and 
\begin{equation}
    \left\|\partial_3 u^0\right\|_{\dot{H}^\frac{1}{2}} \exp 
    \left(\frac{\left\|u^0\right\|_{L^2}^2}{C \nu^2}\right)
    \leq 2 \pi C \nu.
\end{equation}
Note that we do not assume that $u\in H^\frac{3}{2},$ but the bound on $\|\partial_3 u\|_{\dot{H}^\frac{1}{2}}$ clearly implies that $\partial_3 u \in H^\frac{1}{2}$ nonetheless.
Let $v^0=P_{2d}\left(u^0\right)$ and let 
$w^0=u^0-P_{2d}\left(u^0\right).$
From Proposition \ref{P2d}, we know that
\begin{equation}
    \left\|v^0\right\|_{L^2\left(\mathbb{T}^2\right)}\leq \left\|u^0\right\|_{L^2\left(\mathbb{T}^3\right)}.
\end{equation}
We also know from Proposition \ref{P2dPerp}, that
\begin{equation}
    \left\|w^0\right\|_{\dot{H}^\frac{1}{2}}\leq \frac{1}{2 \pi} 
    \left\|\partial_3 u^0\right\|_{\dot{H}^\frac{1}{2}}.
\end{equation}
Putting these two inequalities together we find that 
\begin{equation}
    \left\|w^0\right\|_{\dot{H}^\frac{1}{2}} \exp{\left(\frac{\left\|v^0\right\|_{L^2}^2}
    {C \nu^2}\right)}\leq C \nu.
\end{equation}
Applying Theorem \ref{Iftimie}, this completes the proof.
\end{proof}

We should note here that Corollary \ref{CorollaryOfIftimie} is not equivalent to Iftimie's result Theorem \ref{Iftimie}. Corollary \ref{CorollaryOfIftimie} is weaker than Iftimie's result, because Iftimie's result involves controlling $\|P_{2d}^\perp\left(u^0\right)\|_{\dot{H}^\frac{1}{2}},$ which is scale critical, but Corollary \ref{CorollaryOfIftimie} involves controlling $\|\partial_3 u\|_{\dot{H}^\frac{1}{2}},$ which is not scale critical.

Corollary \ref{CorollaryOfIftimie} neither implies, nor is implied by Theorem \ref{2VortTorus}, which is the main result of this paper translated to the setting of the torus rather than the whole space. This is because on the torus, as on the whole space,
\begin{equation}
    \|\omega_h\|_{\dot{H}^{-\frac{1}{2}}}^2=
    \|\partial_3 u\|_{\dot{H}^{-\frac{1}{2}}}^2+
    \|\nabla u_3\|_{\dot{H}^{-\frac{1}{2}}}^2.
\end{equation}
This means that Theorem \ref{2VortTorus} is weaker than Corollary \ref{CorollaryOfIftimie} in the sense that it requires control on both $\partial_3 u$ and $\nabla u_3,$ but it is stronger in the sense that it requires control in the critical space $\dot{H}^{-\frac{1}{2}},$ rather than the subcritical space $\dot{H}^\frac{1}{2}.$

In fact we will show that Theorem \ref{2VortTorus} is not implied by Theorem \ref{Iftimie}, because it is not possible to control
$\|P_{2d}^\perp\left(u^0\right)\|_{\dot{H}^\frac{1}{2}}$ by
$\|\omega_h^0\|_{\dot{H}^{-\frac{1}{2}}},$ where these are the respective critical Hilbert norms in Theorem \ref{Iftimie} and Theorem \ref{2VortTorus}.
This precise result will be as follows.
\begin{proposition}
\begin{equation}
    \sup_{\substack{ u \in H^{\frac{1}{2}}_{df}
    \left(\mathbb{T}^3\right) \\
    \left\|\omega_h\right\|_{\dot{H}^{-\frac{1}{2}}}=1}}
    \left\|P_{2d}^\perp(u)\right\|_{\dot{H}^\frac{1}{2}}=+\infty.
\end{equation}
\end{proposition}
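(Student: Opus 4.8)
The plan is to exhibit an explicit sequence $u^n \in H^{1/2}_{df}(\mathbb{T}^3)$ with $\|\omega^n_h\|_{\dot{H}^{-1/2}}$ bounded (indeed normalized to $1$) but $\|P_{2d}^\perp(u^n)\|_{\dot{H}^{1/2}} \to +\infty$. The key mechanism is the mismatch in the Fourier weights: $\|\omega_h\|_{\dot{H}^{-1/2}}^2$ behaves like $\sum 2\pi|k| |k_h|^{-2}\,(\text{stuff})$ where $|k_h|^2 = k_1^2 + k_2^2$ appears in the denominator because $\omega_h$ is built from $\partial_3 u - \nabla u_3$, whereas $\|P_{2d}^\perp(u)\|_{\dot{H}^{1/2}}^2 = \sum_{k_3\neq 0} 2\pi|k| |\hat u(k)|^2$ weights $\hat u$ directly. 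So I want to concentrate the spectrum of $u^n$ on frequencies with $k_3 \neq 0$ but $k_3$ \emph{small} relative to $k_h$ — frequencies where $\omega_h$ is heavily damped but $u$ itself is not. Concretely, following the construction in the proof of Theorem \ref{VortUnbounded}, I would take $\hat u^n$ supported on a shell $|k| \sim n$ with $|k_3| = 1$ fixed and $|k_h| \sim n$, and a divergence-free polarization; the dangerous ratio is $|\hat\omega_h(k)|^2 / |\hat u(k)|^2 \sim (k_3/k_h)^2 \sim n^{-2}$ on this set, which is exactly the gain we need.

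The steps, in order: (1) choose the frequency set $\Sigma_n = \{k \in \mathbb{Z}^3 : |k_3| = 1,\ n \le |k_h| \le 2n\}$ and note $|\Sigma_n| \sim n^2$; (2) define $\hat u^n(k)$ on $\Sigma_n$ by fixing a unit divergence-free polarization vector $e(k) \perp k$ — for instance pick $e(k)$ to lie (mostly) in the horizontal plane so that the vertical component $u^n_3$ is order $1/n$, ensuring the solenoidal constraint $k\cdot \hat u^n(k) = 0$ is easily satisfied — times a normalizing amplitude $a_n$ to be chosen; (3) compute $\|P_{2d}^\perp(u^n)\|_{\dot{H}^{1/2}}^2 = \sum_{k\in\Sigma_n} 2\pi|k|\,|a_n|^2 \sim |a_n|^2 n \cdot n^2 = |a_n|^2 n^3$, using Proposition \ref{P2dPerp}'s companion Proposition \ref{FourierDecomp} to see that all of $u^n$ lies in the range of $P_{2d}^\perp$ since $k_3 \neq 0$ everywhere on $\Sigma_n$; (4) compute $\|\omega^n_h\|_{\dot{H}^{-1/2}}^2$: using $\partial_3 u - \nabla u_3 = (\omega_2, -\omega_1, 0)$ from Lemma \ref{2CompIsometry}, the horizontal vorticity Fourier symbol picks up a factor of order $k_3/|k_h| \sim 1/n$ in magnitude (because $\hat u$ is nearly horizontal and $k_3 = \pm 1$ while $|k_h|\sim n$ forces, via $k\cdot\hat u = 0$, the relevant combination to be small), so $|\hat\omega_h^n(k)| \lesssim |a_n| \cdot |k| \cdot (1/n)$, giving $\|\omega^n_h\|_{\dot{H}^{-1/2}}^2 \lesssim \sum_{k\in\Sigma_n} (2\pi|k|)^{-1}|a_n|^2 |k|^2 n^{-2} \sim |a_n|^2 n \cdot n^2 \cdot n^{-2} = |a_n|^2 n$; (5) normalize by taking $|a_n|^2 = 1/n$ so $\|\omega^n_h\|_{\dot{H}^{-1/2}} \lesssim 1$ (rescale to exactly $1$), whence $\|P_{2d}^\perp(u^n)\|_{\dot{H}^{1/2}}^2 \gtrsim n \to +\infty$.

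I expect the main obstacle to be step (4): carefully tracking how small $\hat\omega_h^n(k)$ actually is on $\Sigma_n$. One must verify that the divergence-free polarization can genuinely be chosen so that $\hat\omega_h(k)$ gains the full factor $|k_3|/|k_h|$, rather than just being generically order $|k|$. The cleanest route is to build $u^n$ from a stream-function-type ansatz: take the vorticity $\hat\omega^n(k) = b_n\big(\tfrac{k_3}{|k_h|} e_h(k) ,\, 1\big)$ on $\Sigma_n$ (with $e_h(k)$ a horizontal unit vector $\perp k_h$), which is automatically divergence-free and has $|\hat\omega^n_h(k)| = b_n |k_3|/|k_h| \sim b_n/n$; then recover $\hat u^n(k) = (2\pi i)^{-1} |k|^{-2}\, (ik) \times \hat\omega^n(k) / (2\pi)$ via Biot–Savart, and check $|\hat u^n(k)| \sim b_n/|k| \sim b_n/n$ with the \emph{vertical} part $\hat u^n_3$ of size $\sim b_n/n$ as well, so that $P_{2d}^\perp$ captures essentially all of it. With this ansatz steps (3)–(5) become the bookkeeping above with $|a_n| \sim b_n/n$; choosing $b_n$ so that $\|\omega^n_h\|_{\dot H^{-1/2}}=1$, i.e. $b_n^2 \cdot n^{-2} \cdot \sum_{\Sigma_n}(2\pi|k|)^{-1} \sim b_n^2 n^{-2}\cdot n \sim 1$, forces $b_n \sim \sqrt{n}$, hence $\|P_{2d}^\perp(u^n)\|_{\dot H^{1/2}}^2 \sim \sum_{\Sigma_n} |k|\, b_n^2 |k|^{-4} \cdot |k|^? $ — so the precise exponent must be pinned down, but the qualitative divergence is robust because $\omega_h$ carries two extra powers of the small ratio $k_3/|k_h|$ in $\dot H^{-1/2}$ relative to what $u$ carries in $\dot H^{1/2}$. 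I would then double-check the exponent arithmetic once and present the final normalized sequence.
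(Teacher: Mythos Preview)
Your approach is correct and uses the same mechanism as the paper: concentrate the Fourier support on frequencies with $|k_3|=1$ and $|k_h|\to\infty$, so that $P_{2d}^\perp(u)=u$ and the ratio $\|u\|_{\dot H^{1/2}}^2/\|\omega_h\|_{\dot H^{-1/2}}^2\sim|k|^2$ diverges---note the gain comes entirely from the Sobolev weights $|k|$ versus $|k|^{-1}$, not from a pointwise ratio $|\hat\omega_h|/|\hat u|\sim k_3/|k_h|$ (indeed with $\hat u_3=0$ one simply has $|\hat\omega_h(k)|=2\pi|k_3|\,|\hat u(k)|$). The paper carries this out with a \emph{single} frequency pair $k=\pm(1,n,1)$ and the horizontal polarization $\hat u^n(k)=a_n(n,-1,0)$, so $u_3^n\equiv 0$ and the whole computation is two lines; this avoids the shell, the Biot--Savart ansatz, and the exponent bookkeeping you flagged as uncertain.
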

\begin{proof}
For all $n \in \mathbb{N},$ define $u^n \in H^\frac{1}{2}_{df},$ in terms of its Fourier transform by 
\begin{equation}
    \widehat{u^n}(k)=a_n \begin{cases}
    (n,-1,0), k= \pm (1,n,1)\\
    0, \text{otherwise}
    \end{cases},
\end{equation}
where $a_n$ is a normalization factor given by
\begin{equation}
    a_n=\left(\frac{\sqrt{n^2+2}}
    {4 \pi \left(n^2+1\right)}\right)^\frac{1}{2}.
\end{equation}
It is easy to check that for all $n\in \mathbb{N}, 
k\in \mathbb{Z}^3$ we have
$k \cdot \widehat{u^n}(k)=0,$ so $\nabla \cdot u^n=0,$ and for each $n\in \mathbb{N}, u^n \in
H^\frac{1}{2}_{df}\left(\mathbb{T}^3\right).$

It is not essential to the proof,
but we will also note for the sake of clarity that 
\begin{equation}
    u^n(x)=2 a_n (n,-1,0) \cos 
    \left(2 \pi (x_1+n x_2+x_3)\right).
\end{equation}
Note that for all $n\in \mathbb{N}$ $u^n_3=0,$ so we have
\begin{equation}
    \left\|\omega^n_h\right\|_{\dot{H}^{-\frac{1}{2}}}^2=
    \left\|\partial_3 u^n\right\|_{\dot{H}^{-\frac{1}{2}}}^2.
\end{equation}
We know that $\widehat{\partial_3 u}(k)=
2 \pi i k_3 \widehat{u^n}(k),$ so we can conclude that
\begin{equation}
    \widehat{\partial_3 u^n}(k)=2 \pi i a_n\begin{cases}
    (n,-1,0), k= \pm (1,n,1)\\
    0, \text{otherwise}
\end{cases}.\end{equation}
Therefore we can compute that
\begin{align}
    \left\|\omega_h^n\right\|_{\dot{H}^{-\frac{1}{2}}}^2
    &=
    \left\|\partial_3 u^n\right\|_{\dot{H}^{-\frac{1}{2}}}^2\\
    &=
    2 \frac{1}{2 \pi |(1,n,1)|}
    \left|a_n 2 \pi i (n,-1,0)\right|^2
\end{align}
Simplifying terms we find that
\begin{equation}
    \left\|\omega_h^n\right\|_{\dot{H}^{-\frac{1}{2}}}^2=
    \frac{4 \pi a_n^2\left(n^2+1\right)}
    {\sqrt{n^2+2}}.
\end{equation}
Recalling that 
\begin{equation}
    a_n^2=\frac{\sqrt{n^2+2}}
    {4 \pi \left(n^2+1\right)},
\end{equation}
we conclude that for all $n \in \mathbb{N},$
\begin{equation}
    \|\omega_h^n\|_{\dot{H}^{-\frac{1}{2}}}^2=1.
\end{equation}

We know from Proposition \ref{FourierDecomp}, that the Fourier transform of $P_{2d}(u)$ is supported on the plane $k_3=0$ in $\mathbb{Z}^3.$
For all $k_1,k_2 \in \mathbb{Z}, \widehat{u^n}(k_1,k_2,0)=0.$
This implies that for all $n\in \mathbb{N}, P_{2d}(u^n)=0,$ and therefore $P_{2d}^\perp(u^n)=u^n.$
Observe that
\begin{align}
    \left\|u^n\right\|_{\dot{H}^\frac{1}{2}}^2
    &=
    2 \left(2 \pi |(1,n,1)|\right) a_n^2|(n,-1,0)|^2\\
    &=
    4 \pi a_n^2 (n^2+1) \sqrt{n^2+2}.
\end{align}
Again recalling that 
\begin{equation}
    a_n^2=\frac{\sqrt{n^2+2}}
    {4 \pi \left(n^2+1\right)},
\end{equation}
we conclude that for all $n \in \mathbb{N},$
\begin{equation}
    \left\|u^n\right\|_{\dot{H}^\frac{1}{2}}^2=
    n^2+2.
\end{equation}
We have shown that for all $n \in \mathbb{N},$
\begin{equation}
    \left\|\omega^n_h\right\|_{\dot{H}^{-\frac{1}{2}}}=1,
\end{equation}
and
\begin{equation}
    \left\|P_{2d}^\perp(u^n)\right\|_{\dot{H}^\frac{1}{2}}
    =\sqrt{n^2+2}.
\end{equation}
Therefore we may conclude that
\begin{equation}
    \sup_{\substack{ u \in H^{\frac{1}{2}}_{df}
    \left(\mathbb{T}^3\right) \\
    \|\omega_h\|_{\dot{H}^{-\frac{1}{2}}}=1}}
    \left\|P_{2d}^\perp(u)\right\|_{\dot{H}^\frac{1}{2}}=+\infty.
\end{equation}
\end{proof}
By proving that $\|P_{2d}(u^0)\|_{\dot{H}^\frac{1}{2}}$ cannot be controlled by $\|\omega^n_h\|_{\dot{H}^{-\frac{1}{2}}},$
we have shown definitively that Theorem \ref{2VortTorus} is not a corollary of earlier work by Iftimie and separately by Gallagher, and so this result is new on the torus as well as on the whole space.
We will conclude this paper by making a remark that the set of initial data in Theorem \ref{2VortTorus}, is unbounded in all scale invariant spaces.
\begin{remark}
Unlike Theorem \ref{2VortGlobalExistence} which gives examples of global smooth solutions with arbitrarily large initial data in $\dot{H}^\frac{1}{2}\left(\mathbb{R}^3\right)$ but not in $\dot{B}^{-1}_{\infty,\infty}\left(\mathbb{R}^3\right)$, 
Theorem \ref{2VortTorus} does give provide examples global smooth solutions with arbitrarily large initial data in $\dot{B}^{-1}_{\infty,\infty}\left(\mathbb{T}^3\right)$. 
This is clear in particular because
$L^2\left(\mathbb{T}^3\right) \subset L^2\left(\mathbb{T}^3\right).$
If we take
\begin{equation}
    u^0(x)=C (1,-1,0) \cos\left(2\pi(x_1+x_2) \right),
\end{equation}
then $\omega_h^0=0,$ so clearly $u^0$ satisfies the hypothesis of Theorem \ref{2VortTorus} for all $C\in \mathbb{R}.$
Taking $C$ large, this gives us an example of large initial data satisfying the hypothesis of Theorem \ref{2VortTorus}.
This is not new, of course, because global wellposedness for arbitrarily large initial data is well established in two dimensions. 
However, by taking small perturbations of fully two dimensional initial data, Theorem \ref{2VortTorus} gives examples of initial data that is large in $\dot{B}^{-1}_{\infty,\infty}\left(\mathbb{T}^3\right)$ that is not fully two dimensional, but nonetheless generates global smooth solutions. For instance, if we take
\begin{equation}
    u^{0,n}(x)= n(1,-1,0) \cos\left(2\pi(x_1+x_2) \right)+
    \exp(-n^5)(1,-2,1) \cos\left(2\pi(x_1+x_2+x_3) \right),
\end{equation}
then it is straightforward to compute that 
\begin{equation}
    \lim_{n \to \infty}\left\|\omega_h^{0,n}
    \right\|_{\dot{H}^{-\frac{1}{2}}
    \left(\mathbb{T}^3\right)}
    \exp{\left(\frac{K_{0,n} E_{0,n}}
    {\Tilde{R}_2 \nu^4}\right)}=0,
\end{equation}
so Theorem \ref{2VortTorus} implies that $u^{0,n}$ generates global smooth solutions for $n\in \mathbb{N}$ sufficiently large. We can also see that
\begin{equation}
    \lim_{n \to \infty}
    \left\|u^{0,n}\right\|_{\dot{B}^{-1}_{\infty,\infty}}
    =+\infty,
\end{equation}
so there are arbitrarily large initial data that are not fully two dimensional that satisfy the hypothesis of Theorem \ref{2VortTorus}. We cannot extend this to the case of the whole space however, because
$L^2\left(\mathbb{R}^2\right) \not\subset 
L^2\left(\mathbb{R}^3\right).$
\end{remark}

\bibliographystyle{plain}
\bibliography{Bib}
\end{document}